\newtheorem{thm}{Theorem}[section]
\newtheorem*{thm*}{Theorem}
\newtheorem{lm}[thm]{Lemma}
\newtheorem{prop}[thm]{Proposition}
\newtheorem{cor}[thm]{Corollary}
\theoremstyle{definition}\newtheorem{defi}[thm]{Definition}
\theoremstyle{definition}\newtheorem{ex}[thm]{Example}
\theoremstyle{remark}\newtheorem{rem}[thm]{Remark}
\DeclareMathOperator{\he}{ht}
\DeclareMathOperator{\depth}{depth}
\DeclareMathOperator{\spec}{Spec}
\DeclareMathOperator{\Quot}{Quot}
\DeclareMathOperator{\codim}{codim}
\DeclareMathOperator{\Hom}{Hom}
\DeclareMathOperator{\Hilb}{Hilb}
\DeclareMathOperator{\coker}{coker}
\DeclareMathOperator{\id}{id}
\DeclareMathOperator{\pr}{pr}
\DeclareMathOperator{\supp}{Supp}
\DeclareMathOperator{\Ass}{Ass}
\DeclareMathOperator{\Ann}{Ann}
\newcommand{\ra}{\rightarrow}
\newcommand{\surj}{\twoheadrightarrow}
\newcommand{\func}[3]{\ensuremath{#1\mathpunct: #2\ra #3}}
\newcommand{\injfunc}[3]{\ensuremath{#1\mathpunct: #2\hookrightarrow #3}}
\newcommand{\iso}[3]{\ensuremath{#1\mathpunct:
    #2\stackrel{_\sim}{\longrightarrow} #3}}
\newcommand{\CM}{\ensuremath{\mathit{CM}} }
\newcommand{\opn}[1]{\operatorname{#1}}
\newcommand{\calA}{\ensuremath{{\mathcal{A}}}}
\newcommand{\calE}{\ensuremath{{\mathcal{E}}}}
\newcommand{\calF}{\ensuremath{{\mathcal{F}}}}
\newcommand{\calG}{\ensuremath{{\mathcal{G}}}}
\newcommand{\calH}{\ensuremath{{\mathcal{H}}}}
\newcommand{\calI}{\ensuremath{{\mathcal{I}}}}
\newcommand{\calJ}{\ensuremath{{\mathcal{J}}}}
\newcommand{\calK}{\ensuremath{{\mathcal{K}}}}
\newcommand{\calL}{\ensuremath{{\mathcal{L}}}}
\newcommand{\calN}{\ensuremath{{\mathcal{N}}}}
\newcommand{\calO}{\ensuremath{{\mathcal{O}}}}
\newcommand{\calP}{\ensuremath{{\mathcal{P}}}}
\newcommand{\calZ}{\ensuremath{{\mathcal{Z}}}}
\newcommand{\p}{\ensuremath{\mathfrak{p}}}
\newcommand{\q}{\ensuremath{\mathfrak{q}}}
\newcommand{\m}{\ensuremath{\mathfrak{m}}}
\newcommand{\bbN}{\ensuremath{{\mathbb{N}}}}
\newcommand{\bbP}{\ensuremath{{\mathbb{P}}}}
\newcommand{\bbQ}{\ensuremath{{\mathbb{Q}}}}
\newcommand{\bbZ}{\ensuremath{{\mathbb{Z}}}}
\begin{document}
\title{The space of Cohen--Macaulay curves}
\author{Katharina Heinrich} 
\address{KTH Royal Institute of Technology, Institutionen f\"or matematik, 10044 Stockholm, Sweden}
\email{kchal@math.kth.se}
\subjclass[2010]{14H10, 14C05}

\begin{abstract}
  One can consider the Hilbert scheme as a natural compactification of
  the space of smooth projective curves with fixed Hilbert
  polynomial. Here we consider a different modular compactification,
  namely the functor $\CM$ parameterizing curves together with a
  finite map to $\bbP^n$ that is generically a closed immersion.

  We prove that $\CM$ is an algebraic space by contructing a scheme
  $W$ and a representable, surjective and smooth map $\func \pi W
  CM$. Moreover, we show that $\CM$ satisfies the valuative criterion
  for properness.
\end{abstract}
\maketitle

\thispagestyle{empty}

\section{Introduction}

A classical problem in algebraic geometry is to find and to describe
moduli spaces of different geometric objects. The objects considered
here are embedded projective {\em curves}, that is, one-dimensional
sub\-schemes of a given projective space that do not have embedded or
isolated points. Moreover, we assume that the degree $d$ and the genus
$g$ of the curve are fixed, that is, that the curve has a given
Hilbert polynomial $p(t)=dt+1-g$.

A well-known compactification of the space of all curves in a given
projective space $\bbP^n$ is the {\em Hilbert scheme}. For a fixed
polynomial $p(t)$, the Hilbert functor $\calH ilb^p$ parameterizes
flat families of closed subschemes of $\bbP^n$ with Hilbert polynomial
$p(t)$. Grothendieck defined this functor in \cite{FGA221} and proved
that it is represented by a projective scheme $\Hilb^p$. The Hilbert
scheme compactification of the space of embedded curves is obtained by
pa\-ra\-me\-terizing not only curves but all {\em subschemes} having
Hilbert polynomial $p(t)$. For example, the Hilbert scheme
$\Hilb^{3t+1}$ of twisted cubics in $\bbP^3$ contains not only the
smooth twisted cubic curves but also plane curves with embedded or
isolated points.

The moduli space of curves without embedded or isolated points
that we are interested in is represented by an open subscheme of the
Hilbert scheme, and we seek a compactification that avoids the
degenerated schemes mentioned above.

H\o nsen proposed in \cite{Honsen} the following modular
compactification of the space of curves. Instead of looking at curves
with an embedding into the given projective space, he considered
curves with a finite map to the projective space that is generically a
closed immersion. Concretely, this means that the map is an
isomorphism onto its image away from a finite set of closed
points. For example, the normalization of a plane nodal curve in
$\bbP^3$ gives a finite map to $\bbP^3$ that is an isomorphism onto
the image away from the singularity.

H\o nsen proved that the moduli space $\CM$ of such pairs $(C,i)$,
where $\func i C \bbP^n$ denotes the finite map, is an algebraic space
by verifying the conditions in {\em Artin's criteria} for
representability \mbox{\cite[Theorem 3.4]{Artin}}. Furthermore, he
showed that $\CM$ satisfies the valuative criterion for properness, and
thus $\CM$ is a proper algebraic space.

Studying \cite{Honsen}, we had trouble following several of the
arguments as they appeared to be incomplete or essential conditions
seemed not to be satisfied. Here, we construct a scheme $W$ with a
representable, surjective and smooth cover \mbox{$\func
  \pi{W}\CM$}. Thereby, we re-prove that $\CM$ is an algebraic
space. Parts of the construction are based on the ideas in
\cite{Honsen}. However, we were able to simplify several of the
arguments. Moreover, we give a different, more explicit proof for
properness using ideas as in \cite{Kollar:HH}.

A similar moduli space was constructed by Alexeev and Knutson. They
showed in \cite{AK:branched} that the space of {\em
  branchvarieties} parameterizing reduced curves with finite maps to
$\bbP^n$ is a proper Artin stack. Related moduli spaces can also be
found in \cite{Rydh:thesis}, \cite{Kollar:HH} and
\cite{RT:stable_pairs}.

 \subsubsection*{Notation and conventions}
 All schemes considered here are locally Noetherian. In particular,
 $\mathbf{Sch}$ and $\mathbf{Sch}_S$ denote the categories of locally
 Noetherian schemes and $S$-schemes respectively.

 Let $X$ and $Y$ be $S$-schemes, and let $\func f X Y$ be a morphism
 over $S$. For a base change $\func g T S$, we write $X_T$ for the
 fiber product $X\times_S T$ and $f_T$ for the induced morphism
 $\func{f\times \id_T}{X_T}{Y_T}$ of $T$-schemes. If $T=\spec(A)$ for
 a ring $A$, we write $X_A$ and $f_A$ instead. Moreover, for
 $T=\spec(\kappa(s))$ with $s\in S$, we use the notation $X_s$ and
 $f_s$.

 Let $\calF$ be a quasi-coherent sheaf on a $S$-scheme $X$. For a
 base change $\func g T S$, we denote the pullback $h^*\calF$, where
 $\func h {X_T}X$ is the projection, by $\calF_T$.  In particular, for
 $T=\spec(\kappa(s))$ we write $\calF_s$. This should not be confused
 with the stalk $\calF_x$ of $\calF$ at some point $x\in X$.

 \subsection*{Acknowledgments} This project was carried out as part of
 my PhD thesis. I thank my advisor Roy Skjelnes for his guidance and
 support. Moreover, I am grateful to David Rydh. The construction of
 the refinement of the cover in Subsection~\ref{sec:refinement} is
 very much based on his ideas.

\section{Families of Cohen--Macaulay curves}

In this section, we give a definition of the space $\CM$ of
Cohen--Macaulay curves and investigate some simple examples. Then we
show that a Cohen--Macaulay curve does not have nontrivial
automorphisms. Finally, this result is used to prove that $\CM$ is a
sheaf in the \'etale topology.

\subsection{Definition of the Cohen--Macaulay functor}\label{sec:defCM}
\begin{defi}\label{def:CMfunctor}
  Let $p(t)=at+b\in\bbZ[t]$ be a polynomial of degree $1$. For a scheme
  $S$, let $\CM(S):=\CM_{\bbP^n}^{p(t)}(S)$ be the set of isomorphism
  classes of pairs $(C,i)$ of a flat $S$-scheme $C$ and a finite
  $S$-morphism $\func i C{\bbP^n_S}$ such that for every $s\in S$ the
  following properties hold.
  \begin{enumerate}
  \item The fiber $C_s$ is a Cohen--Macaulay scheme, that is, all local
    rings are Cohen--Macaulay rings, and it has pure dimension $1$.
  \item The induced morphism $\func{i_s}{C_s}{\bbP^n_{\kappa(s)}}$ is
    an isomorphism onto its image away from a finite set of closed
    points.
  \item\label{def:CM_hilb} The coherent sheaf $(i_*\calO_C)_s$ on
    $\bbP^n_{\kappa(s)}$ has Hilbert polynomial $p(t)$.
  \end{enumerate}
  Two such pairs $(C_1,i_1)$ and $(C_2,i_2)$ in $\CM(S)$ are
  considered equal if there exists an $S$-isomorphism
  $\iso\alpha{C_1}{C_2}$ such that the diagram
 $$\xymatrix{ & \bbP^n_S & \\
   C_1\ar[rr]^-\alpha\ar[ur]^{i_1} & & C_2\ar[ul]_{i_2}}$$ commutes.
\end{defi} 

\begin{rem}
  Note that $(i_*\calO_C)_s=(i_s)_*\calO_{C_s}$ since the morphism $i$
  is affine.  Moreover, we have $((i_s)_*\calO_{C_s})(d) =
  (i_s)_*(i_s^*\calO_{\bbP^n_{\kappa(s)}}(d))$ for every $d\in\bbZ$ by
  the projection formula.  Hence it follows for all $r\geq 0$ that
  $H^r(\bbP^n_{\kappa(s)},(i_*\calO_C)_s(d)) =
  H^r(C_s,i_s^*\calO_{\bbP^n_{\kappa(s)}}(d))$. In particular,
  property \ref{def:CM_hilb} in the definition is equivalent to
  requiring that the structure sheaf $\calO_{C_s}$ has Hilbert
  polynomial $p(t)$ with respect to the ample invertible sheaf
  $i_s^*\calO_{\bbP^n_{\kappa(s)}}(1)$.
\end{rem}

\subsubsection{\texorpdfstring{Functoriality of $\CM$}{Functoriality
    of CM}} We show in Proposition~\ref{prop:functor} that the
assignment $\CM$ indeed defines a functor.

\begin{lm}\label{lm:iso_alt}
  Let $\func f X Y$ be a finite morphism of locally Noetherian
  $S$-schemes, and let $Z:=\supp(\coker(\calO_Y\to f_*\calO_X))$. For
  $s\in S$, the induced map $\func{f_s}{X_s}{Y_s}$ is an isomorphism
  onto its image away from a finite set of closed points if and only
  if $\dim(Z_s)=0$.
\end{lm}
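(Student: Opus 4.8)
Here is how I would prove Lemma~\ref{lm:iso_alt}.

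The plan is to reduce the statement to a computation on the single fibre over $s$ and then to the vanishing of a cokernel sheaf. Write $\calQ:=\coker(\calO_Y\to f_*\calO_X)$, so that $Z=\supp(\calQ)$; this is a coherent sheaf since $f$ is finite. The first step is to establish base-change compatibility. Because $f$ is finite, hence affine, the formation of $f_*\calO_X$ commutes with the (non-flat) base change $\spec\kappa(s)\to S$: locally this is the statement that the $A$-module $B$ underlying a module-finite algebra $A\to B$ pulls back to $B\otimes_A A'$ along $A\to A'$, and it uses affineness of $f$ rather than any flatness of $X$ over $S$. Applying the right-exact pullback along $Y_s\to Y$ to the presentation $\calO_Y\to f_*\calO_X\to\calQ\to 0$ therefore yields an exact sequence $\calO_{Y_s}\to (f_s)_*\calO_{X_s}\to\calQ_s\to 0$ in which the first arrow is the canonical map, so that $\calQ_s=\coker(\calO_{Y_s}\to (f_s)_*\calO_{X_s})$. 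Since the support of a pullback is the set-theoretic preimage of the support, and since $Z_s$ is by the notational convention the preimage of $Z=\supp(\calQ)$ under $Y_s\to Y$, we obtain $Z_s=\supp(\calQ_s)$ as closed subsets of $Y_s$. As $Y_s$ is Noetherian, the condition $\dim(Z_s)=0$ is thus equivalent to $\coker(\calO_{Y_s}\to (f_s)_*\calO_{X_s})$ being supported on a finite set of closed points.

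It then remains to prove the fibrewise statement: for the finite morphism $\func{f_s}{X_s}{Y_s}$ over the field $\kappa(s)$, the map is an isomorphism onto its image away from finitely many closed points if and only if $\coker(\calO_{Y_s}\to (f_s)_*\calO_{X_s})$ has zero-dimensional support. Let $Y_s'\hookrightarrow Y_s$ be the scheme-theoretic image, cut out by the kernel of $\calO_{Y_s}\to (f_s)_*\calO_{X_s}$, and factor $f_s$ as $X_s\to Y_s'\hookrightarrow Y_s$. On an affine open $\spec A\subseteq Y_s$ this factorization reads $A\twoheadrightarrow A/\kernel\phi\hookrightarrow B$ for a module-finite map $\phi\colon A\to B$, with $\coker\phi$ computing the restriction of the cokernel sheaf; the injection $A/\kernel\phi\hookrightarrow B$ is an isomorphism exactly where $\coker\phi$ vanishes, and a finite morphism is an isomorphism precisely where the induced map of structure sheaves is. Hence $X_s\to Y_s'$ is an isomorphism over $Y_s\setminus F$ if and only if the cokernel sheaf vanishes on $Y_s\setminus F$, and such a finite set $F$ of closed points exists if and only if the support of the cokernel is zero-dimensional.

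Combining the two reductions gives the equivalence. The one step demanding genuine care is the base-change paragraph: one must verify that $f_*\calO_X$ commutes with the passage to the residue field using only affineness of $f$, that the canonical map $\calO_Y\to f_*\calO_X$ is preserved under pullback, and that the fibre $Z_s$ of the support is correctly identified with the support of the fibre $\calQ_s$. Once these compatibilities are in place, the remaining fibrewise argument is essentially the tautology that a finite morphism is an isomorphism onto its image exactly on the locus where the cokernel of the corresponding map of $\calO$-algebras vanishes.
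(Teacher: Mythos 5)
Your proof is correct and follows essentially the same route as the paper: identify $Z_s$ with $\supp(\coker(\calO_{Y_s}\to (f_s)_*\calO_{X_s}))$ via affineness of $f$ and right-exactness of pullback, then observe that this support is exactly the non-isomorphism locus of $f_s$ onto its scheme-theoretic image. The paper asserts these two identifications in two sentences; you supply the base-change verification it leaves implicit.
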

\begin{proof}
  Let $s\in S$. The finite morphism $f_s$ is an isomorphism onto its
  image away from the closed locus $\supp(\coker(f_s^\#))$ in $Y_s$,
  where $\func{f_s^\#}{\calO_{Y_s}}{(f_s)_*\calO_{X_s}}$ is the
  natural map. Now the statement follows as
  $\supp(\coker(f_s^\#))=Z_s$ as closed subsets of $Y_s$.
\end{proof}

\begin{lm}\label{lm:func_field_change}
  Let $X$ be a scheme locally of finite type over a field $k$, and
  let $k\subseteq K$ be a field extension.
  \begin{enumerate}
  \item\label{lm:func_field_change_CM} The scheme $X$ is
    Cohen--Macaulay if and only if $X_K$ is Cohen--Macaulay.
  \item\label{lm:func_field_change_pure} The scheme $X$ has
    pure dimension $n$ if and only if $X_K$ has pure dimension~$n$.
  \item\label{lm:func_field_change_isom} We have $\dim(X_K)=\dim(X)$.
  \end{enumerate}
\end{lm}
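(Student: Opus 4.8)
The engine behind all three statements is that the projection \func{h}{X_K}{X} is faithfully flat, being the base change of the faithfully flat morphism $\spec(K)\to\spec(k)$. As the assertions are local on $X$, I would assume $X=\spec(A)$ with $A$ a finitely generated $k$-algebra, so that $X_K=\spec(B)$ with $B=A\otimes_k K$ Noetherian and $A\to B$ faithfully flat. For (iii) I would then apply Noether normalization: $A$ is finite over a polynomial subring $R=k[x_1,\dots,x_d]$ with $d=\dim(A)$. Module-finiteness is preserved by $-\otimes_k K$, so $B$ is finite over $R\otimes_k K=K[x_1,\dots,x_d]$, and since an integral extension preserves Krull dimension we get $\dim(B)=\dim K[x_1,\dots,x_d]=d=\dim(A)$.

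For (i) I would use the standard criterion for a flat local homomorphism (as in Matsumura's \emph{Commutative Ring Theory}): for a prime $\P$ of $B$ lying over $\p$ of $A$, the ring $B_\P$ is Cohen--Macaulay if and only if $A_\p$ is Cohen--Macaulay and the closed fibre of $A_\p\to B_\P$, a localization of $\kappa(\p)\otimes_k K$, is Cohen--Macaulay. The descent direction (if $B$ is Cohen--Macaulay then so is $A$) is immediate from this together with the surjectivity of $h$. For the ascent direction I must check that every fibre $\kappa(\p)\otimes_k K$ is Cohen--Macaulay. Here $L:=\kappa(\p)$ is a finitely generated field extension of $k$; a transcendence basis yields a finite extension $L/F$ with $F=k(t_1,\dots,t_e)$, whence $L\otimes_k K=L\otimes_F(F\otimes_k K)$. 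Now $F\otimes_k K$ is a localization of $K[t_1,\dots,t_e]$, hence regular, and $L\otimes_F(F\otimes_k K)$ is free of finite rank $[L:F]$ over it; a finite free algebra over a Cohen--Macaulay ring is Cohen--Macaulay, so the fibres are Cohen--Macaulay and the ascent follows.

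For (ii) I would first note, using going-down for the faithfully flat map, that the minimal primes of $B$ contract to minimal primes of $A$, and that over a fixed minimal prime $\p$ they are exactly the minimal primes of the fibre; consequently $\dim(B/\P)$ equals the dimension of a component of $(A/\p)\otimes_k K$. This reduces (ii) to the claim that for a finitely generated domain $C$ over $k$ the ring $C\otimes_k K$ is equidimensional of dimension $\delta:=\dim(C)$. To prove the claim, take Noether normalization $R=k[x_1,\dots,x_\delta]\hookrightarrow C$; being torsion-free and finite over $R$, the module $C$ embeds into some $R^m$, and applying the flat functor $-\otimes_k K$ gives an embedding $C\otimes_k K\hookrightarrow K[x_1,\dots,x_\delta]^m$. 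Thus $C\otimes_k K$ is torsion-free over the domain $K[x_1,\dots,x_\delta]$, so each of its minimal primes meets $K[x_1,\dots,x_\delta]$ only in $(0)$; since $C\otimes_k K$ is moreover finite over $K[x_1,\dots,x_\delta]$, the quotient by any minimal prime is a domain integral over $K[x_1,\dots,x_\delta]$ and hence of dimension $\delta$. Combining, $A$ has pure dimension $n$ exactly when every $\dim(A/\p)=n$, exactly when every $\dim(B/\P)=n$, that is, when $B$ has pure dimension $n$.

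The localizing and component-wise reductions are harmless bookkeeping; the real obstacle is controlling the fibres $\kappa(\p)\otimes_k K$ of the base change, namely their Cohen--Macaulayness for (i) and the equidimensionality of $C\otimes_k K$ for (ii). These are delicate precisely because $K/k$ may be inseparable or of infinite transcendence degree, so the fibres need be neither reduced nor regular. The transcendence-basis reduction and the torsion-free embedding into a free module over a polynomial ring are exactly what tame these fibres and let the flat-descent machinery apply.
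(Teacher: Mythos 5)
Your proof is correct, but it takes a genuinely different route from the paper, whose proof of this lemma is essentially a triple citation: part \ref{lm:func_field_change_CM} is reduced to the affine case and referred to \cite[Theorem 2.1.10]{Bruns-Herzog}, while parts \ref{lm:func_field_change_pure} and \ref{lm:func_field_change_isom} are quoted from \cite[Corollaires (4.2.8) and (4.1.4)]{EGAIV2} (the former stating precisely that the set of dimensions of irreducible components is preserved under the field extension). You instead reprove these facts from scratch: Noether normalization plus preservation of injectivity and module-finiteness under the flat functor $-\otimes_k K$ for \ref{lm:func_field_change_isom}; the flat local homomorphism criterion for Cohen--Macaulayness combined with an explicit verification that the fibres $\kappa(\p)\otimes_k K$ are Cohen--Macaulay (via a transcendence basis, regularity of $F\otimes_k K$ as a localization of a polynomial ring, and finite freeness of $L\otimes_F(F\otimes_k K)$) for \ref{lm:func_field_change_CM}; and, for \ref{lm:func_field_change_pure}, the correspondence between minimal primes of $B$ over a minimal prime $\p$ of $A$ and minimal primes of the fibre, together with the torsion-free embedding $C\otimes_k K\hookrightarrow K[x_1,\dots,x_\delta]^m$ to show equidimensionality of $C\otimes_k K$ for a finitely generated domain $C$. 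All the individual steps check out: the fibre correspondence, the torsion-freeness argument forcing minimal primes to contract to $(0)$ in the polynomial subring, and the finite-free-over-CM-implies-CM step (which is itself an instance of Lemma~\ref{lm:CM_base_change}\ref{lm:CM_base_change1} applied to a free module) are all sound. What your approach buys is self-containedness and visibility of where the hypotheses enter (finite generation of $\kappa(\p)$ over $k$, possible inseparability of $K/k$); what the paper's approach buys is brevity and reliance on results stated at the exact level of generality needed, including the locally-of-finite-type (rather than affine finite-type) setting, where your reduction to $\spec(A)$ is harmless but is passed over in a single clause in both treatments.
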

\begin{proof}
  \begin{enumerate}
  \item Since that property is local on $X$, we can reduce to the
    affine case that is shown in \cite[Theorem 2.1.10]{Bruns-Herzog}.
  \item By \cite[Corollaire (4.2.8)]{EGAIV2}, the set of dimensions of
    the irreducible components of $X$ equals the set of dimensions of
    the irreducible components of $X_K$, and the statement follows.
  \item This is \cite[Corollaire (4.1.4)]{EGAIV2}.\qedhere
  \end{enumerate}
\end{proof}
\begin{prop}\label{prop:functor}
  The assignment $S\mapsto \CM(S)$ defines a contravariant functor
  $\func{\CM}{\mathbf{Sch}^\circ}{\mathbf{Sets}}$.
\end{prop}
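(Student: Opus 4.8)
The plan is to define $\CM$ on morphisms by base change and then to verify that this assignment lands in $\CM(T)$ and respects composition. Given a morphism $\func g T S$ and a pair $(C,i)\in\CM(S)$, I would set $\CM(g)(C,i):=(C_T,i_T)$, where $C_T=C\times_S T$ and $\func{i_T}{C_T}{\bbP^n_T}$ is the induced morphism. Since flatness and finiteness are both stable under base change, $C_T$ is flat over $T$ and $i_T$ is finite; the content of the proof is therefore to check that the three fiberwise conditions of Definition~\ref{def:CMfunctor} survive.

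The key observation is that for $t\in T$ with image $s=g(t)\in S$, the fiber $(C_T)_t$ is canonically the base change $(C_s)_{\kappa(t)}$ along the field extension $\kappa(s)\subseteq\kappa(t)$, and likewise $i_t=(i_s)_{\kappa(t)}$. With this identification, condition (i) follows immediately from Lemma~\ref{lm:func_field_change}, parts (i) and (ii): being Cohen--Macaulay of pure dimension $1$ is insensitive to field extension. For condition (ii), I would first rewrite it via Lemma~\ref{lm:iso_alt} as the vanishing of the dimension of the relevant fiber of $Z=\supp(\coker(\calO_{\bbP^n_S}\to i_*\calO_C))$; since $i$ is affine, the cokernel commutes with the flat base change $\kappa(s)\to\kappa(t)$, so the fiber in question is the field extension $W_{\kappa(t)}$ of $W=\supp(\coker(\calO_{\bbP^n_{\kappa(s)}}\to(i_s)_*\calO_{C_s}))$, and Lemma~\ref{lm:func_field_change}(iii) gives $\dim(W_{\kappa(t)})=\dim(W)=0$. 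For condition (iii), I would use the Remark following the definition to read the Hilbert polynomial off the Euler characteristics $\chi(C_s,(i_s^*\calO_{\bbP^n_{\kappa(s)}}(1))^{\otimes d})$, which are preserved under the flat base change to $\kappa(t)$ by flat base change for coherent cohomology; hence the Hilbert polynomial of $((i_T)_*\calO_{C_T})_t$ is again $p(t)$.

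It then remains to check the formal properties. Well-definedness on isomorphism classes is clear, since an $S$-isomorphism $\iso\alpha{C_1}{C_2}$ commuting with the $i_j$ base-changes to a $T$-isomorphism $\alpha_T$ commuting with the $(i_j)_T$. The identities $\CM(\id_S)=\id$ and $\CM(g\circ h)=\CM(h)\circ\CM(g)$ follow from the canonical isomorphisms $C\times_S S\cong C$ and $(C\times_S T)\times_T T'\cong C\times_S T'$; these hold only up to canonical isomorphism, which is exactly why $\CM(S)$ was defined as a set of isomorphism classes.

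I expect no serious obstacle: the whole proof reduces to the single geometric fact that fibers of a base change are field-extension base changes of the original fibers, after which each condition is handed to the already-established Lemmas~\ref{lm:iso_alt} and~\ref{lm:func_field_change} together with flat base change along a field extension. The only point demanding care is condition (ii), where one must commute the cokernel past the base change; this is legitimate precisely because $i$ is affine and a field extension is flat.
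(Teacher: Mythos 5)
Your proposal is correct and follows essentially the same route as the paper: base change preserves flatness and finiteness, the fiber of $C_T$ over $t$ is the base change of $C_{g(t)}$ along $\kappa(g(t))\hookrightarrow\kappa(t)$, and Lemmas~\ref{lm:iso_alt} and~\ref{lm:func_field_change} handle the fiberwise conditions. You merely spell out two points the paper leaves implicit (the invariance of the Hilbert polynomial under field extension and the verification of the functor axioms), both correctly.
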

\begin{proof}
  Let $\func f{T}{S}$ be a morphism, and let $(C,i)\in
  \CM(S)$. We claim that $(C_T,i_T)\in \CM(T)$. First we observe that
  being flat and being finite are stable under base change. Moreover,
  the fiber of $C_T$ over some $t\in T$ is the base change of the
  fiber of $C$ over the point $f(t)$ by the field extension
  $\kappa(f(t))\hookrightarrow \kappa(t)$. As the fiber $C_{f(t)}$ is
  locally of finite type over $\spec(\kappa(t))$, it follows from
  Lemma~\ref{lm:func_field_change} and Lemma~\ref{lm:iso_alt}
  that $(C_T,i_T)\in \CM(T)$.
\end{proof}

\subsubsection{Cohen--Macaulay rings of dimension $1$} In the case of
one-di\-men\-sional rings, the Cohen--Macaulay property is particularly
simple.
\begin{lm}\label{lm:CM_ring}
  Let $A$ be a Noetherian ring of dimension $1$. The following are
  equivalent.
  \begin{enumerate}
  \item The ring $A$ is Cohen--Macaulay.\label{lm:CM_ring1}
  \item All associated prime ideals of $A$ are minimal, that is, $A$
    does not have embedded prime ideals.\label{lm:CM_ring2}
  \item Every zero divisor of $A$ is contained in a minimal prime
    ideal of $A$.\label{lm:CM_ring3}
  \end{enumerate}
\end{lm}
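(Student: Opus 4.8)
The plan is to prove the equivalences for a one-dimensional Noetherian ring by connecting the Cohen–Macaulay condition to the structure of associated primes. The key fact I would invoke is that for a Noetherian ring, the set of zero divisors equals the union of the associated primes, and that an element is a nonzerodivisor precisely when it avoids every associated prime.

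First I would establish the equivalence of \ref{lm:CM_ring2} and \ref{lm:CM_ring3}. Since the zero divisors of $A$ are exactly the union of the associated primes $\bigcup_{\p\in\Ass(A)}\p$, condition \ref{lm:CM_ring3} says every associated prime is contained in the union of the minimal primes. By prime avoidance, an associated prime contained in this finite union of minimal primes must lie inside one of them; as associated primes that are minimal are themselves minimal, this forces every associated prime to be minimal, which is \ref{lm:CM_ring2}. Conversely, if \ref{lm:CM_ring2} holds, then every associated prime is minimal, so the set of zero divisors is contained in the union of minimal primes, giving \ref{lm:CM_ring3}. This part is essentially formal bookkeeping with $\Ass(A)$.

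The substantive step is the equivalence of \ref{lm:CM_ring1} with the other two, and this is where I would localize. The Cohen–Macaulay property is a local condition: $A$ is Cohen–Macaulay if and only if $A_\m$ is Cohen–Macaulay for every maximal ideal $\m$. For a local ring $(A_\m,\m A_\m)$ of dimension at most $1$, being Cohen–Macaulay means $\depth(A_\m)=\dim(A_\m)$. In dimension $0$ this is automatic, and in dimension $1$ it says the depth is $1$, i.e. the maximal ideal contains a nonzerodivisor, which is equivalent to $\m A_\m$ not being an associated prime. Translating back, $A$ has positive depth at all its one-dimensional localizations precisely when no non-minimal (hence embedded) prime is associated. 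So the presence of an embedded prime is exactly the obstruction to the Cohen–Macaulay property, giving the equivalence of \ref{lm:CM_ring1} and \ref{lm:CM_ring2}.

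The main obstacle I anticipate is handling the interaction between the global statement and the local depth computation cleanly, in particular making sure the dimension-one hypothesis is used correctly so that the only associated primes of concern are either minimal or the (locally) maximal ones. The cleanest route is probably to cite the standard characterization of Cohen–Macaulay rings in dimension one directly from \cite{Bruns-Herzog}, rather than re-deriving the depth argument, and then to reduce the statement about embedded primes to \ref{lm:CM_ring2} and \ref{lm:CM_ring3} by the prime-avoidance argument above. I would expect the whole proof to be quite short once the unmixedness characterization of Cohen–Macaulay rings is in hand.
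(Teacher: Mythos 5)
Your proof is correct, and the equivalence of \ref{lm:CM_ring2} and \ref{lm:CM_ring3} is argued exactly as in the paper: zero divisors are the union of the associated primes, and prime avoidance upgrades containment in the (finite) union of minimal primes to containment in a single one. Where you diverge is the substantive equivalence \ref{lm:CM_ring1}$\Leftrightarrow$\ref{lm:CM_ring2}. The paper deduces it in one line from the \emph{Unmixedness Theorem} \cite[Theorem 2.1.6]{Bruns-Herzog}: since $\dim(A)=1$, every ideal of maximal height is automatically unmixed, so $A$ is Cohen--Macaulay if and only if $(0)$ is unmixed, which is precisely the absence of embedded primes. You instead localize at maximal ideals and compute depth: $A_\m$ is Cohen--Macaulay automatically when $\dim(A_\m)=0$, and when $\dim(A_\m)=1$ it is Cohen--Macaulay iff $\m A_\m$ contains a nonzerodivisor, i.e.\ iff $\m\notin\Ass(A)$; since in a one-dimensional ring every non-minimal prime is a height-one maximal ideal, this is exactly the no-embedded-primes condition. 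Both routes are sound. The paper's is shorter given the (rather heavy) Unmixedness Theorem as a black box; yours is more elementary and self-contained, using only the local definition of Cohen--Macaulayness, the behaviour of $\Ass$ under localization, and prime avoidance. Your closing remark that one could instead just cite the unmixedness characterization is precisely what the paper does.
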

\begin{proof}
  According to the {\em Unmixedness Theorem} \cite[Theorem
  2.1.6]{Bruns-Herzog}, a ring is Cohen--Macaulay if and only if every
  ideal $I$ generated by $\he(I)$ elements is unmixed, that is, all
  associated prime ideals of $I$ are minimal over $I$.

  As every ideal of maximal height is unmixed and $\dim(A)=1$, it
  follows that $A$ is Cohen--Macaulay if and only if the ideal $(0)$ is
  unmixed. But the latter means that $A$ does not have any embedded
  prime ideals. This shows the equivalence of
  assertions~\ref{lm:CM_ring1} and \ref{lm:CM_ring2}.

  That assertion~\ref{lm:CM_ring2} implies assertion~\ref{lm:CM_ring3}
  follows immediately from the fact that the set
  $\operatorname{Zdv}(A)$ of zero divisors of $A$ equals the union of
  the associated prime ideals of $A$.  Suppose conversely that every
  zero divisor is contained in a minimal prime ideal. Then we have the
  inclusion
  $$\bigcup_{\text{ass.\ primes } \p}\p=\operatorname{Zdv}(A)\subseteq
  \bigcup_{\text{minimal primes }\p}\p.$$ By {\em Prime
    avoidance}~\cite[Lemma 3.3]{Eisenbud}, it follows that every
  associated prime ideal is minimal, and we are done.
\end{proof}

\subsubsection{Families over a local Artin ring}
\begin{lm}\label{lm:equidim}
  Let $X$ be a scheme of finite type over $\spec(k)$, where $k$ is a
  field. Then the following properties are equivalent.
  \begin{enumerate}
  \item The scheme $X$ has pure dimension $n$.
  \item Every nonempty open subscheme $U$ of $X$ has pure dimension~$n$.
  \end{enumerate}
\end{lm}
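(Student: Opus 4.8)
The plan is to observe that one implication is essentially tautological and to concentrate on the other. The implication from (ii) to (i) is immediate: since $X$ is a nonempty open subscheme of itself (the empty case being vacuous), applying (ii) to $U=X$ yields (i).

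For the converse, suppose $X$ has pure dimension $n$ and let $U\subseteq X$ be a nonempty open subscheme. Write $X_1,\dots,X_m$ for the irreducible components of $X$; by hypothesis each has dimension $n$. First I would identify the irreducible components of $U$: they are precisely the nonempty intersections $X_i\cap U$. Indeed, each nonempty $X_i\cap U$ is open in the irreducible set $X_i$, hence irreducible and dense in $X_i$; taking closures in $X$ shows that such an intersection is closed in $U$, that distinct ones cannot be contained in one another, and that each is a maximal irreducible closed subset of $U$. This reduces the problem to comparing $\dim(X_i\cap U)$ with $\dim(X_i)=n$.

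The key step is then the claim that for an irreducible scheme $Z$ of finite type over $k$, every nonempty open subset $V\subseteq Z$ satisfies $\dim V=\dim Z$. This holds because $V$ contains the generic point of $Z$ and therefore has the same function field, so that $\dim V=\trdeg_k\kappa(V)=\trdeg_k\kappa(Z)=\dim Z$, using the dimension formula for irreducible schemes of finite type over a field (passing to $Z_{\mathrm{red}}$ if necessary to apply it). Applying this with $Z=X_i$ and $V=X_i\cap U$ gives $\dim(X_i\cap U)=n$ for every nonempty intersection, so every irreducible component of $U$ has dimension $n$, i.e.\ $U$ has pure dimension $n$.

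The only delicate point is this dimension-preservation claim for open subsets of an irreducible finite-type scheme; everything else is bookkeeping about irreducible components. I expect to invoke the transcendence-degree formula (or alternatively to cite EGA IV, as in Lemma~\ref{lm:func_field_change}) rather than to reprove it from scratch.
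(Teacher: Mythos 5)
Your proposal is correct and follows essentially the same route as the paper: both arguments identify the irreducible components of $U$ as the nonempty intersections of $U$ with the components of $X$, and both reduce the key point to the fact that a nonempty open subset of an irreducible (or integral, after passing to the reduction) finite-type $k$-scheme has the same dimension, which the paper cites from EGA~IV~(4.1.1.3) and you justify via the transcendence-degree formula for the function field. The reverse implication by taking $U=X$ is also exactly the paper's argument.
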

\begin{proof}
  First we can without loss of generality assume that $X$ and hence
  also every open subscheme of $X$ is reduced. Moreover, we observe
  that $\dim(V)=\dim(Y)$ for every nonempty open subscheme $V$ of an
  integral scheme $Y$ of finite type over $k$, see
  \cite[(4.1.1.3)]{EGAIV2}.

  Suppose that $X$ has pure dimension $n$, and let $\emptyset\neq
  U\subseteq X$ be an open subscheme. The irreducible components of
  $U$ are of the form $Y\cap U$ where $Y$ is an irreducible component
  of $X$. By the remark above, it follows that $\dim(Y\cap
  U)=\dim(Y)=n$.

  The opposite implication follows directly by setting $U=X$.
\end{proof}

\begin{lm}\label{lm:equidim_Artin}
  Let $X$ be a scheme of finite type over $\spec(R)$, where $R$ is a
  local Artin ring.  Suppose that the closed fiber $X_0$ has pure
  dimension $n$. Then $X$ has pure dimension $n$. Moreover, every
  nonempty open subscheme $U$ of $X$ has pure dimension $n$.
\end{lm}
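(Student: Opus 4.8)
The plan is to reduce everything to the closed fiber by observing that $X$ and $X_0$ share the same underlying topological space. Since $R$ is local Artin, its maximal ideal $\m$ is nilpotent and the quotient $\kappa:=R/\m$ is the residue field. The canonical closed immersion $\spec(\kappa)\hookrightarrow\spec(R)$ is defined by the nilpotent ideal $\m$, so after base change along $X\to\spec(R)$ the closed immersion $X_0=X\times_{\spec(R)}\spec(\kappa)\hookrightarrow X$ is defined by the nilpotent ideal $\m\calO_X$. A closed immersion defined by a nilpotent ideal is a homeomorphism on underlying topological spaces, so $X_0$ and $X$ are homeomorphic.

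I would then argue that the property of having pure dimension $n$ depends only on the underlying topological space. Indeed, the irreducible components of a scheme are exactly the maximal irreducible closed subsets of its space, and the dimension of such a component is the supremum of the lengths of chains of irreducible closed subsets contained in it; both notions are purely topological. Under the homeomorphism $X_0\cong X$ the irreducible components of $X$ therefore correspond bijectively to those of $X_0$, and corresponding components have equal dimension. Since $X_0$ has pure dimension $n$ by hypothesis, it follows that $X$ has pure dimension $n$, which proves the first assertion.

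For the \emph{moreover} part I would first pass to the closed fiber, which is a scheme of finite type over the field $\kappa$. Applying Lemma~\ref{lm:equidim} to $X_0$, every nonempty open subscheme of $X_0$ has pure dimension $n$. Now let $U\subseteq X$ be a nonempty open subscheme, and let $U_0:=U\times_X X_0$ be the corresponding open subscheme of $X_0$. On the one hand $U_0$ has pure dimension $n$ by the previous sentence; on the other hand $U_0\hookrightarrow U$ is again a closed immersion cut out by a nilpotent ideal, hence a homeomorphism, so by the same topological argument as above $U$ has pure dimension $n$.

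The only point that requires genuine care, rather than routine verification, is the justification that ``pure dimension $n$'' transfers across a homeomorphism of spaces. This is where I would be explicit that irreducible components and their Krull dimensions are intrinsic to the topological space and are unaffected by the nilpotent thickening; granting this, both assertions follow immediately, and the reduction to Lemma~\ref{lm:equidim} handles the statement about open subschemes without any further work.
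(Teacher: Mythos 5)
Your proposal is correct and follows essentially the same route as the paper: reduce to the closed fiber via Lemma~\ref{lm:equidim} and use the fact that $X$ and $X_0$ (and likewise $U$ and $U_0$) share the same underlying topological space, pure dimension being a purely topological notion. You simply spell out in more detail the points the paper leaves implicit, namely that the nilpotence of the maximal ideal makes $X_0\hookrightarrow X$ a homeomorphism and that irreducible components and their dimensions are topological invariants.
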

\begin{proof}
  Observe first that every nonempty open subscheme $V$ of $X_0$ has
  pure dimension $n$ by Lemma~\ref{lm:equidim}. Now the statement
  follows immediately since $X$ and $X_0$ have the same underlying
  topological space.
\end{proof}
\begin{prop}\label{prop:fiber_CM}
  Let $\func f X S$ be a flat morphism of locally Noetherian schemes.
  \begin{enumerate}
  \item \label{prop:fiber_CM1} Suppose that $S$ and all fibers $X_s$,
    where $s\in S$, are Cohen--Macaulay schemes.  Then $X$ is
    Cohen--Macaulay.
  \item \label{prop:fiber_CM2} Let $S=\spec(R)$ for a local Artin ring
    $R$, and suppose that the closed fiber $X_0$ is Cohen--Macaulay and
    of pure dimension $n$. Then $X$ is Cohen--Macaulay and of pure
    dimension $n$.
  \end{enumerate}
\end{prop}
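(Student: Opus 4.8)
The plan is to reduce part \ref{prop:fiber_CM1} to the standard criterion governing Cohen--Macaulayness along a flat local homomorphism, and then to obtain part \ref{prop:fiber_CM2} as a special case of \ref{prop:fiber_CM1} combined with the dimension lemmas already in place.

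For part \ref{prop:fiber_CM1} I would argue one point at a time. Fix $x\in X$ with image $s=f(x)\in S$. Flatness of $f$ makes the induced homomorphism $\func{\varphi}{\calO_{S,s}}{\calO_{X,x}}$ a flat local homomorphism of Noetherian local rings. Its closed fiber is $\calO_{X,x}/\mathfrak{m}_s\calO_{X,x}$, which is exactly the local ring $\calO_{X_s,x}$ of the scheme-theoretic fiber $X_s$ at $x$; since $X_s$ is Cohen--Macaulay by hypothesis, this fiber ring is Cohen--Macaulay, and $\calO_{S,s}$ is Cohen--Macaulay because $S$ is. The key input is then the classical result that for a flat local homomorphism $\func{\varphi}{(A,\mathfrak{m})}{(B,\mathfrak{n})}$ one has the additivity formulas $\depth(B)=\depth(A)+\depth(B/\mathfrak{m}B)$ and $\dim(B)=\dim(A)+\dim(B/\mathfrak{m}B)$, whence $B$ is Cohen--Macaulay if and only if both $A$ and the fiber $B/\mathfrak{m}B$ are. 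Applying this with $A=\calO_{S,s}$ and $B=\calO_{X,x}$ shows $\calO_{X,x}$ is Cohen--Macaulay, and as $x$ was arbitrary, $X$ is Cohen--Macaulay.

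For part \ref{prop:fiber_CM2} the first observation is that a local Artin ring $R$ has dimension $0$, hence is Cohen--Macaulay, and $\spec(R)$ consists of the single point $\mathfrak{m}$. Therefore the \emph{only} fiber of $f$ is the closed fiber $X_0$, which is Cohen--Macaulay by assumption, so the hypotheses of \ref{prop:fiber_CM1} are satisfied and $X$ is Cohen--Macaulay. For the pure-dimension statement I would use that $\mathfrak{m}$ is nilpotent, so $\mathfrak{m}\calO_X$ is a nilpotent ideal sheaf and the closed immersion $X_0\hookrightarrow X$ is a homeomorphism on underlying spaces; thus $X$ and $X_0$ have the same irreducible components with the same dimensions. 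Together with the assumption that $X_0$ has pure dimension $n$ this gives that $X$ has pure dimension $n$, which is precisely the content of Lemma~\ref{lm:equidim_Artin}.

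The only nontrivial ingredient is the additivity of depth along a flat local homomorphism, equivalently the criterion that a flat local extension is Cohen--Macaulay exactly when its base and closed fiber are; I would cite this from \cite{Bruns-Herzog} rather than reprove it, since the additivity of $\dim$ is elementary but the corresponding statement for $\depth$ requires the genuine homological work. Everything else is bookkeeping: passing to local rings in \ref{prop:fiber_CM1}, and exploiting in \ref{prop:fiber_CM2} that $\spec$ of a local Artin ring is a single nilpotently-thickened point, so that the global hypotheses of \ref{prop:fiber_CM1} collapse to a condition on the closed fiber alone.
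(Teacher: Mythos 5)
Your proof is correct and follows essentially the same route as the paper: both reduce to the local statement that a flat local homomorphism with Cohen--Macaulay base and closed fiber has Cohen--Macaulay source (the paper cites \cite[Proposition 2.1.16(b)]{Bruns-Herzog} after reducing to the affine case), and both handle the Artin case by noting that $R$ is zero-dimensional hence Cohen--Macaulay and that $X$ and $X_0$ share the same underlying space, which is exactly Lemma~\ref{lm:equidim_Artin}.
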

\begin{proof}
  Since the statement is local on both $X$ and $S$, we can without
  loss of generality assume that $X=\spec(A)$ and $S=\spec(R)$ are
  affine, where $A$ is a flat $R$-algebra and $R$ is
  Cohen--Macaulay. Then assertion~\ref{prop:fiber_CM1} is the statement
  of \cite[Proposition 2.1.16(b)]{Bruns-Herzog}.

  For assertion~\ref{prop:fiber_CM2}, we observe first that $S$ is
  Cohen--Macaulay. In particular, the scheme $X$ is Cohen--Macaulay by
  assertion \ref{prop:fiber_CM1}. Moreover, it follows from
  Lemma~\ref{lm:equidim_Artin} that $X$ has pure dimension $n$.
\end{proof}
\begin{prop}\label{prop:Artin_base}
  Let $(C,i)\in \CM(\spec(R))$ for a local Artin ring $R$. Then $C$ is
  Cohen--Macaulay and of pure dimension $1$.  Moreover, the morphism
  $i$ is an isomorphism onto its image away from finitely many closed
  points.
\end{prop}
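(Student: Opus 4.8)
The plan is to obtain the Cohen--Macaulay and pure-dimension assertions directly from the fiber results proved above, and then to upgrade the fiberwise isomorphism property of condition~(ii) to a statement about the total morphism $i$ by exploiting that an Artin local base is a nilpotent thickening of its residue field.

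First I would record that, since $(C,i)\in\CM(\spec(R))$, the structure morphism $C\to\spec(R)$ is flat, and its unique fiber $C_0$ over the single (closed) point of $\spec(R)$ is Cohen--Macaulay of pure dimension $1$ by condition~(i) of Definition~\ref{def:CMfunctor}. As $R$ is a local Artin ring, this is exactly the hypothesis of Proposition~\ref{prop:fiber_CM}\ref{prop:fiber_CM2}, which then gives at once that $C$ is Cohen--Macaulay and of pure dimension $1$.

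For the claim about $i$, I would set $Z:=\supp(\coker(\func{i^\#}{\calO_{\bbP^n_R}}{i_*\calO_C}))$; just as in the proof of Lemma~\ref{lm:iso_alt}, the finite morphism $i$ is then an isomorphism onto its image away from $Z$, so it suffices to show that $Z$ is a finite set of closed points of $\bbP^n_R$. The main point is that, because $R$ is Artin local, its maximal ideal is nilpotent and hence the closed immersion $\bbP^n_{\kappa(0)}\hookrightarrow\bbP^n_R$ is a homeomorphism of underlying topological spaces; under this identification $Z$ coincides with its closed fiber $Z_0$, which by the proof of Lemma~\ref{lm:iso_alt} equals $\supp(\coker(i_0^\#))$. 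Condition~(ii) of Definition~\ref{def:CMfunctor} says precisely that $i_0$ is an isomorphism onto its image away from the finite set of closed points $Z_0$ of $\bbP^n_{\kappa(0)}$; since a homeomorphism carries closed points to closed points, $Z=Z_0$ is then a finite set of closed points of $\bbP^n_R$, which is what we want.

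I expect the only delicate step to be this transfer from the fiber to the total morphism: one has to check that the Artin local base contributes no additional topology, so that the exceptional locus $Z$ of $i$ is supported entirely in the closed fiber, where condition~(ii) already guarantees finiteness. The remaining assertions are then immediate citations of Proposition~\ref{prop:fiber_CM} and Lemma~\ref{lm:iso_alt}.
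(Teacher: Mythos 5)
Your argument is correct and follows the paper's proof essentially verbatim: the first claim is the same citation of Proposition~\ref{prop:fiber_CM}\ref{prop:fiber_CM2}, and the second claim is the paper's observation that the exceptional locus $Y=\supp(\coker(\calO_{\bbP^n_R}\to i_*\calO_C))$ has the same underlying topological space as its closed fiber $Y_0$, which is zero-dimensional by Lemma~\ref{lm:iso_alt}. Your explicit remark that the nilpotence of the maximal ideal makes $\bbP^n_{\kappa(0)}\hookrightarrow\bbP^n_R$ a homeomorphism is just a slightly more detailed phrasing of the same point.
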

\begin{proof}
  We have seen in Proposition~\ref{prop:fiber_CM} that the scheme $C$
  is Cohen--Macaulay and of pure dimension $1$.

  Let $Y=\supp(\coker(\calO_{\bbP^n_R}\to i_*\calO_C))$. Then the
  closed fiber $Y_0$ is zero-dimensional by
  Lemma~\ref{lm:iso_alt}. Now the statement follows as $Y$ and
  $Y_0$ have the same underlying topological space and, in particular,
  the same dimension.
\end{proof}

\subsubsection{Scheme-theoretic image} Next we study the
scheme-theoretic image $i(C)$ of a point $(C,i)\in \CM(\spec(k))$. In
particular, we show that it is Cohen--Macaulay and of pure
dimension $1$.
\begin{lm}\label{lm:image_affine}
  Let $\injfunc j B A$ be a finite injective homomorphism of
  Noetherian rings, and suppose that $A$ is Cohen--Macaulay and of pure
  dimension $1$. Then $B$ is Cohen--Macaulay of pure dimension $1$.
\end{lm}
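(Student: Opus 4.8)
The plan is to reduce the Cohen--Macaulay property of $B$ to the numerical criterion of Lemma~\ref{lm:CM_ring}, which applies as soon as we know that $\dim(B)=1$. Using the injection $j$ I would identify $B$ with a subring of $A$, so that $B\subseteq A$ becomes an integral (indeed module-finite) extension. This lets me transport information about primes, dimensions and zero divisors between the two rings via the standard theory of integral extensions, namely lying over and the equality of Krull dimensions along an integral extension.

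First I would settle the dimension and purity statements. Let $\q$ be a minimal prime of $B$. By lying over there is a prime $\p$ of $A$ with $\p\cap B=\q$, and choosing a minimal prime $\p_0\subseteq\p$ of $A$ gives $\p_0\cap B\subseteq\q$, hence $\p_0\cap B=\q$ by minimality of $\q$. Since $A/\p_0$ is integral over $B/\q$, their Krull dimensions agree, and because $A$ has pure dimension $1$ we have $\dim(A/\p_0)=1$. Thus $\dim(B/\q)=1$ for every minimal prime $\q$ of $B$, which shows simultaneously that $B$ has pure dimension $1$ and that $\dim(B)=1$.

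With $\dim(B)=1$ in hand, I would verify condition~\ref{lm:CM_ring3} of Lemma~\ref{lm:CM_ring}: every zero divisor of $B$ lies in a minimal prime. This is where injectivity of $j$ is essential. If $b\in B$ is a zero divisor, choose $x\in B$ with $x\neq 0$ and $bx=0$; since $B\hookrightarrow A$ is injective, $x$ remains nonzero in $A$ and $bx=0$ there, so $b$ is a zero divisor of $A$ as well. As $A$ is Cohen--Macaulay of dimension $1$, Lemma~\ref{lm:CM_ring} furnishes a minimal prime $\p$ of $A$ with $b\in\p$. Then $b\in\p\cap B$, and $\p\cap B$ is a minimal prime of $B$: indeed $\dim(B/(\p\cap B))=\dim(A/\p)=1=\dim(B)$, and a prime whose quotient attains the full dimension $1$ cannot properly contain another prime (such a containment would produce a chain of length $2$ in $B$). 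Hence every zero divisor of $B$ lies in a minimal prime, and Lemma~\ref{lm:CM_ring} yields that $B$ is Cohen--Macaulay.

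The computations are light; the only points needing care are the appeals to integral-extension theory (lying over and $\dim(A/\p_0)=\dim(B/\q)$) and, above all, the observation that a zero divisor of $B$ stays a zero divisor of $A$. That last step is the heart of the argument and fails without injectivity of $j$, so I would take care to pinpoint exactly where the hypothesis is used.
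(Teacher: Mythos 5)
Your proof is correct and follows essentially the same route as the paper: both arguments use the Cohen--Seidenberg theory of integral extensions (lying over and equality of dimensions along $B/\q\subseteq A/\p$) to get pure dimension $1$, and then verify criterion~\ref{lm:CM_ring3} of Lemma~\ref{lm:CM_ring} by observing that injectivity of $j$ forces a zero divisor of $B$ to remain a zero divisor of $A$, hence to lie in a minimal prime whose contraction is minimal in $B$. The only difference is cosmetic: you unwind the lying-over argument explicitly where the paper cites the theorem directly.
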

\begin{proof}
  By {\em Cohen--Seidenberg Theorem} \cite[(5.E) Theorem
  5]{Mats:algebra}, we have $\dim(B)=\dim(A)$ and every minimal
  prime $\p$ of $B$ is contained in a minimal prime $\q$ of $A$ with
  $\dim(B/\p)=\dim(A/\q)$. It follows that $B$ has pure dimension $1$.

  By Lemma~\ref{lm:CM_ring}, a one-dimensional ring is Cohen--Macaulay
  if and only if every zero divisor is contained in a minimal prime
  ideal. So let $b$ be a zero divisor of $B$. Then its image $j(b)$ is
  a zero divisor of $A$, and hence it is contained in a minimal prime
  ideal $\q$ of $A$. The restriction $\p:=j^{-1}(\q)$ is a prime ideal
  of $B$ containing $b$ with $\dim(B/\p)=\dim(A/\q)$. It follows that
  $\dim(B/\p)=1$, and $\p$ is minimal.
\end{proof}
\begin{prop}\label{prop:image}
  Let $\func f X Y$ be a finite morphism of schemes of finite type
  over a field. Suppose that $X$ is Cohen--Macaulay and of pure
  dimension $1$. Then the scheme-theoretic image $Z$ of $X$ in $Y$ is
  Cohen--Macaulay and of pure dimension $1$.
\end{prop}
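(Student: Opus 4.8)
The plan is to reduce the proposition to the affine statement of Lemma~\ref{lm:image_affine}, which already contains the ring-theoretic heart of the matter. Both being Cohen--Macaulay and having pure dimension $1$ are local on the target, so it suffices to prove the claim after restricting to an affine open $U=\spec(B_0)$ of $Y$. Here I would use that the formation of the scheme-theoretic image of a quasi-compact morphism commutes with the flat base change given by an open immersion; since $f$ is finite, hence quasi-compact, the scheme-theoretic image of $f^{-1}(U)\to U$ is exactly $Z\cap U$. Moreover, a finite morphism is affine, so $f^{-1}(U)=\spec(A)$ is affine, and $f$ corresponds to a finite ring homomorphism $\func{\varphi}{B_0}{A}$.

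Next I would identify the scheme-theoretic image explicitly. On this affine chart the sheaf $f_*\calO_X$ is associated to $A$ viewed as a $B_0$-module, so the ideal defining $Z\cap U$ is $I=\kernel(\varphi)$. Writing $B:=B_0/I$, we obtain $Z\cap U=\spec(B)$, and $\varphi$ factors as $B_0\surj B\hookrightarrow A$ with $B=\im(\varphi)\subseteq A$; in particular the induced homomorphism $B\hookrightarrow A$ is injective. It remains to check that $A$ is finite as a $B$-module: if $a_1,\dots,a_m$ generate $A$ over $B_0$, then, since the $B_0$-module structure on $A$ factors through $B$, the same elements generate $A$ over $B$. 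Both rings are Noetherian, since $B_0$ is of finite type over the ground field.

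With this setup the conclusion is immediate. The map $B\hookrightarrow A$ is a finite injective homomorphism of Noetherian rings, and $A$ is Cohen--Macaulay of pure dimension $1$ by hypothesis, so Lemma~\ref{lm:image_affine} shows that $B$ is Cohen--Macaulay of pure dimension $1$. As this holds on an affine cover of $Y$, the scheme $Z$ is Cohen--Macaulay and of pure dimension $1$.

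The genuine content has already been absorbed into Lemma~\ref{lm:image_affine}, and what remains is essentially bookkeeping. The two points that demand a little care are the reductions themselves: first, the compatibility of the scheme-theoretic image with restriction to affine opens, which is what guarantees that the local computations glue to the global $Z$; and second, the passage from ``finite over $B_0$'' to ``finite over $B$'', which is precisely what allows me to invoke the lemma for the intermediate ring $B$ rather than for $B_0$. I do not expect either to be a serious obstacle, so the proof should be short.
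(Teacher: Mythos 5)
Your proof is correct and follows essentially the same route as the paper: reduce to an affine chart of $Y$ using that both properties are local (the paper cites Lemma~\ref{lm:equidim} for locality of pure dimension), identify $Z$ there as $\spec$ of the image ring $B_0/\ker(\varphi)$, and invoke Lemma~\ref{lm:image_affine}. The extra details you supply (compatibility of the scheme-theoretic image with restriction to opens, and finiteness of $A$ over $B$ rather than just over $B_0$) are exactly the bookkeeping the paper leaves implicit, and they check out.
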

\begin{proof}
  By definition and Lemma~\ref{lm:equidim} respectively, both
  conditions are local, and we reduce to the case that $X=\spec(A)$
  and $Y=\spec(C)$ are affine.  Then $Z=\spec(B)$ with $B=C/\ker(C\ra
  A)$. By Lemma~\ref{lm:image_affine}, the ring $B$ is Cohen--Macaulay
  of pure dimension $1$.
\end{proof}
\begin{cor}\label{cor:scheme-theor-image_CM}
  Let $(C,i)\in \CM(\spec(k))$ for a field $k$. Then the
  scheme-theoretic image of $C$ in $\bbP^n_k$ is Cohen--Macaulay and of
  pure dimension $1$.
\end{cor}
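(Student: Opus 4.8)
The plan is to apply Proposition~\ref{prop:image} directly, taking $X=C$, $Y=\bbP^n_k$ and $f=i$. To do so I must verify the hypotheses of that proposition for the data of a point $(C,i)\in\CM(\spec(k))$: that $\func i C{\bbP^n_k}$ is a finite morphism of schemes of finite type over the field $k$, and that $C$ is Cohen--Macaulay and of pure dimension $1$.

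First I would unwind Definition~\ref{def:CMfunctor} in the case $S=\spec(k)$. By definition, $(C,i)$ consists of a flat $\spec(k)$-scheme $C$ together with a finite $\spec(k)$-morphism $\func i C{\bbP^n_k}$ for which the three fiber conditions hold at every point of $\spec(k)$. Since $\spec(k)$ has a single point $s$ and the corresponding fiber $C_s$ is $C$ itself, property~(i) of the definition states precisely that $C$ is Cohen--Macaulay and of pure dimension $1$.

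Next I would check the finite-type conditions. The projective space $\bbP^n_k$ is of finite type over $k$ by construction, so $Y$ satisfies the hypothesis. Moreover, $i$ is finite, hence in particular of finite type, so the composition $C\to\bbP^n_k\to\spec(k)$ exhibits $C$ as a scheme of finite type over $k$. Thus $X$ is of finite type over $k$ as well, and $f=i$ is a finite morphism as required.

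With all hypotheses verified, Proposition~\ref{prop:image} yields that the scheme-theoretic image of $C$ in $\bbP^n_k$ is Cohen--Macaulay and of pure dimension $1$, which is the assertion. I do not expect any real obstacle here, as the corollary is a direct specialization of Proposition~\ref{prop:image}; the only point needing a word is the observation that $C$ is of finite type over $k$, which follows formally from the finiteness of $i$ and the fact that $\bbP^n_k$ is of finite type over $k$.
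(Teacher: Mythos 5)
Your proof is correct and follows exactly the paper's route: the paper's own proof is simply ``this follows directly from Proposition~\ref{prop:image},'' and your write-up merely makes explicit the (entirely routine) verification that $C$ itself is Cohen--Macaulay of pure dimension $1$ and of finite type over $k$. No issues.
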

\begin{proof}
  This follows directly from Proposition~\ref{prop:image}.
\end{proof}
Moreover, we can give bounds for the degree and the genus of the
projective curve $i(C)$.

\begin{prop}\label{prop:image_Hilb_poly}
  Let $k$ be a field, and let $(C,i) \in
  \CM_{\bbP^n}^{at+b}(\spec(k))$. Then the scheme-theoretic image
  $i(C)$ of $C$ in $\bbP^n_k$ is a curve of degree $a$ and arithmetic
  genus $g$ that is bounded by \[ 1-b\leq g \leq \frac 1 2
  (a-1)(a-2).\] In particular, there are only finitely many
  possibilities for the Hilbert polynomial $p_{i(C)}(t)$ of the curve
  $i(C)$.
\end{prop}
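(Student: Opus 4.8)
\subsection*{Proof sketch}

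The plan is to read off the Hilbert polynomial of the scheme-theoretic image $Z:=i(C)$ from two short exact sequences, one relating $Z$ to $C$ and one relating $Z$ to a plane curve. By Corollary~\ref{cor:scheme-theor-image_CM}, $Z$ is Cohen--Macaulay of pure dimension~$1$, so it is a genuine projective curve and its Hilbert polynomial has the form $p_Z(t)=(\deg Z)\,t+1-g$, where $g$ is the arithmetic genus. First I would use that the scheme-theoretic image comes with a canonical injection $\calO_Z\hookrightarrow i_*\calO_C$, fitting into
\[
0\longrightarrow\calO_Z\longrightarrow i_*\calO_C\longrightarrow\calQ\longrightarrow 0 .
\]
Since $i$ is an isomorphism onto its image away from finitely many closed points, Lemma~\ref{lm:iso_alt} shows that $\calQ$ is supported on a finite set, hence has finite length $\ell:=\length(\calQ)$. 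Additivity of Hilbert polynomials, together with property~\ref{def:CM_hilb} giving $p_{i_*\calO_C}(t)=at+b$, then yields $p_Z(t)=at+b-\ell$. Comparing coefficients gives $\deg Z=a$ and $g=1-b+\ell$; as $\ell\geq 0$ this already proves that $Z$ has degree $a$ and that $g\geq 1-b$.

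For the upper bound I would project $Z$ generically into the plane. We may assume $n\geq 3$, as for $n\le 2$ the curve $Z$ already lies in $\bbP^2$. A dimension count on the Grassmannian of $(n-3)$-planes shows that a generic linear center $\Lambda\cong\bbP^{n-3}$ is disjoint from $Z$; since a center disjoint from $Z$ yields finite fibers, linear projection restricts to a finite morphism $\func{\pi}{Z}{\bbP^2}$. By Proposition~\ref{prop:image}, the scheme-theoretic image $Z'=\pi(Z)$ is Cohen--Macaulay of pure dimension~$1$ in $\bbP^2$. A pure one-dimensional Cohen--Macaulay subscheme of the regular scheme $\bbP^2$ is an effective Cartier divisor, that is, a plane curve $V(F)$ of some degree $d'$; such a curve has arithmetic genus exactly $\tfrac12(d'-1)(d'-2)$.

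The decisive step, which I expect to be the main obstacle, is to show that for a generic center the projection $\func{\pi}{Z}{Z'}$ is again an isomorphism onto its image away from finitely many closed points. Because $C$, and hence $Z$, may be non-reduced, one cannot simply quote the classical fact that a generic projection of a reduced curve is birational onto its image; instead I would choose $\Lambda$ to avoid the secant variety of $Z$ (of dimension at most $3$) and the tangent directions of $Z$ outside a finite locus, so that $\pi$ is a local isomorphism at every generic point of $Z$. This both matches the multiplicities of the components of $Z$ and $Z'$ and forces the cokernel $\calQ'$ of $\calO_{Z'}\hookrightarrow\pi_*\calO_Z$ to have finite length. Since $\pi$ is linear, $\pi^*\calO_{\bbP^2}(1)=\calO_{\bbP^n}(1)|_Z$, so the projection formula gives $p_{\pi_*\calO_Z}(t)=p_Z(t)=at+1-g$. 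Comparing this through the exact sequence with $p_{Z'}(t)=d't+1-\tfrac12(d'-1)(d'-2)$ forces $d'=a$ and
\[
\tfrac12(a-1)(a-2)=g+\length(\calQ')\geq g ,
\]
which is the asserted upper bound. Finally, since $g$ is an integer with $1-b\leq g\leq\tfrac12(a-1)(a-2)$, there are only finitely many admissible values of $g$, hence only finitely many possibilities for the Hilbert polynomial $p_{i(C)}(t)=at+1-g$.
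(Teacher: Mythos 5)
Your first half is exactly the paper's argument: factor $i$ through $Z:=i(C)$, observe that the cokernel $\calQ$ of $\calO_Z\hookrightarrow i_*\calO_C$ has finite length $\ell$ because $i$ is an isomorphism onto its image away from finitely many closed points, and read off $\deg Z=a$ and $g=1-b+\ell\geq 1-b$ from additivity of Hilbert polynomials. That part is correct and matches the paper line for line. Where you diverge is the upper bound: the paper does not prove $g\leq\frac12(a-1)(a-2)$ at all, but quotes Hartshorne's genus bound (\cite[Theorem~3.1]{Hartshorne:genus}), remarking only that the proof there works for curves in $\bbP^n_k$. You instead try to internalize that result by a generic linear projection to $\bbP^2$, which is a legitimately different (more self-contained) route and is essentially the classical strategy behind the cited theorem.

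However, as written your projection argument has a genuine gap, and it sits precisely at the step you flag as ``the main obstacle.'' The whole bookkeeping --- $d'=a$ and $\frac12(a-1)(a-2)=g+\length(\calQ')$ --- depends on $\func{\pi}{Z}{Z'}$ being an isomorphism onto its image away from finitely many closed points; if the projection were, say, generically $2{:}1$, the cokernel $\calQ'$ would have one-dimensional support, the leading coefficients would no longer match, and the inequality could not be extracted. The secant-variety-plus-tangent-directions heuristic you invoke is the standard proof for \emph{reduced} curves over an algebraically closed field, but $Z$ is only known to be Cohen--Macaulay of pure dimension $1$ and may be everywhere non-reduced (e.g.\ a multiple structure on a line), in which case one must argue on scheme-theoretic fibers: by Nakayama, $\calO_{Z'}\to\pi_*\calO_Z$ is surjective at $z'$ exactly when the scheme-theoretic fiber $\pi^{-1}(z')$ is a single reduced point, and showing this holds over a dense open of $Z'$ for a general center requires controlling the Zariski tangent spaces of the non-reduced structure, not just of $Z_{\mathrm{red}}$. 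You also need to dispose of finite base fields (no generic center may be $k$-rational); this is harmless since degree and arithmetic genus are invariant under field extension, but it must be said. None of this is carried out, so the upper bound is not yet proved by your sketch; either supply that argument or fall back on the citation the paper uses.
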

\begin{proof}
  The finite morphism $i$ factors through the scheme-theoretic
  image as $$\xymatrix{C\ar[rr]^i\ar[dr]^-{h} && \bbP^n_k, \\
    &i(C)\ar@{^(->}[ur]^-j&}$$ where $h$ is finite and an isomorphism
  away from finitely many closed points, and $j$ is a closed
  immersion.  The one-dimensional scheme $i(C)$ has Hilbert polynomial
  of degree $1$, say $p_{i(C)}(t)=ct+d$.

  The cokernel $\calK$ of the injective map $j_*\calO_{i(C)}\ra
  i_*\calO_C$ is supported on the finitely many closed points that
  form the non-isomorphism locus of $h$.  Hence its Hilbert polynomial
  $p_{\calK}(t)$ is constant, $p_{\calK}(t) =h^0(\calK)=:l$ for
  $l\in\bbN$.  Consider the short exact sequence
  \begin{equation*}
    \xymatrix{0\ar[r] & j_*\calO_{i(C)}\ar[r] & i_*\calO_C\ar[r] &
      \calK\ar[r] & 0} \end{equation*}
  of $\calO_{\bbP^n_k}$-modules. The additivity of the Hilbert
  polynomial implies then that  
  \begin{equation}
    p_{i_*{\calO_C}}(t)=p_{i(C)}(t)+p_\calK(t),\label{eq:Hilb_poly}
  \end{equation}
  that is, $at+b=ct+d+l$. In particular, we get that $c=a$ and $d\leq
  b=d+l$.  The relation $d=1-g$ gives the lower bound $1-b\leq
  g$. Moreover, the arithmetic genus of a curve of degree $a$ is
  bounded from above by $\frac 1 2 (a-1)(a-2)$ by
  \cite[Theorem~3.1]{Hartshorne:genus}. Note that the proof there
  works even for curves in $\bbP^n_k$ with $n\geq 3$.
\end{proof}

\subsection{First examples} The results of
Proposition~\ref{prop:image_Hilb_poly} allow us to understand some
simple examples.
\begin{thm} Let $p(t)=at+b\in \bbZ[t]$ be a polynomial.
  \begin{enumerate}
  \item \label{item:ex_empty} Suppose that $b<1-\frac 1 2
    (a-1)(a-2)$. Then $\CM_{\bbP^n}^{at+b}=\emptyset$.
  \item If $b=1-\frac 1 2 (a-1)(a-2)$, then
    $CM_{\bbP^n}^{at+b}=\Hilb_{\bbP^n}^{at+b}$ is the Hilbert scheme
    of plane curves of degree $a$ in $\bbP^n$.
  \end{enumerate}
\end{thm}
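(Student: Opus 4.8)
The plan is to read both statements off the genus estimate of Proposition~\ref{prop:image_Hilb_poly} together with the bookkeeping in its proof. For a pair over a field the scheme-theoretic image satisfies $p_{i(C)}(t)=at+(b-l)$ with $l=h^0(\calK)\geq0$, so $i(C)$ has arithmetic genus $g=1-b+l$, and the bound $g\leq\frac12(a-1)(a-2)$ rewrites as
\[ b\;\geq\;1-\tfrac12(a-1)(a-2)+l\;\geq\;1-\tfrac12(a-1)(a-2); \]
thus $1-\frac12(a-1)(a-2)$ is the least value $b$ can take. For part~\ref{item:ex_empty} I would argue by contradiction: a pair over any field $k$ would force the displayed inequality, contradicting $b<1-\frac12(a-1)(a-2)$; since a section over a nonempty $S$ restricts by Proposition~\ref{prop:functor} to a section over the residue field of a point, $\CM_{\bbP^n}^{at+b}(S)=\emptyset$ for all $S$.

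For the second part the hypothesis makes $b$ equal to its minimal value, so equality holds throughout: $l=0$ and $g=\frac12(a-1)(a-2)$. The vanishing $l=0$ gives $\calK=0$, so $h$ is an isomorphism and $i=j\circ h$ is a closed immersion with $C\cong i(C)$; by the equality case of the bound in \cite{Hartshorne:genus} the image $i(C)$ is a plane curve of degree $a$. Conversely any plane curve of degree $a$ is a Cartier divisor in a $\bbP^2\subseteq\bbP^n$, hence Cohen--Macaulay of pure dimension $1$, and its inclusion defines a point of $\CM$.

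To promote this to an isomorphism of functors I would build mutually inverse natural transformations. Given $(C,i)\in\CM(S)$, the sheaf $\calK:=\coker(\calO_{\bbP^n_S}\to i_*\calO_C)$ is coherent; since base change is right exact and $(i_*\calO_C)_s=(i_s)_*\calO_{C_s}$, each fiber $\calK_s$ is $\coker(i_s^\#)=0$. A stalkwise Nakayama argument---for $x$ over $s$ one has $\calK_x=\m_s\calK_x\subseteq\m_x\calK_x$---yields $\calK=0$, so $i$ is a closed immersion and $C\hookrightarrow\bbP^n_S$ is a flat family with fiberwise Hilbert polynomial $at+b$, i.e.\ an element of $\Hilb_{\bbP^n}^{at+b}(S)$. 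In the other direction, for $Z\in\Hilb_{\bbP^n}^{at+b}(S)$ with inclusion $\iota$, I would show each fiber $Z_s$ is a Cohen--Macaulay plane curve: deleting the zero-dimensional (embedded and isolated) primary components yields $Z_s'\subseteq Z_s$ fitting in $0\to\calN\to\calO_{Z_s}\to\calO_{Z_s'}\to0$ with $\calN$ of finite length $\ell\geq0$, whence $\chi(\calO_{Z_s'})=b-\ell$; as $Z_s'$ is Cohen--Macaulay of pure dimension $1$ and degree $a$ by Lemma~\ref{lm:CM_ring}, its genus $\frac12(a-1)(a-2)+\ell$ cannot exceed $\frac12(a-1)(a-2)$, forcing $\ell=0$ and maximal genus, so $Z_s=Z_s'$ is a plane curve. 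The two transformations are inverse to one another, giving $\CM_{\bbP^n}^{at+b}=\Hilb_{\bbP^n}^{at+b}$, and the fiber computation identifies this common functor with the Hilbert scheme of plane curves of degree $a$. The main obstacle is precisely this passage to families: proving that the fiberwise closed immersion is a closed immersion (the Nakayama vanishing of $\calK$) and, for the reverse map, excluding embedded and isolated points at the extremal polynomial so that the equality case of \cite{Hartshorne:genus} applies fiberwise.
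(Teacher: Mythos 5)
Your proposal is correct and follows essentially the same route as the paper: part (i) is read off the genus bound of Proposition~\ref{prop:image_Hilb_poly}, and part (ii) uses the equality case to force $\coker(\calO_{\bbP^n_k}\to i_*\calO_C)=0$, promotes this to families via the fiberwise criterion for closed immersions, and invokes \cite[Theorem~3.1]{Hartshorne:genus} for the converse. The only differences are that you prove by hand (Nakayama on the stalks of $\calK$, and the purification argument $Z_s'\subseteq Z_s$) two steps that the paper handles by citing \cite[Proposition~(4.6.7)]{EGAIII1} and \cite[Theorem~3.1]{Hartshorne:genus} respectively; both versions are sound.
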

\begin{proof}
  Let $k$ be a field, and let $(C,i)\in \CM(\spec(k))$. Then by
  Proposition~\ref{prop:image_Hilb_poly}, the scheme-theoretic image
  $i(C)$ of $C$ in $\bbP^n_k$ is a curve of degree $a$ and arithmetic
  genus $g$ such that $1-b\leq g\leq 1 - \frac 1 2 (a-1)(a-2)$. This
  show directly that no such curves exists if $b<1-\frac 1
  2(a-1)(a-2)$, and hence assertion~\ref{item:ex_empty}.

  If $b=1-\frac 1 2 (a-1)(a-2)$, it follows that $g=1-b$. Observe that
  the image $i(C)$ has Hilbert polynomial
  $p(t)=p_{i_*\calO_C}(t)$ in this case. Then
  Equation~(\ref{eq:Hilb_poly}) above implies that
  \mbox{$\coker(\calO_{\bbP^n_k}\ra i_*\calO_C)=0$}, that is, the
  curves $C$ and $i(C)$ are isomorphic, and the map $i$ is a closed
  immersion. Since being a closed immersion can be checked on the
  fibers, see \cite[Proposition~(4.6.7)]{EGAIII1}, it follows that for
  every scheme $S$ and every family $(C,i)\in \CM(S)$ the map $i$ is a
  closed immersion. Conversely, every closed subscheme of $\bbP^n_k$
  with Hilbert polynomial $p(t)=at+1-\frac 1 2 (a-1)(a-2)$ is a plane
  curve without embedded or isolated points by
  \cite[Theorem~3.1]{Hartshorne:genus}.
\end{proof}

\subsection{\texorpdfstring{Automorphisms of $\CM$ curves}{Automorphisms of CM curves}}
 
Next we show that for every scheme $S$, and every $(C,i)\in \CM(S)$,
the scheme $C$ does not have any nontrivial automorphism over
$\bbP^n_S$.
\begin{lm}\label{lm:surj_loc}
  Let $\func \varphi B A$ be a finite homomorphism of Noetherian
  rings, where $A$ is Cohen--Macaulay and of pure dimension $1$.
  Suppose that the localized map $\func{\varphi_\p}{B_\p}{A_\p}$ is
  surjective for all prime ideals $\p$ of $B$ apart from finitely many
  maximal ideals $\m_1,\dotsc,\m_s$. Then there exists an element
  $b\in B$ that is not a zero divisor of $A$ such that the map
  $\func{\varphi_b}{B_b}{A_b}$ is surjective.
\end{lm}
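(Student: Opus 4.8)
The plan is to translate the statement into a condition on the cokernel module and then to separate two finite sets of primes by prime avoidance. Set $M:=\coker(\func{\varphi}{B}{A})$, viewed as a $B$-module; since $\varphi$ is finite, $A$ and hence $M$ are finitely generated over $B$. Because localization is exact, the cokernel of $\func{\varphi_\p}{B_\p}{A_\p}$ is $M_\p$, so $\varphi_\p$ is surjective precisely when $M_\p=0$, i.e.\ when $\p\notin\supp(M)$. The hypothesis therefore says exactly that $\supp(M)\subseteq\{\m_1,\dotsc,\m_s\}$, a finite set of maximal ideals. Producing the desired $b$ then amounts to finding $b\in B$ with $M_b=0$ (so that $\func{\varphi_b}{B_b}{A_b}$ is surjective, its cokernel being $M_b$) and with $\varphi(b)$ a non-zero-divisor of $A$.

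Next I would phrase both requirements as avoidance conditions. For surjectivity after inverting $b$: since $M$ is finitely generated, $M_b=0$ holds if and only if $b\in\sqrt{\Ann_B(M)}$, and as $\supp(M)=V(\Ann_B(M))$ consists of finitely many maximal ideals we have $\sqrt{\Ann_B(M)}=\bigcap_{\m_i\in\supp(M)}\m_i=:I$. For the non-zero-divisor condition: because $A$ is Cohen--Macaulay of pure dimension $1$, Lemma~\ref{lm:CM_ring} gives $\operatorname{Zdv}(A)=\bigcup_{j}\q_j$, where $\q_1,\dotsc,\q_r$ are the finitely many minimal primes of $A$. Setting $\p_j:=\varphi^{-1}(\q_j)$, the element $\varphi(b)$ avoids $\operatorname{Zdv}(A)$ exactly when $b\notin\bigcup_j\p_j$. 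Thus it suffices to find $b\in I$ with $b\notin\bigcup_j\p_j$.

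The key step, and the only place the geometry enters, is to check that $I\not\subseteq\p_j$ for every $j$, so that prime avoidance \cite[Lemma 3.3]{Eisenbud} applies. Here I would use that each $\p_j$ is \emph{not} a maximal ideal: the map $\varphi$ induces a finite injective homomorphism $B/\p_j\hookrightarrow A/\q_j$, and since $A$ has pure dimension $1$ the Cohen--Seidenberg theorem \cite[(5.E) Theorem 5]{Mats:algebra} yields $\dim(B/\p_j)=\dim(A/\q_j)=1$. Consequently no $\m_i$ is contained in $\p_j$: a maximal ideal contained in the proper ideal $\p_j$ would equal $\p_j$, contradicting $\dim(B/\p_j)=1$. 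If we had $I\subseteq\p_j$, then $\prod_{\m_i\in\supp(M)}\m_i\subseteq I\subseteq\p_j$ would force some $\m_i\subseteq\p_j$, a contradiction; hence $I\not\subseteq\p_j$.

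Finally I would invoke prime avoidance to obtain $b\in I\setminus\bigcup_j\p_j$. Then $b\in\sqrt{\Ann_B(M)}$ gives $M_b=0$, so $\varphi_b$ is surjective, while $\varphi(b)\notin\q_j$ for all $j$ shows $\varphi(b)\notin\operatorname{Zdv}(A)$, i.e.\ $b$ is not a zero divisor of $A$. (If $M=0$, that is $\supp(M)=\emptyset$, then $\varphi$ is already surjective and $b=1$ works.) I expect the main obstacle to be the bookkeeping in the key step---verifying that the finitely many ``bad'' maximal ideals for surjectivity are genuinely disjoint from the one-dimensional primes $\p_j$ over which $\varphi(b)$ must not vanish---since this disjointness is exactly what makes the two conditions simultaneously satisfiable.
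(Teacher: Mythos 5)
Your proof is correct and follows essentially the same route as the paper: reduce both requirements to avoidance conditions and apply prime avoidance, the key point being that the primes over which $\varphi(b)$ must not vanish are non-maximal because $A$ has pure dimension $1$. The only cosmetic differences are that you justify the surjectivity condition via the cokernel's annihilator and rule out maximality via Cohen--Seidenberg, whereas the paper works with $\Ass_B(A)$ and argues that an integral extension of a field is a field; both variants are fine.
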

\begin{proof}
  An element $b$ with the required properties has to be contained in
  all exceptions $\m_1,\dotsc,\m_s$. Moreover, in order not to be a
  zero divisor of $A$, the element $b$ may not be contained in any of
  the associated prime ideals of $A$ as a $B$-module. We claim that
  such an element exists, that is, $\bigcap_{i=1}^s\m_i\not\subseteq
  \bigcup_{j=1}^t\p_j$, where we set
  \mbox{$\Ass_B(A)=\{\p_1,\dotsc,\p_t\}$}. Suppose namely the
  opposite, that is, $\bigcap_{i=1}^s\m_i \subseteq
  \bigcup_{j=1}^t\p_j$. Prime avoidance implies then that
  $\m_i\subseteq \p_j$ for some $i$ and $j$.  In particular, it
  follows that $\p_j$ is a maximal ideal of $B$. Note that by
  \cite[(9.A)~Proposition]{Mats:algebra}, we have
  $\Ass_B(A)=\{\varphi^{-1}(\q)\mid \q\in \Ass_A(A)\}$, and hence
  $\p_j=\varphi^{-1}(\q)$ for some $\q\in \Ass_A(A)$. The induced map
  $B/{\p_j}\hookrightarrow A/\q$ is a finite and hence integral
  extension of a field, and therefore $A/\q$ is a field. This
  contradicts the fact that $\q$ is an associated prime ideal of the
  Cohen--Macaulay ring $A$ of pure dimension $1$.
\end{proof}
\begin{lm}\label{lm:prod_ass}
  Let $R$ be a Noetherian ring, and let $M$ be a finitely generated
  $R$-module. Then the natural map $M\to \prod_{\p\in\Ass(M)}M_\p$ is
  injective.
\end{lm}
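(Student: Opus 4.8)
The plan is to let $K$ denote the kernel of the natural map $\func\varphi M{\prod_{\p\in\Ass(M)}M_\p}$, which sends $m$ to the tuple of its images $(m/1)_\p$, and to show that $K=0$. Since $M$ is finitely generated over the Noetherian ring $R$, its submodule $K$ is finitely generated as well. Suppose, for contradiction, that $K\neq 0$. Then, by the standard fact that a nonzero module over a Noetherian ring has an associated prime, the set $\Ass(K)$ is nonempty.

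Next I would choose an associated prime $\p\in\Ass(K)$. Because $K$ is a submodule of $M$, we have $\Ass(K)\subseteq\Ass(M)$, so $\p$ is in fact an associated prime of $M$; in particular $M_\p$ is one of the factors occurring in the product. By the defining property of an associated prime, there is an element $k\in K$ with $\Ann(k)=\p$.

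The contradiction then comes from comparing two facts about this $k$. Since $\p\in\Ass(M)$, the element $k$ lies in the kernel $K$ and therefore maps to zero in the factor $M_\p$; hence there exists $s\in R\setminus\p$ with $sk=0$, which says $s\in\Ann(k)=\p$, contradicting $s\notin\p$. Therefore $K=0$ and $\varphi$ is injective. The argument relies only on the three standard facts quoted above---nonvanishing of $\Ass$ for nonzero modules, monotonicity $\Ass(K)\subseteq\Ass(M)$ for submodules, and the realization of an associated prime as the annihilator of a single element. I do not expect a genuine obstacle here; the only point deserving care is that the product is indexed \emph{exactly} by $\Ass(M)$, so that the chosen $\p\in\Ass(K)\subseteq\Ass(M)$ really does label a factor in which $k$ has become zero.
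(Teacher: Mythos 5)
Your proof is correct and follows essentially the same route as the paper: both arguments come down to the fact that a nonzero (sub)module of $M$ has an associated prime lying in $\Ass(M)$, which then detects a nonzero element of the kernel in the corresponding factor $M_\p$. The only cosmetic difference is that you phrase this via $\Ass(K)\subseteq\Ass(M)$ for the kernel $K$, while the paper argues elementwise that $\Ann(m)\not\subseteq\p$ for every $\p\in\Ass(M)$ forces $\Ann(m)=R$.
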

\begin{proof}
  Suppose that the element $m\in M$ is mapped to 0. Then for every
  associated prime ideal $\p\in \Ass(M)$ there exists $s\notin \p$
  such that $sm=0$. In particular, $\Ann(m)\not\subseteq \p$ for all
  associated prime ideals $\p$ of $M$. It follows that $\Ann(m)=R$,
  and hence $m=0$.
\end{proof}
\begin{lm}\label{lm:subring_prod_Artin}
  Let $R$ be a Noetherian ring. Then $R$ has an embedding into a
  finite product of local Artin rings.
\end{lm}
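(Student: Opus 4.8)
The plan is to realize the desired embedding as a composite of two injections: the first coming from a primary decomposition of the zero ideal, the second from Lemma~\ref{lm:prod_ass}. Since $R$ is Noetherian, the Lasker--Noether theorem provides a primary decomposition $(0)=\bigcap_{i=1}^n\q_i$, where each $\q_i$ is $\p_i$-primary for some prime ideal $\p_i$. Because the $\q_i$ intersect in the zero ideal, the canonical map $R\to\prod_{i=1}^n R/\q_i$ is injective, and the product is finite. It therefore suffices to embed each factor $A_i:=R/\q_i$ into a local Artin ring.

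The reason for passing to the primary quotients first is that each $A_i$ has a single associated prime. Indeed, $\q_i$ being $\p_i$-primary means exactly that $\Ass(A_i)=\{\bar\p_i\}$, where $\bar\p_i=\p_i/\q_i$; in particular $\bar\p_i$ is the unique minimal prime of $A_i$ and coincides with its nilradical. Applying Lemma~\ref{lm:prod_ass} to the Noetherian ring $A_i$, viewed as a module over itself, the product then runs over the single prime $\bar\p_i$, so the localization map $A_i\hookrightarrow (A_i)_{\bar\p_i}$ is injective.

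It remains to check that $(A_i)_{\bar\p_i}$ is a local Artin ring. It is a localization of a Noetherian ring, hence Noetherian and local. Since $\bar\p_i$ is a minimal prime of $A_i$, the only prime of $A_i$ contained in it is $\bar\p_i$ itself, so $(A_i)_{\bar\p_i}$ has a unique prime ideal and thus dimension $0$; equivalently, its maximal ideal coincides with the nilradical and is therefore nilpotent. A Noetherian local ring of dimension $0$ is Artinian, so $(A_i)_{\bar\p_i}$ is a local Artin ring. Composing the two injections yields the required embedding $R\hookrightarrow\prod_{i=1}^n (A_i)_{\bar\p_i}$ into a finite product of local Artin rings.

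The main thing to get right is the choice of which primes to localize at. Localizing $R$ directly at its associated primes, as a naive reading of Lemma~\ref{lm:prod_ass} would suggest, fails, because an embedded prime $\p$ yields a positive-dimensional localization $R_\p$ that is not Artinian. Passing to the primary quotients $R/\q_i$ beforehand removes the embedded components, leaving a single minimal prime whose localization is forced to be zero-dimensional; this is precisely the step that makes the Artinian conclusion available, and it is the only point requiring care.
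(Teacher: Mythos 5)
Your proof is correct and follows essentially the same route as the paper: a primary decomposition of $(0)$ gives an injection $R\hookrightarrow\prod_i R/\q_i$, and each primary quotient, having a unique associated prime, embeds via Lemma~\ref{lm:prod_ass} into its localization at that prime, which is a local Artin ring since the prime is minimal. Your closing remark about why one must pass to primary quotients before localizing is a sound observation, though not needed for the argument itself.
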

\begin{proof}
  Let $(0)=\bigcap_{i=j}^l\q_j$ be a {\em primary decomposition} of
  the zero ideal of $R$, see \cite[Theorem IV.2.2.1]{Bourbaki:CA}. The
  natural map $R\to \prod_{j=1}^l R/\q_j$ is then injective. The ideal
  $\q_j$ is $\p_j$-primary for $\p_j:=\sqrt{\q_j}$, and hence we have
  that $\Ass(R/\q_j)=\{\p_j/\q_j\}$. In particular, it follows by
  Lemma~\ref{lm:prod_ass} that the localization map \mbox{$R/\q_j\to
    R_j:=(R/\q_j)_{\p_j/\q_j}$} is injective. Since $\p_j$ is a
  minimal prime over ideal of $\q_j$, the ring $R_j$ is a local Artin
  ring, and the composition $R\hookrightarrow \prod_{j=1}^l R/\q_j
  \hookrightarrow \prod_{j=1}^l R_j$ has the required properties.
\end{proof}
\begin{thm}\label{thm:auto}
  Let $(C,i)\in \CM(S)$ for a locally Noetherian scheme $S$.  Let
  $\alpha$ be an $S$-automorphism of $C$ that is compatible with the
  finite morphism $i$, that is, $i\circ\alpha =i$. Then $\alpha$ is
  the identity morphism on $C$.
\end{thm}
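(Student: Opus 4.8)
The plan is to reduce the claim to a purely ring-theoretic statement over a local Artin base and then exploit that a Cohen--Macaulay curve has no embedded points. Since equality of two morphisms can be checked locally on the target, I may first restrict to an affine open $\spec(B)\subseteq \bbP^n_S$ and assume $S=\spec(R)$ is affine; writing $i^{-1}(\spec(B))=\spec(A)$, the morphism $i$ corresponds to a finite ring homomorphism $B\to A$ with $A$ flat over $R$, and the compatibility $i\circ\alpha=i$ means that $\alpha$ restricts to a $B$-algebra automorphism $\func{\varphi}{A}{A}$. It suffices to prove that $\varphi=\id$.

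Next I would reduce to the case that $R$ is a local Artin ring. By Lemma~\ref{lm:subring_prod_Artin} there is an embedding $R\hookrightarrow \prod_j R_j$ into a finite product of local Artin rings, and since $A$ is flat over $R$ the induced map $A\hookrightarrow A\otimes_R\prod_j R_j=\prod_j(A\otimes_R R_j)$ is again injective. This map is compatible with $\varphi$ and with the base-changed automorphisms on each factor $A\otimes_R R_j$, which belongs to a family in $\CM(\spec(R_j))$ by Proposition~\ref{prop:functor}. Hence it is enough to show that each base-changed automorphism is the identity, so I may assume that $R$ is a local Artin ring.

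Now suppose $R$ is local Artin. By Proposition~\ref{prop:Artin_base} the scheme $C$ is Cohen--Macaulay of pure dimension $1$, and by Lemma~\ref{lm:equidim_Artin} so is its open subscheme $\spec(A)$; in particular $A$ is a one-dimensional Cohen--Macaulay ring, so by Lemma~\ref{lm:CM_ring} it has no embedded primes and $\Ass(A)$ consists exactly of the minimal primes of $A$. The key point is that at each minimal prime the automorphism is forced to be trivial: a minimal prime $\p$ of $A$ is a generic point of a one-dimensional component of $C$, so its image under the finite morphism $i$ is a generic point of the scheme-theoretic image and therefore does not lie in the zero-dimensional locus $Z=\supp(\coker(\calO_{\bbP^n_R}\to i_*\calO_C))$ from Lemma~\ref{lm:iso_alt}. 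Consequently the localized map $B_{\q}\to A_{\p}$, with $\q$ the contraction of $\p$, is surjective, so $A_{\p}$ is generated by the image of $B$. Since $\varphi$ is a $B$-algebra homomorphism it fixes that image, whence the localization $\varphi_{\p}$ is the identity on $A_{\p}$.

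Finally I would assemble these local computations. By Lemma~\ref{lm:prod_ass} the natural map $A\to \prod_{\p\in\Ass(A)}A_{\p}$ is injective, and it intertwines $\varphi$ with $\prod_{\p}\varphi_{\p}=\id$; therefore $\varphi=\id$, as wanted. The main obstacle is the passage to a local Artin base: over a general Noetherian $R$ the total space $C$ need not be Cohen--Macaulay, so the no-embedded-primes argument is unavailable, and the reduction via Lemma~\ref{lm:subring_prod_Artin} together with flatness of $A$ is exactly what repairs this. A secondary point requiring care is the verification that the generic points of $C$ avoid the non-isomorphism locus $Z$, which is where purity of dimension $1$ of the image enters.
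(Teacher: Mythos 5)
Your proof is correct, and its skeleton --- restrict to affine opens $\spec(B)$ of the target, reduce to a local Artin base via Lemma~\ref{lm:subring_prod_Artin} and flatness of $A$ over $R$, then exploit that $A$ is one-dimensional Cohen--Macaulay --- is exactly that of the paper. Where you diverge is the endgame over the Artin base. The paper invokes Lemma~\ref{lm:surj_loc}: prime avoidance produces a single element $b\in B$, not a zero divisor on $A$, lying in every maximal ideal where $B\to A$ fails to be surjective but in no associated prime of $A$; then $B_b\to A_b$ is surjective, the induced endomorphism of $A_b$ is the identity, and $A\hookrightarrow A_b$ finishes. You instead localize at each $\p\in\Ass(A)$ separately: these are the minimal primes, their images under the finite map $i$ are non-closed and hence avoid the zero-dimensional non-isomorphism locus, so $B_\q\to A_\q$ is surjective for $\q=\p\cap B$ and $\varphi$ induces the identity on $A_\p$; Lemma~\ref{lm:prod_ass} then gives $\varphi=\id$. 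Both arguments rest on the same two facts (the non-surjectivity locus is a finite set of closed points, and no associated prime of $A$ is maximal), so the difference is that you trade the prime-avoidance step inside Lemma~\ref{lm:surj_loc} for a direct appeal to Lemma~\ref{lm:prod_ass}; arguably your route is slightly more transparent, at the cost of checking several localizations instead of one. Two small points deserve a line each: first, for $\varphi_\p$ to be a well-defined endomorphism of $A_\p$ you need $\varphi^{-1}(\p)=\p$, which holds because surjectivity of $B_\q\to A_\q$ forces $\p$ to be the \emph{unique} prime of $A$ over $\q$, and $\varphi^{-1}(\p)$ also lies over $\q$; second, $A_\p$ is a localization of the quotient $A_\q$ of $B_\q$ rather than literally generated by the image of $B$, but a ring endomorphism fixing the image of $B$ fixes all fractions of such elements, so the conclusion $\varphi_\p=\id$ stands.
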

\begin{proof}
  Suppose first that $S=\spec(R)$ for a local Artin ring $R$. By
  Proposition~\ref{prop:Artin_base}, the scheme $C$ is Cohen--Macaulay
  and of pure dimension $1$, and $i$ is an isomorphism onto its image
  away from finitely many closed points. Let $V=\spec(B)\subset
  \bbP^n_R$ be an open subset. Then $i^{-1}(V)=\spec(A)$ is affine, and
  $\alpha$ induces an endomorphism of $\spec(A)$. In particular, we
  can reduce to the affine situation, and we have to show that every
  $B$-algebra endomorphism $\alpha$ of $A$ is the identity. Note that
  the ring $A$ is Cohen--Macaulay and of pure dimension $1$, and the
  structure map $\func \varphi B A$ satisfies the conditions of
  Lemma~\ref{lm:surj_loc}. Hence there exists $b\in B$ such that
  $B_b\ra A_b$ is surjective and $b$ is not a zero divisor of $A$. It
  follows that the induced $B_b$-algebra endomorphism $\alpha_b$ is
  the identity. Moreover, we have an inclusion $A\subseteq A_b$. Now
  the statement follows as $\alpha$ is the restriction of $\alpha_b$
  to $A$.

  Secondly, we consider the case that $S=\spec(R)$ for a Noetherian
  ring $R$. As in the first case it suffices to show that for any open
  affine subset $V=\spec(B)\subset \bbP^n_R$ and $i^{-1}(V)=\spec(A)$,
  every $B$-algebra endomorphism $\alpha$ of $A$ is the identity. Let
  $R\hookrightarrow \prod_{j=1}^l R_j$ be an embedding into a finite
  product of local Artin rings $R_j$, see
  Lemma~\ref{lm:subring_prod_Artin}. Since $A$ is flat over $R$, we
  get an injection $A\hookrightarrow A\otimes_R \prod_{j=1}^l R_j\cong
  \prod_{j=1}^l (A\otimes_R R_j)$. Moreover, $\alpha$ is the
  restriction of the endomorphism $\prod_j (\alpha\otimes\id_{R_j})$
  of $\prod_j (A\otimes_R R_j)$ to $A$. But $\alpha\otimes\id_{R_j}$
  is a $B\otimes_R R_j$-algebra endomorphism of $A\otimes_R R_j$, and
  hence, as in the first case, it is the identity. It follows that
  $\alpha=\id$.

  For a general locally Noetherian scheme $S$, we observe that the
  statement is local on $S$, and we can directly reduce to the affine
  case.
\end{proof}

\begin{cor}\label{cor:auto}
  Let $S$ be a scheme, and let $(C_1,i_1),(C_2,i_2)\in \CM(S)$.  There
  exists at most one isomorphism $\func \beta{C_1}{C_2}$ such that
  $i_2\circ \beta=i_1$.
\end{cor}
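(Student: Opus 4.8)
The plan is to deduce this directly from Theorem~\ref{thm:auto}, which asserts that a Cohen--Macaulay curve admits no nontrivial automorphism compatible with its finite map to $\bbP^n_S$. The corollary is precisely the uniqueness counterpart of that triviality statement, so the whole argument reduces to converting "at most one" into "the difference is trivial".

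Concretely, I would begin by supposing that two isomorphisms $\func \beta {C_1}{C_2}$ and $\func{\beta'}{C_1}{C_2}$ both satisfy $i_2\circ\beta=i_1$ and $i_2\circ\beta'=i_1$. The idea is to compare them by forming the composite $\alpha:=\beta'^{-1}\circ\beta$, which is an $S$-automorphism of $C_1$ since $\beta$ and $\beta'$ are isomorphisms of $S$-schemes and $\beta'$ is invertible. The point $(C_1,i_1)$ lies in $\CM(S)$ with $S$ locally Noetherian by our standing conventions, so $\alpha$ is exactly the kind of automorphism to which Theorem~\ref{thm:auto} applies, provided I verify the compatibility condition $i_1\circ\alpha=i_1$.

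That verification is the one computation to carry out, and it is a short diagram chase: from $i_2\circ\beta'=i_1$ one gets $i_2=i_1\circ\beta'^{-1}$, and then
\[
i_1\circ\alpha=i_1\circ\beta'^{-1}\circ\beta=i_2\circ\beta=i_1,
\]
as required. Applying Theorem~\ref{thm:auto} to $\alpha$ then yields $\alpha=\id_{C_1}$, which unwinds to $\beta=\beta'$, establishing uniqueness.

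I do not expect any genuine obstacle here: all the substantive work --- the Cohen--Macaulay structure of the fibers, the reduction to the affine local case, and the use of Lemma~\ref{lm:surj_loc} together with the embedding of Lemma~\ref{lm:subring_prod_Artin} into a product of Artin rings --- is already packaged inside Theorem~\ref{thm:auto}. The only care needed is to ensure that $\alpha$ is formed as a morphism of $S$-schemes over $\bbP^n_S$ so that the hypotheses of that theorem are literally met, and that the composition $\beta'^{-1}\circ\beta$ is well defined, which holds because $\beta'$ is an isomorphism.
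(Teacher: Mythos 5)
Your argument is correct and is exactly the proof given in the paper: form $\alpha:=\beta'^{-1}\circ\beta$, check $i_1\circ\alpha=i_1$, and apply Theorem~\ref{thm:auto} to conclude $\alpha=\id_{C_1}$ and hence $\beta=\beta'$. The verification $i_1\circ\alpha=i_2\circ\beta=i_1$ is the only computation needed, and you carry it out correctly.
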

\begin{proof}
  Let $\beta_1,\beta_2$ be two such isomorphisms. Then
  $\alpha:=\beta_2^{-1}\circ \beta_1$ is an automorphism of $C_1$ such
  that $i_1\circ \alpha=i_1$, and hence $\beta_1=\beta_2$ by
  Theorem~\ref{thm:auto}.
\end{proof}

\begin{rem}
  Note that Theorem~\ref{thm:auto} does not hold if we drop the
  requirement in Definition~\ref{def:CMfunctor} of $\CM$ that the
  curves $C_s$ have to be of pure dimension $1$. Consider namely the
  $k$-scheme $C$ that is the disjoint union of a line and two isolated
  points $x_1$ and $x_2$, and a finite morphism $\func i C \bbP^n_k$
  that embeds the line and maps the points $x_1$ and $x_2$ to the same
  image. Then the morphism $\func \alpha C C$ that keeps the line
  fixed and exchanges the points $x_1$ and $x_2$ is a nontrivial
  automorphism such that $i \circ \alpha =i$.
\end{rem}

\subsection{Sheaf in the \'etale topology} The nonexistence of
nontrivial automorphisms is an important tool for showing that $\CM$
is a sheaf in the \'etale topology.
\begin{prop}[{\cite[Proposition II.1.5]{Milne:EC}}]\label{prop:sheaf_criterion}
  Let $\func{F}{\mathbf{Sch}^\circ}{\mathbf{Sets}}$ be a
  functor. Then $F$ is a sheaf in the \'etale topology if and only if the
  following properties hold.
  \begin{enumerate}
  \item $F$ is a sheaf in the Zariski topology.
  \item For any surjective \'etale morphism $V\ra U$ of affine schemes
    the diagram $$F(U)\ra F(V) \rightrightarrows F(V\times_U V)$$ is
    exact.
  \end{enumerate}
\end{prop}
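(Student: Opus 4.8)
The statement is a standard comparison of topologies, so the plan is to reduce the sheaf axiom for an arbitrary \'etale covering to the two special classes of coverings appearing in conditions~(i) and~(ii). The forward implication is immediate: every Zariski open covering is an \'etale covering (open immersions are \'etale), and every surjective \'etale morphism of affine schemes is a one-element \'etale covering, so an \'etale sheaf automatically satisfies both conditions. The content is the converse, and throughout I would repeatedly use two consequences of the Zariski sheaf property: that $F(\emptyset)$ is a one-point set (the empty covering of $\emptyset$), and that $F$ turns finite disjoint unions into products, $F(\coprod_a X_a)=\prod_a F(X_a)$, obtained by covering $\coprod_a X_a$ by its open-and-closed pieces, whose pairwise intersections are empty.

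Assume conditions~(i) and~(ii), and let $\{f_i\colon U_i\to U\}_{i\in I}$ be an \'etale covering. I would split the verification of the sheaf axiom into separatedness (injectivity of $F(U)\to\prod_i F(U_i)$) and gluing, establishing separatedness first, since it is the tool that legitimises the gluing step. In both steps the base is first reduced to the affine case by means of the Zariski sheaf property, and then to a single surjective \'etale morphism of affines as follows: cover each $U_i$ by affine opens to get an \'etale covering $\{U_{ij}\to U\}$ with every $U_{ij}$ affine and refining the original one; since $U$ is affine, hence quasi-compact, and \'etale morphisms are open, finitely many $U_{i_1j_1},\dots,U_{i_nj_n}$ already cover $U$. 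Then $V:=\coprod_{k=1}^n U_{i_kj_k}$ is a finite disjoint union of affines, hence affine, and $V\to U$ is a surjective \'etale morphism of affine schemes. By the disjoint-union formula, $F(V)=\prod_k F(U_{i_kj_k})$ and $F(V\times_U V)=\prod_{k,l}F(U_{i_kj_k}\times_U U_{i_lj_l})$, so condition~(ii) applied to $\{V\to U\}$ is precisely the sheaf axiom for the finite affine covering $\{U_{i_kj_k}\to U\}$. Separatedness for the original covering then follows because the composite $F(U)\to\prod_i F(U_i)\to F(V)$ is injective.

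For gluing, given a matching family $(s_i)\in\prod_i F(U_i)$, I would pull it back to a matching family for the finite affine refinement, which by the identifications above is an element of the equalizer of $F(V)\rightrightarrows F(V\times_U V)$; condition~(ii) then produces a unique $s\in F(U)$ inducing it. It remains to check $s|_{U_i}=s_i$ for every $i\in I$ of the original covering. Here one compares the two sections after restriction to the surjective \'etale covering $V\times_U U_i\to U_i$ of $U_i$: on each factor $U_{i_kj_k}\times_U U_i$ both restrict to the common value of $s_{i_k}$ and $s_i$ over $U_{i_k}\times_U U_i$ guaranteed by the matching condition, so separatedness of $F$ forces $s|_{U_i}=s_i$. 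I expect this last comparison --- descending the glued section from the finite affine refinement back to the possibly infinite, non-affine original covering --- to be the main obstacle, since it is exactly where one must already know that $F$ is separated and must track the refinement maps on fiber products; the reductions to an affine base, by contrast, are routine diagram chases with the Zariski sheaf property.
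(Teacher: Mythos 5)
The paper does not prove this proposition at all: it is quoted verbatim from Milne, Proposition II.1.5, and used as a black box, so the only ``proof'' on record is the citation. Your argument is a correct, self-contained proof and is in substance the standard one. The two pivots you rely on both check out. First, the Zariski sheaf axiom applied to the empty covering of $\emptyset$ and to finite disjoint unions really does give $F(\coprod_k X_k)=\prod_k F(X_k)$, and this is exactly what converts condition~(ii) for the single surjective \'etale morphism $V=\coprod_k U_{i_kj_k}\to U$ into the \v{C}ech exactness for the finite affine covering $\{U_{i_kj_k}\to U\}$ (note $V\times_U V=\coprod_{k,l}U_{i_kj_k}\times_U U_{i_lj_l}$ is again a finite disjoint union of affines). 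Second, the step you rightly identify as the delicate one --- descending the glued section from the finite affine refinement back to the original covering --- works because you establish separatedness first, and because the auxiliary covering $V\times_U U_i\to U_i$ reduces, over any affine open $W\subseteq U_i$, to the morphism $V\times_U W\to W$, which is a surjective \'etale morphism of \emph{affine} schemes (a fibre product of affines over the affine $U$), so condition~(ii) genuinely applies there. The remaining reduction of the base to the affine case is, as you say, a routine double-covering argument with the Zariski sheaf property, and your ordering (separatedness before gluing) is precisely what makes it go through on the possibly non-affine overlaps of the affine cover of the base. In short: where the paper defers to the literature, you have supplied the proof, and it is sound.
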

\begin{prop}\label{prop:sheaf_Zariski}
  The functor $\CM$ is a sheaf in the Zariski topology.
\end{prop}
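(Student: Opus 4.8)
The plan is to verify the two sheaf axioms directly for an arbitrary Zariski open cover, the entire argument resting on the rigidity recorded in Corollary~\ref{cor:auto}: any two isomorphisms between elements of $\CM$ over a common base that are compatible with the maps to $\bbP^n$ must coincide. So I would fix a scheme $S$ with an open cover $S=\bigcup_{i\in I}U_i$, write $U_{ij}=U_i\cap U_j$ and $U_{ijk}=U_i\cap U_j\cap U_k$, and use Proposition~\ref{prop:functor} to make sense of all restrictions.

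For separatedness, suppose $(C_1,i_1),(C_2,i_2)\in\CM(S)$ become isomorphic over each $U_i$, witnessed by isomorphisms $\beta_i\colon C_1|_{U_i}\to C_2|_{U_i}$ with $i_2\circ\beta_i=i_1$. On each overlap $U_{ij}$ the two restrictions $\beta_i|_{U_{ij}}$ and $\beta_j|_{U_{ij}}$ are compatible isomorphisms between the same pair of elements of $\CM(U_{ij})$, so Corollary~\ref{cor:auto} forces them to agree. Since morphisms of schemes form a Zariski sheaf, the $\beta_i$ then glue to a global $\beta\colon C_1\to C_2$, which is an isomorphism and satisfies $i_2\circ\beta=i_1$ because both properties are local on $S$; hence $(C_1,i_1)=(C_2,i_2)$ in $\CM(S)$.

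For the gluing axiom I would start from data $(C_i,\iota_i)\in\CM(U_i)$ together with isomorphisms $\phi_{ij}\colon C_i|_{U_{ij}}\to C_j|_{U_{ij}}$ compatible with the maps to $\bbP^n$, expressing that the $(C_i,\iota_i)$ agree over overlaps. The key observation is that these automatically satisfy the cocycle condition: over $U_{ijk}$ both $\phi_{jk}\circ\phi_{ij}$ and $\phi_{ik}$ are compatible isomorphisms $C_i|_{U_{ijk}}\to C_k|_{U_{ijk}}$, so they coincide by Corollary~\ref{cor:auto}. Therefore the $C_i$ glue along the open subschemes $C_i|_{U_{ij}}$ to a scheme $C$ over $S$, and the $\iota_i$, being compatible with the $\phi_{ij}$, glue to a finite $S$-morphism $i\colon C\to\bbP^n_S$.

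It then remains to check that $(C,i)\in\CM(S)$, which I expect to be routine since every defining property is local: flatness of $C$ over $S$ and finiteness of $i$ may be tested on the $U_i$, and for the fiberwise conditions (i)--(iii) of Definition~\ref{def:CMfunctor} any $s\in S$ lies in some $U_i$, where the fiber $C_s=(C_i)_s$ already satisfies them. By construction $(C,i)$ restricts to $(C_i,\iota_i)$ over each $U_i$, which establishes the equalizer property. The only genuine obstacle in the whole argument is the cocycle condition; normally it must be imposed as an extra hypothesis, but here it is handed to us for free by the absence of nontrivial compatible automorphisms from Theorem~\ref{thm:auto} and Corollary~\ref{cor:auto}.
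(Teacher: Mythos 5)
Your proposal is correct and follows essentially the same route as the paper: both parts of the sheaf axiom are reduced to the rigidity of Theorem~\ref{thm:auto}/Corollary~\ref{cor:auto}, which forces the local isomorphisms to agree on overlaps in the separatedness step and hands you the cocycle condition for free in the gluing step, after which locality of the defining conditions of $\CM$ finishes the argument. This matches the paper's proof in structure and in every essential detail.
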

\begin{proof}
  Let $S$ be a scheme, and let $\{S_a\}$ be an open cover of $S$. We
  have to show that the sequence $$\CM(S)\to \prod_a
  \CM(S_a)\rightrightarrows \prod_{a,b} \CM(S_{ab}),$$ where we write
  $S_{ab}$ for the intersection $S_a\cap S_b$, is exact.

  Let first $(C,i)$ and $(D,j)$ be two elements of $\CM(S)$
  restricting to the same element of $\CM(S_a)$ for every $a$. Then
  for every $a$ there exists an isomorphism
  $\iso{\alpha_a}{C_{S_a}}{D_{S_a}}$ that commutes with $i$ and $j$.
  Restriction of $\alpha_a$ and $\alpha_b$ to the intersection
  $C_{S_{ab}}$ gives an automorphism $\alpha_b^{-1}\circ \alpha_a$ of
  $C_{S_{ab}}$ that commutes with the finite morphism $i$. Since
  $\alpha_b^{-1}\circ\alpha_a$ is the identity morphism by
  Theorem~\ref{thm:auto}, we see that the morphisms $\alpha_a$
  coincide on the overlaps and hence glue to an isomorphism $\iso
  \alpha C D$ with $j\circ\alpha =i$. This shows that $(C,i)=(D,j)$ as
  elements of $\CM(S)$, and the first map is injective.

  Now let $\{(C_a,i_a)\in \CM(S_a)\}$ be a family that has the same
  image in $\prod_{a,b}\CM(S_{ab})$ under both restriction maps. That
  means that there exist isomorphisms
  $\iso{\alpha_{ab}}{(C_a)_{S_{ab}}}{(C_b)_{S_{ab}}}$ that commute
  with the restrictions of $i_a$ and $i_b$. By Theorem~\ref{thm:auto}
  and Corollary~\ref{cor:auto}, we have equalities
  $\alpha_{ba}=\alpha_{ab}^{-1}$ and
  $\alpha_{ac}=\alpha_{bc}\circ\alpha_{ab}$. Hence the $\{C_a\}$ glue
  to a scheme $C$ over $S$ with a morphism $\func{i}{C}{\bbP^n_S}$ such
  that $i\vert_{C_a}=i_a$ for every $a$. As the defining properties of
  $\CM$ are local on $S$, we see that \mbox{$(C,i)\in \CM(S)$}.
\end{proof}

Let $\func p {S'} S$ be a morphism of schemes. We have projection
morphisms $\func{p_i}{S'\times_S S'}{S'}$ for $i=1,2$ and
$\func{p_{ij}}{S'\times_S S'\times_S S'}{S'\times_S S'}$ for $1\leq
i<j\leq 3$. A {\em descent datum} is a scheme $X'$ over $S'$ together
with an $(S'\times_S S')$-isomorphism $\iso \varphi{p_1^*X'}{p_2^*X'}$
such that $p_{23}^*\varphi\circ p_{12}^*\varphi=p_{13}^*\varphi$. In
particular, for a $S$-scheme $X$ the pullback $p^*X=X_{S'}$
together with the natural isomorphism $\varphi_X\mathpunct :
p_1^*p^*X\cong(p\circ p_1)^*X=(p\circ p_2)^*X\cong p_2^*p^*X$ defines
the so-called canonical descent datum.

Let $\mathbf{Sch}_{S'/S}$ be the category where the objects are
descent data, and morphisms between descent data $(X_1',\varphi_1)$ and
$(X_2',\varphi_2)$ are $S'$-morphisms $\func \tau{X_1'}{X_2'}$ such
that $\varphi_2\circ p_1^*\tau=p_2^*\tau\circ \varphi_1$. Then there
is a functor $\func{G}{\mathbf{Sch}_S}{\mathbf{Sch}_{S'/S}}$ that maps
a $S$-scheme $X$ to the canonical descent datum $(p^*X, \varphi_X)$.
\begin{thm}[{\cite[Theorem
    14.70]{GortzWed:AlgGeo}}] \label{thm:descent_functor} Let $\func
  p{S'}S$ be a faithfully flat and quasi-compact morphism of schemes.
  \begin{enumerate}
  \item The functor $\func G{\mathbf{Sch}_S}\mathbf{Sch}_{S'/S},\
    X\mapsto (p^*X,\varphi_X)$ is fully faithful.
  \item Let $(X',\varphi)$ be a descent datum, and suppose that $X'$ is
    affine over $S'$. Then there exists a scheme $X$ that is affine
    over $S$ and such
    that $(X',\varphi)\cong(p^*X,\varphi_X)$.
  \end{enumerate}
\end{thm}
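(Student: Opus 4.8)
The plan is to recognize this as the standard theorem of faithfully flat quasi-compact descent for affine morphisms and to reduce both assertions to descent for modules along a faithfully flat ring extension. First I would note that both statements are local on $S$: full faithfulness concerns bijectivity of $\Hom$-sets, which may be checked locally, and effectivity in (ii) can be checked on an affine open cover of $S$, glued afterwards using assertion (i). Since $p$ is quasi-compact, I may therefore assume $S=\spec(R)$ and $S'=\spec(R')$ are affine, so that $R\ra R'$ is faithfully flat.

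The technical core is the descent statement for modules: for faithfully flat $R\ra R'$, the base-change functor $M\mapsto M\otimes_R R'$, together with its canonical descent datum, is an equivalence from $R$-modules to $R'$-modules equipped with a descent datum. The essential input is the exactness of the Amitsur complex
$$0\ra M\ra M\otimes_R R'\rightrightarrows M\otimes_R R'\otimes_R R'.$$
I would prove this by first treating the case in which $R\ra R'$ admits a retraction, where an explicit contracting homotopy shows exactness, and then reducing the general case to it: the multiplication map endows $R'\ra R'\otimes_R R'$ with a retraction, so the complex becomes exact after applying $-\otimes_R R'$, and faithful flatness of $R\ra R'$ then forces the original complex to be exact. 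The inverse functor sends a descent datum to the equalizer of its two structure maps.

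Granting module descent, assertion (i) follows by translating morphisms into sheaf data. Faithfulness holds because $X_{S'}\ra X$ is faithfully flat, so two morphisms agreeing after pullback to $S'$ already agree; fullness amounts to descending a morphism $X_{S'}\ra Y_{S'}$ compatible with the descent data, which is controlled by the sheaf property of the structure sheaf in the faithfully flat topology, i.e. precisely by the exactness above. For assertion (ii), I would write $X'=\spec_{S'}(\calA')$ for a quasi-coherent $\calO_{S'}$-algebra $\calA'$; the isomorphism $\varphi$ of $S'\times_S S'$-schemes is an isomorphism of algebras and hence equips $\calA'$ with a descent datum compatible with multiplication. Module descent produces a quasi-coherent $\calO_S$-module $\calA$ descending $\calA'$, and compatibility of the descent with tensor products transports the algebra structure to $\calA$. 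Then $X:=\spec_S(\calA)$ is affine over $S$ with canonical descent datum isomorphic to $(X',\varphi)$.

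The main obstacle is the exactness of the Amitsur complex, that is, genuine faithfully flat descent for modules; the remaining steps are formal translations between affine schemes, their structure sheaves, and the associated modules. The delicate point there is that faithful flatness alone provides no retraction of $R\ra R'$, so the homotopy argument must be carried out after the auxiliary split base change $R'\ra R'\otimes_R R'$ and only afterwards descended using faithful flatness.
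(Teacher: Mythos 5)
The paper offers no proof of this statement at all: it is imported verbatim from G\"ortz--Wedhorn (Theorem 14.70) and used as a black box in the proof of Proposition~\ref{prop:sheaf_etale_aff}. Your proposal instead sketches the standard proof of faithfully flat descent for affine morphisms, and the sketch is essentially correct: the reduction to affine $S$ and $S'$ (where quasi-compactness is what lets you replace $S'$ by a finite disjoint union of affines, a step you pass over quickly but which is where that hypothesis actually enters), the exactness of the Amitsur complex via the retraction trick and faithfully flat base change, and the translation of an affine descent datum into a quasi-coherent algebra with descent datum. Two points deserve a little more care than your outline gives them. First, exactness of the Amitsur complex only shows that a module $M$ is recovered as the equalizer of its own canonical descent datum; for assertion (ii) you must additionally show that for an \emph{arbitrary} descent datum $(M',\varphi)$ the natural map $N\otimes_R R'\to M'$ from the equalizer $N$ is an isomorphism, which is a separate, though exactly parallel, application of the same split-after-base-change argument. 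Second, assertion (i) concerns arbitrary $S$-schemes $X$ and $Y$, not only affine ones, so fullness requires covering $Y$ by affines, descending the morphism locally, and checking compatibility of the local descents; this is routine but not literally contained in module descent. Neither point is a genuine gap --- both are standard --- but they are where the remaining details live, and your identification of the Amitsur complex as the technical core is the right one.
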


\begin{prop}\label{prop:sheaf_etale_aff}
  Let $V\to U$ be a faithfully flat morphism of affine schemes $U$ and
  $V$. Then the sequence $$\CM(U)\to \CM(V)\rightrightarrows
  \CM(V\times_U V)$$ is exact.
\end{prop}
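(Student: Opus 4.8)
The plan is to deduce exactness from faithfully flat descent, carried out over the base $\bbP^n_U$ rather than over $U$ itself. Write $\func p V U$ for the given morphism; as $U$ and $V$ are affine, $p$ is quasi-compact, and its base change $\func{\bar p}{\bbP^n_V}{\bbP^n_U}$ is again faithfully flat and quasi-compact. The key observation is that for any $(C,i)\in\CM(T)$ the finite morphism $i$ is affine, so $C$ is affine over $\bbP^n_T$; and for $(C,i)\in\CM(U)$ one has $(C_V,i_V)=\bar p^{\,*}C$ as schemes affine over $\bbP^n_V=\bbP^n_U\times_U V$, compatibly with the maps to projective space. This places us in the setting of Theorem~\ref{thm:descent_functor} with $S=\bbP^n_U$ and $S'=\bbP^n_V$, and the heart of the argument will be that Corollary~\ref{cor:auto} supplies for free the compatibilities that descent usually forces one to check by hand.

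For injectivity of $\CM(U)\to\CM(V)$, I would take $(C_1,i_1),(C_2,i_2)\in\CM(U)$ that become isomorphic over $V$, yielding a $\bbP^n_V$-isomorphism $\func\beta{\bar p^{\,*}C_1}{\bar p^{\,*}C_2}$. To descend $\beta$ I must check that it is a morphism of the canonical descent data, i.e. that $\varphi_{C_2}\circ p_1^*\beta=p_2^*\beta\circ\varphi_{C_1}$ over $\bbP^n_{V\times_U V}$. Both sides are isomorphisms between elements of $\CM(V\times_U V)$ commuting with the finite maps to projective space, so they agree by Corollary~\ref{cor:auto}. Full faithfulness in Theorem~\ref{thm:descent_functor}(i) then produces a $\bbP^n_U$-isomorphism $\func\alpha{C_1}{C_2}$, which is exactly an equality $(C_1,i_1)=(C_2,i_2)$ in $\CM(U)$.

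For the equalizer condition, I first note that the image of $\CM(U)$ lands in the equalizer, since both projections $V\times_U V\to V$ become equal after composing with $p$, canonically identifying the two pullbacks of $(C_V,i_V)$. Conversely, given $(C',i')\in\CM(V)$ in the equalizer, I obtain a $\bbP^n_{V\times_U V}$-isomorphism $\func\varphi{p_1^*C'}{p_2^*C'}$. The cocycle condition $p_{23}^*\varphi\circ p_{12}^*\varphi=p_{13}^*\varphi$ is once more an equality of isomorphisms in $\CM(V\times_U V\times_U V)$ compatible with the maps to projective space, so it holds automatically by Corollary~\ref{cor:auto}. Thus $(C',\varphi)$ is a genuine descent datum; as $C'$ is affine over $\bbP^n_V$, Theorem~\ref{thm:descent_functor}(ii) yields a scheme $C$ affine over $\bbP^n_U$, that is, a finite morphism $\func i C{\bbP^n_U}$, with $(\bar p^{\,*}C,\varphi_C)\cong(C',\varphi)$; in particular $(C_V,i_V)\cong(C',i')$ in $\CM(V)$.

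It then remains to verify that $(C,i)$ actually lies in $\CM(U)$. Finiteness of $i$ and flatness of $C$ over $U$ descend along the faithfully flat quasi-compact morphism $p$ from the same properties of $i'$ and $C'$. For the fiberwise conditions I would use the surjectivity of $p$: each $s\in U$ admits a preimage $t\in V$, and $(C_V)_t=(C_s)_{\kappa(t)}$ is the base change of the fiber $C_s$ along $\kappa(s)\hookrightarrow\kappa(t)$. Since the formation of $\coker(\calO_{\bbP^n}\to i_*\calO_C)$ and of the relevant cohomology commutes with this flat base change, Lemma~\ref{lm:func_field_change} and Lemma~\ref{lm:iso_alt} transport the Cohen--Macaulay property, purity of dimension $1$, the generic-immersion condition, and the Hilbert polynomial from $(C_V)_t$ back to $C_s$. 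This exhibits $(C,i)\in\CM(U)$ restricting to $(C',i')$, completing the proof of exactness. The step one might fear --- verifying the cocycle condition --- dissolves thanks to the rigidity of Corollary~\ref{cor:auto}; the only genuine care is needed in descending the fiberwise geometric conditions, which is precisely the purpose of the field-change Lemmas~\ref{lm:func_field_change} and~\ref{lm:iso_alt}.
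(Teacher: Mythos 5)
Your proof is correct and follows essentially the same route as the paper: descent over $\bbP^n_U$ via Theorem~\ref{thm:descent_functor}, with the rigidity of Corollary~\ref{cor:auto} (equivalently Theorem~\ref{thm:auto}) supplying both the compatibility of the isomorphism with the canonical descent data and the cocycle condition, followed by descent of finiteness and flatness and the fiberwise checks via Lemmas~\ref{lm:func_field_change} and~\ref{lm:iso_alt}. The only cosmetic slip is the phrase identifying ``affine over $\bbP^n_U$'' with ``finite''; as you yourself note a sentence later, finiteness must be (and is) descended separately.
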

\begin{proof}
  Being faithfully flat and quasi-compact is stable under base change.
  Hence also the induced map $\bbP^n_V\ra \bbP^n_U$ is faithfully flat
  and quasi-compact. We will use results on descent as in
  Theorem~\ref{thm:descent_functor} with $S'=\bbP^n_V$ and
  $S=\bbP^n_U$.

  Let $(C,i)$ and $(D,j)$ be two elements of $\CM(U)$ having the same
  image in $\CM(V)$. Then there exists an isomorphism
  $\iso\alpha{C_V}{D_V}$ with $i_V=j_V\circ \alpha$. By
  Theorem~\ref{thm:auto}, we have that $\varphi_{D_V}\circ
  p_1^*\alpha=p_2^*\alpha\circ \varphi_{C_V}$, and hence $\alpha$ is an
  isomorphism between the descent data $(C_V,\varphi_{C_V})$ and
  $(D_V,\varphi_{D_V})$ in $\mathbf{Sch}_{\bbP^n_V/\bbP^n_U}$. Since
  the functor $\func G {\mathbf{Sch}_{\bbP^n_U}}{\mathbf{
      Sch}_{\bbP^n_V/\bbP^n_U}}$ is fully faithful by
  Theorem~\ref{thm:descent_functor}, it follows that $C$ and $D$ are
  isomorphic as schemes over $\bbP^n_U$, and hence $(C,i)=(D,j)$ in
  $\CM(U)$.

  Now suppose that an element $(C_V,i_V)\in \CM(V)$ has the same
  image in $\CM(V\times_UV)$ under both projections. Then we have an
  isomorphism $\iso{\alpha}{p_1^*C_V}{p_2^*C_V}$ of schemes over
  $\bbP^n_{V\times_U V}$. Again by Theorem~\ref{thm:auto}, $\alpha$
  satisfies the cocycle condition so we get a descent datum
  $(C_V,\alpha)$. Since $C_V$ is affine over $S'=\bbP^n_V$, there exists by
  Theorem~\ref{thm:descent_functor} a scheme $C$ with an
  affine morphism $\func i C \bbP^n_U$ such that $C\times_UV\cong C_V$
  as schemes over $\bbP^n_V$. We claim that $(C,i)\in \CM(U)$. As the
  properties finite and flat are stable under faithfully flat descent
  by \cite[Proposition~1.2.36]{FGAexplained}, the morphism $i$ is
  finite and $C$ is flat over $U$. Note further that the map $V\ra U$
  is surjective. Hence every fiber of $C$ over $U$ gives a fiber of
  $C_V$ over $V$ after a suitable change of base field. Then it
  follows from Lemma~\ref{lm:func_field_change} that $(C,i)$ indeed is
  an element of $\CM(U)$.
\end{proof}
\begin{thm}\label{thm:sheaf}
  The functor $\CM$ is a sheaf in the \'etale topology.
\end{thm}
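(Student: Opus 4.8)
The plan is to invoke the \'etale-sheaf criterion recorded in Proposition~\ref{prop:sheaf_criterion}, which reduces the claim to two conditions: that $\CM$ is a sheaf in the Zariski topology, and that for every surjective \'etale morphism $\func V U$ of affine schemes the diagram $\CM(U)\ra\CM(V)\rightrightarrows\CM(V\times_U V)$ is exact. Both of these have essentially been prepared in the immediately preceding results, so the argument amounts to assembling them. First I would dispatch the Zariski condition, since this is precisely the content of Proposition~\ref{prop:sheaf_Zariski}.

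For the second condition, the key observation is that a surjective \'etale morphism of affine schemes is automatically faithfully flat and quasi-compact: \'etale morphisms are flat, surjectivity together with flatness yields faithful flatness, and any morphism of affine schemes is quasi-compact. Hence Proposition~\ref{prop:sheaf_etale_aff}, which establishes exactness of $\CM(U)\ra\CM(V)\rightrightarrows\CM(V\times_U V)$ for an arbitrary faithfully flat morphism of affine schemes, applies verbatim to the \'etale covering $\func V U$, giving the required exactness.

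I expect no genuine obstacle here, as the substantive work has already been carried out: the injectivity on the left and the descent of objects in Proposition~\ref{prop:sheaf_etale_aff} rest on the rigidity Theorem~\ref{thm:auto} (no nontrivial automorphisms compatible with $i$) and on faithfully flat descent for affine morphisms (Theorem~\ref{thm:descent_functor}). The only point one must verify in the present proof is the elementary implication that a surjective \'etale morphism is faithfully flat; once this is noted, both hypotheses of Proposition~\ref{prop:sheaf_criterion} are satisfied and the theorem follows at once.
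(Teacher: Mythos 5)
Your argument is correct and coincides with the paper's own proof: both reduce the claim via Proposition~\ref{prop:sheaf_criterion} to the Zariski sheaf property (Proposition~\ref{prop:sheaf_Zariski}) and to exactness for affine \'etale coverings, which follows from Proposition~\ref{prop:sheaf_etale_aff} once one notes that a surjective \'etale morphism is faithfully flat. No gaps.
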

\begin{proof}
  We have seen in Proposition~\ref{prop:sheaf_Zariski} that $\CM$ is a
  sheaf in the Zariski topology. Moreover, since a surjective and \'etale
  morphism is faithfully flat, the sheaf property is satisfied for
  affine \'etale coverings by Proposition~\ref{prop:sheaf_etale_aff}.
  Now the statement follows from
  Proposition~\ref{prop:sheaf_criterion}.
\end{proof}

\section{Representability of the diagonal}

In this section, we show that the diagonal map $\CM\to \CM\times \CM$
is representable. In particular, this implies that every map $X\to
\CM$, where $X$ is a scheme, is representable.

\begin{prop}\label{prop:mor_isom_repres}.
  Let $S$ be a locally Noetherian scheme, and let $X$ and $Y$ be
  schemes that are projective over $S$. Suppose further that $X$ is flat over
  $S$. The functors $({\mathbf{Sch}}_S)^\circ\to {\mathbf{Sets}}$
  defined by
  \begin{align*}
    \calH om_S(X,Y)(T)& :=\Hom_T(X_T,Y_T),\\ \calI som_S(X,Y)(T)&:=
    \opn{Isom}_T(X_T,Y_T),
  \end{align*}
  for every $S$-scheme $T$, are represented by schemes locally of
  finite type over $S$.
\end{prop}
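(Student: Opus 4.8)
The plan is to realize both functors as open subschemes of a Hilbert scheme by means of the classical graph construction together with Grothendieck's existence theorem.

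First I would encode a morphism by its graph. A $T$-morphism $\func f{X_T}{Y_T}$ determines the graph $\Gamma_f\subseteq X_T\times_T Y_T=(X\times_S Y)_T$, namely the image of the section $(\id,f)$; since $Y\to S$ is separated, $\Gamma_f$ is closed, and the first projection restricts to an isomorphism $\func{\pr_1}{\Gamma_f}{X_T}$. Conversely, a closed subscheme $Z\subseteq (X\times_S Y)_T$ for which $\pr_1\vert_Z$ is an isomorphism yields the morphism $X_T\xrightarrow{\sim}Z\hookrightarrow(X\times_S Y)_T\xrightarrow{\pr_2}Y_T$, and the two constructions are mutually inverse. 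As $X$ is flat over $S$, the scheme $X_T$, and hence any such $Z\cong X_T$, is flat over $T$, so $Z$ is a $T$-point of the Hilbert functor of $X\times_S Y$. This identifies $\calH om_S(X,Y)$ with the subfunctor of $\Hilb_{X\times_S Y/S}$ of those $Z$ whose first projection is an isomorphism, and $\calI som_S(X,Y)$ with the subfunctor of those $Z$ for which \emph{both} projections $\pr_1\vert_Z$ and $\pr_2\vert_Z$ are isomorphisms. Since $X\times_S Y$ is projective over $S$, Grothendieck's theorem \cite{FGA221} represents $\Hilb_{X\times_S Y/S}$ by a scheme that is a disjoint union, indexed by Hilbert polynomials, of projective $S$-schemes; in particular it is locally of finite type over $S$. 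It therefore suffices to prove that the two subfunctors are represented by open subschemes, since an open subscheme inherits being locally of finite type over $S$.

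For the openness I would let $H:=\Hilb_{X\times_S Y/S}$ carry its universal closed subscheme $\calZ\subseteq(X\times_S Y)_H$, flat over $H$, and set $g:=\pr_1\vert_\calZ\colon\calZ\to X_H$. The claim is that $U:=\{h\in H:g_h \text{ is an isomorphism}\}$ is open; then $U$ represents $\calH om_S(X,Y)$, because a morphism $T\to H$ factors through $U$ precisely when the pulled-back family has its first projection an isomorphism, i.e.\ when it arises from a morphism $X_T\to Y_T$. Both $\calZ$ and $X_H$ are proper and flat over $H$ (the latter because $X$ is flat over $S$); being a closed immersion can be tested fiberwise by \cite[Proposition~(4.6.7)]{EGAIII1}, and combined with the flatness of $\calZ$ and $X_H$ over $H$ this shows that the fiberwise-isomorphism locus $U$ is open. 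For $\calI som_S(X,Y)$ I would intersect $U$ with the locus where $g':=\pr_2\vert_\calZ\colon\calZ\to Y_H$ is a fiberwise isomorphism.

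The main obstacle is precisely the openness of these isomorphism loci, and in particular the asymmetry in the $\calI som$ case, where $Y$ is not assumed flat over $S$. The key point is that at a fiber over $h_0$ at which $g'_{h_0}$ is an isomorphism, the fiberwise criterion of flatness (EGA IV$_3$, 11.3.10) forces $Y_H$ to be $H$-flat and $g'$ to be flat in a neighborhood of $\calZ_{h_0}$, so that one is reduced to the situation of two proper, flat families over $H$. There the delicate step is the reverse spreading-out: propagating an isomorphism on the single fiber $\calZ_{h_0}\xrightarrow{\sim}Y_{h_0}$ to a neighborhood of $h_0$. This follows from the constancy of cohomology in flat families, which guarantees that the induced map $\calO_{Y_H}\to g'_*\calO_\calZ$ remains an isomorphism near the fiber. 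Everything else in the argument is formal.
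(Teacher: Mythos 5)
Your proof is correct and is essentially the paper's own argument: after the (implicit) reduction to a Noetherian base, the paper simply cites FGA 221, Section 4.c, and the content of that citation is exactly the graph construction you carry out, realizing $\calH om_S(X,Y)$ and $\calI som_S(X,Y)$ as open subfunctors of $\Hilb_{X\times_S Y/S}$, with the flatness of $X$ over $S$ making graphs flat over the base and the fiberwise flatness criterion handling the asymmetry in the $\calI som$ case. Nothing further is needed.
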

\begin{proof}
  Since the statement is local on $S$ by \cite[Theorem
  8.9]{GortzWed:AlgGeo}, we can assume that $S$ is Noetherian, and the
  proposition follows from \cite[Section 4.c,
  p.221-19f.]{FGA221}
\end{proof}

\begin{cor}\label{cor:isom_repr}
  Let $S$ be locally Noetherian, and let $\func {f_1}{X_1}Y$ and
  $\func{f_2}{X_2}{Y}$ be morphisms of projective $S$-schemes. Suppose
  that $X_1$ and $X_2$ are flat over $S$. Let \[\func{\calI:= \calI
    som_S((X_1,f_1),(X_2,f_2))}
  {({\mathbf{Sch}}_S)^\circ}{\mathbf{Sets}}\] be the functor defined
  by $$\calI(T)=\{\alpha\in \calI som_S(X_1,X_2)(T)\mid
  f_{1,T}=f_{2,T}\circ \alpha\}$$ for every $S$-scheme $T$. Then
  $\calI$ is represented by a scheme locally of finite type over $S$.
\end{cor}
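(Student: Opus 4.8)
The plan is to realize the functor $\calI$ as a locally closed (in fact closed) subscheme of the representing object for $\calI som_S(X_1,X_2)$, which exists by Proposition~\ref{prop:mor_isom_repres}. Write $I:=\calI som_S(X_1,X_2)$ for the scheme representing the isomorphism functor; it is locally of finite type over $S$. The condition cutting out $\calI$ from $I$ is the compatibility $f_{1,T}=f_{2,T}\circ\alpha$, so the essential point is to show that imposing this equality of morphisms to $Y$ is a representable (and in fact closed) condition.

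First I would introduce the universal isomorphism over $I$. On the $I$-scheme $X_{1,I}$ we have the universal isomorphism $\iso{\alpha^{\mathrm{univ}}}{X_{1,I}}{X_{2,I}}$, and hence two morphisms from $X_{1,I}$ to $Y_I$: the composite $f_{1,I}$ on the one hand, and $f_{2,I}\circ\alpha^{\mathrm{univ}}$ on the other. For an $S$-scheme $T$ with a map $g\colon T\to I$, the pullbacks of these two morphisms agree if and only if the corresponding $\alpha$ lies in $\calI(T)$; this is just the definition of $\calI$ together with the universal property of $I$. Thus $\calI$ is exactly the functor on $\mathbf{Sch}_I$ classifying those base changes over which the two morphisms $X_{1,I}\rightrightarrows Y_I$ become equal.

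Next I would invoke a standard representability result for equalizers of morphisms into a separated (or projective) scheme. Since $Y$ is projective over $S$, the diagonal $Y\to Y\times_S Y$ is a closed immersion, so $Y_I\to Y_I\times_I Y_I$ is a closed immersion. The two morphisms $f_{1,I}$ and $f_{2,I}\circ\alpha^{\mathrm{univ}}$ together give a single morphism $X_{1,I}\to Y_I\times_I Y_I$, and the locus where they agree is the preimage of the diagonal, which is therefore a closed subscheme $Z\subseteq X_{1,I}$. The functor $\calI$ is then the functor of $I$-schemes $T\to I$ over which $X_{1,T}$ factors through $Z_T$, equivalently over which $Z_T=X_{1,T}$. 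Because $X_1$ is flat and projective over $S$, the map $I'\to I$ representing "$X_{1,T}\subseteq Z$ is everything'' can be identified with a closed subscheme of $I$; concretely, one pushes the closed condition down along the flat projective morphism $X_{1,I}\to I$ to obtain the scheme representing $\calI$, locally of finite type over $S$ as a closed subscheme of $I$.

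The main obstacle is precisely this last descent of the condition from $X_{1,I}$ to the base $I$: equality of two morphisms out of $X_{1,I}$ is a priori a condition on the total space, and I must show it is represented by a subscheme of $I$ itself. The clean way around this is to note that, since $Y_I$ is separated over $I$ and $X_{1,I}\to I$ is flat and proper, the equalizer of the two $I$-morphisms $X_{1,I}\rightrightarrows Y_I$ is representable by a closed subscheme of $I$: this is the standard fact that for a proper flat morphism, the locus in the base over which two maps into a separated target coincide is closed and represents the equalizer subfunctor. Applying this with the target $Y_I$ gives the representing scheme for $\calI$ directly as a closed subscheme of $I$, hence locally of finite type over $S$. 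I would take care to record that flatness of $X_1$ over $S$ is exactly what is needed to pass from the closed condition on fibers to a closed subscheme of the base.
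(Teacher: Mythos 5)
Your argument is correct, but it takes a genuinely different route from the paper. The paper never descends a condition from the total space $X_{1,I}$ down to the base: it writes $\calI$ as the fiber product of the map $\func\tau{\calI som_S(X_1,X_2)}{\calH om_S(X_1,Y)\times \calH om_S(X_1,Y)}$, $\alpha\mapsto (f_{1,T},f_{2,T}\circ\alpha)$, with the diagonal of the Hom functor; since $\calH om_S(X_1,Y)$ and $\calI som_S(X_1,X_2)$ are representable and locally of finite type over $S$ by Proposition~\ref{prop:mor_isom_repres}, representability of $\calI$ and the finiteness claim then follow formally from the existence of fiber products of schemes. You instead cut $\calI$ out of the Isom scheme as the equalizer locus of $f_{1,I}$ and $f_{2,I}\circ\alpha^{\mathrm{univ}}$ on $X_{1,I}$, and the whole weight of your proof rests on the ``standard fact'' that for $X_{1,I}\to I$ proper and flat and $Y_I\to I$ separated this equalizer condition is represented by a closed subscheme of $I$. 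That fact is true, but it is not cheap: it is precisely Proposition~\ref{prop:morph_equal} of this paper, whose proof goes through Hilbert schemes and the fact that a proper monomorphism is a closed immersion, and which moreover assumes a Noetherian base and constant Hilbert polynomial on the fibers of $X_1$ (both satisfied only after passing to an open and closed decomposition of $I$, which you should make explicit). So your route buys a slightly stronger conclusion --- $\calI$ is a closed subscheme of $\calI som_S(X_1,X_2)$ --- at the cost of a substantially harder input, whereas the paper's diagonal trick gets representability using only the Hom-scheme machinery already established; note that the paper's $\Delta$ is itself a closed immersion (the Hom scheme being separated), so the same strengthening is available there too.
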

\begin{proof}
  Let $\func\Delta{\calH om_S(X_1,Y)}{\calH om_S(X_1,Y)\times\calH
    om_S(X_1,Y)}$ be the diagonal map. Consider further the
  map \[\func \tau{\calI som_S(X_1,X_2)}{\calH om_S(X_1,Y)\times \calH
    om_S(X_1,Y)}\] given by $\tau(T)(\alpha)=(f_{1,T},f_{2,T}\circ
  \alpha)$. Then the diagram
  $$\xymatrix{\calI \ar[r] \ar[d] & 
    \calI som_S(X_1,X_2) \ar[d]^\tau \\
    \calH om_S(X_1,Y) \ar[r]^-\Delta & \calH om_S(X_1,Y)\times \calH
    om_S(X_1,Y)}$$ is Cartesian. Since the functors $\calI
  som_S(X_1,X_2)$ and $\calH om_S(X_1,Y)$ are representable by
  Proposition~\ref{prop:mor_isom_repres}, it follows that the fiber
  product $\calI$ is representable. Note moreover that $\Delta$ and
  hence $\calI \to \calI som_S(X_1,X_2)$ is locally of finite type.
  As $\calI som_S(X_1,X_2)$ is locally of finite type over $S$, it
  follows now that $\calI$ is locally of finite type over $S$.
\end{proof}

\begin{lm}\label{lm:isom_repr_mono}
  Let $S$ be a locally Noetherian scheme, and let $(C_1,i_1)$ and
  $(C_2,i_2)$ be elements of $\CM(S)$. Let $\calI=\calI
  som_S((C_1,i_1),(C_2,i_2))$ be the functor defined in
  Corollary~\ref{cor:isom_repr}. Then $\calI$ is represented by a
  scheme $Y$ such that the structure morphism $Y\ra S$ is a
  monomorphism locally of finite type.
\end{lm}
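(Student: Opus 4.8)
The plan is to obtain everything from results already in place, with the representability supplied by Corollary~\ref{cor:isom_repr} and the monomorphism property by the rigidity recorded in Corollary~\ref{cor:auto}. First I would check that $\calI$ falls within the scope of Corollary~\ref{cor:isom_repr}. By the definition of $\CM$ the schemes $C_1$ and $C_2$ are flat over $S$, and since the morphisms $i_1,i_2$ are finite, $C_1$ and $C_2$ are finite, hence projective, over $\bbP^n_S$; composing with the projective morphism $\bbP^n_S\to S$ shows that $C_1$ and $C_2$ are projective over $S$. Thus Corollary~\ref{cor:isom_repr}, applied with common target $\bbP^n_S$ and the two maps $i_1,i_2$, produces a scheme $Y$ locally of finite type over $S$ representing $\calI=\calI som_S((C_1,i_1),(C_2,i_2))$. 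This already yields both the representability and the ``locally of finite type'' part of the assertion.

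It remains to prove that the structure morphism $\func{\pi}{Y}{S}$ is a monomorphism. Here I would use the criterion that $\pi$ is a monomorphism if and only if for every scheme $T$ the map $Y(T)\to S(T)$ on $T$-valued points is injective. Since $Y$ represents $\calI$, the fibre of this map over a given $S$-scheme structure $\func{h}{T}{S}$ is canonically identified with $\calI(T)$; equivalently, the diagonal $\func{\Delta}{Y}{Y\times_S Y}$ is an isomorphism precisely when the functorial diagonal $\calI\to\calI\times\calI$ is one. In either formulation, $\pi$ is a monomorphism exactly when $\calI(T)$ has at most one element for every $S$-scheme $T$. As $\pi$ is locally of finite type it suffices to test this on locally Noetherian $T$ (indeed on spectra of fields), so we stay within the conventions of the paper.

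Thus the heart of the matter is the bound $\#\calI(T)\leq 1$. Fixing an $S$-scheme $T$, Proposition~\ref{prop:functor} shows that the base changes $(C_{1,T},i_{1,T})$ and $(C_{2,T},i_{2,T})$ again lie in $\CM(T)$, and by construction
\[
  \calI(T)=\{\alpha\in \opn{Isom}_T(C_{1,T},C_{2,T}) \mid i_{1,T}=i_{2,T}\circ\alpha\}.
\]
Corollary~\ref{cor:auto}, applied over the base $T$, asserts that there is at most one such isomorphism, so $\#\calI(T)\leq 1$ and $\pi$ is a monomorphism. I do not expect a serious obstacle in this argument: the only points demanding care are verifying the projectivity hypothesis of Corollary~\ref{cor:isom_repr} and recognising that the absence of nontrivial compatible isomorphisms in Corollary~\ref{cor:auto} is exactly the injectivity on $T$-points characterising a monomorphism. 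The genuinely nontrivial input, namely the rigidity of $\CM$-curves, has already been carried out in Theorem~\ref{thm:auto}.
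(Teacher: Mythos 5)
Your proof is correct and takes essentially the same route as the paper's: representability and the ``locally of finite type'' part come from Corollary~\ref{cor:isom_repr}, and the monomorphism property is precisely the statement that $\calI(T)$ has at most one element for every $S$-scheme $T$, which is Corollary~\ref{cor:auto}. Your extra verification of the projectivity hypothesis of Corollary~\ref{cor:isom_repr} (finite over $\bbP^n_S$, hence projective over $S$) is a welcome detail that the paper leaves implicit, and the aside about testing on spectra of fields is unnecessary since the pointwise argument already works for arbitrary $T$.
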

\begin{proof}
  Let $Y$ be the $S$-scheme that represents $\calI$ by
  Corollary~\ref{cor:isom_repr}, and let $\func h Y S$ be the
  structure morphism. Let $\func{g_1,g_2}T Y$ be two morphisms from a
  scheme $T$ such that $h\circ g_1=h\circ g_2$.  Taking this
  composition as structure map over $S$, we can consider $g_1$ and
  $g_2$ as elements of $\Hom_S(T,Y)=\calI(T)$. So $g_1$ and $g_2$
  correspond to two isomorphisms
  $\iso{\alpha_1,\alpha_2}{C_{1,T}}{C_{2,T}}$ over $\bbP^n_T$. Since
  every set $\calI(T)$ contains at most one element by
  Corollary~\ref{cor:auto}, it follows that $\alpha_1=\alpha_2$.  But
  this means $g_1=g_2$, and we get that $h$ is a monomorphism.
\end{proof}

\begin{prop}\label{prop:representable}
  The diagonal $\CM\ra \CM\times \CM$ is representable and locally of
  finite type.
\end{prop}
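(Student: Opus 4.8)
The plan is to deduce representability of the diagonal directly from the representability of the $\calI som$ functor proved in Lemma~\ref{lm:isom_repr_mono}. By definition, $\Delta\colon\CM\to\CM\times\CM$ is representable precisely when, for every locally Noetherian scheme $T$ and every morphism $T\to\CM\times\CM$, the fiber product $T\times_{\CM\times\CM}\CM$ is representable by a scheme. Since $(\CM\times\CM)(T)=\CM(T)\times\CM(T)$, such a morphism is the same datum as a pair $(C_1,i_1),(C_2,i_2)\in\CM(T)$, and I will fix this pair throughout.

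First I would unwind the fiber product as a functor on $\mathbf{Sch}$. For a scheme $g\colon T'\to T$ over $T$, an element of $(T\times_{\CM\times\CM}\CM)(T')$ is a pair consisting of $g$ and an object $\phi\in\CM(T')$ whose image $(\phi,\phi)$ under the diagonal coincides with the image $\bigl((C_1,i_1)_{T'},(C_2,i_2)_{T'}\bigr)$ of $g$. This forces $\phi=(C_1,i_1)_{T'}=(C_2,i_2)_{T'}$ in $\CM(T')$, so $\phi$ is determined by $g$ and the only condition on $g$ is the equality $(C_1,i_1)_{T'}=(C_2,i_2)_{T'}$. By Definition~\ref{def:CMfunctor} this equality holds if and only if there is an isomorphism $(C_1)_{T'}\cong(C_2)_{T'}$ over $\bbP^n_{T'}$, that is, if and only if the set $\calI(g)$ is nonempty, where $\calI=\calI som_T((C_1,i_1),(C_2,i_2))$ is the functor of Lemma~\ref{lm:isom_repr_mono} applied with base $S=T$.

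Next I would invoke Lemma~\ref{lm:isom_repr_mono} to represent $\calI$ by a scheme $Y$ whose structure map $h\colon Y\to T$ is a monomorphism locally of finite type. Since $Y$ represents $\calI$, we have $\calI(g)=\Hom_T(T',Y)=\{g'\colon T'\to Y\mid h\circ g'=g\}$, and as $h$ is a monomorphism this set has at most one element, in agreement with Corollary~\ref{cor:auto}. Thus $\calI(g)$ is nonempty exactly when $g$ factors, necessarily uniquely, through $h$, so by the previous paragraph the fiber product functor sends $T'$ to the set of morphisms $T'\to T$ that factor through $h$. Because $h$ is a monomorphism this set is precisely the image of the injection $\Hom(T',Y)\hookrightarrow\Hom(T',T)$, so the map $Y\to T\times_{\CM\times\CM}\CM$ induced by $h$ is an isomorphism of functors and the fiber product is represented by $Y$.

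Representability of the diagonal then follows, and $\Delta$ is locally of finite type because the representing morphism $h\colon Y\to T$ is locally of finite type by Lemma~\ref{lm:isom_repr_mono}. I expect the only delicate point to be the functorial bookkeeping in the middle steps: matching the equality $(C_1,i_1)_{T'}=(C_2,i_2)_{T'}$ in $\CM(T')$ with the nonemptiness of $\calI(g)$, and then using the monomorphism property of $h$ to recognize that the fiber product functor is exactly the functor $\Hom(-,Y)$ represented by $Y$, rather than merely a subfunctor of $\Hom(-,T)$ admitting a monomorphism from $Y$. All of the geometric content has already been absorbed into Corollary~\ref{cor:auto} and Lemma~\ref{lm:isom_repr_mono}, so no further substantial work should be required.
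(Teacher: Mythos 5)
Your proposal is correct and follows essentially the same route as the paper's own proof: both reduce the fiber product over $\CM\times\CM$ to the functor $\calI som_S((C_1,i_1),(C_2,i_2))$, invoke Lemma~\ref{lm:isom_repr_mono} to represent it by a scheme $Y$ whose structure map is a monomorphism locally of finite type, and use the monomorphism property to identify the fiber product with $\Hom(-,Y)$. Your explicit unwinding of why the second component $\phi$ of a point of the fiber product is forced to equal $(C_1,i_1)_{T'}$ is a slightly more careful bookkeeping step than the paper records, but the argument is the same.
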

\begin{proof}
  Let $S$ be a locally Noetherian scheme, and \mbox{$S\to \CM\times
    \CM$} a map corresponding to elements $(C_1,i_1),(C_2,i_2)\in
  \CM(S)$. We have to show that the fiber product
  $\CM\times_{\CM\times \CM} S$ is representable and that the second
  projection \mbox{$\CM\times_{\CM\times \CM} S\to S$} is locally of
  finite type.
  
  For every morphism $\func g T S$, the composition $T\stackrel g \to
  S \to \CM\times \CM$ is represented by the elements
  $(C_{1,T},i_{1,T})$ and $(C_{2,T},i_{2,T})$ of $\CM(T)$.  It follows
  that
  $$(\CM\times_{\CM\times \CM}S)(T)=\left\{\func g T S\left\vert
 \begin{array}{ll} (C_{1,T},i_{1,T})=(C_{2,T},i_{2,T}) \\\text{ in
   }\CM(T)   \end{array}
\right.\right\}.$$ By
  definition of $\CM$, the two pairs $(C_{1,T},i_{1,T})$ and
  $(C_{2,T},i_{2,T})$ are equal if there exists an isomorphism
  $\iso\alpha{C_{1,T}}{C_{2,T}}$ over $T$ such that
  \mbox{$i_{1,T}=i_{2,T}\circ \alpha$}. In other words, we have that
  $$(\CM\times_{\CM\times \CM}S)(T)=\{\func g T S \mid \calI
  som_S((C_1,i_1),(C_2,i_2))(T)\neq \emptyset\}.$$ Note that the last
  expression makes sense since we can consider $T$ as an $S$-scheme
  via $g$. By Lemma~\ref{lm:isom_repr_mono}, the
  functor $\calI som_S((C_1,i_1),(C_2,i_2))$ is represented by a
  scheme $Y$, and the structure morphism $Y\to S$ is a
  monomorphism. Note that a morphism $\func g T S$ factors through $Y$
  if and only if $\Hom_S(T,Y)\neq \emptyset$. As $Y\to S$ is a
  monomorphism, such a factorization is unique and
  hence $$(\CM\times_{\CM\times \CM} S)(T)=\Hom(T,Y).$$ This shows that
  the fiber product $\CM\times_{\CM\times \CM}S$ is represented by the
  scheme $Y$ that is locally of finite type over $S$.
\end{proof}

Representability of the diagonal is equivalent to any map from a
scheme being representable.
\begin{prop}
  Let $\func F{{\mathbf{Sch}}^{\circ}}{\mathbf{Sets}}$ be a
  functor. Then the following properties are equivalent.
  \begin{enumerate}
  \item The diagonal $F\ra F\times F$ is representable.
  \item For every scheme $U$ and every $\xi\in F(U)$ the map
    $\func{\xi}{ U}F$ is representable.
  \end{enumerate}
\end{prop}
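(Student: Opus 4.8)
The plan is to prove both implications by manipulating fibre products of functors, reducing each direction to the defining property of representability of a morphism. Recall that a morphism of functors is representable precisely when its base change along any morphism from a scheme is represented by a scheme. Throughout I will identify an element $\xi\in F(U)$ with the induced morphism $\func\xi U F$, and verify equalities of functors on $W$-points for an arbitrary test scheme $W$.

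For the implication (i) $\Rightarrow$ (ii), suppose the diagonal $\func\Delta F{F\times F}$ is representable, and let $\func\xi U F$ be the morphism attached to some $\xi\in F(U)$. To see that $\xi$ is representable I must show that $U\times_F V$ is a scheme for every morphism $\func\eta V F$ from a scheme $V$. The key observation is the identity
$$U\times_F V\cong (U\times V)\times_{F\times F}F,$$
where $U\times V\to F\times F$ is $(\xi,\eta)$ and $F\to F\times F$ is the diagonal; this is immediate on $W$-points, since a point of either side is a pair $(a,b)$ with $\func a W U$, $\func b W V$ satisfying $\xi\circ a=\eta\circ b$ in $F(W)$. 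As $U\times V$ is a scheme and $\Delta$ is representable by hypothesis, the right-hand side is a scheme, and hence so is $U\times_F V$.

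For the implication (ii) $\Rightarrow$ (i), suppose every morphism from a scheme to $F$ is representable. A morphism $T\to F\times F$ from a scheme amounts to a pair $(\xi,\eta)$ of morphisms $\func{\xi,\eta}T F$, and I must show that $F\times_{F\times F}T$, the equalizer of $\xi$ and $\eta$, is a scheme. Here I would form the fibre product $P:=T\times_F T$ taken with $\xi$ on the left factor and $\eta$ on the right; since $\eta$ is representable by hypothesis, $P$ is a scheme, equipped with two projections assembling to $\func{(\pr_1,\pr_2)}P{T\times T}$. The identity to verify is then
$$F\times_{F\times F}T\cong P\times_{T\times T}T,$$
where $\func\delta T{T\times T}$ is the diagonal of the scheme $T$; on $W$-points both sides consist of a single morphism $\func f W T$ with $\xi\circ f=\eta\circ f$. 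Because the diagonal $\delta$ of a scheme is an immersion, and in particular representable, the right-hand side is a locally closed subscheme of $P$, so $F\times_{F\times F}T$ is representable.

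The routine part is the two $W$-point identifications, which are formal. The only genuine input beyond functor bookkeeping lies in the second direction: one must pass from the representability of individual morphisms $\func\eta V F$ to that of the diagonal by first building the scheme $P=T\times_F T$ and then cutting out the equalizer as the preimage of the scheme-theoretic diagonal of $T$. I expect this reduction---recognizing that intersecting $P$ with $\delta(T)$ yields exactly the equalizer, and invoking representability of the diagonal of a scheme---to be the main, if modest, obstacle; the first direction is essentially immediate once the fibre-product identity is written down.
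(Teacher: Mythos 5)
Your proof is correct and follows essentially the same route as the paper: the first direction uses the identity $U\times_F V\cong (U\times V)\times_{F\times F}F$, and the second forms $T\times_F T$ (a scheme by hypothesis) and then pulls back along the diagonal of the scheme $T$, exactly as in the paper's Cartesian-diagram argument. The only cosmetic difference is that you additionally remark that the diagonal of a scheme is an immersion, which is more than needed since the fibre product of schemes is automatically a scheme.
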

\begin{proof}
  Suppose first that the diagonal $F\ra F\times F$ is representable,
  and let $\func \xi {U} F$ and $\func \eta {V} F$ be two maps from
  schemes $U$ and $V$. We have to show that the fiber product
  $U\times_F V$ is representable.  Consider the map
  $\func{\xi\times\eta }{U\times V}{F\times F}$.  Then the fiber
  product $(U\times V)\times_{F\times F}F$ is representable by
  assumption. As $(U \times V) \times_{F\times F}F\cong U\times_F
  V$, we see that assertion (i) implies assertion~(ii).

  Suppose conversely that assertion (ii) holds. Let
  $\func{(\xi,\xi')}{ V}{F\times F}$ be a map from a scheme $V$ to the
  product $F\times F$. By assumption, the fiber product
  $V\times_F V$ with respect to the maps $\xi$ and $\xi'$
  is represented by a scheme $W$. Moreover, one checks that the
  diagram $$\xymatrix{W\times_{V\times V}V\ar[r] \ar[d] & V \ar[d]\\
    F\ar[r] & F\times F}$$ is Cartesian. This shows that the diagonal
  is representable, and hence assertion~(i) holds.
\end{proof}

\begin{cor}\label{cor:loc_fin_type}
  Let $\func \xi X \CM$ be a map, where $X$ is a scheme locally of finite
  type over $\spec(\bbZ)$. Then $\xi$ is representable and locally of
  finite type.
\end{cor}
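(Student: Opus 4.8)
The plan is to read off both assertions from the two results immediately preceding, namely Proposition~\ref{prop:representable}, which states that the diagonal $\CM\to\CM\times\CM$ is representable \emph{and} locally of finite type, together with the equivalence between representability of the diagonal and representability of maps out of schemes.

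Representability is then immediate: since the diagonal is representable by Proposition~\ref{prop:representable}, the preceding proposition (implication (i)$\Rightarrow$(ii)) shows at once that every map $\func\xi X\CM$ from a scheme $X$ is representable. This costs nothing and uses none of the hypotheses on $X$; so the entire content of the corollary is the finite-type claim.

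I would obtain that claim from the same Cartesian square that underlies the equivalence proposition. Fix an arbitrary test map $\func\eta T\CM$; the task is to show that the projection $X\times_{\CM}T\to T$ is locally of finite type. Using the canonical identification $X\times_{\CM}T\cong(X\times T)\times_{\CM\times\CM}\CM$ one has the Cartesian diagram
$$\xymatrix{X\times_{\CM}T\ar[r]\ar[d] & \CM\ar[d]^\Delta\\
X\times T\ar[r]^-{\xi\times\eta} & \CM\times\CM,}$$
in which the left vertical arrow is the base change of the diagonal $\Delta$ along $\xi\times\eta$; by Proposition~\ref{prop:representable} this arrow is therefore locally of finite type. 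I would then factor the projection as
$$X\times_{\CM}T\longrightarrow X\times T\longrightarrow T,$$
where the second arrow is the base change of the structure morphism $X\to\spec(\bbZ)$ along $T\to\spec(\bbZ)$. Since the ambient category $\mathbf{Sch}$ has $\spec(\bbZ)$ as terminal object, the product $X\times T$ is fibered over $\spec(\bbZ)$, and this base change is locally of finite type precisely because $X$ is assumed locally of finite type over $\spec(\bbZ)$. As locally of finite type is stable under composition, the projection $X\times_{\CM}T\to T$ is locally of finite type, as required.

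The only delicate point—more bookkeeping than genuine obstacle—is keeping track of which morphism absorbs which hypothesis: the diagonal contributes finite-typeness of the first factor, while the running assumption on $X$ contributes it for the second, so that exactly the stated hypothesis on $X$ is what is needed. Both facts invoked, stability of ``locally of finite type'' under base change and under composition, are standard.
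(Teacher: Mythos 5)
Your argument is correct and coincides with the paper's own proof: both identify $X\times_{\CM}T$ with $(X\times T)\times_{\CM\times\CM}\CM$ via the same Cartesian square, deduce that $X\times_{\CM}T\to X\times T$ is locally of finite type as a base change of the diagonal, and conclude by composing with the projection $X\times T\to T$, which is locally of finite type by the hypothesis on $X$. The only cosmetic difference is that you invoke the equivalence proposition explicitly for representability, whereas the paper reads it off the same diagram.
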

\begin{proof}
  For any map $\func \eta Y \CM$ the diagram
  $$\xymatrix{X\times_{\CM} Y\ar[r] \ar[d] & X\times Y
    \ar[d]^{\xi\times\eta} \\ \CM\ar[r]^-\Delta & \CM\times \CM}$$ is
  Cartesian. Since the diagonal $\Delta$ is representable and locally
  of finite type, we see that $X\times_{\CM}Y\to X\times Y$ is locally
  of finite type. Now the statement follows as the projection $X\times
  Y\to Y$ is locally of finite type by assumption on $X$.
\end{proof}

\section{Embedding in projective space}

The goal of this section is to show that there exists a positive
integer $N$ such that for every base scheme $S$ and any $(C,i)\in
\CM(S)$ there exists an open cover $\{S_a\}$ of $S$ such that
$C_{S_a}$ can be embedded into the projective space $\bbP^N_{S_a}$.

First we consider the case of curves over fields. We show in
Proposition~\ref{prop:field_closed_imm} that there exist integers $m$
and $N:=N(m)$ such that for every field $k$ and every $(C,i)\in
\CM(\spec(k))$ the invertible sheaf $\calL:=i^*\calO_{\bbP^n_k}(m)$ is
very ample and every choice of basis of $H^0(C,\calL)$ gives rise to a
closed immersion $\injfunc j C{\bbP^N_k}$.

The main tool here is {\em Castelnuovo--Mumford regularity} that also
plays an important roll in the construction of the Hilbert scheme. In
Subsection~\ref{sec:m-reg}, we give an introduction to this concept.

The case of a general base scheme $S$ is then treated in
Subsection~\ref{sec:emb_S}.

\subsection{Some facts about Castelnuovo--Mumford
  regularity} \label{sec:m-reg}
In the following, we sometimes suppress the projective space
$\bbP^n_k$ in the notation of the cohomology groups $H^i(\bbP^n_k,
\calF)$ for a coherent sheaf $\calF$ in order to increase
readability. Moreover, we write $h^i(\calF):=\dim_k H^i(\calF)$ for
its dimension.
\begin{defi}
  Let $\bbP^n_k$ be the projective $n$-space over a field $k$, and let
  $\calF$ be a coherent sheaf on $\bbP^n_k$. Let $m\in \bbZ$ be an
  integer. If $$H^i(\bbP^n_k,\calF(m-i))=0$$ for every $i>0$, then
  $\calF$ is $m$-{\em regular}.
\end{defi}

\begin{ex}\label{ex:m-reg_0}
  Suppose that $\calF$ is a coherent sheaf on $\bbP^n_k$ with
  zero-dimen\-sional support. Then $\calF$ is $m$-regular for every
  $m\in \bbZ$ since the cohomology vanishes in degrees that exceed the
  dimension of the support.
\end{ex}   
\begin{ex}\label{ex:m-reg_bbP}
  For every integer $d$, the invertible sheaf $\calO(d):=
  \calO_{\bbP^n_k}(d)$ on $\bbP^n_k$ is $(-d)$-regular.  Indeed, we have that
  $H^i(\calO(d+m-i))=0$ unless $i=0,n$ and
  $H^n(\calO(d+m-n))\cong H^0(\calO(-d-m-1))^\vee$ by
  \cite[Theorem III.5.1]{Hartshorne}.
  Clearly the last term vanishes for $m\geq -d$.
\end{ex}

\begin{prop}[{\cite[Proposition 1.3]{Kleiman:m-reg}}]\label{prop:m-reg}
  Let $\calF$ be a $m$-regular coherent sheaf on $\bbP^n_k$. Then for
  every $m'\geq m$ we have the following.
  \begin{enumerate}
  \item\label{prop:m-reg1} $\calF$ is $m'$-regular.
  \item\label{prop:m-reg2} $\calF(m')$ is generated by its global
    sections.
  \end{enumerate}
\end{prop}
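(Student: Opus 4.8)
The plan is to prove both assertions simultaneously by induction on $n$, using the classical device of cutting $\calF$ by a generic hyperplane. Since the formation of cohomology commutes with the flat base change $k\subseteq K$, and since global generation can be checked after a faithfully flat extension, I would first reduce to the case that $k$ is infinite. Then, because $\calF$ has only finitely many associated points, I can choose a linear form $h\in H^0(\bbP^n_k,\calO(1))$ that vanishes at none of them; multiplication by $h$ is therefore injective on $\calF$ and yields the short exact sequence
$$0\to \calF(-1)\xrightarrow{\ h\ }\calF\to \calF_H\to 0,$$
where $H\cong \bbP^{n-1}_k$ is the hyperplane cut out by $h$ and $\calF_H:=\calF\otimes\calO_H$ is regarded as a coherent sheaf on $H$. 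The base case $n=0$ is trivial, as all higher cohomology vanishes.

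First I would record the restriction lemma: if $\calF$ is $m$-regular on $\bbP^n_k$, then $\calF_H$ is $m$-regular on $H$. This is immediate from the long exact cohomology sequence attached to the twist $0\to \calF(m-i-1)\to \calF(m-i)\to \calF_H(m-i)\to 0$, since the flanking terms $H^i(\calF(m-i))$ and $H^{i+1}(\calF(m-i-1))$ both vanish for $i>0$ by $m$-regularity of $\calF$. With this in hand, assertion \ref{prop:m-reg1} follows: to show that $m$-regularity implies $(m+1)$-regularity, I apply the induction hypothesis on $H$ to conclude that $\calF_H$ is $(m+1)$-regular, and then read off from the piece
$$H^i(\calF(m-i))\to H^i(\calF(m+1-i))\to H^i(\calF_H(m+1-i))$$
that $H^i(\calF(m+1-i))=0$ for $i>0$, the outer terms vanishing by $m$-regularity of $\calF$ and $(m+1)$-regularity of $\calF_H$ respectively. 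Iterating gives $m'$-regularity for all $m'\ge m$.

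For assertion \ref{prop:m-reg2} I would prove the auxiliary statement that, for $m$-regular $\calF$, the multiplication map $H^0(\calF(m))\otimes_k H^0(\calO(1))\to H^0(\calF(m+1))$ is surjective, again by induction on $n$. Using assertion \ref{prop:m-reg1}, the vanishing of $H^1(\calF(m-1))$ and $H^1(\calF(m))$ makes the restriction maps $H^0(\calF(m))\to H^0(\calF_H(m))$ and $H^0(\calF(m+1))\to H^0(\calF_H(m+1))$ surjective, while the restriction lemma together with the induction hypothesis makes the corresponding multiplication map on $H$ surjective. A diagram chase then lifts any $s\in H^0(\calF(m+1))$: subtract a combination of lifted sections times lifted linear forms to land in the kernel of restriction, which is exactly $h\cdot H^0(\calF(m))$, and absorb the remainder into the product with $h$. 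Applying this to every twist $m'\ge m$ shows that the graded module $\bigoplus_{m'\ge m}H^0(\calF(m'))$ is generated in degree $m$ over $k[x_0,\dots,x_n]$; sheafifying yields a surjection $H^0(\calF(m))\otimes_k\calO\to \calF(m)$, which is exactly the global generation of $\calF(m)$, and hence of $\calF(m')$ for every $m'\ge m$.

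The main obstacle I anticipate is not any single vanishing statement but the careful orchestration of the two nested inductions through the restriction sequence: one must ensure that the hyperplane form $h$ remains a nonzerodivisor on $\calF$ (handled by the reduction to an infinite field and the avoidance of the finitely many associated points) so that the restriction sequence stays short exact, and one must track precisely which regularity inputs---those for $\calF$ versus those freshly obtained for $\calF_H$---are needed to kill each flanking cohomology group. The bookkeeping in the multiplication-surjectivity step, where assertion \ref{prop:m-reg1} must already be available to guarantee $H^1(\calF(m))=0$, is the most delicate point.
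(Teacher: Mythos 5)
Your proof is correct: the paper gives no argument for this proposition at all, citing it verbatim from Kleiman, and your generic-hyperplane-section induction (reduce to $k$ infinite, cut by a linear form avoiding the associated points of $\calF$, prove the restriction lemma, deduce (i) by climbing one twist at a time, and deduce (ii) from surjectivity of the multiplication maps $H^0(\calF(m))\otimes H^0(\calO(1))\to H^0(\calF(m+1))$) is precisely the classical proof in that reference and in Lecture 14 of Mumford's \emph{Lectures on curves on an algebraic surface}. It is also fully consistent with the paper's own toolkit, which deploys the identical hyperplane-restriction device in the proof of Proposition~\ref{prop:kernel_m-reg}.
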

\begin{cor}\label{cor:m-reg}
  Let $\calF$ be $m$-regular. Then $H^i(\bbP^n_k,\calF(m))=0$ for
  $i>0$, and hence $h^0(\calF(m))=p_\calF(m)$ for the
  Hilbert polynomial $p_\calF(t)$ of $\calF$.
\end{cor}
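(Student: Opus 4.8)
The plan is to deduce both assertions directly from the definition of $m$-regularity together with the first part of Proposition~\ref{prop:m-reg}, which guarantees that regularity propagates upward. The whole argument is essentially formal, so I would keep it short.

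First I would establish the vanishing $H^i(\bbP^n_k,\calF(m))=0$ for every $i>0$ by exploiting this upward propagation. Fix an integer $i>0$. Since $m+i\geq m$, the first part of Proposition~\ref{prop:m-reg} tells us that $\calF$ is $(m+i)$-regular. Unwinding the definition of $(m+i)$-regularity means that $H^j(\calF((m+i)-j))=0$ for all $j>0$; taking the particular value $j=i$ gives $H^i(\calF((m+i)-i))=H^i(\calF(m))=0$, which is exactly the desired statement for this index. Running this over all $i>0$ yields the first claim.

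For the second assertion I would pass to the Euler characteristic. Once we know that $H^i(\calF(m))=0$ for all $i>0$, the alternating sum of the dimensions of the cohomology groups collapses to $\chi(\calF(m))=h^0(\calF(m))$. On the other hand, the Hilbert polynomial $p_\calF(t)$ is by definition the polynomial whose value at every integer $\ell$ agrees with $\chi(\calF(\ell))$; evaluating at $\ell=m$ gives $p_\calF(m)=\chi(\calF(m))$. Combining the two equalities yields $h^0(\calF(m))=p_\calF(m)$.

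I do not anticipate a genuine obstacle here, since nothing beyond standard cohomology and the already-available Proposition~\ref{prop:m-reg} is needed. The only point requiring care is the index bookkeeping in the first step: one must apply regularity at the shifted value $m+i$ rather than at $m$, and check that the choice $j=i$ in the defining condition $H^j(\calF((m+i)-j))=0$ produces precisely the untwisted-by-$i$ sheaf $\calF(m)$. Once this indexing is lined up correctly, the vanishing is immediate and the Euler characteristic computation is routine.
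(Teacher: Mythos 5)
Your proof is correct and follows exactly the paper's argument: the vanishing $H^i(\calF(m))=H^i(\calF((m+i)-i))=0$ is obtained by upgrading $m$-regularity to $(m+i)$-regularity via Proposition~\ref{prop:m-reg}(i), and the equality $h^0(\calF(m))=p_\calF(m)$ then follows from the Euler characteristic interpretation of the Hilbert polynomial. The index bookkeeping you flag is handled correctly, and no further comment is needed.
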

\begin{proof}
  As $H^i(\bbP^n_k,\calF(m))=H^i(\bbP^n_k,\calF((m+i)-i))$, the
  statement is a direct consequence of assertion (i) in the
  proposition.
\end{proof}

\begin{lm}\label{lm:m-reg_ex_seq}
  Let $0\ra \calF\ra \calG\ra \calH\ra 0$ be an exact sequence of
  coherent sheaves on $\bbP^n_k$. Then we have the following.
  \begin{enumerate}
  \item\label{lm:m-reg_ex_seq1} If $\calF$ and $\calH$ are
    $m$-regular, then $\calG$ is $m$-regular.
  \item\label{lm:m-reg_ex_seq2} Suppose that $\calG$ is $m$-regular
    and that $H^i(\bbP^n_k, \calF(m+1-i))=0$ for all $i>1$ (in
    particular, this is satisfied if $\calF$ is $(m+1)$-regular). Then
    $\calH$ is $m$-regular.
  \end{enumerate}
\end{lm}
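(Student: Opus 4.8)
The plan is to reduce everything to the long exact sequence in cohomology obtained by twisting the given short exact sequence. Since $\calO(d):=\calO_{\bbP^n_k}(d)$ is locally free, tensoring the sequence $0\to\calF\to\calG\to\calH\to 0$ by $\calO(d)$ preserves exactness, yielding $0\to\calF(d)\to\calG(d)\to\calH(d)\to 0$ for every $d\in\bbZ$. Taking cohomology gives, for each $i$, the exact fragment $H^i(\calF(d))\to H^i(\calG(d))\to H^i(\calH(d))\to H^{i+1}(\calF(d))$. Both assertions then follow by specializing $d$ and reading off which groups vanish from the regularity hypotheses, so the only real work is index bookkeeping.

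For assertion~\ref{lm:m-reg_ex_seq1}, I would fix $i>0$ and set $d=m-i$. The fragment $H^i(\calF(m-i))\to H^i(\calG(m-i))\to H^i(\calH(m-i))$ has both outer terms equal to zero: the left by $m$-regularity of $\calF$ and the right by $m$-regularity of $\calH$. Hence the middle term $H^i(\calG(m-i))$ vanishes for every $i>0$, which is exactly the statement that $\calG$ is $m$-regular.

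For assertion~\ref{lm:m-reg_ex_seq2}, I would again fix $i>0$, set $d=m-i$, and use the fragment $H^i(\calG(m-i))\to H^i(\calH(m-i))\to H^{i+1}(\calF(m-i))$. The left term vanishes because $\calG$ is $m$-regular. The subtle point, and the step to get exactly right, is the right term: I rewrite $m-i=(m+1)-(i+1)$, so that $H^{i+1}(\calF(m-i))=H^{i+1}(\calF((m+1)-(i+1)))$, and then apply the hypothesis $H^j(\calF(m+1-j))=0$ for all $j>1$ with $j=i+1$; since $i>0$ forces $j=i+1>1$, this group is zero. Therefore $H^i(\calH(m-i))=0$ for all $i>0$, i.e.\ $\calH$ is $m$-regular. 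Finally, for the parenthetical remark, I would note that if $\calF$ is $(m+1)$-regular then $H^j(\calF((m+1)-j))=0$ for \emph{all} $j>0$, and in particular for all $j>1$, so the weaker hypothesis is indeed implied.

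I do not expect a genuine obstacle here: the argument is a formal consequence of the long exact sequence, and the whole content lies in matching the shifted vanishing condition in \ref{lm:m-reg_ex_seq2} to the cohomology group $H^{i+1}(\calF(m-i))$ appearing in the connecting map. The only thing to be careful about is to track the twist $m-i$ consistently across the three sheaves and to verify the index shift $j=i+1$ lands in the allowed range $j>1$.
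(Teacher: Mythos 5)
Your proposal is correct and follows exactly the paper's argument: twist by $\calO(m-i)$, take the long exact sequence in cohomology, and read off the vanishing of the outer terms, with the same index shift $H^{i+1}(\calF(m-i))=H^{i+1}(\calF((m+1)-(i+1)))$ used in the paper for assertion (ii). There is nothing to add.
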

\begin{proof}
  The statements follow directly from the induced long exact sequence
 \begin{eqnarray}\label{eqn:m-reg_ex_seq}
   \cdots\ H^i(\calF(d))\longrightarrow
   H^i(\calG(d)) \longrightarrow H^i(\calH(d))\longrightarrow
   H^{i+1}(\calF(d))\ \cdots
 \end{eqnarray}
in cohomology for $d=m-i$.  

For assertion~\ref{lm:m-reg_ex_seq1} suppose that $\calF$ and
$\calH$ are $m$-regular. Then we have that
$H^i(\calF(m-i))=H^i(\calH(m-i))=0$ for all $i> 0$. From
sequence~(\ref{eqn:m-reg_ex_seq}) we get that $H^i(\calG(m-i))=0$ for
all $i> 0$, and $\calG$ is $m$-regular.

Moreover, if $H^i(\calG(m-i))=H^{i+1}(\calF(m+1-(i+1)))=0$ for $i>0$,
then $H^i(\calH(m-i))=0$, and hence assertion \ref{lm:m-reg_ex_seq2}.
\end{proof}

\begin{thm}[{\cite[Theorem in Lecture 14]{Mumford:Lectures}}]
  \label{thm:ideals_m-reg} For every numerical polynomial $p(t)\in
  \bbQ[t]$ there exists an integer $m$ such that all coherent sheaves
  of ideals on $\bbP^n_k$ having Hilbert polynomial $p(t)$ are
  $m$-regular.
\end{thm}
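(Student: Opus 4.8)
The plan is to prove the statement by induction on the dimension $n$ of the ambient space $\bbP^n_k$, following the classical hyperplane-section argument. Since $m$-regularity is a purely cohomological condition, and cohomology commutes with the flat base change along a field extension $k\subseteq K$ — so that $\calI$ is $m$-regular if and only if its pullback to $\bbP^n_K$ is — I may freely enlarge $k$ and assume it infinite; this is what makes suitably generic hyperplanes available. Because the bound produced by the induction will turn out to be a purely numerical function of $p(t)$ and $n$, it is then valid uniformly over all fields and all ideal sheaves with Hilbert polynomial $p(t)$. The base case $n=0$ is immediate from Example~\ref{ex:m-reg_0}, since every coherent sheaf on a point has zero-dimensional support.

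For the inductive step, let $\calI\subseteq \calO_{\bbP^n_k}$ be a coherent sheaf of ideals with Hilbert polynomial $p(t)$. As $k$ is infinite, I can choose a hyperplane $H\cong\bbP^{n-1}_k$, with linear form $h$, avoiding the finitely many associated primes of $\calO_{\bbP^n_k}/\calI$; then $h$ is a nonzerodivisor on the quotient, so multiplication by $h$ (which is automatically injective on the subsheaf $\calI$) has cokernel the ideal sheaf $\overline{\calI}\subseteq\calO_H$ of the scheme-theoretic intersection, giving a short exact sequence
\[
0\longrightarrow \calI(d-1)\stackrel{\cdot h}{\longrightarrow}\calI(d)\longrightarrow \overline{\calI}(d)\longrightarrow 0 .
\]
Taking Euler characteristics shows that the Hilbert polynomial of $\overline{\calI}$ is the first difference $p(t)-p(t-1)$, which depends only on $p(t)$. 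By the induction hypothesis applied on $\bbP^{n-1}_k$ to this numerical polynomial, there is an integer $m_1\geq 0$, depending only on $p(t)$ and $n$, such that every such $\overline{\calI}$ is $m_1$-regular.

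It remains to bootstrap the regularity of $\overline{\calI}$ into a regularity bound for $\calI$ by chasing the long exact cohomology sequence of the displayed sequence. For $i\geq 2$ the groups $H^{i-1}(\overline{\calI}(d))$ and $H^i(\overline{\calI}(d))$ vanish once $d\geq m_1-1$ by Proposition~\ref{prop:m-reg}, so the sequence yields isomorphisms $H^i(\calI(d-1))\cong H^i(\calI(d))$ in that range; combined with Serre vanishing in high degree this forces $H^i(\calI(d))=0$ for all $i\geq 2$ and $d\geq m_1-2$. For $i=1$ the vanishing of $H^1(\overline{\calI}(d))$ for $d\geq m_1-1$ gives only surjections $H^1(\calI(d-1))\twoheadrightarrow H^1(\calI(d))$, so the dimensions $h^1(\calI(d))$ form a non-increasing sequence of non-negative integers for $d\geq m_1-1$ that vanishes for $d\gg 0$.

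The crux — and the step I expect to be the main obstacle — is to bound uniformly the degree at which this $H^1$-sequence reaches zero. The key point is that a single equality $h^1(\calI(d_0-1))=h^1(\calI(d_0))$ with $d_0\geq m_1$ already forces the multiplication maps to be isomorphisms in all higher degrees, hence by Serre vanishing forces $h^1(\calI(d))=0$ for $d\geq d_0-1$; this propagation is the module-theoretic heart of Mumford's argument, exploiting the structure of $\bigoplus_d H^1(\calI(d))$ as a graded module and a second general linear form. Granting it, the sequence $h^1(\calI(d))$ must strictly decrease at each step until it vanishes, so it reaches zero after at most $h^1(\calI(m_1-1))$ steps. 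Finally I bound this starting value uniformly: once $H^i(\calI(d))=0$ for $i\geq 2$ and $d\geq m_1-2$, the Euler characteristic relation gives $h^1(\calI(d))=h^0(\calI(d))-p(d)\leq \binom{d+n}{n}-p(d)$, using $\calI\subseteq\calO_{\bbP^n_k}$. Thus $\calI$ is $m$-regular as soon as $m\geq m_1+n$ and $m\geq (m_1-1)+\bigl(\binom{m_1-1+n}{n}-p(m_1-1)\bigr)$, an integer depending only on $p(t)$ and $n$, which completes the induction.
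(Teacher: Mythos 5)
The paper does not actually prove this statement: it is quoted directly from Mumford's Lecture 14, so there is no internal proof to compare against. Your proposal is a reconstruction of Mumford's own argument, and most of it is sound: the reduction to an infinite field by flat base change, the choice of a hyperplane avoiding the associated points of $\calO_{\bbP^n_k}/\calI$ so that $\overline{\calI}:=\calI\cdot\calO_H$ is again an ideal sheaf with Hilbert polynomial $p(t)-p(t-1)$, the induction on $n$, the descending chase killing $H^i(\calI(d))$ for $i\geq 2$, and the uniform bound $h^1(\calI(m_1-1))\leq \binom{m_1-1+n}{n}-p(m_1-1)$ via the Euler characteristic are all correct, and every bound produced depends only on $p(t)$ and $n$.

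The one genuine gap is exactly the step you flag and then ``grant'': that a single equality $h^1(\calI(d_0-1))=h^1(\calI(d_0))$ with $d_0\geq m_1$ forces $h^1(\calI(d))=0$ for all $d\geq d_0-1$. Without this, the monotonicity of $h^1$ gives no effective bound on where it vanishes, and the uniformity that is the whole content of the theorem collapses, so it cannot be left as a black box. Moreover, your description of the mechanism (a second general linear form acting on the graded module $\bigoplus_d H^1(\calI(d))$) is not how the standard argument runs. The correct chain is: by the long exact sequence, the equality is equivalent to surjectivity of $H^0(\calI(d_0))\to H^0(\overline{\calI}(d_0))$; this surjectivity propagates to $d_0+1$ because the composite $H^0(\calI(d_0))\otimes H^0(\calO_{\bbP^n_k}(1))\to H^0(\overline{\calI}(d_0))\otimes H^0(\calO_H(1))\to H^0(\overline{\calI}(d_0+1))$ is onto and factors through $H^0(\calI(d_0+1))\to H^0(\overline{\calI}(d_0+1))$; surjectivity of the second arrow is the multiplication-map half of Castelnuovo--Mumford regularity applied to the $m_1$-regular sheaf $\overline{\calI}$, an ingredient that is genuinely needed here and is \emph{not} contained in Proposition~\ref{prop:m-reg} as stated in the paper, which records only persistence of regularity and global generation. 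Once surjectivity holds for all $d\geq d_0$, the maps $H^1(\calI(d-1))\to H^1(\calI(d))$ are isomorphisms in that range, and Serre vanishing forces $h^1(\calI(d_0-1))=0$. With that lemma supplied (and a harmless off-by-one in your final bound corrected to $m\geq m_1+\bigl(\binom{m_1-1+n}{n}-p(m_1-1)\bigr)$), your proof is complete.
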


\begin{prop}\label{prop:kernel_m-reg}
  Let $m'\in\bbN$, and let $p(t)\in \bbQ[t]$ be
  a numerical polynomial. Then there exists an integer $m=m(m',p)\geq
  m'$, such that for every short exact sequence
  \begin{equation} \xymatrix{0\ar[r] & \calF \ar[r] & \calG \ar[r] &
      \calH \ar[r] & 0}\label{eqn:ex_seq0}
  \end{equation}
  of coherent sheaves on $\bbP_k^n$ with $\calG$ being $m'$-regular
  and $\calH$ having Hilbert polynomial $p(t)$, the sheaf $\calF$ is
  $m$-regular.
\end{prop}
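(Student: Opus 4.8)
The plan is to compute the cohomology of $\calF$ from sequence~(\ref{eqn:ex_seq0}) together with the regularity of $\calG$, isolate a single troublesome term, and feed the fixed Hilbert polynomial $p$ back in through Theorem~\ref{thm:ideals_m-reg}. Since $\calG$ is $m'$-regular, Proposition~\ref{prop:m-reg} makes it $d$-regular for every $d\geq m'$, so $H^i(\calG(e))=0$ whenever $i>0$ and $e\geq m'-i$. Writing out the long exact cohomology sequence of~(\ref{eqn:ex_seq0}) twisted by $e=m-i$ and assuming $m\geq m'+1$, the groups $H^{i-1}(\calG(m-i))$ and $H^i(\calG(m-i))$ vanish for $i\geq 2$, giving $H^i(\calF(m-i))\cong H^{i-1}(\calH((m-1)-(i-1)))$; and for $i=1$ one has $H^1(\calG(m-1))=0$, so $H^1(\calF(m-1))=\coker\bigl(H^0(\calG(m-1))\to H^0(\calH(m-1))\bigr)$. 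Hence $\calF$ is $m$-regular once two conditions hold: (a)~$\calH$ is $(m-1)$-regular, and (b)~the map $H^0(\calG(m-1))\to H^0(\calH(m-1))$ is surjective.

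To access both conditions I would reduce to the case where $\calG$ is a free twisted sheaf. As $\calG(m')$ is globally generated by Proposition~\ref{prop:m-reg}, the sections $H^0(\calG(m'))$ give a surjection $\calO(-m')^{\oplus N}\twoheadrightarrow\calG$ with $m'$-regular source (Example~\ref{ex:m-reg_bbP}). Let $\calF'$ be the preimage of $\calF$, so $0\to\calF'\to\calO(-m')^{\oplus N}\to\calH\to 0$ still has quotient $\calH$ of Hilbert polynomial $p$, while $0\to\calP\to\calF'\to\calF\to 0$ with $\calP=\ker(\calO(-m')^{\oplus N}\to\calG)$. Because the presentation uses all of $H^0(\calG(m'))$, the map $H^0(\calO(-m')^{\oplus N}(m'))\to H^0(\calG(m'))$ is onto, whence $H^1(\calP(m'))=0$, and by the same long exact sequence argument $\calP$ is $(m'+1)$-regular. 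By Lemma~\ref{lm:m-reg_ex_seq}(ii) applied to $0\to\calP\to\calF'\to\calF\to 0$ it then suffices to bound the regularity of $\calF'$; that is, it suffices to prove the proposition when $\calG=\calO(-m')^{\oplus N}$.

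In that free case, twisting by $m'$ turns the inclusion into $\calF'(m')\subseteq\calO^{\oplus N}$ with quotient $\calH(m')$ of Hilbert polynomial $p(t+m')$. For $N=1$ this is an inclusion of a sheaf of ideals of fixed Hilbert polynomial, so Theorem~\ref{thm:ideals_m-reg} bounds its regularity by an integer depending only on $p$ and $m'$. For general $N$ I would deduce the analogous bound, uniformly in $N$, from the rank-one statement: after cutting $\calH(m')$ down to a bounded number of generators and inducting on the rank, the superfluous free summands land in the kernel and, being $m'$-regular, do not raise the regularity by Lemma~\ref{lm:m-reg_ex_seq}(i), so the bound stays independent of $N$. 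This supplies both (a) and (b) with an $m=m(m',p)$, and taking $m$ one larger than the resulting bounds finishes the argument.

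I expect condition~(b) — equivalently the vanishing of the $H^1$-term — to be the main obstacle. It cannot be extracted from the regularities of $\calG$ and $\calH$ alone: for $\calG=\calO$ and $\calH=\calO_Z$ with $Z$ a finite subscheme, both are $0$-regular by Example~\ref{ex:m-reg_0}, yet the least degree in which $\calI_Z=\calF$ becomes regular grows with the length of $Z$, i.e.\ with $p$. So the fixed Hilbert polynomial must re-enter precisely here, through the boundedness furnished by Theorem~\ref{thm:ideals_m-reg}, and the genuinely delicate points are making the regularity bound for $\calF'$ uniform in the number of generators $N$ (equivalently, bounding the number of generators of $\calH$ intrinsically in terms of $p$) and reducing the higher-rank bound to the rank-one ideal-sheaf case.
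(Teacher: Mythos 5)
Your strategy is genuinely different from the paper's. The paper argues by induction on $n$: it restricts the sequence to a generic hyperplane $H$ avoiding all associated points, applies the inductive hypothesis to $\calF\vert_H$, and lifts via Kleiman's result that $m_1$-regularity of $\calF\vert_H$ makes $\calF$ $(m_1+h^1(\calF(m_1-1)))$-regular, finally bounding $h^1(\calF(m_1-1))$ by $h^0(\calH(m_1-1))=p(m_1-1)$ after proving that $\calH$ is $m_1$-regular. By contrast, you reduce to a free middle term and try to quote Theorem~\ref{thm:ideals_m-reg} directly. Your first reduction is correct and cleanly executed: replacing $\calG$ by $\calO(-m')^{\oplus N}$ via global generation of $\calG(m')$, checking that $\calP=\ker(\calO(-m')^{\oplus N}\to\calG)$ is $(m'+1)$-regular, and applying Lemma~\ref{lm:m-reg_ex_seq}\ref{lm:m-reg_ex_seq2} to $0\to\calP\to\calF'\to\calF\to 0$ does legitimately reduce the problem to bounding the regularity of $\calF'=\ker(\calO(-m')^{\oplus N}\to\calH)$. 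Your closing observation that the Hilbert polynomial of $\calH$ must enter somewhere (the ideal of a long finite subscheme) is also a good sanity check.

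The gap is in the passage from $N=1$ to general $N$, which you flag as the delicate point but then only gesture at. There are two separate problems. First, ``cutting $\calH(m')$ down to a bounded number of generators'' is itself an unproven boundedness statement: it requires a uniform bound on $\dim_{\kappa(x)}\calH(m')\otimes\kappa(x)$ over all $\calH$ with Hilbert polynomial $p$, together with control of the strata where this fiber dimension is large, none of which follows from anything established so far. Second, and more seriously, the ``induction on the rank'': filtering by coordinate subspaces $W_1\subset\cdots\subset W_N$ and setting $\calH_j=\im(W_j\otimes\calO\to\calH(m'))$, the graded pieces of the kernel are ideal sheaves $\calI_j=\ker(\calO\to\calH(m')/\calH_{j-1})$, whose quotients $\calO/\calI_j$ have Hilbert polynomials $q_j$ satisfying only $\sum_j q_j=p(t+m')$ and otherwise depending on $\calH$ and the filtration. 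Theorem~\ref{thm:ideals_m-reg} yields a uniform bound only once one knows the $q_j$ range over a \emph{finite} set; that finiteness needs a lower bound on the constant term of the Hilbert polynomial of a closed subscheme of $\bbP^n$ of given degree (a genus-type bound in the spirit of Proposition~\ref{prop:image_Hilb_poly}, extended from Cohen--Macaulay curves to arbitrary subschemes of dimension at most one), which is additional input you neither state nor prove. If that finiteness is supplied, your filtration does close up --- and you can then drop the ``bounded number of generators'' step entirely, since the superfluous summands contribute $\calI_j=\calO$, which is $0$-regular and harmless by Lemma~\ref{lm:m-reg_ex_seq}\ref{lm:m-reg_ex_seq1}. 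As written, however, the proof is a plausible programme rather than a complete argument; the paper's hyperplane induction sidesteps these boundedness questions altogether, at the cost of the infinite-field reduction and the external restriction lemma from \cite{Kleiman:m-reg}.
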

\begin{proof}
  Since $m$-regularity does not depend on the field $k$, we can
  without loss of generality assume that $k$ is infinite.

  We prove the proposition by induction on $n$. For $n=0$, there is
  nothing to show, and we can directly assume that $n>0$. Since $k$ is
  infinite, there exists a hyperplane $j\mathpunct: H \subseteq
  \bbP^n_k$ such that in the diagram
  \begin{equation}\label{eqn:KFG}
    \begin{array}{rcl}
      \xymatrix{ & 0\ar[d] &0\ar[d] &0\ar[d] & \\ 0\ar[r]
        & \calF(-1) \ar[r]\ar[d] & \calF \ar[r] \ar[d] & j_*(\calF\vert_H)
        \ar[r]\ar[d] &0\\0\ar[r] & \calG(-1)\ar[d] \ar[r] & \calG
        \ar[r]\ar[d] & j_*(\calG\vert_H) \ar[r]\ar[d] &0\\0\ar[r] &
        \calH(-1) \ar[d]\ar[r] & \calH \ar[r] \ar[d] & j_*(\calH\vert_H)
        \ar[r]\ar[d] &0 \\ & 0 & 0 & 0 &}
    \end{array}
  \end{equation}
  all sequences are exact. Indeed, take $H$ so that is does not
  contain any of the associated points of $\calF$, $\calG$ and
  $\calH$. Then all horizontal and the first two vertical sequence are
  exact. Exactness of the third vertical sequence follows by diagram
  chase. Moreover, we get an induced short exact
  sequence $$\xymatrix{0\ar[r] & \calF\vert_H\ar[r] &
    \calG\vert_H\ar[r] & \calH\vert_H\ar[r] & 0}$$ on
  $H\cong\bbP^{n-1}_k$. Note that also $\calG\vert_H$ is $m'$-regular
  by Lemma~\ref{lm:m-reg_ex_seq}\ref{lm:m-reg_ex_seq2}. Furthermore,
  the Hilbert polynomial $q(t)=p(t)-p(t-1)$ of $\calH\vert_H$ depends
  only on $p(t)$. By induction, there exists $m_1=m_1(m',q)\in\bbN$
  such that the restriction $\calF\vert_H$ is $m_1$-regular. This
  implies by \cite[Proposition 1.4]{Kleiman:m-reg} that $\calF$ is
  $(m_1+h^1(\calF(m_1-1)))$-regular, and it remains to show that there
  exists an independent bound for $h^1(\calF(m_1-1))$.

  Note that by Proposition~\ref{prop:m-reg}\ref{prop:m-reg1}, we can
  assume that $m_1\geq m'$. In particular, the sheaf $\calG$ is
  $m_1$-regular, and hence
  $H^1(\calG(m_1-1))=0$. Sequence~(\ref{eqn:ex_seq0}) implies that the
  map $H^0(\calH(m_1-1))\to H^1(\calF(m_1-1))$ is surjective, and hence
  $h^1(\calF(m_1-1))\leq h^0(\calH(m_1-1))$. Finally, we claim that
  $\calH$ is $m_1$-regular. This concludes the proof since then
  $h^0(\calH(m_1-1))=p(m_1-1)$, and we can set $m:=m_1+p(m_1-1)$.

  To prove the claim, we first observe that, by
  Lemma~\ref{lm:m-reg_ex_seq}\ref{lm:m-reg_ex_seq2}, it suffices to
  show that $H^i(\calF(m_1+1-i))=0$ for all $i>1$. To simplify
  notation, we set $\calF':=j_*(\calF\vert_H)$. After twisting the
  short exact sequence $$\xymatrix{0\ar[r] &\calF(-1)\ar[r]
    &\calF\ar[r] &\calF'\ar[r] &0}$$ from diagram~\eqref{eqn:KFG} by
  $\calO(d)$, we consider the induced long exact sequence
\begin{align*}
  \cdots\ H^{i-1}(\calF'(d))\longrightarrow
  H^i(\calF(d-1))\longrightarrow H^i(\calF(d))\longrightarrow
  H^i(\calF'(d))\ \cdots
  \end{align*} 
  in cohomology.  Let $i>1$. Then we have
  \mbox{$H^{i-1}(\calF'(d))=H^i(\calF'(d))=0$} for $d\geq m_1+1-i$
  since the sheaf $\calF'$ is $m_1$-regular. Therefore it follows that
  \mbox{$H^i(\calF(d-1))\cong H^i(\calF(d))$} for all $d\geq
  m_1+1-i$. In particular, we see that $H^i(\calF(m_1-i))\cong
  H^i(\calF(m_1-i+1))\cong H^i(\calF(m_1-i+2))\cong \ldots$. But
  $H^i(\calF(d))=0$ for $d\gg 0$, and we obtain
  $H^i(\calF(m_1+1-i))= 0$.
\end{proof}

\subsection{Embedding over a field}

The results on Castelnuovo--Mumford regularity can then be used to
show that there exist integers $m$ and $N$ such that for every field
$k$ and every $(C,i)\in \CM(\spec(k))$, the invertible sheaf
$i^*\calO_{\bbP^n_k}(m)$ is generated by $N+1$ global sections and the
corresponding map $\func j C \bbP^n_k$ is a closed immersion.

\begin{prop}\label{prop:univ_m-reg}
  There exists $m'\in \bbN$ such that for every field $k$ and every
  pair $(C,i)\in \CM(\spec(k))$ the coherent sheaf $i_*\calO_C$ on
  $\bbP^n_k$ is $m'$-regular.
\end{prop}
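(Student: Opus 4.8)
The plan is to produce a uniform regularity bound by controlling the two pieces of $i_*\calO_C$ separately: the part coming from the scheme-theoretic image $Z:=i(C)$, which is a genuine closed subscheme of $\bbP^n_k$, and the cokernel supported on finitely many points. The key leverage is Proposition~\ref{prop:image_Hilb_poly}, which tells us that there are only finitely many possible Hilbert polynomials for the image curve $Z$, together with Theorem~\ref{thm:ideals_m-reg}, which furnishes a uniform $m$-regularity bound for ideal sheaves with a given Hilbert polynomial.

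First I would factor $i$ through its scheme-theoretic image as $C\stackrel{h}\to Z\stackrel{j}\hookrightarrow \bbP^n_k$, exactly as in Proposition~\ref{prop:image_Hilb_poly}, giving the short exact sequence
\begin{equation*}
  \xymatrix{0\ar[r] & j_*\calO_Z\ar[r] & i_*\calO_C\ar[r] & \calK\ar[r] & 0}
\end{equation*}
with $\calK$ supported on finitely many closed points. By Example~\ref{ex:m-reg_0}, the sheaf $\calK$ is $m$-regular for \emph{every} $m$, so it contributes no obstruction. For the subsheaf $j_*\calO_Z$, I would use the structure sequence
\begin{equation*}
  \xymatrix{0\ar[r] & \calI_Z\ar[r] & \calO_{\bbP^n_k}\ar[r] & j_*\calO_Z\ar[r] & 0,}
\end{equation*}
where $\calI_Z$ is the ideal sheaf of $Z$. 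Since $Z$ has one of finitely many Hilbert polynomials (bounded degree $a$ and genus in the range of Proposition~\ref{prop:image_Hilb_poly}), the ideal sheaf $\calI_Z$ likewise has one of finitely many Hilbert polynomials, so Theorem~\ref{thm:ideals_m-reg} gives a single integer $m_0$, independent of $k$ and of the particular pair, such that every such $\calI_Z$ is $m_0$-regular. Because $\calO_{\bbP^n_k}$ is $0$-regular by Example~\ref{ex:m-reg_bbP}, Lemma~\ref{lm:m-reg_ex_seq}\ref{lm:m-reg_ex_seq2} then yields that $j_*\calO_Z$ is $\max(m_0,0)$-regular, again uniformly.

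Finally I would reassemble: with $j_*\calO_Z$ uniformly $m'$-regular and $\calK$ regular in every degree, Lemma~\ref{lm:m-reg_ex_seq}\ref{lm:m-reg_ex_seq1} applied to the first sequence shows that $i_*\calO_C$ is $m'$-regular, completing the proof. The main obstacle I anticipate is the bookkeeping needed to guarantee the bound is genuinely uniform across all fields $k$ simultaneously: one must verify that $m$-regularity and the Hilbert-polynomial constraints are insensitive to the base field (which is why Proposition~\ref{prop:kernel_m-reg} reduces to $k$ infinite, and why Theorem~\ref{thm:ideals_m-reg} is stated field-independently), and that the finitely many admissible image polynomials from Proposition~\ref{prop:image_Hilb_poly} translate into finitely many ideal-sheaf polynomials. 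Once field-independence of regularity is invoked, the rest is a direct concatenation of the two short exact sequences.
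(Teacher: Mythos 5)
Your proposal is correct and follows essentially the same route as the paper: factor $i$ through the scheme-theoretic image, use Proposition~\ref{prop:image_Hilb_poly} and Theorem~\ref{thm:ideals_m-reg} to get a uniform regularity bound for the ideal sheaf of $i(C)$, transfer it to $j_*\calO_{i(C)}$ via the structure sequence and Lemma~\ref{lm:m-reg_ex_seq}\ref{lm:m-reg_ex_seq2}, and then conclude with Lemma~\ref{lm:m-reg_ex_seq}\ref{lm:m-reg_ex_seq1} since the punctual cokernel $\calK$ is regular in every degree. The only cosmetic difference is that the paper introduces the ideal sheaf as the kernel of $\calO_{\bbP^n_k}\to i_*\calO_C$ rather than directly as $\calI_{i(C)}$, which is the same sheaf.
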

\begin{proof}
  Consider the factorization $$\xymatrix{C\ar[rr]^i\ar[dr]^-{h} & &
    \bbP^n_k, \\ & i(C)\ar@{^(->}[ur]^-j & }$$ through the
  scheme-theoretic image $i(C)$, where $h$ is finite and an
  isomorphism away from finitely many closed points, and $j$ is a
  closed immersion.  Let $\calJ$ and $\calK$ be the kernel and the
  cokernel of the natural map $\calO_{\bbP^n_k}\ra i_*\calO_C$.  Then
  we have short sequences
  \begin{equation}\label{eqn:m-reg1}
    \xymatrix{0\ar[r] & \calJ\ar[r] & \calO_{\bbP^n_k}\ar[r] &
      j_*\calO_{i(C)}\cong \calO_{\bbP^n_k}/\calJ\ar[r] & 0} \end{equation}
  and
  \begin{equation}\label{eqn:m-reg2}
    \xymatrix{0\ar[r] & j_*\calO_{i(C)}\ar[r] & i_*\calO_C\ar[r] &
      \calK\ar[r] & 0}
  \end{equation}
  of $\calO_{\bbP^n_k}$-modules. We have seen in
  Proposition~\ref{prop:image_Hilb_poly} that there are only finitely
  many possibilities for the Hilbert polynomial of
  $i(C)$. Consequently, by the exact sequence
  \eqref{eqn:m-reg1}, there are only finitely many possibilities for
  the Hilbert polynomial of $\calJ$. By
  Theorem~\ref{thm:ideals_m-reg}, there exists an integer $m'$
  independent of $k$, $C$ and $i$ such that $\calJ$ is
  $m'$-regular. Consider sequences~\eqref{eqn:m-reg1} and
  \eqref{eqn:m-reg2}, and recall that $\calO_{\bbP^n_k}$ and $\calK$
  are $0$-regular by Example~\ref{ex:m-reg_bbP} and \ref{ex:m-reg_0}
  respectively. We see with Lemma~\ref{lm:m-reg_ex_seq} that also
  $i_*\calO_C$ is $m'$-regular.
\end{proof} 
\begin{prop}\label{prop:point_kernel_m-reg}
  Let $m'$ be as in Proposition~\ref{prop:univ_m-reg}. There exists an
  integer $m\geq m'$ such that for every field $k$, every $(C,i)\in
  \CM(\spec(k))$ and every closed $k$-point $x\in C$, the
  sheaf $i_*\calJ$ is $m$-regular, where $\calJ$ is the sheaf of
  ideals on $C$ corresponding to the closed immersion
  \mbox{$\injfunc{h}{\spec(\kappa(x))}{C}$}.
\end{prop}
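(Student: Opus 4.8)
The plan is to exhibit $i_*\calJ$ as the kernel in a short exact sequence of exactly the shape handled by Proposition~\ref{prop:kernel_m-reg}, and then quote that proposition with a \emph{fixed} Hilbert polynomial. First I would write the structure sequence of the closed point $x$ on $C$,
\[
0 \to \calJ \to \calO_C \to h_*\calO_{\spec(\kappa(x))} \to 0,
\]
and apply the pushforward $i_*$. As $i$ is finite and therefore affine, $i_*$ is exact on quasi-coherent sheaves, so I obtain a short exact sequence
\[
0 \to i_*\calJ \to i_*\calO_C \to (i\circ h)_*\calO_{\spec(\kappa(x))} \to 0
\]
of coherent sheaves on $\bbP^n_k$.

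Next I would identify the outer terms so as to match the hypotheses of Proposition~\ref{prop:kernel_m-reg}. The middle sheaf $i_*\calO_C$ is $m'$-regular by Proposition~\ref{prop:univ_m-reg}, with $m'$ independent of $k$, $C$ and $i$. The quotient $(i\circ h)_*\calO_{\spec(\kappa(x))}$ is a skyscraper sheaf supported at the single closed point $i(x)$, and since $x$ is a $k$-point its stalk $\kappa(x)=k$ is one-dimensional over $k$. Because a sheaf with zero-dimensional support is unchanged by twisting (compare Example~\ref{ex:m-reg_0}), its Hilbert polynomial is the constant polynomial $p(t)=1$, which does not depend on any of the data.

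Finally I would feed this into Proposition~\ref{prop:kernel_m-reg} with the fixed regularity bound $m'$ and the fixed polynomial $p(t)=1$. This yields an integer $m=m(m',1)\geq m'$, depending only on $m'$ and on the ambient dimension $n$, for which the kernel $i_*\calJ$ is $m$-regular; as this $m$ is independent of $k$, of $(C,i)$ and of the chosen point $x$, it is the uniform bound asserted. I do not expect a real obstacle here, since the inductive regularity estimate in Proposition~\ref{prop:kernel_m-reg} already does all the heavy lifting. The only step requiring genuine care is the observation that $x$ being $k$-rational pins the Hilbert polynomial of the skyscraper quotient to the constant $1$: were $\kappa(x)$ permitted to be a nontrivial finite extension of $k$, one would additionally need a uniform bound on $[\kappa(x):k]$ to keep that polynomial fixed, and the clean application of Proposition~\ref{prop:kernel_m-reg} would break down.
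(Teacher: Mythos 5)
Your proposal is correct and follows the paper's proof essentially verbatim: push forward the ideal-sheaf sequence of the point along the affine morphism $i$, note that $i_*\calO_C$ is $m'$-regular by Proposition~\ref{prop:univ_m-reg} and that the skyscraper quotient has constant Hilbert polynomial $1$ because $x$ is $k$-rational, and then invoke Proposition~\ref{prop:kernel_m-reg}. Your closing remark about why $k$-rationality is needed to pin down the quotient's Hilbert polynomial is a sound observation, though it is not part of the paper's argument.
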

\begin{proof}
  Let $k$ be a field, $(C,i)\in \CM(\spec(k))$ and $x\in C$ be a
  $k$-rational point. The closed immersion $h$ gives rise to a short
  exact sequence
  $$\xymatrix{0\ar[r] & \calJ\ar[r] & \calO_C\ar[r] &
    h_*\calO_{\spec(\kappa(x))}\ar[r] &0}$$ of
  $\calO_C$-modules. Since $i$ is finite and hence in particular
  affine, also the induced sequence
  $$\xymatrix{0\ar[r] & i_*\calJ\ar[r] & i_*\calO_C\ar[r] &
    i_*h_*\calO_{\spec(\kappa(x))}\ar[r] &0}$$ on $\bbP^n_k$ is
  exact. Let $m'\in \bbN$ be as in
  Proposition~\ref{prop:univ_m-reg}. Then the coherent sheaf
  $i_*\calO_C$ is $m'$-regular. Moreover, the coherent sheaf
  $i_*h_*\calO_{\spec(\kappa(x))}$ has Hilbert polynomial constant
  equal to $1$ since $x$ is a $k$-rational point.  Then the statement is
  a direct consequence of Proposition~\ref{prop:kernel_m-reg}.
\end{proof}

\begin{lm}\label{lm:dir_im_gen}
  Let $\func f X Y$ be an affine morphism of schemes, and let $\calF$
  be a quasi-coherent sheaf on $X$. Suppose that the direct image
  $f_*\calF$ is generated by its global sections. Then $\calF$ is
  generated by its global sections.
\end{lm}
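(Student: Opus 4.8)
The plan is to reduce the statement to the standard fact that, for an affine morphism, the canonical adjunction morphism $f^*f_*\calF\ra\calF$ is surjective, and then combine this with the right exactness of $f^*$. Recall that $\calF$ being generated by its global sections means precisely that there is a surjection $\calO_X^{(I)}\surj\calF$ from a free sheaf for some index set $I$, the images of the basis sections being generating global sections. Hence it suffices to produce such a surjection.

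First I would record the local description of the canonical morphism. As $f$ is affine, I may work over an affine open $V=\spec(B)\subseteq Y$ with $f^{-1}(V)=\spec(A)$ affine, where $\calF$ corresponds to an $A$-module $M$. Then $f_*\calF$ corresponds to $M$ viewed as a $B$-module, $f^*f_*\calF$ corresponds to $M\otimes_B A$, and the canonical morphism $f^*f_*\calF\ra\calF$ is induced by the map $M\otimes_B A\ra M$, $m\otimes a\mapsto am$. This map is surjective, as it splits the inclusion $m\mapsto m\otimes 1$, so the morphism $f^*f_*\calF\ra\calF$ is surjective.

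Next I would use the hypothesis. By assumption there is a surjection $\calO_Y^{(I)}\surj f_*\calF$. Applying the pullback functor $f^*$, which is right exact and commutes with direct sums, and using $f^*\calO_Y=\calO_X$, I obtain a surjection $\calO_X^{(I)}=f^*\calO_Y^{(I)}\surj f^*f_*\calF$. Composing with the surjection $f^*f_*\calF\ra\calF$ from the previous step yields a surjection $\calO_X^{(I)}\surj\calF$, which is exactly the required assertion.

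I expect the only real content to be the surjectivity of the counit $f^*f_*\calF\ra\calF$, which is where the affineness of $f$ enters in an essential way; without it the map need not be surjective. Everything else is formal: right exactness of $f^*$, its compatibility with direct sums, and the reformulation of generation by global sections. An equivalent, more hands-on route avoiding the counit would be to check generation stalk by stalk: for $x\in X$ with $y=f(x)$, and $\p\subseteq A$, $\q=\p\cap B$ the corresponding primes, the global sections generating $(f_*\calF)_y=M_\q$ over $B_\q$ also generate $\calF_x=M_\p$ over $A_\p$, via the natural surjection $M_\q\otimes_{B_\q}A_\p\ra M_\p$; but the functorial argument is cleaner.
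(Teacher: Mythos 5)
Your proof is correct and follows essentially the same route as the paper: write generation by global sections as a surjection from a free sheaf onto $f_*\calF$, apply the right exact functor $f^*$, and compose with the counit $f^*f_*\calF\ra\calF$, which is surjective because $f$ is affine. The only cosmetic difference is that you verify the surjectivity of the counit by the local computation $M\otimes_B A\ra M$, whereas the paper cites EGA~II (3.4.7) for this fact.
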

\begin{proof}
  Since $f_*\calF$ is generated by its global sections, there exists a
  surjective map $\bigoplus_{i\in I}\calO_Y\surj f_*\calF$. Applying
  the right exact inverse image functor $f^*$, we get a surjective map
  $\bigoplus_{i\in I}\calO_X\surj f^*f_*\calF$. Since $f$ is affine,
  the natural homomorphism $f^*f_*\calF\ra \calF$ is surjective, see
  \cite[Remarques (3.4.7)]{EGAII}. Composition of these two maps
  yields the required surjection.
\end{proof}

\begin{prop}\label{prop:point_glob_sec}
  Let $m$ be as in Proposition~\ref{prop:point_kernel_m-reg}. Let $k$
  be a field, and let $(C,i)\in \CM(\spec(k))$.
  \begin{enumerate}
  \item The coherent sheaf $i^*\calO_{\bbP^n_k}(m)$ is generated by
    its global sections, its higher cohomology vanishes and
    $h^0(i^*\calO_{\bbP^n_k}(m))=p(m)$.
  \item\label{prop:point_glob_sec2} Let $x\in C$ be a $k$-rational
    point, and let $\calJ$ be the sheaf of ideals of the closed
    immersion $\injfunc{h}{\spec(\kappa(x))}{C}$. Then the coherent
    sheaf $\calJ\otimes_{\calO_C}i^*\calO_{\bbP^n_k}(m)$ is generated
    by its global sections and its higher cohomology vanishes.
  \end{enumerate}
\end{prop}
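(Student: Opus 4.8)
The plan is to transfer everything to the projective space $\bbP^n_k$ via the direct image $i_*$ and apply the regularity results established above. The key observation is that $i$ is affine, so $i_*$ is exact on the relevant sheaves and, by the projection formula together with Lemma~\ref{lm:dir_im_gen}, questions of global generation and cohomology vanishing on $C$ can be read off from the corresponding questions for $i_*(-)$ on $\bbP^n_k$. I expect no single difficult obstacle here; the work is in bookkeeping the two twists carefully and invoking the correct earlier regularity statement in each case.

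\begin{proof}
  For assertion (i), the projection formula gives
  \[
    i_*\bigl(i^*\calO_{\bbP^n_k}(m)\bigr)\cong
    (i_*\calO_C)\otimes_{\calO_{\bbP^n_k}}\calO_{\bbP^n_k}(m)
    =(i_*\calO_C)(m).
  \]
  By Proposition~\ref{prop:univ_m-reg}, the sheaf $i_*\calO_C$ is $m'$-regular, and since $m\geq m'$ it is $m$-regular by Proposition~\ref{prop:m-reg}\ref{prop:m-reg1}. Hence $(i_*\calO_C)(m)$ is generated by its global sections by Proposition~\ref{prop:m-reg}\ref{prop:m-reg2}, and Lemma~\ref{lm:dir_im_gen} applied to the affine morphism $i$ shows that $i^*\calO_{\bbP^n_k}(m)$ is generated by its global sections on $C$. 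The higher cohomology vanishes by Corollary~\ref{cor:m-reg}, namely $H^r((i_*\calO_C)(m))=0$ for $r>0$; since $i$ is affine these agree with $H^r(C,i^*\calO_{\bbP^n_k}(m))$ as noted in the remark following Definition~\ref{def:CMfunctor}. The same corollary yields $h^0=p_{i_*\calO_C}(m)=p(m)$.

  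For assertion (ii), let $\calJ\subseteq\calO_C$ be the ideal sheaf of the closed point $x$. Again by the projection formula,
  \[
    i_*\bigl(\calJ\otimes_{\calO_C}i^*\calO_{\bbP^n_k}(m)\bigr)\cong
    (i_*\calJ)(m),
  \]
  where I use that $i_*$ commutes with twisting by $\calO_{\bbP^n_k}(m)$ on the affine morphism $i$. By Proposition~\ref{prop:point_kernel_m-reg} the sheaf $i_*\calJ$ is $m$-regular, so $(i_*\calJ)(m)$ is globally generated by Proposition~\ref{prop:m-reg}\ref{prop:m-reg2} and has vanishing higher cohomology by Corollary~\ref{cor:m-reg}. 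Applying Lemma~\ref{lm:dir_im_gen} once more transports global generation back to $C$, and the affineness of $i$ identifies the cohomology groups as before. This proves assertion (ii).
\end{proof}
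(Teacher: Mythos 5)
Your proof is correct and follows essentially the same route as the paper: identify $i_*(\calF\otimes i^*\calO_{\bbP^n_k}(m))$ with $(i_*\calF)(m)$ via the projection formula, use the $m$-regularity of $i_*\calO_C$ and $i_*\calJ$ (Propositions~\ref{prop:univ_m-reg} and \ref{prop:point_kernel_m-reg}) together with Proposition~\ref{prop:m-reg} and Corollary~\ref{cor:m-reg}, and transfer global generation and cohomology back to $C$ via Lemma~\ref{lm:dir_im_gen} and the affineness of $i$. The only cosmetic difference is that the paper handles $\calO_C$ and $\calJ$ in a single uniform argument while you treat the two cases separately.
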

\begin{proof}
  Let $\calF$ denote the structure sheaf $\calO_C$ or the sheaf of ideals
  $\calJ$. Note that in both cases the direct image $i_*\calF$ is
  $m$-regular by the choice of $m$. Hence, by
  Proposition~\ref{prop:m-reg} and Corollary~\ref{cor:m-reg}, the
  twist $(i_*\calF)(m)$ is generated by its global sections and its
  higher cohomology vanishes. Note further that $(i_*\calF)(m)\cong
  i_*(\calF\otimes_{\calO_C} i^*\calO_{\bbP^n_k}(m))$ by the
  projection formula. Thus the proposition follows from
  Lemma~\ref{lm:dir_im_gen} and the fact that we have
  $H^r(C,\calG)=H^r(\bbP^n_k,i_*\calG)$ for every coherent sheaf
  $\calG$ on $C$ and $r\geq 0$ since the morphism $i$ is affine.
\end{proof}

\begin{lm}\label{lm:sep_points_tangents}
  Let $k$ be an algebraically closed field, and let $X$ be a
  scheme over $k$. Let $\calL$ be an invertible sheaf on $X$ that is
  generated by its global sections. Suppose further that
  $\calJ\otimes_{\calO_X}\calL$ is generated by its global sections
  for every sheaf of ideals $\calJ$ corresponding to a closed point of
  $X$. Then the global sections of $\calL$ separate points and tangent
  vectors.
\end{lm}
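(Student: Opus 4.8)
The plan is to verify the two standard conditions—separation of points and separation of tangent vectors—directly from the global-generation hypotheses, using that $k$ is algebraically closed so that, by the Nullstellensatz, every closed point $P$ of $X$ is $k$-rational with $\kappa(P)=k$; the fibre of $\calL$ at $P$ is then a one-dimensional $k$-vector space. Throughout I write $\calJ_P$ for the ideal sheaf of the reduced closed point $P$, so that $(\calJ_P)_P=\m_P$ while $(\calJ_P)_Q=\calO_{X,Q}$ for $Q\neq P$. Since $\calL$ is invertible, hence flat, tensoring the inclusion $0\to\calJ_P\to\calO_X$ by $\calL$ yields an inclusion $\calJ_P\otimes_{\calO_X}\calL\hookrightarrow\calL$; I will use this to regard the global sections of $\calJ_P\otimes_{\calO_X}\calL$ as exactly those global sections of $\calL$ that vanish at $P$. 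Recall that, by Nakayama, global generation of a coherent sheaf means precisely that its global sections surject onto every fibre.

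For separation of points, fix distinct closed points $P\neq Q$. At $Q$ the stalk $(\calJ_P)_Q$ is all of $\calO_{X,Q}$, so $(\calJ_P\otimes_{\calO_X}\calL)_Q=\calL_Q$; since $\calJ_P\otimes_{\calO_X}\calL$ is globally generated by hypothesis, its global sections surject onto the one-dimensional fibre at $Q$, producing a section $s$ of $\calL$ with $s(P)=0$ but $s(Q)\neq0$. Exchanging the roles of $P$ and $Q$ gives a section $t$ with $t(Q)=0$ and $t(P)\neq0$, and the images of $s$ and $t$ span $(\calL_P\otimes\kappa(P))\oplus(\calL_Q\otimes\kappa(Q))$; thus the global sections separate $P$ and $Q$.

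For separation of tangent vectors, fix a closed point $P$. The global sections of $\calL$ vanishing at $P$ are exactly the image of $H^0(X,\calJ_P\otimes_{\calO_X}\calL)$, and I must show that their classes span $\m_P\calL_P/\m_P^2\calL_P$. The key identification is that of the fibre: since $(\calJ_P)_P=\m_P$ and $\calL_P$ is free of rank one, one has
\[
(\calJ_P\otimes_{\calO_X}\calL)_P\otimes\kappa(P)\cong(\m_P/\m_P^2)\otimes_k(\calL_P/\m_P\calL_P)\cong\m_P\calL_P/\m_P^2\calL_P.
\]
Global generation of $\calJ_P\otimes_{\calO_X}\calL$ forces its global sections to surject onto this fibre, which is exactly the required spanning statement.

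The argument has no deep step; the main thing to be careful about is the bookkeeping of the fibre identifications—checking that the subsheaf $\calJ_P\otimes_{\calO_X}\calL\subseteq\calL$ really does cut out precisely the sections vanishing at $P$, and that the fibre $(\calJ_P\otimes_{\calO_X}\calL)_P\otimes\kappa(P)$ is canonically the twisted cotangent space $\m_P\calL_P/\m_P^2\calL_P$—together with the use of $k=\bar k$ to ensure $\kappa(P)=k$, so that these fibres are the honest $k$-vector spaces appearing in the definitions of separating points and tangent vectors.
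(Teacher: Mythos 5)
Your proposal is correct and follows essentially the same route as the paper's proof: both arguments use global generation of $\calJ\otimes_{\calO_X}\calL$ at the stalk of a second point $y\neq x$ (where $(\calJ\otimes_{\calO_X}\calL)_y=\calL_y$) to separate points, and both identify $(\calJ\otimes_{\calO_X}\calL)_x=\m_x\calL_x$ so that global generation directly gives the spanning of $\m_x\calL_x/\m_x^2\calL_x$ needed to separate tangent vectors. The extra bookkeeping you supply (the injection $\calJ\otimes_{\calO_X}\calL\hookrightarrow\calL$ via flatness of $\calL$ and the fibre identification with the twisted cotangent space) is implicit in the paper's shorter write-up and is carried out correctly.
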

\begin{proof}
  Let $x$ be a closed point of $X$, and let $\calJ$ be the sheaf of
  ideals of the closed immersion $\spec(\kappa(x))\hookrightarrow
  X$. 

  Let first $y\neq x$ be another closed point of $X$. Since the sheaf
  $\calJ\otimes_{\calO_X}\calL$ is globally generated, there exists a 
  section $s\in H^0(X,\calJ\otimes_{\calO_X}\calL)$ such that
  $s_y\notin \m_y(\calJ\otimes_{\calO_X}\calL)_y=\m_y\calL_y$. Note
  that $s_x\in (\calJ\otimes_{\calO_X}\calL)_x =
  \m_x\calL_x$. From the inclusion $H^0(X,\calJ\otimes_{\calO_X}\calL)
  \subseteq H^0(X,\calL)$, we get that the global sections of $\calL$
  separate points.

  It remains to show that the vector space
  $\m_x\calL_x/\m_x^2\calL_x$ is spanned by the set $\{ s \in
  H^0(C,\calL)\mid s_x\in \m_x\calL_x\}$. Since
  $\m_x\calL_x=(\calJ\otimes_{\calO_X} \calL)_x$, this is a direct
  consequence of the sheaf $\calJ\otimes _{\calO_X}\calL$ being
  generated by its global sections.
\end{proof}
\begin{prop}\label{prop:field_closed_imm}
  Let $k$ be a field, and let $(C,i)\in \CM(\spec(k))$. Let $m$ be as
  in Proposition~\ref{prop:point_kernel_m-reg}, and set
  $N:=p(m)-1$. Then the sheaf $i^*\calO_{\bbP^n_k}(m)$ is very
  ample. In particular, there exists a closed immersion $\injfunc j C
  \bbP^N_k$ such that
  \mbox{$j^*\calO_{\bbP^N_k}(1)=i^*\calO_{\bbP^n_k}(m)$}.
\end{prop}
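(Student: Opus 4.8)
The plan is to prove very ampleness by reducing to an algebraically closed field and then applying the separation criterion for closed points and tangent vectors. The statement ``$\calL:=i^*\calO_{\bbP^n_k}(m)$ is very ample'' means precisely that the global sections of $\calL$ define a closed immersion into projective space. Since very ampleness is stable under base field extension (the formation of $H^0$ and the induced map commute with flat base change for a projective scheme), I would first reduce to the case that $k$ is algebraically closed by passing to $\overline{k}$; here I would invoke that $\calL_{\overline k}=i_{\overline k}^*\calO_{\bbP^n_{\overline k}}(m)$ and that $(C_{\overline k},i_{\overline k})\in\CM(\spec(\overline k))$ by Lemma~\ref{lm:func_field_change}, so the integer $m$ from Proposition~\ref{prop:point_kernel_m-reg} still applies.

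\textbf{Assembling the hypotheses of the separation lemma.}
Once $k$ is algebraically closed, the strategy is to verify the hypotheses of Lemma~\ref{lm:sep_points_tangents} for $\calL=i^*\calO_{\bbP^n_k}(m)$. Proposition~\ref{prop:point_glob_sec}(i) gives that $\calL$ is generated by its global sections, and Proposition~\ref{prop:point_glob_sec}\ref{prop:point_glob_sec2} gives that $\calJ\otimes_{\calO_C}\calL$ is globally generated for the ideal sheaf $\calJ$ of any $k$-rational (hence, over an algebraically closed field, any closed) point $x\in C$. Thus Lemma~\ref{lm:sep_points_tangents} applies directly and the global sections of $\calL$ separate points and tangent vectors. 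Since $C$ is projective (it is finite over $\bbP^n_k$) and $\calL$ is globally generated while its sections separate points and tangent vectors, the standard criterion \cite[Proposition II.7.3 or the remark following it]{Hartshorne} shows that the morphism to projective space defined by $\calL$ is a closed immersion, i.e.\ $\calL$ is very ample.

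\textbf{Producing the explicit embedding.}
For the final assertion, I would use Proposition~\ref{prop:point_glob_sec}(i), which gives $h^0(C,\calL)=p(m)=N+1$. Choosing a basis $s_0,\dots,s_N$ of $H^0(C,\calL)$ determines a morphism $\func{j}{C}{\bbP^N_k}$ with $j^*\calO_{\bbP^N_k}(1)\cong\calL$, and the map $j$ is exactly the morphism associated to the globally generated sheaf $\calL$ together with this choice of generating sections. By the previous paragraph this morphism is a closed immersion. Descending from $\overline k$ back to $k$ is harmless for the existence statement, since the closed-immersion property descends along the faithfully flat extension $k\hookrightarrow\overline k$ by \cite[Proposition~(4.6.7)]{EGAIII1} (a property already cited in the paper), and the basis can be chosen over $k$ because $\calL$ and its space of sections are defined over $k$.

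\textbf{The main obstacle.}
The substantive content of the argument is entirely packaged into the earlier regularity estimates, so the chief subtlety here is bookkeeping rather than a genuine new difficulty: one must be careful that the \emph{same} integer $m$ works simultaneously for $\calO_C$ and for the ideal sheaf of every point, which is exactly what Proposition~\ref{prop:point_kernel_m-reg} guarantees uniformly in $k$, $C$, $i$ and $x$. I expect the only real care to be needed in the reduction to an algebraically closed field---specifically, checking that separation of points and tangent vectors, a condition stated over $\overline k$ in Lemma~\ref{lm:sep_points_tangents}, correctly yields very ampleness of $\calL$ over the original field $k$ via faithfully flat descent.
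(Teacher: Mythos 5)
Your proposal is correct and follows essentially the same route as the paper's proof: both reduce to an algebraically closed field (using that $H^0(C,\calL)\otimes_k\bar k=H^0(C_{\bar k},\calL_{\bar k})$ and that being a closed immersion can be checked after the base change $k\hookrightarrow\bar k$), then combine Proposition~\ref{prop:point_glob_sec} with Lemma~\ref{lm:sep_points_tangents} and \cite[Proposition II.7.3]{Hartshorne} to conclude. The only cosmetic difference is the order of operations --- the paper defines $j$ over $k$ first and verifies the closed-immersion property after base change, whereas you base change first and descend at the end --- which changes nothing of substance.
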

\begin{proof}
  By Proposition~\ref{prop:point_glob_sec}, the invertible sheaf
  $\calL:=i^*\calO_{\bbP^n_k}(m)$ is generated by its global sections
  and $h^0(C,\calL)=p(m)$.  Let $s_0,\dotsc,s_N$ be a basis of
  $H^0(C,\calL)$. We claim that the map $\func j C\bbP^N_k$
  corresponding to $\calL$ and the sections $s_0,\dotsc,s_N$ is a
  closed immersion.  Note that $j$ is a closed immersion if and only
  if the induced map $\func{\bar j}{C_{\bar k}}{\bbP^N_{\bar k}}$,
  obtained by base change to the algebraic closure $\bar k$ of $k$, is
  a closed immersion. As we moreover have that $H^0(C,\calL)\otimes_k
  \bar k=H^0(C_{\bar k},\calL_{\bar k})$, we can without loss of
  generality assume that $k$ is algebraically closed.

  The invertible sheaf $\calL$ satisfies the conditions of
  Lemma~\ref{lm:sep_points_tangents} by choice of $m$, and hence its
  global sections separate points and tangent vectors. Now it follows
  from \cite[Proposition II.7.3]{Hartshorne} that $\bar j$ is a closed
  immersion.
\end{proof}

\subsection{Embedding over a general base scheme}\label{sec:emb_S}
Now let $S$ be a locally Noetherian scheme, and let $N\in \bbN$ be as
in the previous subsection. We show that for every pair $(C,i)\in
\CM(S)$, the scheme $S$ has an open cover $\{S_a\}$ such that for
every $a$ there exists a closed immersion $C_{S_a}\hookrightarrow
\bbP^{N}_{S_a}$.

\begin{thm}[Cohomology and base change]\label{thm:cohom_basechange}
  Let $S$ be a locally Noetherian scheme, and let $\func f X S$ be a
  proper morphism. Let $\calF$ be a coherent $\calO_X$-module that is
  flat over $S$. Suppose that $H^r(X_s,\calF_s)=0$ for all $s\in S$
  and $r>0$. Then we have the following.
  \begin{enumerate}
  \item The $\calO_S$-module $f_*\calF$ is locally free.
  \item For every morphism $\func g T S$ the natural base change map
    $g^*f_*\calF\ra (f_T)_*\calF_T$ is an isomorphism.
  \end{enumerate}
\end{thm}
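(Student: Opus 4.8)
The plan is to reduce to the affine situation and then invoke the standard \emph{universal} complex computing cohomology. Since being locally free and being an isomorphism are both local properties, and since both the formation of $f_*$ and the base change map commute with restriction to opens of $S$, I may assume $S=\spec(A)$ with $A$ Noetherian; it then suffices to prove the base change statement for $T=\spec(B)$ affine, the general case following by gluing over an affine cover of $T$. The essential input is the existence of a bounded complex $L^\bullet\colon 0\to L^0\to L^1\to\dots\to L^N\to 0$ of finitely generated free $A$-modules, together with functorial isomorphisms $H^r(X_B,\calF_B)\cong h^r(L^\bullet\otimes_A B)$ for every $A$-algebra $B$ and every $r\geq 0$, compatible with the natural base change maps. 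This is Grothendieck's theorem on cohomology and base change for proper morphisms (see \cite[Lecture 14]{Mumford:Lectures} or \cite{EGAIII1}); it is produced from the \v{C}ech complex of $\calF$ relative to a finite affine cover, replacing the flat but infinitely generated \v{C}ech modules by a finite free complex using properness and the flatness of $\calF$ over $A$.

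Next I would exploit the hypothesis $H^r(X_s,\calF_s)=0$ for $r>0$. Taking $B=\kappa(s)$ gives $h^r(L^\bullet\otimes_A\kappa(s))=0$ for all $s\in S$ and all $r\geq 1$. I would then prune the complex by descending induction on the top degree. For the top map $d^{N-1}\colon L^{N-1}\to L^N$, vanishing of $h^N$ on every fiber means $\coker(d^{N-1})\otimes_A\kappa(s)=0$ for all $s$, so by Nakayama the finitely generated module $\coker(d^{N-1})$ vanishes; thus $d^{N-1}$ is surjective, and since $L^N$ is free the surjection splits, exhibiting $Z^{N-1}:=\kernel(d^{N-1})$ as a direct summand of $L^{N-1}$, hence finitely generated projective. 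Replacing $L^\bullet$ by $0\to L^0\to\dots\to L^{N-2}\to Z^{N-1}\to 0$ yields a complex with the same cohomology after tensoring with any $B$ (the discarded summand $L^N\xrightarrow{\sim}L^N$ stays acyclic because the modules are projective). Iterating down through degrees $N-1,\dots,1$, each step using $h^r(\text{fiber})=0$ for the current top degree $r\geq 1$, collapses the complex to a single finitely generated projective module $P$ in degree $0$, so that $H^0(X_B,\calF_B)\cong P\otimes_A B$ and $H^r(X_B,\calF_B)=0$ for $r\geq 1$, functorially in $B$.

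With this collapsed complex the conclusions are immediate. Taking $B=A$ gives $f_*\calF\cong P$, and a finitely generated projective module over the Noetherian ring $A$ is locally free, proving~(i). For~(ii) with $T=\spec(B)$, the functorial isomorphism reads $(f_T)_*\calF_T\cong P\otimes_A B\cong(f_*\calF)\otimes_A B$, and its compatibility with the canonical base change map identifies $g^*f_*\calF\to(f_T)_*\calF_T$ with this isomorphism; gluing over an affine cover of a general $T$ then finishes the proof. I expect the main obstacle to be not the pruning argument, which is formal, but the careful justification of the universal complex $L^\bullet$ together with the \emph{functoriality} of the isomorphisms $H^r(X_B,\calF_B)\cong h^r(L^\bullet\otimes_A B)$ in $B$ and their compatibility with the base change maps; without this naturality the final identification in~(ii) would not follow, which is why I would either cite it in full or reproduce its construction from the \v{C}ech complex.
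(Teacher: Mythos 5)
Your argument is correct, but it takes a different route from the paper: the paper disposes of the theorem in two lines by citing EGA III, namely Corollaire (7.9.9) for local freeness of $f_*\calF$ and Corollaire (6.9.9) (applied with $\calP_0=\calF$, $\calP_i=0$ for $i\neq 0$) for the base change isomorphism, whereas you unwind the machinery behind those references: the finite complex $L^\bullet$ computing $H^\bullet(X_B,\calF_B)$ universally in $B$, followed by the pruning argument that uses $H^r(X_s,\calF_s)=0$ for $r>0$ to collapse $L^\bullet$ to a single finitely generated projective module in degree $0$. The pruning itself is sound: right-exactness of $\otimes$ identifies $h^N(L^\bullet\otimes\kappa(s))$ with $\coker(d^{N-1})\otimes\kappa(s)$, Nakayama at each prime kills the cokernel, and projectivity of the top term splits the resulting surjection so that the kernel is again a finitely generated projective summand; iterating is legitimate because direct summands are preserved by any base change. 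Two small caveats: the universal complex is in general only guaranteed to have finitely generated \emph{projective} terms (free in positive degrees), not free throughout, but your argument only ever uses projectivity; and the construction via the \v{C}ech complex uses that $X$ is separated and quasi-compact over $S$, which properness supplies. You are right to single out the functoriality of the isomorphisms $H^r(X_B,\calF_B)\cong h^r(L^\bullet\otimes_A B)$ and their compatibility with the canonical base change maps as the point that must be cited or proved in full --- that is exactly the content of the EGA results the paper leans on. The trade-off is the usual one: the paper's citation is shorter and defers all rigor to EGA's exchange-property formalism, while your version is essentially self-contained modulo the existence of $L^\bullet$ and makes visible why fiberwise vanishing of higher cohomology yields both conclusions simultaneously.
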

\begin{proof}
  The first statement (i) is \cite[Corollaire (7.9.9)]{EGAIII2}. For
  statement (ii) consider \cite[Corollaire (6.9.9)]{EGAIII2} in the
  special case that $\calP_0=\calF$ and $\calP_i=0$ for $i\neq 0$.
\end{proof}
\begin{prop}\label{prop:LocFree_BaseChange}
  Let $S$ be locally Noetherian. Let $(C,i)\in \CM(S)$,
  and denote by $\func f C S$ the structure map. Let further
  $\calL:=i^*\calO_{\bbP^n_S}(m)$ with $m$ as in
  Proposition~\ref{prop:point_kernel_m-reg}. Then the following holds.
  \begin{enumerate}
  \item The sheaf $f_*\calL$ is locally free of rank $p(m)$.
  \item For every morphism $\func g T S$ the natural base change map
    $g^*f_*\calL\ra (f_T)_*\calL_T$ is an iso\-morphism.
  \end{enumerate}
\end{prop}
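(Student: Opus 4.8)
The plan is to apply the Cohomology and Base Change theorem, Theorem~\ref{thm:cohom_basechange}, to the structure morphism $\func f C S$ and the invertible sheaf $\calL$, and then read off both assertions from its two parts. First I would verify the hypotheses. Properness of $f$ holds because $f$ factors as the composition of the finite morphism $\func i C{\bbP^n_S}$ with the projection $\bbP^n_S\to S$: finite morphisms are proper, the projection is projective and hence proper, and a composition of proper morphisms is proper. Coherence and $S$-flatness of $\calL$ are immediate, since $\calL$ is invertible and hence locally free of rank $1$ over $C$, in particular $C$-flat, while $C$ is flat over $S$ by the definition of $\CM$; flatness is transitive, so $\calL$ is flat over $S$.

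The substantive hypothesis is the vanishing of higher cohomology on the fibers, and this is where the earlier work pays off. For a point $s\in S$, pullback commutes with base change, so the fiber sheaf $\calL_s$ is identified with $i_s^*\calO_{\bbP^n_{\kappa(s)}}(m)$, where $\func{i_s}{C_s}{\bbP^n_{\kappa(s)}}$ is the induced map and $(C_s,i_s)\in\CM(\spec(\kappa(s)))$. Because $m$ is chosen as in Proposition~\ref{prop:point_kernel_m-reg}, Proposition~\ref{prop:point_glob_sec} applies to the pair $(C_s,i_s)$ and gives $H^r(C_s,\calL_s)=0$ for all $r>0$, together with $h^0(C_s,\calL_s)=p(m)$. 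This confirms the remaining hypothesis of Theorem~\ref{thm:cohom_basechange}, and moreover the value $p(m)$ is independent of $s$.

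With all hypotheses in place, Theorem~\ref{thm:cohom_basechange}(i) shows that $f_*\calL$ is locally free, and Theorem~\ref{thm:cohom_basechange}(ii) is exactly assertion~(ii). To determine the rank in assertion~(i), I would apply the base change isomorphism to $\func g{\spec(\kappa(s))}S$ for each $s\in S$, which identifies the fiber $(f_*\calL)\otimes_{\calO_S}\kappa(s)$ with $H^0(C_s,\calL_s)$; by the previous paragraph this space has dimension $p(m)$, so $f_*\calL$ has constant rank $p(m)$. Since every step reduces to invoking results already established, I do not expect a genuine obstacle. The only points requiring care are the identification of the fiber $\calL_s$ with $i_s^*\calO_{\bbP^n_{\kappa(s)}}(m)$, so that Proposition~\ref{prop:point_glob_sec} is applicable to the sheaf on $C_s$ itself rather than to its pushforward to projective space, and the observation that the vanishing in Proposition~\ref{prop:point_glob_sec} holds uniformly over all fibers.
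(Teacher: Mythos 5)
Your proposal is correct and follows exactly the paper's route: identify $\calL_s$ with $i_s^*\calO_{\bbP^n_{\kappa(s)}}(m)$, invoke Proposition~\ref{prop:point_glob_sec} for the vanishing of higher cohomology on fibers, and then apply Theorem~\ref{thm:cohom_basechange}. You spell out the verification of properness and flatness and the rank computation via the fiber $H^0$, which the paper leaves implicit, but the argument is the same.
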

\begin{proof}
  For every $s\in S$, we have $\calL_s=(i_s)^*\calO_{\bbP^n_{\kappa(s)}}(m)$,
  and hence \mbox{$H^r(C_s,\calL_s)=0$} for all $r> 0$ by
  Proposition~\ref{prop:point_glob_sec}. Then the statement follows
  directly from Theorem~\ref{thm:cohom_basechange}.
\end{proof}

\begin{prop}\label{prop:star_star_surj}
  Let $S$ be a locally Noetherian scheme, and let $(C,i)\in
  \CM(S)$. With the notation as in
  Proposition~\ref{prop:LocFree_BaseChange}, the natural map $\func
  \pi {f^*f_*\calL} \calL$ is surjective.
\end{prop}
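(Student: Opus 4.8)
The statement asserts that the natural evaluation map $\func\pi{f^*f_*\calL}\calL$ is surjective, where $\calL=i^*\calO_{\bbP^n_S}(m)$ and $f\mathpunct: C\ra S$ is the structure map. My plan is to check surjectivity fiber by fiber, using the Cohen--Macaulay base change results already established. Surjectivity of a morphism of coherent sheaves can be tested on the fibers over $S$ by Nakayama's lemma, so it suffices to show that for every $s\in S$ the induced map $(f^*f_*\calL)_s\ra\calL_s$ is surjective on the fiber $C_s$.

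The key observation is that the formation of this evaluation map commutes with base change. Concretely, I would first fix $s\in S$ and consider the base change to $\spec(\kappa(s))$. By Proposition~\ref{prop:LocFree_BaseChange}(ii), the natural map $\kappa(s)\otimes_{\calO_S} f_*\calL\ra (f_s)_*\calL_s$ is an isomorphism, so after pulling back along $f$ the source of the evaluation map restricts to $(f_s)^*(f_s)_*\calL_s$. Thus the restriction of $\pi$ to the fiber $C_s$ is precisely the natural evaluation map $\func{\pi_s}{(f_s)^*(f_s)_*\calL_s}{\calL_s}$ for the fiber $(C_s,i_s)\in\CM(\spec(\kappa(s)))$. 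This reduces the problem to the case of a curve over a field, which is exactly the content available from the embedding-over-a-field results.

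For a curve over a field, surjectivity of the evaluation map $\func{\pi_s}{(f_s)^*(f_s)_*\calL_s}{\calL_s}$ is equivalent to $\calL_s$ being generated by its global sections, since $(f_s)_*\calL_s=H^0(C_s,\calL_s)$ and the evaluation map sends the global sections to their values in the stalks. But Proposition~\ref{prop:point_glob_sec}(i) tells us precisely that $\calL_s=i_s^*\calO_{\bbP^n_{\kappa(s)}}(m)$ is generated by its global sections, by the choice of $m$. Hence $\pi_s$ is surjective for every $s\in S$, and the proof concludes by Nakayama.

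The main obstacle to watch is the base-change compatibility in the second paragraph: I must verify carefully that the restriction of the globally-defined evaluation map $\pi$ to a fiber genuinely agrees with the fiberwise evaluation map $\pi_s$, rather than merely some map between isomorphic sheaves. This hinges on the naturality of the isomorphism in Proposition~\ref{prop:LocFree_BaseChange}(ii) together with the compatibility of pullback with the counit $f^*f_*\ra\id$. Once this identification is justified, the remaining steps are formal. Everything is local on $S$, so there is no difficulty in reducing to an affine, Noetherian base, and Nakayama's lemma applies to the coherent cokernel of $\pi$ to turn fiberwise surjectivity into surjectivity.
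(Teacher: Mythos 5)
Your proposal is correct and follows essentially the same route as the paper: both arguments reduce to showing the coherent cokernel of $\pi$ vanishes on every fiber $C_s$, identify the restriction of $\pi$ to $C_s$ with the fiberwise evaluation map $\pi_s$ via the base change isomorphism of Proposition~\ref{prop:LocFree_BaseChange}(ii), and conclude from the global generation of $\calL_s$ supplied by Proposition~\ref{prop:point_glob_sec}. The compatibility you flag as the main obstacle is exactly the factorization $h^*\pi = \pi_s\circ\alpha$ that the paper writes out explicitly, with $\alpha$ the pullback of the base change isomorphism.
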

\begin{proof}
  The cokernel $\calN$ of $\pi$ is coherent, and it suffices to show
  that $\calN_s=0$ for every $s\in S$.

  So let $s\in S$, and consider the fiber diagram
  $$\xymatrix{C_s\ar[d]_{f_s}\ar[r]^h & C \ar[d]^f \\
    \spec(\kappa(s))\ar[r]^-g & S.}$$ Since $\calL_s=h^*\calL$ is
  generated by its global sections by assumption on $m$, see
  Proposition~\ref{prop:point_glob_sec}, the natural map $\func
  {\pi_s}{f_s^*(f_s)_*\calL_s}\calL_s$ is surjective. Note that the
  map $\func {h^*\pi}{h^*f^*f_*\calL}{h^*\calL}$ factors as
  $$h^*f^*f_*\calL\cong f_s^*g^*f_*\calL
  \stackrel{\alpha}\longrightarrow f_s^*(f_s)_*h^*\calL
  \stackrel{\pi_s}\longrightarrow h^*\calL,$$ where $\alpha$ is the
  pullback of the natural map $g^*f_*\calL\ra(f_s)_*h^*\calL$ which is
  an isomorphism by Proposition~\ref{prop:LocFree_BaseChange}. It
  follows that also $h^*\pi$ is surjective, that is, $\calN_s=0$.
\end{proof}

\begin{prop}[{\cite[Proposition {(4.6.7)}]{EGAIII1}}]
  \label{prop:EGA_finite_closed_fiber} 
  Let $S$ be a locally Noetherian scheme, and let $\func f X Y$ be a
  morphism of proper $S$-schemes. Suppose that $\func{f_s}{X_s}{Y_s}$
  is finite (resp.\ a closed immersion) for a point $s\in
  S$. Then there exists an open neighborhood $U\subseteq S$ of $s$
  such that the restriction $\func{f_U}{X_U}{Y_U}$ is finite
  (resp.\ a closed immersion).
\end{prop}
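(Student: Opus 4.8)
The plan is to first observe that $f$ itself is proper, and then treat the two cases separately, reducing the closed-immersion case to the finite case. Since $X\to S$ is proper and $Y\to S$ is separated (being proper), the cancellation property for proper morphisms shows that $\func f X Y$ is proper; in particular each restriction $\func{f_U}{X_U}{Y_U}$ over an open $U\subseteq S$ is again proper, and so is each fiber $\func{f_s}{X_s}{Y_s}$.

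For the finite case, I would use that a proper morphism is finite if and only if it is quasi-finite, together with the openness of the quasi-finite locus. First note that for a point $x$ lying in the fiber $X_s$, the fiber of $f$ through $x$ coincides with the fiber of $f_s$: writing $y=f(x)\in Y_s$, one has $f^{-1}(y)=X_s\times_{Y_s}\spec(\kappa(y))=f_s^{-1}(y)$, since $\spec(\kappa(y))\to Y$ factors through $Y_s$ and $X\times_Y Y_s=X_s$. Thus if $f_s$ is finite, hence has finite fibers, then $f$ is quasi-finite at every point of $X_s$. The locus $Q\subseteq X$ where $f$ is quasi-finite is open, so $X\setminus Q$ is closed; as $X\to S$ is proper and hence closed, its image in $S$ is closed and avoids $s$ (no point of $X\setminus Q$ lies over $s$, since $X_s\subseteq Q$). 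Taking $U$ to be the complement of this image yields an open neighborhood of $s$ with $X_U\subseteq Q$, so $f_U$ is proper and quasi-finite, hence finite.

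For the closed-immersion case, I would first shrink $S$ so that $f_U$ is finite by the previous step; replacing $S$ by $U$, assume $f$ itself is finite. Then $f_*\calO_X$ is a coherent $\calO_Y$-algebra, and $f$ is a closed immersion exactly when the cokernel $\calC$ of $\calO_Y\to f_*\calO_X$ vanishes. Because $f$ is affine, the formation of $f_*\calO_X$ commutes with the base change $\spec(\kappa(s))\to S$, so restricting the right-exact sequence $\calO_Y\to f_*\calO_X\to \calC\to 0$ to the fiber $Y_s$ identifies $\calC\vert_{Y_s}$ with the cokernel of $\calO_{Y_s}\to (f_s)_*\calO_{X_s}$. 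The latter vanishes because $f_s$ is a closed immersion, so $\calC\vert_{Y_s}=0$; by Nakayama's lemma this forces $\calC_y=0$ for every $y\in Y_s$, that is, $\supp(\calC)\cap Y_s=\emptyset$. Since $\supp(\calC)$ is closed in $Y$ and $Y\to S$ is proper, the image of $\supp(\calC)$ is closed in $S$ and avoids $s$; its complement is the desired neighborhood over which $\calC=0$ and $f$ is a closed immersion.

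The main obstacle is the finite case, which rests on two substantial inputs: the openness of the quasi-finite locus for a morphism of finite type, and the theorem that a proper quasi-finite morphism is finite (a form of Zariski's Main Theorem). Both are standard but not elementary, which is why the statement is quoted from EGA rather than reproved here. The closed-immersion case is then comparatively routine, the only point requiring care being the base-change identification of $\calC\vert_{Y_s}$, which works precisely because finite morphisms are affine and the pushforward of the structure sheaf is computed by tensoring the corresponding finite algebra.
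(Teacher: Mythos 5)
Your proof is correct. Note, however, that the paper does not prove this proposition at all: it is quoted verbatim from EGA III, Proposition (4.6.7), and used as a black box (in Propositions~\ref{prop:isom-embedding} and \ref{prop:finite_closed_fiber}). So there is no internal argument to compare against; what you have done is supply a proof of the cited result. Your argument is sound: the cancellation step gives properness of $f$; the identification $f^{-1}(y)=f_s^{-1}(y)$ for $y\in Y_s$ is right because $X\times_Y Y_s=X_s$; openness of the quasi-finite locus plus closedness of the image of $X\setminus Q$ under the proper map $X\to S$ produces the neighborhood; and proper plus quasi-finite implies finite. In the closed-immersion case, the base-change identification of $\calC\vert_{Y_s}$ with $\coker(\calO_{Y_s}\to (f_s)_*\calO_{X_s})$ is legitimate since pushforward along an affine morphism commutes with arbitrary base change and $\coker$ is right exact, and the Nakayama step ($\calC_y=\m_s\calC_y\subseteq \m_y\calC_y$ forces $\calC_y=0$) is fine because $\calC$ is coherent. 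The only caveat is the one you flag yourself: the finite case leans on two nontrivial theorems (openness of the quasi-finite locus and the proper-quasi-finite-implies-finite form of Zariski's Main Theorem), which is presumably why the author preferred to cite EGA rather than reprove the statement; EGA's own proof of (4.6.7) proceeds somewhat differently, via the finiteness and semicontinuity machinery of EGA III \S4.4, so your route is a genuine (and arguably more transparent) alternative.
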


\begin{prop}\label{prop:isom-embedding}
  Let $(C,i) \in \CM(S)$, where $S$ is a locally Noetherian scheme.
  Let $m$ be as in Proposition~\ref{prop:point_kernel_m-reg}, and set
  $\calL:=i^*\calO_{\bbP^n_S}(m)$. Suppose that $f_*\calL$, where
  $\func f C S$ denotes the structure map, is a free
  $\calO_S$-module of rank $N+1$. Then every isomorphism $\iso\sigma
  {\calO_{S}^{N+1}}{f_*\calL}$ gives rise to a closed immersion
  $\injfunc{j_\sigma}{C}{\bbP^N_S}$ with
  $j_\sigma^*\calO_{\bbP^N_S}(1)=\calL$.
\end{prop}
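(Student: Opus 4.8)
The plan is to construct $j_\sigma$ from a global surjection $\calO_C^{N+1}\surj \calL$ and then verify that it is a closed immersion fiber by fiber, transferring the field case (Proposition~\ref{prop:field_closed_imm}) to a neighborhood of each point via Proposition~\ref{prop:EGA_finite_closed_fiber}. First I would produce the defining surjection. Since $\sigma$ is an isomorphism, its pullback $\func{f^*\sigma}{\calO_C^{N+1}}{f^*f_*\calL}$ is again an isomorphism, and composing it with the surjection $\func{\pi}{f^*f_*\calL}{\calL}$ of Proposition~\ref{prop:star_star_surj} gives a surjection $\phi:=\pi\circ f^*\sigma\colon\calO_C^{N+1}\surj \calL$. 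The images of the standard basis are $N+1$ global sections of the invertible sheaf $\calL$ that generate it, so by the universal property of projective space (the relative version of \cite[Theorem II.7.1]{Hartshorne}) they determine an $S$-morphism $\func{j_\sigma}{C}{\bbP^N_S}$ with $j_\sigma^*\calO_{\bbP^N_S}(1)=\calL$, which already settles the pullback part of the claim.

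It remains to show that $j_\sigma$ is a closed immersion. Observe that $C$ is proper over $S$, since $i$ is finite and hence proper while $\bbP^n_S$ is proper over $S$; thus $j_\sigma$ is a morphism of proper $S$-schemes. By Proposition~\ref{prop:EGA_finite_closed_fiber} it therefore suffices to check that each fiber $\func{j_{\sigma,s}}{C_s}{\bbP^N_{\kappa(s)}}$ is a closed immersion, because the resulting open sets cover $S$ and being a closed immersion is local on the target $\bbP^N_S$. The fiber $j_{\sigma,s}$ is the morphism attached to the pulled-back surjection $h^*\phi\colon\calO_{C_s}^{N+1}\surj\calL_s$, where $\func h{C_s}C$ is the inclusion in the fiber diagram of Proposition~\ref{prop:star_star_surj}.

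Using $f\circ h=g\circ f_s$ and the factorization of $h^*\pi$ recorded in the proof of Proposition~\ref{prop:star_star_surj}, I would rewrite $h^*\phi=\pi_s\circ f_s^*(\sigma_s)$, where $\sigma_s$ is the composite of $g^*\sigma$ with the base change isomorphism $g^*f_*\calL\to(f_s)_*\calL_s$ of assertion (ii) of Proposition~\ref{prop:LocFree_BaseChange}. As a composite of isomorphisms, $\func{\sigma_s}{\kappa(s)^{N+1}}{H^0(C_s,\calL_s)}$ is an isomorphism, so the $N+1$ sections cutting out $j_{\sigma,s}$ form a basis of $H^0(C_s,\calL_s)$. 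Since $\calL_s=i_s^*\calO_{\bbP^n_{\kappa(s)}}(m)$, Proposition~\ref{prop:field_closed_imm} shows exactly that the morphism determined by such a basis is a closed immersion, which finishes the fiberwise verification and hence the proof.

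The universal-property construction of $j_\sigma$ and the reduction to fibers are routine. The main obstacle I anticipate is the base-change bookkeeping of the last paragraph: one must identify the fiber of the globally constructed surjection $\phi$ with the data $(\calL_s,\text{basis of }H^0(C_s,\calL_s))$ appearing in Proposition~\ref{prop:field_closed_imm}, and this hinges on combining the compatibility of $\pi$ with base change (as in the proof of Proposition~\ref{prop:star_star_surj}) with the base change isomorphism of Proposition~\ref{prop:LocFree_BaseChange}.
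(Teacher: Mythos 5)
Your proposal is correct and follows essentially the same route as the paper: construct $j_\sigma$ by composing $f^*\sigma$ with the surjection $f^*f_*\calL\surj\calL$ of Proposition~\ref{prop:star_star_surj}, check closed-immersion-ness on fibers via Proposition~\ref{prop:field_closed_imm}, and globalize with Proposition~\ref{prop:EGA_finite_closed_fiber}. The only difference is that you spell out the base-change bookkeeping (identifying the fiber of the surjection with a basis of $H^0(C_s,\calL_s)$ via Proposition~\ref{prop:LocFree_BaseChange}), which the paper leaves implicit.
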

\begin{proof}
  Composing the pull-back $f^*\sigma$ with the map $f^*f_*\calL \surj
  \calL$, that is surjective by Proposition~\ref{prop:star_star_surj},
  gives rise to a $S$-morphism $\func {j_\sigma} {C}{\bbP^N_S}$ such
  that $j_\sigma^*\calO_{\bbP^N_S}(1)=\calL$. We observe that
  $j_\sigma$ is a closed immersion over every fiber by
  Proposition~\ref{prop:field_closed_imm}.  Then it follows from
  Proposition~\ref{prop:EGA_finite_closed_fiber} that $j_\sigma$
  itself is a closed immersion.
\end{proof}
\begin{thm}\label{thm:embedding_general_base}
  Let $m$ be as in Proposition~\ref{prop:point_kernel_m-reg} and set
  $N:=p(m)-1$. For any locally Noetherian scheme $S$, and any
  $(C,i)\in \CM(S)$, the invertible sheaf
  $\calL:=i^*\calO_{\bbP^n_S}(m)$ is very ample for the structure
  morphism $\func f C S$.

  Moreover, the scheme $S$ has an open cover $\{S_a\}_{a\in I}$ such
  that for every $a\in I$ there exists a closed immersion $j_a\!
  :C_a:=C\times_S S_a\hookrightarrow \bbP^N_{S_a}$ with
  $j_a^*\calO_{\bbP^N_{S_a}}(1)=i_a^*\calO_{\bbP^n_{S_a}}(m)$.
\end{thm}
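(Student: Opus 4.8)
The statement has two parts: that $\calL=i^*\calO_{\bbP^n_S}(m)$ is very ample for $f$, and that there is a trivialising open cover $\{S_a\}$ carrying closed immersions $C_a\hookrightarrow\bbP^N_{S_a}$. Since essentially all the geometric content is already packaged in the earlier results, the plan is to globalise Proposition~\ref{prop:isom-embedding} by passing to a cover over which $f_*\calL$ is free. First I would record that, by Proposition~\ref{prop:LocFree_BaseChange}, the sheaf $f_*\calL$ is locally free of rank $p(m)=N+1$; hence $S$ admits an open cover $\{S_a\}_{a\in I}$ such that each restriction $(f_*\calL)|_{S_a}$ is free of rank $N+1$, and on each piece I fix a trivialisation $\iso{\sigma_a}{\calO_{S_a}^{N+1}}{(f_*\calL)|_{S_a}}$.

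Next I would produce the closed immersions. For the open immersion $S_a\hookrightarrow S$, part (ii) of Proposition~\ref{prop:LocFree_BaseChange} identifies the base-change map as an isomorphism $(f_*\calL)|_{S_a}\cong (f_{S_a})_*\calL_{S_a}$, so $(f_{S_a})_*\calL_{S_a}$ is free of rank $N+1$. By functoriality (Proposition~\ref{prop:functor}) the pair $(C_{S_a},i_{S_a})$ lies in $\CM(S_a)$, and $\calL_{S_a}=i_{S_a}^*\calO_{\bbP^n_{S_a}}(m)$. Applying Proposition~\ref{prop:isom-embedding} to $(C_{S_a},i_{S_a})$ together with the trivialisation $\sigma_a$ then yields a closed immersion $\injfunc{j_a}{C_a}{\bbP^N_{S_a}}$ with $j_a^*\calO_{\bbP^N_{S_a}}(1)=\calL_{S_a}$, which is exactly the second assertion.

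For very ampleness I would glue these local pictures. The surjection $\func{\pi}{f^*f_*\calL}{\calL}$ of Proposition~\ref{prop:star_star_surj} corresponds, by the universal property of the projective bundle, to an $S$-morphism $\func{g}{C}{\bbP(f_*\calL)}$ with $g^*\calO_{\bbP(f_*\calL)}(1)=\calL$. Over each $S_a$ the trivialisation $\sigma_a$ identifies $\bbP(f_*\calL)|_{S_a}$ with $\bbP^N_{S_a}$, and under this identification the restriction $g^{-1}(\bbP^N_{S_a})=C_a\to\bbP^N_{S_a}$ is precisely $j_a$, a closed immersion. Since being a (closed) immersion is local on the target and the open subschemes $\bbP^N_{S_a}$ cover $\bbP(f_*\calL)$, the morphism $g$ is an immersion; hence $\calL$ is very ample relative to $f$.

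Given the strength of the preceding propositions, there is no serious difficulty here and the work is mostly bookkeeping. The two points that need genuine care are, first, the base-change identification $(f_*\calL)|_{S_a}\cong(f_{S_a})_*\calL_{S_a}$ along the open immersion, which is exactly what licenses the use of Proposition~\ref{prop:isom-embedding} on each $S_a$ and ensures $\calL_{S_a}$ pulls back correctly; and second, the gluing step, where I would confirm that the restriction of the global $g$ to $C_a$ really coincides with $j_a$ (via the compatibility of $\pi$ with base change already exploited in Proposition~\ref{prop:star_star_surj}) before invoking locality on the target of $\bbP(f_*\calL)$ to conclude that $g$ is an immersion.
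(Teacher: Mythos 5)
Your proof is correct and follows essentially the same route as the paper: cover $S$ by opens trivialising $f_*\calL$ (Proposition~\ref{prop:LocFree_BaseChange}) and apply Proposition~\ref{prop:isom-embedding} on each piece to get the closed immersions $j_a$. Your additional gluing argument for very ampleness --- assembling the $j_a$ into a closed immersion $C\to\bbP(f_*\calL)$ via the surjection of Proposition~\ref{prop:star_star_surj} and locality on the target --- is a detail the paper leaves implicit, and is a worthwhile clarification rather than a different approach.
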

\begin{proof}
  By Proposition~\ref{prop:LocFree_BaseChange}, the coherent sheaf
  $f_*\calL$ is locally free of rank $N+1$. Let $\{S_a\}_{a\in I}$ be
  an open cover of $S$ such that every restriction
  $(f_*\calL)\vert_{S_a}$ is free. For every $a\in I$ consider the
  induced element $(C_a,i_a)$ in $\CM(S_a)$ with structure map
  $\func{f_a}{C_a}{S_a}$. According to
  Proposition~\ref{prop:isom-embedding}, every choice of basis for
  $(f_*\calL)\vert_{S_a}=(f_a)_*i_a^*\calO_{\bbP^n_{S_a}}(m)$ then gives
  a closed immersion $\injfunc {j_a} {C_a}{\bbP^N_{S_a}}$ with
  the required properties.
\end{proof}

\section{{\texorpdfstring{A covering of $\CM$}{A covering of
      CM}}}\label{chap:cover}

Let $m$ be as in Proposition~\ref{prop:point_kernel_m-reg}, and set
$N:=p(m)-1$. We show in Subsection~\ref{sec:sub_hilb} that there exists a
scheme $W_0$ parameterizing all closed subschemes $Z\subseteq
\bbP^n\times\bbP^N$ such that $(Z,\pr_1)$ is an element of $\CM$, and
the second projection $\func {\pr_2}Z\bbP^N$ is a closed
immersion. Here $\pr_1$ and $\pr_2$ denote the projections $Z\ra
\bbP^n$ and $Z\ra \bbP^N$ respectively. In
Subsection~\ref{sec:refinement}, we construct a refinement $W$ of $W_0$
corresponding to the closed subschemes $Z\subseteq\bbP^n\times\bbP^N$ as
above such that the second projection $\pr_2$ is given by the
invertible sheaf $\pr_1^*\calO_{\bbP^n}(m)$. Moreover, we get a
surjective map $\func \pi W \CM$ that maps $Z$ to $(Z,\pr_1)$.

\subsection{Subscheme of the Hilbert scheme}\label{sec:sub_hilb}

Let $P(t)=p((m+1)t)$, and let $H:=\Hilb_{\bbP^n\times \bbP^N}^{P(t)}$
be the Hilbert scheme parameterizing closed subschemes of
$\bbP^n\times\bbP^N$ having Hilbert polynomial $P(t)$ with respect to
the very ample sheaf $\calO(1,1)$. In the following, we show that the functor
$\func F{(\mathbf{Sch})^\circ}{\mathbf{Sets}}$ given
by $$F(S):=\left\{Z\subseteq \bbP^n_S\times_S\bbP^N_S\text{ in
  }H(S)\left\vert \begin{array}{l} (Z,\pr_1)\in \CM(S) \text{ and }\\
      \pr_2\text{ is a closed immersion}\end{array}\right.\right\},$$
where $\func {\pr_1}{Z}{\bbP^n_S}$ and $\func{\pr_2}{Z}{\bbP^N_S}$
denote the projections, is represented by an open subscheme $W_0$ of
$H$.

\subsubsection{Properties of the projections } 
\begin{prop}\label{prop:finite_closed_fiber}
  Let $S$ be locally Noetherian, and let $\func f X Y$ be a
  morphism of proper $S$-schemes. There exists an open subscheme
  $U$ of $S$ such that a morphism $\func g T S$ factors through $U$
  if and only if the morphism $\func{f_T}{X_T}{Y_T}$ obtained by base
  change is finite (resp. a closed immersion).
\end{prop}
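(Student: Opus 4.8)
The plan is to take for $U$ the locus of points of $S$ over which the fiber of $f$ already enjoys the desired property, to show that this locus is open, and then to verify the universal property directly. Concretely, I would set
\[
U:=\{\,s\in S \mid \func{f_s}{X_s}{Y_s}\text{ is finite (resp.\ a closed immersion)}\,\},
\]
treating the two cases in parallel (they yield, a priori, two different open subschemes). To see that $U$ is open, note that if $s\in U$, then Proposition~\ref{prop:EGA_finite_closed_fiber} provides an open neighborhood $U_s\subseteq S$ of $s$ on which the restriction $\func{f_{U_s}}{X_{U_s}}{Y_{U_s}}$ is finite (resp.\ a closed immersion). Since both properties are stable under base change, every fiber $f_{s'}$ with $s'\in U_s$ is again finite (resp.\ a closed immersion), so $U_s\subseteq U$ and $U$ is open.

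For the ``if'' direction, suppose $\func g T S$ factors through $U$. Covering $U$ by the neighborhoods $U_s$ constructed above, on each of which $f$ restricts to a finite morphism (resp.\ closed immersion), and using that both properties are local on the base, I conclude that $\func{f_U}{X_U}{Y_U}$ is finite (resp.\ a closed immersion). The morphism $f_T$ is the base change of $f_U$ along $T\ra U$, and as finiteness and being a closed immersion are stable under base change, $f_T$ has the same property.

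For the converse, suppose $f_T$ is finite (resp.\ a closed immersion). The key is the fiber identification already used in the proof of Proposition~\ref{prop:functor}: for $t\in T$ the fiber $(X_T)_t$ is the base change of $X_{g(t)}$ along the field extension $\kappa(g(t))\hookrightarrow\kappa(t)$, so $(f_T)_t$ is the base change of $f_{g(t)}$ along this extension. Since $f_T$ has the property in question, so does its fiber $(f_T)_t$. Because a field extension is faithfully flat and both finiteness and being a closed immersion descend along faithfully flat morphisms (compare \cite[Proposition~1.2.36]{FGAexplained}), it follows that $f_{g(t)}$ is finite (resp.\ a closed immersion); that is, $g(t)\in U$. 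Hence $g(T)\subseteq U$, and since $U$ is open, the morphism $g$ factors through the open subscheme $U$.

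The main obstacle is precisely the descent step in the converse: one must pass from the good behavior of the fiber of $f_T$ over $t$ back to the same behavior of $f_{g(t)}$ over the smaller residue field $\kappa(g(t))$. This is faithfully flat descent along $\kappa(g(t))\hookrightarrow\kappa(t)$, and the careful identification $(f_T)_t=(f_{g(t)})_{\kappa(t)}$ of fibers is what makes the descent applicable. Everything else is a routine combination of Proposition~\ref{prop:EGA_finite_closed_fiber}, locality on the base, and stability under base change.
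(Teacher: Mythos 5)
Your proof is correct and follows essentially the same route as the paper: the same open locus $U$ of points with finite (resp.\ closed immersion) fibers, openness via Proposition~\ref{prop:EGA_finite_closed_fiber}, stability under base change for one direction, and the fiberwise identification $(f_T)_t=(f_{g(t)})_{\kappa(t)}$ together with descent along the residue field extension for the other. Your explicit appeal to faithfully flat descent is a slightly more careful justification of the step the paper states as ``$f_{g(t)}$ has $\calP$ if and only if $(f_T)_t$ has $\calP$,'' but the argument is the same.
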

\begin{proof}
  Let $\calP$ denote one of the properties ``is a closed immersion''
  and ``is finite''.  

  By Proposition~\ref{prop:EGA_finite_closed_fiber}, the set
  $U:=\{s\in S\mid \func{f_s}{X_s}{Y_s}\text{ has }\calP\}$
  is open, and the restriction $\func{f_U}{X_U}{Y_U}$ has property
  $\calP$. Since property $\calP$ is stable under base change, a
  morphism $T\to S$ that factors through $U$ has $\calP$.

  Now consider a morphism $\func g T S$ such that the induced map
  $f_T$ has property $\calP$. Then $\func{(f_T)_t}{(X_T)_t}{(Y_T)_t}$
  has $\calP$ for every $t\in T$. As the morphism
  \mbox{$\func{f_{g(t)}}{X_{g(t)}}{Y_{g(t)}}$} has $\calP$ if and only
  if the morphism $(f_T)_t$ obtained by the change of base field
  $\spec(\kappa(t))\to \spec(\kappa(g(t)))$ has $\calP$, it follows
  that $g(t)\in U$. Since $U$ is an open subscheme, we get that $g$
  factors through $U$.
\end{proof}

\subsubsection{Cohen--Macaulay and of pure dimension $1$}
  
\begin{prop}\label{prop:CM+equidim_fibers2}
  Let $\func f X S$ be a flat, proper morphism of finite
  presentation.  Then there exists an open subscheme $U$ of $S$ such
  that a morphism $\func g T S$ factors through $U$ if and only if
  all fibers of $X_T$ over $T$ are Cohen--Macaulay and of pure
  dimension $1$.
\end{prop}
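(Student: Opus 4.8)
The plan is to produce the open set $U$ by separately handling the two fiber conditions — Cohen--Macaulayness and purity of dimension $1$ — and then intersecting. For both properties the strategy is the same: show that the locus of points $s \in S$ over which the fiber $X_s$ has the property is open, show that the restricted family over that locus has the property fiberwise, and finally show that the condition is stable under and detected by base change, so that $g\colon T\to S$ factors through the locus precisely when all fibers of $X_T$ have the property. This is the same three-part pattern used in Proposition~\ref{prop:finite_closed_fiber}, and the base-change bookkeeping at the end will be identical in spirit: a fiber of $X_T$ over $t\in T$ is the base change of the fiber $X_{g(t)}$ by the field extension $\kappa(g(t))\hookrightarrow\kappa(t)$, so by Lemma~\ref{lm:func_field_change} the fiber $(X_T)_t$ is Cohen--Macaulay of pure dimension $1$ if and only if $X_{g(t)}$ is. Granting openness of the locus $U$ in $S$, this forces $g(t)\in U$ for all $t$, hence $g$ factors through $U$; the converse follows since both properties are fiberwise and hence stable under base change.

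The substance of the proof is therefore the openness of the Cohen--Macaulay locus and of the pure-dimension-$1$ locus of a flat proper morphism of finite presentation. For the Cohen--Macaulay condition I would invoke the standard result that for a morphism that is flat and locally of finite presentation the set of points of $X$ where the fibers are Cohen--Macaulay is open in $X$ (this is \cite[Th\'eor\`eme (12.2.4)]{EGAIV2}); call its image-complement's closed image under $f$ away, using that $f$ is proper (hence closed) to see that the set of $s\in S$ whose entire fiber is Cohen--Macaulay is open, namely $S\setminus f(X\setminus(\text{CM-locus}))$. For the dimension condition, flatness of $f$ together with properness gives that the fiber dimension is locally constant in the appropriate sense, and the function $s\mapsto \dim(X_s)$ is upper semicontinuous by \cite[Th\'eor\`eme (13.1.3)]{EGAIV2}; combined with Lemma~\ref{lm:equidim} purity of dimension $1$ of a fiber is equivalent to every irreducible component having dimension exactly $1$, and the locus where this holds is open by the semicontinuity of the dimensions of the components, again citing \cite[(4.2.8)]{EGAIV2} for the behaviour under the relevant reductions.

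The main obstacle I anticipate is verifying that the two loci are genuinely open in $S$ rather than merely in $X$, and doing so cleanly. The Cohen--Macaulay locus in the source $X$ is open by the cited EGA result, but to descend this to an open set in $S$ I must use properness of $f$ to pass from an open condition on points of $X$ to an open condition on points of $S$; the key point is that $X_s$ is Cohen--Macaulay exactly when the fiber is contained in the CM-locus of $X$, and the set of such $s$ is the complement of the image of a closed subset under the closed map $f$. The purity condition is more delicate because it is not simply ``open on $X$'': one must rule out both higher-dimensional and lower-dimensional (e.g. embedded or isolated) components of fibers, and assembling openness here requires combining upper semicontinuity of fiber dimension with the behaviour of the number and dimensions of components under flat specialization. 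Once both loci are shown open and the restricted families verified to be fiberwise Cohen--Macaulay of pure dimension~$1$, the functorial characterization follows formally as above, exactly paralleling Proposition~\ref{prop:finite_closed_fiber}.
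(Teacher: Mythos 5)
Your base-change step is exactly the paper's: the fiber $(X_T)_t$ is obtained from $X_{g(t)}$ by the field extension $\kappa(g(t))\hookrightarrow\kappa(t)$, and Lemma~\ref{lm:func_field_change} transfers both properties, so everything reduces to openness in $S$ of the locus $U=\{s\in S\mid X_s\text{ is Cohen--Macaulay and of pure dimension }1\}$. Your treatment of the Cohen--Macaulay half is also sound: the relative Cohen--Macaulay locus is open in $X$ for a flat morphism locally of finite presentation, and properness of $f$ lets you descend to the open set $S\setminus f(X\setminus V)$.

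The gap is in the pure-dimension-$1$ half, and you have in fact flagged it yourself without closing it. Upper semicontinuity of $s\mapsto\dim(X_s)$ only yields openness of $\{s\mid\dim(X_s)\le 1\}$; it says nothing about components of dimension $0$. Ruling out isolated points is the genuinely delicate part: the set $\{x\in X\mid\dim_x(X_{f(x)})=0\}$ is \emph{open} by Chevalley semicontinuity, and the image of an open set under a proper map is in general neither open nor closed, so the descent trick that works for the Cohen--Macaulay locus does not apply here. Neither \cite[Corollaire (4.2.8)]{EGAIV2} nor Lemma~\ref{lm:equidim} supplies the missing semicontinuity of the dimensions of the components, and your argument ends by asserting that openness ``requires combining'' various facts rather than actually combining them. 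The paper avoids the issue entirely: using (in effect) Lemma~\ref{lm:CM_ring}, it rewrites ``$X_s$ is Cohen--Macaulay of pure dimension $1$'' as ``$(\calO_X)_s$ has no embedded points and every irreducible component of its support has dimension $1$,'' which is precisely one of the conditions whose locus in $S$ is proved open for a proper, flat, finitely presented family in \cite[Th\'eor\`eme (12.2.1)(iv)]{EGAIV3}. Note that this reference establishes openness for the two conditions \emph{jointly}; splitting them as you propose both loses access to that citation and leaves you to prove openness of the pure-dimension locus by hand, which is exactly where your proof currently stops.
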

\begin{proof}
  Note that a fiber $X_s$ for $s\in S$ is Cohen--Macaulay and of pure
  dimension $1$ if and only if the structure sheaf
  $\calO_{X_s}=(\calO_X)_s$ does not have any embedded point and all
  irreducible components of $\supp((\calO_X)_s)$ have dimension $1$.
  By \cite[Théorème (12.2.1)(iv)]{EGAIV3}, the set $U:=\{s\in S\mid
  X_s \text{ is Cohen--Macaulay and of pure dimension 1}\}$ is then
  open.

  Consider a morphism $\func g T S$, and let $t\in T$. Then $(X_T)_t$
  is obtained from $X_{g(t)}$ by base change $\spec(\kappa(t))\ra
  \spec(\kappa(g(t)))$. By Lemma~\ref{lm:func_field_change}, the fiber
  $(X_T)_t$ is Cohen--Macaulay and of pure dimension~$1$ if and only if
  $g(t)\in U$. The statement now follows as $U$ is open.
\end{proof}

\subsubsection{Isomorphism onto image away from finite set of closed
  points}
\begin{prop}\label{prop:isom_on_image_open}
  Let $\func h X Y$ be a finite morphism of schemes over a locally
  Noetherian scheme $S$. Suppose that $Y$ is proper over $S$. Then
  there exists an open subscheme $U$ of $S$ such that a morphism $T\to
  S$ factors through $U$ if and only if for every $t\in T$ the map
  $\func{h_t}{X_t}{Y_t}$ is an isomorphism onto its image away from
  finitely many closed points.
\end{prop}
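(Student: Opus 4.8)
The plan is to reduce the fiber condition to a statement about the dimension of the fibers of an auxiliary proper morphism, and then to combine upper semicontinuity of fiber dimension with properness in order to produce the open subscheme $U$, following the same template as Proposition~\ref{prop:finite_closed_fiber} and Proposition~\ref{prop:CM+equidim_fibers2}.

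First I would set up the relevant object. Since $h$ is finite the sheaf $h_*\calO_X$ is coherent, so the cokernel $\calC:=\coker(\calO_Y\to h_*\calO_X)$ is a coherent $\calO_Y$-module and $Z:=\supp(\calC)$ is a closed subset of $Y$. Endowing $Z$ with its reduced induced structure makes it a closed subscheme of the proper $S$-scheme $Y$, hence $Z$ is itself proper over $S$; write $\func q Z S$ for its structure morphism. By Lemma~\ref{lm:iso_alt}, for each $s\in S$ the map $h_s$ is an isomorphism onto its image away from finitely many closed points if and only if the fiber $Z_s$ is finite, that is, $\dim(Z_s)\le 0$.

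Next I would produce the open set. By the upper semicontinuity of fiber dimension \cite[(13.1.3)]{EGAIV3}, the function $z\mapsto \dim_z(Z_{q(z)})$ is upper semicontinuous on $Z$, so $F:=\{z\in Z\mid \dim_z(Z_{q(z)})\ge 1\}$ is closed. As $q$ is proper, hence closed, the image $q(F)$ is closed in $S$, and I set $U:=S\setminus q(F)$. Since $\dim(Z_s)=\max_{z\in Z_s}\dim_z(Z_s)$, a point $s$ lies in $U$ exactly when $\dim(Z_s)\le 0$, which by the previous paragraph is precisely the locus where $h_s$ has the required property. For the base-change direction, fix $\func g T S$ and $t\in T$; then $h_t$ is the base change of $h_{g(t)}$ along the field extension $\kappa(g(t))\hookrightarrow\kappa(t)$. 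Because $h$ is affine, the formation of $\calC$ commutes with this flat base change, exactly as in the proof of Lemma~\ref{lm:iso_alt}, so the non-isomorphism locus of $h_t$ is the pullback of that of $h_{g(t)}$, and these have equal dimension by Lemma~\ref{lm:func_field_change}\ref{lm:func_field_change_isom}. Hence $h_t$ is an isomorphism onto its image away from finitely many closed points if and only if $h_{g(t)}$ is, i.e.\ if and only if $g(t)\in U$. Since $U$ is open, imposing this for every $t\in T$ is equivalent to $g(T)\subseteq U$, that is, to $g$ factoring through $U$.

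The main obstacle is the openness step: the point is to recognise that the correct object on which to apply semicontinuity is neither $S$ nor $Y$ but the non-isomorphism locus $Z$, and that properness of $q\colon Z\to S$ is exactly what converts upper semicontinuity of the fiber dimension on $Z$ into an open condition on $S$. Once this is in place, the field-invariance needed for the base-change characterisation is routine, combining the affineness of $h$ (already used in Lemma~\ref{lm:iso_alt}) with the dimension invariance of Lemma~\ref{lm:func_field_change}.
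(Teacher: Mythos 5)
Your proposal is correct and follows essentially the same route as the paper: form $Z=\supp(\coker(\calO_Y\to h_*\calO_X))$ with its reduced structure, characterise the fiberwise condition via Lemma~\ref{lm:iso_alt} as $\dim(Z_s)\leq 0$, obtain openness of $U$ from semicontinuity of fiber dimension for the proper map $Z\to S$, and transfer along field extensions using Lemma~\ref{lm:func_field_change}. The only cosmetic difference is that you unfold the proof of Chevalley's semicontinuity theorem for proper morphisms (closedness of the locus $\dim_z\geq 1$ plus properness of $q$), where the paper simply cites \cite[Th\'eor\`eme (13.1.5)]{EGAIV3} directly.
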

\begin{proof}
  Consider the closed subset $Z=\supp(\coker(\calO_Y\to h_*\calO_X))$
  of $Y$ with the induced reduced scheme structure. Then {\em
    Chevalley's upper semicontinuity theorem} \cite[Théorème
  (13.1.5)]{EGAIV3} implies that the locus $U:=\{s\in S\mid \dim(Z_s)=0\}$ is
  open.

  By Lemma~\ref{lm:func_field_change}\ref{lm:func_field_change_isom},
  a morphism $\func g T S$ factors through the open subscheme $U$ if
  and only if $\dim((Z_{g(t)})_t)=0$ for every $t\in T$. As, moreover, 
  $\dim((Z_{g(t)})_t)=\dim(\supp(\coker(\calO_{Y_T}\to
  (h_T)_*\calO_{X_T}))_t)$, the statement follows with Lemma~\ref{lm:iso_alt}.
\end{proof}

\subsubsection{The Hilbert polynomial}
\begin{prop}\label{prop:Hilbert_poly_loc_const}
  Let $\func h X \bbP^n_S$ be a finite morphism of schemes over a
  locally Noetherian scheme $S$. Suppose that $X$ is flat over
  $S$. Let further \mbox{$p(x)\in \bbQ[x]$} be a numerical
  polynomial. Then there exists an open and closed subscheme $U$ of
  $S$ such that a morphism $\func g T S$ factors through $U$ if and
  only if the coherent sheaf $((h_T)_*\calO_{X_T})_t$ has Hilbert
  polynomial $p(x)$ for every $t\in T$.
\end{prop}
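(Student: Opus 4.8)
The plan is to reduce the statement to the local constancy of the Hilbert polynomial in a flat family and then to handle the base change by exploiting that $h$ is affine. First I would set $\calF:=h_*\calO_X$, which is a coherent sheaf on $\bbP^n_S$ since $h$ is finite. Because $h$ is affine and $X$ is flat over $S$, the sheaf $\calF$ is flat over $S$: locally $X=\spec(B)$ sits over an affine open $\spec(A)$ of $\bbP^n_S$ with $B$ a finite $A$-algebra, and $\calF$ corresponds to $B$ viewed as an $A$-module, whose flatness over the local ring of $S$ is exactly the flatness of $X$ over $S$. For every $s\in S$ the fibre $\calF_s$ is then a coherent sheaf on $\bbP^n_{\kappa(s)}$, and the condition of the statement (for $T=S$, $g=\id$) is that its Hilbert polynomial equals $p(x)$.

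Next I would invoke the standard fact that the Hilbert polynomial of the fibres of a sheaf flat over the base is locally constant. Concretely, for each fixed $d$ the Euler characteristic $\chi(\calF_s(d))$ is locally constant in $s$, since $\calF$ is flat and $\bbP^n_S\to S$ is proper (this is the content of the proof of \cite[Theorem III.9.9]{Hartshorne}, applied on the connected components of $S$). As $p_{\calF_s}(d)=\chi(\calF_s(d))$, the assignment $s\mapsto p_{\calF_s}$ is a locally constant function on $S$ with values in $\bbQ[x]$. Hence the locus $U:=\{s\in S\mid p_{\calF_s}(x)=p(x)\}$ is both open and closed, and I would endow it with its canonical open subscheme structure.

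It then remains to verify the factoring criterion. For a morphism $\func g T S$, finiteness is stable under base change, and since $h$ is affine the formation of $h_*\calO_X$ commutes with the base change $\bbP^n_T\to\bbP^n_S$, so $(h_T)_*\calO_{X_T}\cong\calF_T$. For $t\in T$ the fibre $(\calF_T)_t$ is obtained from $\calF_{g(t)}$ by the field extension $\kappa(g(t))\hookrightarrow\kappa(t)$, and as cohomology commutes with this flat base change the two sheaves have the same Hilbert polynomial. Therefore $((h_T)_*\calO_{X_T})_t$ has Hilbert polynomial $p(x)$ for every $t\in T$ if and only if $g(t)\in U$ for every $t$, that is, if and only if $g(T)\subseteq U$; since $U$ is open, this is equivalent to $g$ factoring through $U$.

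The routine verifications are the local flatness of $\calF$ and the affine base-change identity $(h_T)_*\calO_{X_T}\cong\calF_T$, which I would not spell out. The one genuine input is the local constancy of the Hilbert polynomial, and this is exactly where the flatness hypothesis on $X$ enters; it is also what upgrades the conclusion from an open subscheme, as in the preceding propositions of this subsection, to an open \emph{and} closed one. I expect no serious obstacle beyond correctly quoting that local-constancy result.
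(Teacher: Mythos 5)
Your proposal is correct and follows essentially the same route as the paper: flatness of $h_*\calO_X$ over $S$ (via $h$ affine and $X$ flat), local constancy of the fibrewise Hilbert polynomial of a flat coherent sheaf (the paper cites \cite[Proposition (7.9.11)]{EGAIII2} where you appeal to the argument behind \cite[Theorem III.9.9]{Hartshorne}), the affine base-change identity $(h_T)_*\calO_{X_T}\cong(h_*\calO_X)_T$, and invariance of the Hilbert polynomial under field extension. No gaps.
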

\begin{proof}
  Note that since $X$ is flat over $S$ and $h$ is affine, the direct
  image $h_*\calO_X$ is flat over $S$. In particular, its
  Hilbert polynomial is locally constant on $S$ by \cite[Proposition
  (7.9.11)]{EGAIII2}, and therefore the subset \mbox{$U:=\{s\in S\mid
    (h_*\calO_X)_s \text{ has Hilbert polynomial }p(x)\}$} is open and
  closed. Moreover, we have that $(h_T)_*\calO_{X_T}=(h_*\calO_X)_T$
  since $h$ is affine. It follows that set $U$ has the required
  property because the Hilbert polynomial is independent of the base
  field.
\end{proof}

\subsection{The covering scheme}
We use the results of the previous subsection to construct a
surjective map $W_0\to \CM$ from a scheme $W_0$.
\begin{thm}\label{thm:cover_W0}
  There exists a scheme $W_0$ of finite type over $\spec(\bbZ)$ such
  that $$W_0(S)=\left\{Z\subseteq \bbP^n_S\times_S
    \bbP^N_S\left\vert\begin{array}{l} (Z,\pr_1)\in \CM(S) \text{ and
        }\\ \func{\pr_2}{Z}{\bbP^N_S}\text{ is a closed
          immersion}\end{array}\right.\right\}$$ for all schemes $S$.

  The map $\func \tau {W_0} \CM$, given by mapping $Z\subseteq
  \bbP^n_S\times_S\bbP^N_S$ in $W_0(S)$ to $(Z,\pr_1)$ in $\CM(S)$ for
  all $S$, is surjective as a map of Zariski sheaves.
\end{thm}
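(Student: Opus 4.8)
The plan is to realize $W_0$ as an open subscheme of the Hilbert scheme $H=\Hilb_{\bbP^n\times\bbP^N}^{P(t)}$ by intersecting the open loci cut out by the four propositions of the previous subsection. Recall that $H$ carries a universal family, i.e.\ a closed subscheme $\calZ\subseteq \bbP^n_H\times_H\bbP^N_H$, flat and proper over $H$ with Hilbert polynomial $P(t)$ with respect to $\calO(1,1)$. Composing the two projections of $\calZ$ gives finite-type morphisms $\func{\pr_1}{\calZ}{\bbP^n_H}$ and $\func{\pr_2}{\calZ}{\bbP^N_H}$ of proper $H$-schemes, and I want to carve out the locus where $(\calZ,\pr_1)$ is a $\CM$-family and $\pr_2$ is a closed immersion.

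\textbf{Construction of $W_0$.}
First I would apply Proposition~\ref{prop:finite_closed_fiber} to the morphism $\pr_2$ to obtain an open $U_1\subseteq H$ over which $\pr_2$ is a closed immersion, and to $\pr_1$ to obtain an open $U_2$ over which $\pr_1$ is finite. Next, over $U_1\cap U_2$ the family $\calZ$ is flat, proper, of finite presentation, so Proposition~\ref{prop:CM+equidim_fibers2} yields an open $U_3$ over which all fibers are Cohen--Macaulay of pure dimension~$1$. Since $\pr_1$ is finite over $U_2$, Proposition~\ref{prop:isom_on_image_open} applied to $\func{\pr_1}{\calZ}{\bbP^n}$ gives an open $U_4$ over which $\pr_1$ is fiberwise an isomorphism onto its image away from finitely many closed points. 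Finally, Proposition~\ref{prop:Hilbert_poly_loc_const} applied to the finite morphism $\pr_1$ (using that $\calZ$ is flat over $H$) produces an open-and-closed $U_5$ cutting out the locus where $(\pr_{1})_*\calO_\calZ$ has fiberwise Hilbert polynomial $p(t)$. I then set $W_0:=U_1\cap U_2\cap U_3\cap U_4\cap U_5$, an open subscheme of $H$, hence of finite type over $\spec(\bbZ)$. Each of the five propositions gives exactly the statement that a morphism $\func g T H$ factors through the relevant open piece if and only if the corresponding property holds for $\calZ_T$ over every $t\in T$; intersecting, I obtain that $g$ factors through $W_0$ iff $\calZ_T$ satisfies all four defining conditions of the functor $F$, which is precisely the condition $(\calZ_T,\pr_1)\in\CM(T)$ together with $\pr_2$ being a closed immersion. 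This identifies $W_0(S)$ with the displayed set and shows $W_0$ represents $F$.

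\textbf{Surjectivity of $\tau$.}
For the surjectivity statement I must check that $\tau$ is an epimorphism of Zariski sheaves, i.e.\ that for any $(C,i)\in\CM(S)$ there is a Zariski cover $\{S_a\}$ of $S$ together with elements of $W_0(S_a)$ mapping to the restrictions $(C,i)|_{S_a}$. This is exactly where Theorem~\ref{thm:embedding_general_base} enters: given $(C,i)\in\CM(S)$, that theorem provides an open cover $\{S_a\}$ of $S$ and, for each $a$, a closed immersion $\func{j_a}{C_a}{\bbP^N_{S_a}}$ with $j_a^*\calO_{\bbP^N_{S_a}}(1)=i_a^*\calO_{\bbP^n_{S_a}}(m)$. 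I then form the morphism $(i_a,j_a)\colon C_a\to\bbP^n_{S_a}\times_{S_a}\bbP^N_{S_a}$; since $j_a$ is already a closed immersion, this product map is a closed immersion, so its image is a closed subscheme $Z_a\subseteq\bbP^n_{S_a}\times_{S_a}\bbP^N_{S_a}$ isomorphic to $C_a$ over $S_a$, with $\pr_1|_{Z_a}=i_a$ and $\pr_2|_{Z_a}=j_a$. Thus $\pr_2$ is a closed immersion and $(Z_a,\pr_1)\cong(C_a,i_a)\in\CM(S_a)$, so $Z_a\in W_0(S_a)$ and $\tau(Z_a)=(C,i)|_{S_a}$.

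\textbf{Main obstacle.}
The conceptually delicate point is the Hilbert polynomial bookkeeping. I must confirm that these $Z_a$ actually land in the correct component of $H$, i.e.\ that $Z_a\cong C_a$ has Hilbert polynomial $P(t)=p((m+1)t)$ with respect to $\calO(1,1)$. On $Z_a$ one computes $\pr_1^*\calO_{\bbP^n}(1)\otimes\pr_2^*\calO_{\bbP^N}(1)\cong i_a^*\calO_{\bbP^n}(1)\otimes j_a^*\calO_{\bbP^N}(1)\cong i_a^*\calO_{\bbP^n}(1)\otimes i_a^*\calO_{\bbP^n}(m)=i_a^*\calO_{\bbP^n}(m+1)$, so the Hilbert polynomial of $Z_a$ with respect to $\calO(1,1)$ is $t\mapsto\chi\bigl(i_a^*\calO_{\bbP^n}(m+1)^{\otimes t}\bigr)=p((m+1)t)=P(t)$ by the definition of $\CM$ and the remark following Definition~\ref{def:CMfunctor}. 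This verifies $Z_a\in H(S_a)$ and hence $Z_a\in W_0(S_a)$, completing the surjectivity argument. The remaining care is purely formal: everything is a Zariski-local statement, so surjectivity as a map of Zariski sheaves follows, which is all that is claimed here (the stronger smoothness and representability will be addressed only after the refinement $W$ is constructed).
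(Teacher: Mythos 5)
Your proposal is correct and follows essentially the same route as the paper: realize $W_0$ as the intersection of the open loci in $H=\Hilb_{\bbP^n\times\bbP^N}^{P(t)}$ cut out by Propositions~\ref{prop:finite_closed_fiber}, \ref{prop:CM+equidim_fibers2}, \ref{prop:isom_on_image_open} and \ref{prop:Hilbert_poly_loc_const} applied to the universal family and its two projections, and prove surjectivity via the local closed immersions $j_a$ from Theorem~\ref{thm:embedding_general_base} together with the computation $(i_a,j_a)^*\calO(1,1)\cong i_a^*\calO_{\bbP^n_{S_a}}(m+1)$, which places the image in the component with Hilbert polynomial $P(t)=p((m+1)t)$. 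Your write-up merely makes explicit some bookkeeping (which projection each openness statement is applied to, and the Hilbert polynomial verification) that the paper leaves implicit.
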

\begin{proof}
  Let $H$ be the Hilbert scheme parameterizing closed subschemes of the
  product $\bbP^n\times\bbP^N$ with Hilbert polynomial
  $P(t):=p((m+1)t)$ with respect to the very ample sheaf $\calO(1,1):=
  \pr_1^*\calO_{\bbP^n}(1)\otimes \pr_2^*\calO_{\bbP^N}(1)$. Applying
  Propositions~\ref{prop:finite_closed_fiber},
  \ref{prop:CM+equidim_fibers2}, \ref{prop:isom_on_image_open} and
  \ref{prop:Hilbert_poly_loc_const} to the universal family
  \mbox{$\calZ_H\subseteq \bbP^n_H\times_H\bbP^N_H$} over $H$ and the
  projections $\calZ_H\to \bbP^n_H$ and $\calZ_H\to\bbP^N_H$, we see
  that all conditions in $W_0$ are open on $H$.

  For surjectivity of $\tau$, let $S$ be a scheme and $(C,i)\in
  \CM(S)$. By Theorem~\ref{thm:embedding_general_base}, we have an
  open covering $\{S_a\}$ of $S$ and closed immersions
  $\injfunc{j_a}{C_a}{\bbP^N_{S_a}}$ such that
  $j_a^*\calO_{\bbP^N_{S_a}}(1)=i^*\calO_{\bbP^n_{S_a}}(m)$. For every
  $a$, the induced closed immersion
  $\injfunc{(i_a,j_a)}{C_{S_a}}{\bbP^n_{S_a}\times_{S_a}\bbP^N_{S_a}}$
  satisfies that
  $(i_a,j_a)^*\calO(1,1)=i_a^*\calO_{\bbP^n_{S_a}}(m+1)$, and hence gives
  rise to an element of $W_0(S_a)$.
\end{proof}

\subsection{Refinement of the covering scheme}\label{sec:refinement}
\begin{lm}\label{lm:sheaf_isom_repr}
  Let $S$ be a locally Noetherian scheme, and let $\calE$ and $\calF$
  be finite locally free $\calO_S$-modules. Then the functor $\func F
  {({\mathbf{Sch}}_S)^\circ}{\mathbf{Sets}}$ defined by
  $$F(T)=\{\calO_T\text{-module isomorphisms }g^*\calE\ra
  g^*\calF\},$$ where $\func g T S$ denotes the structure map, is
  represented by a scheme of finite type over $S$.
\end{lm}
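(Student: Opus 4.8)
The plan is to exhibit $F$ as an open subscheme of the scheme that represents all homomorphisms $g^*\calE\to g^*\calF$, and then to carve out the isomorphisms by an openness (determinant) argument. First I would set $\calG:=\calH om_{\calO_S}(\calE,\calF)$, which is again finite locally free since $\calE$ and $\calF$ are. Because $\calE$ is locally free of finite rank, the formation of $\calH om$ and of duals commutes with pullback, so for every $\func g T S$ there are natural identifications $\Hom_{\calO_T}(g^*\calE,g^*\calF)=\Gamma(T,g^*\calG)$. The functor $T\mapsto \Gamma(T,g^*\calG)$ is represented by the linear scheme $V:=\spec_S(\opn{Sym}_{\calO_S}(\calG^\vee))$, which is affine and of finite type over $S$ as $\calG^\vee$ is finite locally free; indeed a standard adjunction computation gives $\Hom_S(T,V)=\Gamma(T,g^*\calG)$. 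On $V$ I would record the resulting universal homomorphism $\func{\varphi}{\rho^*\calE}{\rho^*\calF}$, where $\func\rho V S$ is the structure map, so that a morphism $\func h T V$ over $S$ corresponds exactly to the homomorphism $h^*\varphi$.

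Next I would cut out the isomorphism locus. The key tool is the following standard fact: for a homomorphism $\func\theta\calA\calB$ of finite locally free modules on any scheme $X$ there is an open subscheme $U\subseteq X$ with the property that a morphism $\func h T X$ factors through $U$ if and only if $h^*\theta$ is an isomorphism. This is local on $X$: trivializing $\calA$ and $\calB$ and using that their ranks are locally constant, one sees that $U$ is empty on the components where the two ranks disagree, while on a component where both ranks equal $r$ the map $\theta$ is given by an $r\times r$ matrix and $U$ is the non-vanishing locus of its determinant, a distinguished affine open; Nakayama's lemma shows these loci glue and have the stated universal property. Applying this to the universal homomorphism $\varphi$ on $V$ produces an open subscheme $W\subseteq V$.

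Finally I would check that $W$ represents $F$. For any $\func g T S$, the $T$-points of $V$ are precisely the homomorphisms $g^*\calE\to g^*\calF$, and by the universal property of $W$ the ones factoring through $W$ are exactly the isomorphisms; hence $\Hom_S(T,W)=F(T)$, naturally in $T$. As an open subscheme of the finite-type $S$-scheme $V$, the scheme $W$ is of finite type over $S$, which finishes the argument.

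The hard part will be the middle step, namely establishing openness of the isomorphism locus together with its universal property, and in particular making sure the case $\rk\calE\neq\rk\calF$ on some components of $S$, where $F$ is empty, is handled correctly. If one prefers to avoid the open-locus lemma, an alternative is to realize $F$ as a closed subscheme of $V\times_S V'$, where $V'$ is the analogous scheme representing homomorphisms $g^*\calF\to g^*\calE$, defined by the two equations $\psi\circ\varphi=\id$ and $\varphi\circ\psi=\id$ for the universal homomorphisms $\varphi$ and $\psi$; since an isomorphism determines its inverse uniquely, this closed subscheme again represents $F$ and is of finite type over $S$.
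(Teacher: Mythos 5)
Your proposal is correct and follows essentially the same route as the paper: realize the Hom-functor as the linear scheme $\mathbf{Spec}(\opn{Sym}(\calE\otimes\calF^\vee))$ and exhibit the isomorphisms as an open subfunctor of it. The paper simply asserts the openness of the isomorphism locus, whereas you supply the determinant/Nakayama argument (and correctly flag the rank-mismatch case); this is a welcome filling-in of detail rather than a different proof.
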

\begin{proof}
  We note first that $F$ is an open subfunctor of \mbox{$\func H
    {(\mathbf{Sch}_S)^\circ}{\mathbf{Sets}}$} defined by
  $H(T)=\Hom_{\calO_T}(g^*\calE,g^*\calF)$. Now the statement follows
  as $H$ is represented by the scheme
  $\mathbf{Spec}(\opn{Sym}(\calE\otimes \calF^\vee))$ that is of
  finite type over $S$.
\end{proof}

\begin{prop}\label{prop:cover_isom}
  Over the scheme $W_0$ of Theorem~\ref{thm:cover_W0}, consider the
  universal family $\calZ_0\subseteq
  \bbP^n_{W_0}\times_{W_0}\bbP^N_{W_0}$ with structure map $\func f
  {\calZ_0} W_0$.  Let $\calL:=\pr_1^*\calO_{\bbP^n_{W_0}}(m)$, where
  $m$ is as in Proposition~\ref{prop:point_kernel_m-reg}, and set
  $\calE:=f_*\calL$. Let $\func F
  {(\mathbf{Sch}_{W_0})^\circ}{\mathbf{Sets}}$ be the functor defined
  by $$F(T) = \{\calO_T\text{-module isomorphisms } \calO_T^{N+1} \ra
  g^*\calE\},$$ for every $W_0$-scheme $\func g T W_0$. Then $F$ is
  represented by a scheme $W_1$ that is of finite type over $W_0$.
\end{prop}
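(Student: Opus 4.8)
The plan is to recognize $F$ as a direct instance of the isomorphism functor already treated in Lemma~\ref{lm:sheaf_isom_repr}. Writing $\calO_T^{N+1}=g^*(\calO_{W_0}^{N+1})$, the set $F(T)$ consists precisely of the $\calO_T$-module isomorphisms $g^*(\calO_{W_0}^{N+1})\to g^*\calE$. Hence $F$ is exactly the functor of Lemma~\ref{lm:sheaf_isom_repr} associated to the pair of $\calO_{W_0}$-modules $\calO_{W_0}^{N+1}$ and $\calE=f_*\calL$. To apply that lemma, it suffices to verify that both of these sheaves are finite locally free over $W_0$.

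The free module $\calO_{W_0}^{N+1}$ is trivially finite locally free. For $\calE=f_*\calL$, I would invoke Proposition~\ref{prop:LocFree_BaseChange}. By the defining property of $W_0$ in Theorem~\ref{thm:cover_W0}, the universal family yields an element $(\calZ_0,\pr_1)\in\CM(W_0)$, and $\calL=\pr_1^*\calO_{\bbP^n_{W_0}}(m)=\pr_1^*\calO_{\bbP^n_{W_0}}(m)$ is exactly the sheaf appearing in that proposition, with the same choice of $m$ from Proposition~\ref{prop:point_kernel_m-reg}. Since $W_0$ is of finite type over $\spec(\bbZ)$, it is locally Noetherian, so the hypotheses of Proposition~\ref{prop:LocFree_BaseChange} are met, and that result gives that $\calE=f_*\calL$ is locally free of rank $p(m)=N+1$.

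With both $\calO_{W_0}^{N+1}$ and $\calE$ finite locally free over the locally Noetherian scheme $W_0$, Lemma~\ref{lm:sheaf_isom_repr} immediately produces a scheme $W_1$ of finite type over $W_0$ representing $F$. I do not expect a genuine obstacle: the substantive content — the local freeness of the pushforward $f_*\calL$ and its compatibility with base change — has already been established in Proposition~\ref{prop:LocFree_BaseChange}, while the representability of isomorphism functors between finite locally free sheaves is precisely Lemma~\ref{lm:sheaf_isom_repr}. The only point needing explicit care is to confirm that the universal family over $W_0$ really belongs to $\CM(W_0)$, which is built directly into the construction of $W_0$ in Theorem~\ref{thm:cover_W0}.
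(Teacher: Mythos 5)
Your proposal is correct and follows essentially the same route as the paper: observe that $(\calZ_0,\pr_1)\in\CM(W_0)$, deduce from Proposition~\ref{prop:LocFree_BaseChange} that $\calE=f_*\calL$ is locally free of rank $N+1$, and then apply Lemma~\ref{lm:sheaf_isom_repr} with the pair $(\calO_{W_0}^{N+1},\calE)$. The only difference is that you spell out the identification $\calO_T^{N+1}=g^*(\calO_{W_0}^{N+1})$ and the locally Noetherian hypothesis explicitly, which the paper leaves implicit.
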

\begin{proof}
  We have that $(\calZ_0,\pr_1)\in \CM(W_0)$. Hence the sheaf $\calE$
  is locally free of rank $N+1$ by
  Proposition~\ref{prop:LocFree_BaseChange}. Then the statement is a
  special case of Lemma~\ref{lm:sheaf_isom_repr}.
\end{proof}

\begin{rem}\label{rem:notation_closed_imm}
  Let $T$ be a scheme. Then a morphism $\func h T W_1$ corresponds to
  a closed subscheme $Z\subseteq \bbP^n_T\times\bbP^N_T$ and an
  isomorphism
  $\iso{\sigma}{\calO_T^{N+1}}{f_*\pr_1^*\calO_{\bbP^n_T}(m)}$ of
  $\calO_T$-modules, where $\func f Z T$ denotes the structure map. By
  Proposition~\ref{prop:isom-embedding}, the isomorphism $\sigma$
  gives rise to a closed immersion
  $\injfunc{j_h:=j_\sigma}{Z}{\bbP^N_T}$.
\end{rem}
Recall that, by definition of $W_1$, the second projection
$\func{\pr_2}{Z}{\bbP^N_T}$ is a closed immersion. In the following,
we show that we can restrict ourselves to isomorphisms $\sigma$ such
that $j_\sigma=\pr_2$. Explicitly, we prove that there exists a closed
subscheme $W$ of $W_1$ such that a morphism $\func h T W_1$ factors
through $W$ if and only if $j_h=\pr_2$.
\begin{lm}\label{lm:morphism_equal}
  Let $\func {f,g} X Y$ be two morphisms over a scheme $S$, and
  suppose that $Y$ is separated over $S$. Let $X_0:=X\times_{Y\times_S
    Y} Y$ be the fiber product with respect to the morphism
  $\func{(f,g)}{X}{Y\times_S Y}$ and the diagonal map
  \mbox{$\func {\Delta:=(\id,\id)} Y Y\times_S Y$}, and denote the projections by
  $\func p {X_0} X$ and $\func {p'} {X_0} Y$. Then
  $f=g$ if and only if the first projection $p$ is an isomorphism.
\end{lm}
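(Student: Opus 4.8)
The plan is to argue purely via the universal property of the fiber product, reading off the functor of points of $X_0$ and of the projection $p$. First I would observe that $p\mathpunct:X_0\ra X$ is precisely the base change of the diagonal $\Delta\mathpunct:Y\ra Y\times_S Y$ along the morphism $(f,g)\mathpunct:X\ra Y\times_S Y$. Unwinding the universal property, for any scheme $T$ a morphism $\tilde a\mathpunct:T\ra X_0$ is the same datum as a pair $(a\mathpunct:T\ra X,\,b\mathpunct:T\ra Y)$ with $(f,g)\circ a=\Delta\circ b$; since $\Delta\circ b=(b,b)$, this equality says exactly $f\circ a=b=g\circ a$. Hence $b$ is forced to equal $f\circ a=g\circ a$, and such an $\tilde a$ exists (and is then unique) if and only if $f\circ a=g\circ a$. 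In other words, $p$ represents the condition that $a$ equalizes $f$ and $g$: a morphism $a\mathpunct:T\ra X$ factors through $p$ precisely when $f\circ a=g\circ a$.

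Next I would record that $p$ is a monomorphism. This is where separatedness of $Y$ over $S$ enters: the diagonal $\Delta$ is then a closed immersion, and closed immersions (in particular, monomorphisms) are stable under the base change defining $X_0$, so $p$ is a closed immersion and thus a monomorphism. In particular, any factorization of a given morphism through $p$ is unique, which is the uniqueness already seen above.

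With these two observations the equivalence splits cleanly into its two implications. For the reverse direction, if $p$ is an isomorphism then $\id_X=p\circ p^{-1}$ factors through $p$, so applying the previous paragraph with $T=X$ and $a=\id_X$ yields $f=f\circ\id_X=g\circ\id_X=g$. For the forward direction, if $f=g$ then $a=\id_X$ trivially equalizes $f$ and $g$, so there is a section $\func{\varepsilon}{X}{X_0}$ with $p\circ\varepsilon=\id_X$. Then $p\circ(\varepsilon\circ p)=p=p\circ\id_{X_0}$, and since $p$ is a monomorphism we conclude $\varepsilon\circ p=\id_{X_0}$; thus $\varepsilon$ is a two-sided inverse of $p$, and $p$ is an isomorphism.

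I do not expect a serious obstacle here: the whole argument is a formal consequence of the universal property of the fiber product together with $p$ being a monomorphism. The one point that requires a little care—and the reason I would not simply argue that the equalizer is a surjective closed subscheme of $X$—is that a surjective closed immersion need not be an isomorphism. The correct mechanism is instead to manufacture an explicit section of $p$ out of the hypothesis $f=g$ and then to use that $p$ is a monomorphism to promote this section to a genuine inverse.
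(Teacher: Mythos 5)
Your proof is correct and follows essentially the same route as the paper: both identify $p$ as the base change of the closed immersion $\Delta$, use the universal property of the fiber product to produce a section of $p$ when $f=g$ (and to read off $f=g$ from an inverse of $p$), and your extra care in upgrading the section to a two-sided inverse via the monomorphism property just makes explicit a step the paper leaves terse.
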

\begin{proof}
  Being the base change of the closed immersion $\Delta$, the morphism
  $p$ is a closed immersion.

  Suppose first that $f=g$. As then $\Delta\circ f=(f,f)=(f,g)\circ
  \id_X$, by the universal property of the fiber product, there exists
  a map $\func q X {X_0}$ such that $p\circ q=\id_X$, see the
  following commutative diagram \[\xymatrix{X \ar@/_1pc/[ddr]_{\id_X}
    \ar@/^1pc/[drr]^f \ar@{-->}[dr]^q & & \\ & X_0 \ar[r]^{p'}
    \ar[d]^{p} & Y\ar[d]^\Delta \\ & X \ar[r]^-{(f,g)} & Y\times_S
    Y\rlap{.} }\] It follows that the closed immersion $p$ is an
  isomorphism.

  Now assume conversely that $p$ is an isomorphism, and let $q$ be its
  inverse. Then we have $\opn{pr}_i\circ(f,g)\circ p\circ
  q=\opn{pr}_i\circ \Delta \circ p'\circ q$ for both projections
  $\func{\opn{pr}_1,\pr_2}{Y\times_S Y}Y$. In particular, we get that
  $f=p'\circ q$ and $g=p'\circ q$, and hence $f=g$.
\end{proof}

\begin{prop}\label{prop:morph_equal}
  Let $S$ be a Noetherian scheme, and let $\func {f,g}X Y$ be
  morphisms of $S$-schemes, where $X$ is projective and flat over $S$
  and $Y$ is separated over $S$. Suppose that all fibers of $X$ over
  $S$ have Hilbert polynomial $p(t)$ with respect to some very ample
  sheaf $\calL$ on $X$. Then there exists a closed subscheme $S'$ of
  $S$ such that a map $\func h T S$ factors through $S'$ if and only
  if the morphisms $\func{f_T}{X_T}{Y_T}$ and $\func{g_T}{X_T}{Y_T}$
  coincide.
\end{prop}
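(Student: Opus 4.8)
The plan is to realize $S'$ as the top stratum of the flattening stratification of the push-forward of $\calO_{X_0}$, where $X_0$ is the equalizer of $f$ and $g$ furnished by Lemma~\ref{lm:morphism_equal}.

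First I would form $X_0:=X\times_{Y\times_S Y}Y$ as in Lemma~\ref{lm:morphism_equal}, together with its closed immersion $\func p{X_0}X$ (a base change of the diagonal $\Delta$, which is a closed immersion since $Y$ is separated over $S$). Because fibre products commute with base change, the formation of $X_0$ is compatible with any $\func h T S$; that is, $(X_0)_T=X_0\times_S T$ is the analogous equalizer of $f_T$ and $g_T$, and Lemma~\ref{lm:morphism_equal} applied over $T$ shows that $f_T=g_T$ if and only if $\func{p_T}{(X_0)_T}{X_T}$ is an isomorphism. It therefore suffices to produce a closed subscheme $S'\subseteq S$ through which $h$ factors exactly when $p_T$ is an isomorphism.

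Next I would set $\calF:=p_*\calO_{X_0}$, a coherent quotient $\func q{\calO_X}{\calF}$ on $X$. Since $p$ is a closed immersion and pullback is right exact, the formation of $q$ commutes with base change, so $\calF_s$ is a quotient of $\calO_{X_s}$ for every $s\in S$; hence its Hilbert polynomial with respect to $\calL$ satisfies $p_{\calF_s}(t)\le p(t)$, with equality precisely when $q_s$ is an isomorphism, i.e.\ when $(X_0)_s=X_s$. Indeed, equality of the Hilbert polynomials forces $\ker(q_s)$ to have Hilbert polynomial $0$, and a nonzero coherent sheaf on $X_s$ has strictly positive Hilbert polynomial for large twists, so the kernel must vanish. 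Applying the flattening stratification \cite{FGAexplained} to $\calF$ over the Noetherian base $S$, I would let $S'$ be the stratum on which $\calF$ becomes flat with Hilbert polynomial equal to the maximal value $p(t)$. Choosing $\nu\gg 0$ uniformly, which is possible by quasi-compactness since only finitely many Hilbert polynomials occur, the function $s\mapsto\dim_{\kappa(s)}H^0(X_s,\calF_s(\nu))$ is upper semicontinuous and equals $p_{\calF_s}(\nu)$; thus the locus where it attains its maximum $p(\nu)$ is closed, and this equips $S'$ with the structure of a closed subscheme of $S$.

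Finally I would verify the universal property in both directions. If $f_T=g_T$, then $\calF_T=\calO_{X_T}$ is flat over $T$ with Hilbert polynomial $p(t)$, so $h$ factors through $S'$. Conversely, if $h$ factors through $S'$, then $\calF_T$ is $T$-flat with fibrewise Hilbert polynomial $p(t)$; writing $0\to\calK_T\to\calO_{X_T}\to\calF_T\to 0$, flatness of $\calF_T$ keeps this sequence exact after restriction to each fibre, so $\calK_t=\ker(q_t)$ has Hilbert polynomial $0$ and hence vanishes for every $t$. As $\calK_T$ is itself $T$-flat with all fibres zero, Nakayama gives $\calK_T=0$, whence $q_T$ is an isomorphism and $f_T=g_T$. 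I expect the main obstacle to be the scheme-theoretic bookkeeping in pinning down the correct stratum: proving that the maximal-Hilbert-polynomial stratum is genuinely \emph{closed} rather than merely locally closed, and checking that its flattening universal property coincides with the condition $f_T=g_T$. The fibrewise comparison of Hilbert polynomials and the flatness of $\calK_T$ are the technical points that make this matching precise.
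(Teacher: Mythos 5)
Your argument is correct, but it reaches the closed subscheme $S'$ by a genuinely different route than the paper. Both proofs begin the same way: form the equalizer $X_0=X\times_{Y\times_SY}Y$, note that its formation commutes with base change, and use Lemma~\ref{lm:morphism_equal} to reduce to finding a closed subscheme through which $\func hTS$ factors exactly when $(X_0)_T\hookrightarrow X_T$ is an isomorphism. From there the paper stays with Hilbert schemes: the natural monomorphism $\calH ilb^p_{X_0/S}\to\calH ilb^p_{X/S}$ is a morphism of projective $S$-schemes, hence proper, hence a closed immersion by \cite[Corrolaire (18.12.6)]{EGAIV4}, and $S'$ is the pullback of this closed immersion along the section $S\to\Hilb^p_{X/S}$ classifying $X$ as a closed subscheme of itself. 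You instead take the flattening stratification of $\calF=p_*\calO_{X_0}$ and let $S'$ be the stratum with the maximal Hilbert polynomial $p(t)$; since $\calF_s$ is a quotient of $\calO_{X_s}$, that stratum is precisely the locus where $(X_0)_s=X_s$, its underlying set is closed by upper semicontinuity of $s\mapsto h^0(X_s,\calF_s(\nu))$ for $\nu\gg0$, and an immersion with closed image is a closed immersion, so the a priori locally closed stratum is closed. Your verification of the universal property is also sound; in particular the passage from fibrewise vanishing of $\calK_t$ to $\calK_T=0$ needs only Nakayama at each point of $X_T$ (the flatness of $\calK_T$ is not even required). What each approach buys: the paper delegates all semicontinuity and uniform-regularity issues to the existence and projectivity of the Hilbert scheme together with the fact that a proper monomorphism is a closed immersion, whereas your argument works directly with the machinery underlying the Hilbert scheme but obliges you to justify the uniform $\nu$ --- you need $\nu$ large enough that $h^0(\calF_s(\nu))=p_{\calF_s}(\nu)$ for every $s$ and that $h^0(\calK_s(\nu))=0$ forces $\calK_s=0$, which requires a uniform regularity bound for the kernels in the spirit of Proposition~\ref{prop:kernel_m-reg}, or else replacing the single twist $\nu$ by the intersection of the closed loci over all $\nu\geq N$. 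You flag exactly these points as the remaining technical work, and they are all standard.
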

\begin{proof}
  Consider the fiber product $X_0$ of $\func{(f,g)}X {Y\times_S Y}$
  and the diagonal
  $\func \Delta Y{Y\times_S Y}$. Note that $\Delta$ is a
  closed immersion since $Y$ is separated over $S$, and therefore we
  can consider $X_0$ as a closed subscheme of $X$. The construction of
  $X_0$ commutes with base change and hence, by
  Lemma~\ref{lm:morphism_equal}, $f_T=g_T$ for a morphism $T\ra S$ if
  and only if the induced closed immersion $(X_0)_T\hookrightarrow
  X_T$ is an isomorphism.

  We consider the natural transformation $\calH ilb^p_{X_0/S}\ra \calH
  ilb^p_{X/S}$ given by considering a closed subscheme of $(X_0)_T$ as
  closed subscheme of $X_T$ via the inclusion $(X_0)_T\hookrightarrow
  X_T$. This natural transformation is a monomorphism. Note that since
  $X_0$ and $X$ are projective over $S$, the functors $\calH
  ilb^p_{X_0/S}$ and $\calH ilb^p_{X/S}$ are represented by schemes
  $\Hilb^p_{X_0/S}$ and $\Hilb^p_{X/S}$ that are projective over
  $S$. In particular, we have a commutative diagram
  $$\xymatrix{\Hilb^p_{X_0/S}\ar[r]^j\ar[dr]^-{\pi_0} &
    \Hilb^p_{X/S}\ar[d]^\pi \\ & S}$$ with projective, and in
  particular proper and separated, maps $\pi_0$ and $\pi$. Then it
  follows from \cite[Corollary 4.8]{Hartshorne} that the monomorphism
  $j$ is proper. By \cite[Corrolaire (18.12.6)]{EGAIV4}, this implies
  that $j$ is a closed immersion.

  The scheme $X$ as a closed subscheme of itself corresponds to an
  element of $\calH ilb^p_{X/S}(S)$, and hence to a map $S\to
  \Hilb^p_{X/S}$. Let $S'$ be the fiber product of $S$ and
  $\Hilb^p_{X_0/S}$ over $\Hilb^p_{X/S}$. Then $S'$ is a closed
  subscheme of $S$. Moreover, from the commutativity
  of the fiber diagram $$\xymatrix{S'\ar[r]\ar[d] & S \ar[d]\\
    \Hilb^p_{X_0/S} \ar[r] & \Hilb^p_{X/S}, }$$ we get that
  $(X_0)_{S'}= X_{S'}$.  Suppose conversely that $\func h T S$ is a
  morphism such that $(X_0)_T\hookrightarrow X_T$ is an isomorphism.
  Then the composition $T\ra S \ra \Hilb^p_{X/S}$ corresponds to the
  closed subscheme $X_T$ of $X_T$. Since $(X_0)_T=X_T$, the map $T\ra
  \Hilb^p_{X/S}$ factors through $\Hilb^p_{X_0/S}$. It follows that
  $T\ra S$ factors through the fiber product $S'$, and hence $S'$ has
  the required properties.
\end{proof}
\begin{prop}\label{prop:cover_W}
  Let $W_1$ be the $W_0$-scheme of Proposition~\ref{prop:cover_isom}.
  With the notation as in Remark~\ref{rem:notation_closed_imm}, there
  exists a closed subscheme $W$ of $W_1$ such that a morphism $\func
  h T{W_1}$ factors through $W$ if and only if $j_h=\pr_2$.
\end{prop}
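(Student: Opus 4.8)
The plan is to reduce the statement to a comparison of two morphisms over $W_1$ and then to cut out the agreement locus using Proposition~\ref{prop:morph_equal}. Write $\func\rho{W_1}{W_0}$ for the structure morphism, and let $\calZ_1:=\calZ_0\times_{W_0}W_1$ be the pullback of the universal family, with structure map $\func{f_1}{\calZ_1}{W_1}$. Since being finite, being flat and membership in $\CM$ are stable under base change, we have $(\calZ_1,\pr_1)\in\CM(W_1)$, and by definition of $W_0$ the second projection $\func{\pr_2}{\calZ_1}{\bbP^N_{W_1}}$ is a closed immersion. On the other hand, the universal isomorphism $\iso{\sigma_{\mathrm{univ}}}{\calO_{W_1}^{N+1}}{\rho^*\calE}$ classified by $W_1$ becomes, through the base change isomorphism $\rho^*\calE=\rho^*f_*\calL\cong(f_1)_*\calL_1$ of Proposition~\ref{prop:LocFree_BaseChange} (where $\calL_1:=\pr_1^*\calO_{\bbP^n_{W_1}}(m)$), an isomorphism $\calO_{W_1}^{N+1}\to(f_1)_*\calL_1$, and hence by Proposition~\ref{prop:isom-embedding} determines a closed immersion $\func{j_{\mathrm{univ}}}{\calZ_1}{\bbP^N_{W_1}}$.

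I would then apply Proposition~\ref{prop:morph_equal} to the two morphisms $\pr_2$ and $j_{\mathrm{univ}}$ from $X:=\calZ_1$ to $Y:=\bbP^N_{W_1}$ over $S:=W_1$. Its hypotheses are readily verified: $W_1$ is Noetherian, being of finite type over $\spec(\bbZ)$; the target $\bbP^N_{W_1}$ is separated over $W_1$; and $\calZ_1$ is flat over $W_1$ and projective over $W_1$, being a closed subscheme of $\bbP^n_{W_1}\times_{W_1}\bbP^N_{W_1}$. For the required very ample sheaf I take $\calL_1$, which is very ample relative to $f_1$ by Theorem~\ref{thm:embedding_general_base}; by the remark following Definition~\ref{def:CMfunctor}, the structure sheaf of each fiber $(\calZ_1)_s$ has Hilbert polynomial $p(t)$ with respect to $\pr_1^*\calO_{\bbP^n}(1)$, so its Hilbert polynomial with respect to the $m$-th tensor power $(\calL_1)_s$ is the fixed polynomial $p(mt)$, independent of $s\in W_1$. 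Proposition~\ref{prop:morph_equal} then yields a closed subscheme $W\subseteq W_1$ such that a morphism $\func h T{W_1}$ factors through $W$ if and only if $(\pr_2)_T=(j_{\mathrm{univ}})_T$.

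It remains to identify this condition with the equality $j_h=\pr_2$. For $\func h T{W_1}$, the universal property of $W_1$ gives that the isomorphism $\sigma$ attached to $h$ is the pullback $h^*\sigma_{\mathrm{univ}}$, and the base change $(\calZ_1)_T$ is exactly the closed subscheme $Z\subseteq\bbP^n_T\times\bbP^N_T$ corresponding to $h$ as in Remark~\ref{rem:notation_closed_imm}. Thus $(\pr_2)_T$ is the second projection $\func{\pr_2}Z{\bbP^N_T}$, while $(j_{\mathrm{univ}})_T=j_h$ because the construction of $j_\sigma$ in Proposition~\ref{prop:isom-embedding}, assembled from the surjection $f^*f_*\calL\surj\calL$ of Proposition~\ref{prop:star_star_surj} and the base change isomorphism of Proposition~\ref{prop:LocFree_BaseChange}, commutes with base change. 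Hence $(\pr_2)_T=(j_{\mathrm{univ}})_T$ if and only if $j_h=\pr_2$, and $W$ has the asserted property. The one point that genuinely requires care is precisely this last base-change compatibility of the assignment $h\mapsto j_h$; once it is checked, the proposition is a direct application of Proposition~\ref{prop:morph_equal}.
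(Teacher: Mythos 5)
Your argument is correct and is essentially the paper's own proof: the paper likewise forms the universal closed immersion $j_{\id}$ over $W_1$, notes that $j_h$ is its base change along $h$, and applies Proposition~\ref{prop:morph_equal} to $j_{\id}$ and $\pr_2$. You simply spell out the hypothesis checks (Noetherianity of $W_1$, relative very ampleness of $\calL_1$, constancy of the fiberwise Hilbert polynomial) and the base-change compatibility of $h\mapsto j_h$ that the paper leaves implicit.
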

\begin{proof}
  Over $W_1$ we have the universal family $\calZ_1\subseteq
  \bbP^n_{W_1}\times_{W_1}\bbP^N_{W_1}$ with induced closed immersion
  $\injfunc{j:=j_{\id}}{\calZ_1}{\bbP^N_{W_1}}$. As $j_h$ is the base
  change of $j$ by $h$, the statement follows directly from
  Proposition~\ref{prop:morph_equal}.
\end{proof}

Let $\func{\pi}{W}{\CM}$ be the composition $W\hookrightarrow W_1\ra
W_0\stackrel{\tau}\ra \CM$.
\begin{thm}\label{thm:pi_surj}
  The map $\func\pi W \CM$ is surjective as a map of Zariski
  sheaves. 

  In particular, the map
  $\func{\pi(\spec(k))}{W(\spec(k))}{\CM(\spec(k))}$ is surjective for
  every field $k$.
\end{thm}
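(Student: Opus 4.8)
Recall that surjectivity of $\func\pi W\CM$ as a map of Zariski sheaves means that for every scheme $S$ and every $(C,i)\in\CM(S)$ there is an open cover $\{S_a\}$ of $S$ such that each restriction $(C,i)\vert_{S_a}$ lies in the image of $\func{\pi(S_a)}{W(S_a)}{\CM(S_a)}$. The plan is to refine the surjectivity of $\tau$ from Theorem~\ref{thm:cover_W0}: there we already produce, after passing to a suitable open cover, a lift of $(C,i)$ to $W_0$, and now I would track the additional datum of a basis so as to lift all the way to $W$. Since $\pi$ factors as $W\hookrightarrow W_1\to W_0\stackrel\tau\ra\CM$, it suffices to promote the local $W_0$-lifts to local $W$-points.

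\textbf{Construction on each chart.} Let $(C,i)\in\CM(S)$ with structure map $\func fCS$, and set $\calL:=i^*\calO_{\bbP^n_S}(m)$. By Theorem~\ref{thm:embedding_general_base}, the sheaf $\calL$ is very ample, $f_*\calL$ is locally free of rank $N+1$, and $S$ admits an open cover $\{S_a\}$ on which $(f_*\calL)\vert_{S_a}$ is free; choosing a basis amounts to an isomorphism $\iso{\sigma_a}{\calO_{S_a}^{N+1}}{(f_a)_*\calL_a}$, and the associated closed immersion $\injfunc{j_a}{C_a}{\bbP^N_{S_a}}$ of Proposition~\ref{prop:isom-embedding} satisfies $j_a=j_{\sigma_a}$ by construction. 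As in the proof of Theorem~\ref{thm:cover_W0}, the morphism $(i_a,j_a)$ is then a closed immersion, so I would let $Z_a\subseteq\bbP^n_{S_a}\times_{S_a}\bbP^N_{S_a}$ be its image; under the resulting isomorphism $C_a\cong Z_a$ one has $(Z_a,\pr_1)=(C_a,i_a)\in\CM(S_a)$ and $\pr_2=j_a$ is a closed immersion, so $Z_a$ defines an element of $W_0(S_a)$. Transporting $\sigma_a$ along $C_a\cong Z_a$, and using $\pr_1^*\calO_{\bbP^n_{S_a}}(m)\cong\calL_a$, yields an isomorphism $\calO_{S_a}^{N+1}\ra f_*\pr_1^*\calO_{\bbP^n_{S_a}}(m)$, hence a point of $W_1(S_a)$ in the sense of Remark~\ref{rem:notation_closed_imm}.

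\textbf{The main verification.} The delicate step, and the one I expect to be the crux, is checking that this $W_1$-point factors through the closed subscheme $W$ of Proposition~\ref{prop:cover_W}, i.e.\ that $j_\sigma=\pr_2$. This is where the choices must be traced through two parallel instances of the construction in Proposition~\ref{prop:isom-embedding}. Under the identification $C_a\cong Z_a$ the projection $\pr_2$ corresponds to $j_a$, while the closed immersion $j_\sigma$ attached to the transported basis is, by the very definition of Proposition~\ref{prop:isom-embedding}, the immersion $j_{\sigma_a}$ built from $f^*\sigma_a$ and the canonical surjection $f^*f_*\calL_a\surj\calL_a$. Since $j_a=j_{\sigma_a}$ from the previous step, these coincide, so $j_\sigma=\pr_2$ and the point lies in $W(S_a)$ by Proposition~\ref{prop:cover_W}. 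By construction $\pi$ sends it to $(Z_a,\pr_1)=(C,i)\vert_{S_a}$, which gives the desired local lift and hence surjectivity as Zariski sheaves.

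\textbf{The field case.} For the final assertion, take $S=\spec(k)$ with $k$ a field. Then the only open cover of the one-point space $\spec(k)$ is the trivial one, so any open $S_a$ witnessing the lift must equal $\spec(k)$ itself. Consequently the element $(C,i)\in\CM(\spec(k))$ is itself in the image of $\func{\pi(\spec(k))}{W(\spec(k))}{\CM(\spec(k))}$, proving that this map is surjective on sets.
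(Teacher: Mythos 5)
Your proposal is correct and follows essentially the same route as the paper: apply Theorem~\ref{thm:embedding_general_base} to get an open cover on which $f_*\calL$ is free, use the chosen basis to produce the closed immersion $(i_a,j_a)$ giving a $W_0$-point, note that the basis upgrades this to a $W_1$-point, and observe that by construction $j_\sigma=\pr_2$ so the point factors through $W$. Your ``main verification'' paragraph simply spells out in detail what the paper compresses into ``by construction of $j_a$, it even factors through $W$,'' and your handling of the field case (the only open cover of $\spec(k)$ is trivial) is the intended reading of the ``in particular'' clause.
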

\begin{proof}
  Let $S$ be a scheme, and let $(C,i)\in \CM(S)$ with structure map
  $\func f C S$. By Theorem~\ref{thm:embedding_general_base}, there
  exists an open cover $\{S_a\}$ of $S$, such that the sheaf
  $f_*i^*\calO_{\bbP^n_S}(m)$ is free of rank $N+1$ over every $S_a$,
  and a choice of basis of global sections gives rise to a closed
  immersion $\injfunc{j_a}{C_a}{\bbP^N_{S_a}}$ with
  $j_a^*\calO_{\bbP^N_{S_a}}(1)=i_a^*\calO_{\bbP^n_{S_a}}(m)$. Then
  the closed immersion $\func
  {(i_a,j_a)}{C_a}{\bbP^n_{S_a}\times_{S_a}\bbP^N_{S_a}}$ corresponds
  to an element of $W_0(S_a)$ mapping to $(C_a,i_a)\in \CM(S_a)$.  The
  choice of the basis gives a factorization of the corresponding map
  $S_a\to W_0$ through $W_1$. Moreover, by construction of $j_a$, it
  even factors through $W$.
\end{proof}
\begin{cor}
  For every scheme $S$ and map $S\to \CM$, the induced map
  $\func{\pi_S}{W\times_{\CM} S}S$ of schemes is surjective, that is,
  the map $\func\pi{W}\CM$ is surjective as representable map.
\end{cor}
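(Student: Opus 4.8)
The statement to prove is the final corollary: for every scheme $S$ and every map $S\to \CM$, the induced map $\func{\pi_S}{W\times_{\CM} S}S$ is surjective, i.e.\ $\pi$ is surjective as a representable map. The plan is to combine the surjectivity of $\pi$ as a map of Zariski sheaves (Theorem~\ref{thm:pi_surj}) with the representability of $\pi$, which follows from representability of the diagonal (Corollary~\ref{cor:loc_fin_type}), and then check surjectivity on the level of scheme morphisms pointwise.

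First I would fix a map $\xi\mathpunct: S\to \CM$, corresponding to an element $(C,i)\in\CM(S)$. By Corollary~\ref{cor:loc_fin_type}, the map $\xi$ is representable, so the fiber product $W\times_{\CM}S$ is represented by a scheme, and $\pi_S\mathpunct: W\times_{\CM}S\to S$ is an honest morphism of schemes. To prove surjectivity of a morphism of schemes, it suffices to show that every point of $S$ is in the image, and for this it is enough to show that for every point $s\in S$ the fiber $(W\times_{\CM}S)_s$ is nonempty. Equivalently, after base change along $\spec(\kappa(s))\to S$, I would reduce to checking that $W\times_{\CM}\spec(\kappa(s))$ is nonempty, which is a statement about the set $W(\spec(\kappa(s)))\to\CM(\spec(\kappa(s)))$ being nonempty over the given element $(C_s,i_s)$.

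The key input here is the second assertion of Theorem~\ref{thm:pi_surj}, namely that $\pi(\spec(k))\mathpunct: W(\spec(k))\to\CM(\spec(k))$ is surjective for every field $k$. Applying this with $k=\kappa(s)$ to the element $(C_s,i_s)\in\CM(\spec(\kappa(s)))$, I obtain an element of $W(\spec(\kappa(s)))$ mapping to it, i.e.\ a $\kappa(s)$-point of $W\times_{\CM}\spec(\kappa(s))$. Since the latter is the fiber of $\pi_S$ over $s$ up to the identification coming from base change, this shows the fiber is nonempty and hence $s$ lies in the image of $\pi_S$. As $s$ was arbitrary, $\pi_S$ is surjective.

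The only point requiring care—and the part I expect to be the main obstacle—is making the reduction to fibers over fields precise. Surjectivity of a morphism is not in general detected by nonemptiness of fibers unless one works at the level of the underlying topological space, so I would be careful to phrase the argument as: a point $s\in S$ lies in the image of $\pi_S$ precisely when the scheme-theoretic fiber $(\pi_S)^{-1}(s)=(W\times_{\CM}S)\times_S\spec(\kappa(s))$ is nonempty, and then identify this fiber, using the transitivity of fiber products, with $W\times_{\CM}\spec(\kappa(s))$, where the map $\spec(\kappa(s))\to\CM$ is the composition $\spec(\kappa(s))\to S\xrightarrow{\xi}\CM$ classifying $(C_s,i_s)$. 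Once this identification is in place, Theorem~\ref{thm:pi_surj} supplies the required $\kappa(s)$-point and the proof concludes.
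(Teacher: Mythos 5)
Your proposal is correct and follows essentially the same route as the paper: both invoke Corollary~\ref{cor:loc_fin_type} for representability and then reduce surjectivity of $\pi_S$ to the field-valued statement of Theorem~\ref{thm:pi_surj}, the only cosmetic difference being that you check nonemptiness of the fibers over the residue fields $\kappa(s)$ while the paper lifts arbitrary field-valued points $\spec(k)\to S$ and cites the corresponding criterion for surjectivity of scheme morphisms. Your worry about detecting surjectivity via fibers is unfounded here, since a point $s$ lies in the image of a morphism precisely when the scheme-theoretic fiber over $s$ is nonempty.
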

\begin{proof}
  First we note that the statement makes sense since $\pi$ is
  representable by Corollary~\ref{cor:loc_fin_type}. 

  Let $S\to \CM$ be a map from a scheme $S$, and consider a map
  $\spec(k)\ra S$ for a field $k$.  Composition then gives an element
  of $\CM(\spec(k))$ having a lift in $W(\spec(k))$ by
  Theorem~\ref{thm:pi_surj}. Hence the original map $\spec(k)\to S$
  factors through the fiber product $W\times_{\CM}S$.  By
  \cite[Proposition 4.8]{GortzWed:AlgGeo}, this implies that $\pi_S$
  is a surjective map of schemes.
\end{proof}

\section{Smoothness of the covering}\label{chap:smooth}

Let $\func \pi {W}\CM$ be the surjective map of
Theorem~\ref{thm:pi_surj}. The goal of this section is to show that
$\pi$ is smooth. Moreover, we conclude that $\CM$ is an algebraic
space.

\begin{lm}\label{lm:lift_iso_nilpotent}
  Let $A$ be a ring, and let $I$ be a nilpotent ideal of $A$.  
  \begin{enumerate}
  \item\label{lm:lift_iso_nilpotent_iso} Let $\func \varphi M N$ be a
    homomorphism of $A$-modules, and let $\func{\bar
      \varphi}{M/IM}{N/IN}$ be the induced map. Suppose that $N$ is
    flat over $A$. Then $\varphi$ is an isomorphism if and only if
    $\bar \varphi$ is an isomorphism.
  \item\label{lm:lift_iso_nilpotent_free} Let $N$ be a flat
    $A$-module, and let $\func{\bar\varphi }{(A/I)^n}{N/IN}$ be an
    isomorphism. Then there exists an isomorphism
    $\func{\varphi}{A^n}{N}$ such that
    $\varphi\otimes\id_{A/I}=\bar\varphi$.
  \end{enumerate}
\end{lm}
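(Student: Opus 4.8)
The plan is to prove both parts by reducing to the standard theory of lifting idempotents and bases along nilpotent ideals, using the filtration of $I$ by its powers. Since $I$ is nilpotent, there is some $r$ with $I^r=0$, and I would argue by induction on $r$, so that at each step I only need to handle a \emph{square-zero} extension $A\to A/J$ with $J^2=0$. This is the usual device that turns a nilpotent-ideal statement into a square-zero statement, and it is harmless because the hypotheses (flatness of $N$, the shape of $\bar\varphi$) are inherited by the quotients $A/I^j$.

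For assertion~\ref{lm:lift_iso_nilpotent_iso}, one direction is trivial: if $\varphi$ is an isomorphism then so is $\bar\varphi=\varphi\otimes\id_{A/I}$. For the converse, suppose $\bar\varphi$ is an isomorphism. First I would show $\varphi$ is surjective. The cokernel $Q:=\coker(\varphi)$ satisfies $Q/IQ=\coker(\bar\varphi)=0$, so $Q=IQ$, and since $I$ is nilpotent this forces $Q=0$ by a Nakayama-type argument (iterate $Q=IQ=I^2Q=\dots=I^rQ=0$). Now let $K:=\kernel(\varphi)$, giving a short exact sequence $0\to K\to M\to N\to 0$. Because $N$ is flat over $A$, tensoring with $A/I$ stays exact, so $K/IK\to M/IM\to N/IN\to 0$ is exact on the left as well; as $\bar\varphi$ is injective we get $K/IK=0$, whence $K=0$ by the same nilpotence argument. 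The one place to be careful is the left-exactness after tensoring, which is exactly where flatness of $N$ enters through the vanishing of $\operatorname{Tor}_1^A(N,A/I)$.

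For assertion~\ref{lm:lift_iso_nilpotent_free}, I want to lift the isomorphism $\bar\varphi\colon (A/I)^n\to N/IN$ to an isomorphism $\varphi\colon A^n\to N$. The key observation is that a flat module with free reduction is itself free of the same rank: concretely, choose elements $e_1,\dots,e_n\in N$ lifting the images under $\bar\varphi$ of the standard basis of $(A/I)^n$, and let $\varphi\colon A^n\to N$ be the map sending the standard basis to the $e_i$. By construction $\varphi\otimes\id_{A/I}=\bar\varphi$ is an isomorphism, so assertion~\ref{lm:lift_iso_nilpotent_iso} (applied with $M=A^n$, which is flat, and with the already-established flatness of $N$) immediately yields that $\varphi$ is an isomorphism. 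Thus the second part is essentially a formal consequence of the first, once one has chosen lifts of a basis.

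The main obstacle is conceptual rather than computational: it is making sure flatness of $N$ is used correctly to get left-exactness after reduction mod $I$ in the surjectivity/injectivity argument, and recognizing that part~\ref{lm:lift_iso_nilpotent_free} needs almost no new work beyond choosing set-theoretic lifts and invoking part~\ref{lm:lift_iso_nilpotent_iso}. The nilpotence of $I$ (rather than mere containment in the Jacobson radical) is what lets the Nakayama step run without any finiteness hypothesis on the modules, which is the reason the statement holds in this generality.
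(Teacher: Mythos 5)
Your proof is correct and follows essentially the same route as the paper: surjectivity via $\coker(\varphi)=I\coker(\varphi)=\dots=0$, injectivity via flatness of $N$ giving left-exactness of $-\otimes_A A/I$ applied to $0\to K\to M\to N\to 0$, and part (ii) by lifting a basis and invoking part (i). The opening reduction to square-zero extensions is never actually used (the iterated-powers argument already handles general nilpotent $I$), but this is harmless.
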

\begin{proof}
  Clearly $\bar \varphi$ is an isomorphism if $\varphi$ is an
  isomorphism.

  Suppose conversely that $\bar \varphi$ is an isomorphism, and let
  $C$ be the cokernel of $\varphi$. Then we have that
  $C/IC=\coker(\bar \varphi)=0$. It follows that $C=IC=I^2C=\ldots =0$
  since $I$ is nilpotent. This shows that $\varphi$ is surjective.

    Now let $K:=\ker(\varphi)$, and consider the short exact
    sequence $$\xymatrix{0\ar[r] & K \ar[r] & M \ar[r]^\varphi &
      N\ar[r] & 0.}$$ Since $N$ is flat, the sequence remains exact after
    tensoring with $A/I$. In particular, we get that $K/IK=\ker(\bar
    \varphi)=0$. As for the cokernel, this implies that $K=0$ and
    assertion~\ref{lm:lift_iso_nilpotent_iso} follows.

    To show assertion~\ref{lm:lift_iso_nilpotent_free}, we lift the
    basis of $N/IN$ given by $\bar \varphi$ to $N$, and get a
    well-defined map \mbox{$\func \varphi {A^n} N$} that is an
    isomorphism after tensoring with $A/I$.  Now it follows from
    assertion~\ref{lm:lift_iso_nilpotent_iso} that $\varphi$ is an
    isomorphism.
\end{proof}

\begin{thm}\label{thm:form_smooth}
  The map $\func \pi{W}\CM$ is formally smooth, that is, for every
  ring $A$ with nilpotent ideal $I$ and all morphism $\spec(A/I)\to W$
  and $\spec(A)\to \CM$ such that the 
  diagram
  \begin{align}\label{eqn:form_smooth}
    \begin{array}{rcl}
      \xymatrix{\spec(A/I) \ar@{^(->}[d] \ar[r] & W\ar[d]^\pi \\
        \spec(A)\ar@{-->}[ur] \ar[r] & \CM}
    \end{array}
  \end{align}
  commutes, there exists at least one morphism $\spec(A)\dashrightarrow
  W$ making the diagram commute.
\end{thm}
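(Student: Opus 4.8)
The plan is to observe that the only data genuinely missing from the given $\spec(A)$-point of $\CM$ is a trivialization of a pushforward sheaf, and that such a trivialization can always be lifted across a nilpotent thickening. Unwinding diagram~\eqref{eqn:form_smooth}, the bottom arrow is an element $(C,i)\in\CM(A)$ with structure map $\func f C{\spec(A)}$, while the top arrow is a point of $W(A/I)$; by the description of $W\subseteq W_1\to W_0$ in Theorem~\ref{thm:cover_W0}, Proposition~\ref{prop:cover_isom} and Proposition~\ref{prop:cover_W}, this point consists of a closed subscheme $Z_0\subseteq\bbP^n_{A/I}\times\bbP^N_{A/I}$ with $(Z_0,\pr_1)\in\CM(A/I)$ and $\pr_2$ a closed immersion, together with an isomorphism $\iso{\sigma_0}{\calO_{A/I}^{N+1}}{(f_0)_*\pr_1^*\calO_{\bbP^n_{A/I}}(m)}$ satisfying $j_{\sigma_0}=\pr_2$. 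Commutativity of~\eqref{eqn:form_smooth} means $(C_{A/I},i_{A/I})=(Z_0,\pr_1)$ in $\CM(A/I)$, so by Corollary~\ref{cor:auto} there is a \emph{unique} isomorphism $\iso\alpha{C_{A/I}}{Z_0}$ over $\bbP^n_{A/I}$; I use $\alpha$ to identify $(f_0)_*\pr_1^*\calO(m)$ with $(f_{A/I})_*\calL_{A/I}$, where $\calL:=i^*\calO_{\bbP^n_A}(m)$.

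First I would lift the trivialization. By Proposition~\ref{prop:LocFree_BaseChange} the sheaf $f_*\calL$ is locally free of rank $N+1=p(m)$ and its formation commutes with base change, so $(f_*\calL)\otimes_A A/I\cong(f_{A/I})_*\calL_{A/I}$; under this identification and $\alpha$, the isomorphism $\sigma_0$ becomes an isomorphism $(A/I)^{N+1}\to(f_*\calL)/I(f_*\calL)$. Since $f_*\calL$ is flat over $A$, Lemma~\ref{lm:lift_iso_nilpotent}\ref{lm:lift_iso_nilpotent_free} produces an isomorphism $\iso\sigma{\calO_A^{N+1}}{f_*\calL}$ reducing to $\sigma_0$ modulo $I$. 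This is the one substantive step: because the family $(C,i)$ itself is already supplied by the bottom arrow, there is no deformation of the curve to construct and hence no obstruction to kill; smoothness of $\pi$ reflects only that the choice of basis encoded by $W$ lifts freely, and the nilpotence of $I$ together with flatness makes this automatic.

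Then I would assemble the lift and verify it. By Proposition~\ref{prop:isom-embedding}, $\sigma$ yields a closed immersion $\injfunc{j_\sigma}C{\bbP^N_A}$ with $j_\sigma^*\calO_{\bbP^N_A}(1)=\calL$; as in the construction of $W_0$, the map $(i,j_\sigma)$ is then a closed immersion, and its image $Z\subseteq\bbP^n_A\times\bbP^N_A$ satisfies $(Z,\pr_1)\cong(C,i)\in\CM(A)$ with $\pr_2=j_\sigma$ a closed immersion. Thus $Z$ lies in $W_0(A)$ and, together with $\sigma$, defines a point of $W_1(A)$ whose associated closed immersion is, by Remark~\ref{rem:notation_closed_imm}, $j_{(Z,\sigma)}=j_\sigma=\pr_2$; by Proposition~\ref{prop:cover_W} it factors through $W$, giving the desired arrow $\spec(A)\dashrightarrow W$. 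It remains to check that this arrow makes~\eqref{eqn:form_smooth} commute: composing with $\pi$ returns $(Z,\pr_1)\cong(C,i)$, the bottom arrow, while restricting to $A/I$ returns $\sigma_0$ by construction and $Z_0$ because $(j_\sigma)_{A/I}$ corresponds to $j_{\sigma_0}=\pr_2$ under $\alpha$, whence $Z_{A/I}=Z_0$ as subschemes of $\bbP^n_{A/I}\times\bbP^N_{A/I}$. The only real care needed is in bookkeeping these identifications through $\alpha$; everything else is formal.
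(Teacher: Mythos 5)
Your proposal is correct and follows essentially the same route as the paper's proof: unwind both arrows, use commutativity of the square to transport the trivialization $\sigma_0$ to $(f_{A/I})_*\calL_{A/I}$ via the isomorphism $\alpha$, lift it across the nilpotent thickening using Proposition~\ref{prop:LocFree_BaseChange} and Lemma~\ref{lm:lift_iso_nilpotent}, and apply Proposition~\ref{prop:isom-embedding} to obtain the closed immersion defining the lift to $W$. The final bookkeeping check that the restriction of the constructed point recovers $(Z_0,\sigma_0)$ matches the paper's concluding step.
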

\begin{proof}
  Consider a commutative diagram as in the statement of the
  theorem. The map $\spec(A)\ra \CM$ corresponds to an element $(C,i)$
  in $\CM(\spec(A))$ with structure morphism $\func f C \spec(A)$.
  Base change by the closed immersion $\spec(A/I)\hookrightarrow
  \spec(A)$ gives rise to an element $(\bar C,\bar i)\in
  \CM(\spec(A/I))$ with structure morphism $\func{\bar f}{\bar
    C}\spec(A/I)$, where we write $\bar C:=C_{A/I}$.

  The map $\spec(A/I)\to W$ corresponds to a closed subscheme $\bar Z$
  of the product $\bbP^n_{A/I}\times_{A/I}\bbP^N_{A/I}$ with structure
  map $\func{\bar g}{\bar Z}{\spec(A/I)}$ and an isomorphism
  $\iso{\bar \sigma}{\calO_{\spec(A/I)}^{N+1}}{\bar
    g_*\pr_1^*\calO_{\bbP^n_{A/I}}}(m)$ such that $j_{\bar
    \sigma}=\pr_2$. 

  Note that $(\bar Z,\bar\pr_1)=(\bar C,\bar i)$ in $\CM(\spec(A/I))$
  since diagram~(\ref{eqn:form_smooth}) commutes. Hence there is an
  isomorphism $\iso\alpha {\bar C}{\bar Z}$ with $\pr_1\circ
  \alpha=\bar i$. Composing the induced isomorphism $\bar
  g_*\pr_1^*\calO_{\bbP^n_{A/I}}(m) \cong \bar f_*\bar i^*
  \calO_{\bbP^n_{A/I}}(m)$ with $\bar \sigma$ gives an isomorphism
  $\iso{\bar\rho} {\calO_{\spec(A/I)}^{N+1}} {\bar f_*\bar
    i^*\calO_{\bbP^n_{A/I}}(m)}$. Then the closed immersion
  $j_{\bar\rho}$ associated to $\bar \rho$ equals the composed map $\injfunc{\pr_2\circ
    \alpha}{\bar C}{\bbP^N_{A/I}}$.

  Using the identification $(f_*i^*\calO_{\bbP^n_A}(m))_{A/I}\cong
  \bar f_* \bar i^*\calO_{\bbP^n_{A/I}}(m)$ of
  Proposition~\ref{prop:LocFree_BaseChange}, we can lift $\bar \rho$
  to an isomorphism
  $\iso\rho{\calO_A^{N+1}}{f_*i^*\calO_{\bbP^n_A}}(m)$ by
  Lemma~\ref{lm:lift_iso_nilpotent}\ref{lm:lift_iso_nilpotent_free}. By
  Proposition~\ref{prop:isom-embedding}, we get an induced closed
  immersion $\injfunc{j_\rho}{C}{\bbP^N_A}$. 

  The combined closed immersion
  $\injfunc{(i,j_\rho)}{C}{\bbP^n_A\times_A\bbP^N_A}$ gives rise to a
  map $\func\gamma{\spec(A)}W$.  By construction, the base change of
  $j_\rho$ by the closed immersion \mbox{$\spec(A/I)\hookrightarrow
    \spec(A)$} is the map $\pr_2\circ\alpha=j_{\bar \rho}$. It follows
  that the constructed map $\gamma$ makes the
  diagram~\eqref{eqn:form_smooth} commute.
\end{proof}
\begin{rem}
  Note that the map $\gamma$ constructed in the proof is not
  unique. It depends on the choice of a lift $\rho$ of $\bar \rho$. In
  particular, the map $\func\pi W \CM$ is not formally
  \'etale. However, the existence of a smooth cover suffices to show
  that $\CM$ is an algebraic space.
\end{rem}

\begin{prop}\label{prop:alg_space_smooth}
  Let $\func{\calA}{\mathbf{Sch}^\circ}{\mathbf{Sets}}$ be a functor
  that is a sheaf in the \'etale topology. Suppose that there exists a
  scheme $X$ with a representable, surjective and smooth map $X\to
  \calA$. Then $\calA$ is an algebraic space.
\end{prop}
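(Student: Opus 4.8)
The plan is to unwind the definition of an algebraic space: an \'etale sheaf $\calA$ is an algebraic space as soon as there exists a scheme $U$ together with a representable, surjective and \'etale morphism $U\to\calA$. Since $\calA$ is an \'etale sheaf by hypothesis, the whole task is to manufacture such an \'etale atlas out of the given representable, surjective and \textit{smooth} cover $\pi\colon X\to\calA$. As a preliminary I would record that the diagonal $\calA\to\calA\times\calA$ is representable: the fibre product $X\times_\calA X$ is a scheme because $\pi$ is representable, and it sits in a Cartesian square over the diagonal, with the morphism of schemes $X\times_\calA X\to X\times X$ on top and $\pi\times\pi\colon X\times X\to\calA\times\calA$ below. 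The latter is representable and, being the product of two copies of the surjective morphism $\pi$, is a faithfully flat covering; representability of the diagonal then follows by faithfully flat descent. In particular every morphism from a scheme to $\calA$ is representable.

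The heart of the argument is the passage from a smooth to an \'etale cover, which I would carry out by slicing. Since $\pi$ is representable and smooth, for every scheme $T$ and every map $T\to\calA$ the base change $\pi_T\colon X_T\to T$ is a smooth morphism of schemes; hence the sheaf of relative differentials $\Omega_{X/\calA}$ is locally free of some rank $d$ on $X$, its formation commuting with these base changes. For a point $x\in X$ I would choose, on a suitable open neighbourhood $X'\ni x$, sections $g_1,\dotsc,g_d\in\calO_{X'}$ whose differentials form a basis of $\Omega_{X/\calA}$ near $x$, and set $U':=V(g_1,\dotsc,g_d)\subseteq X'$. The standard criterion for cutting a smooth morphism down to an \'etale one \cite[(17.12.1)]{EGAIV4}, which may be verified after an arbitrary base change $T\to\calA$ by a scheme (flatness and being unramified are both defined through the schemes $U'_T\to T$), then shows that the composite $U'\hookrightarrow X'\xrightarrow{\pi}\calA$ is \'etale in a neighbourhood of $x$. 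Assembling such slices into a single scheme $U$, namely a disjoint union over an open cover of $X$, yields an \'etale morphism $U\to\calA$, which is representable because the diagonal of $\calA$ is.

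It remains to arrange that $U\to\calA$ be surjective, and this is the step I expect to be the main obstacle, since each individual slice only controls a neighbourhood of a single point of $X$. I would check surjectivity on geometric points: for an algebraically closed field $L$ and a point $\spec(L)\to\calA$, the fibre $X\times_\calA\spec(L)$ is a smooth $L$-scheme, nonempty because $\pi$ is surjective, so it admits an $L$-point $\bar x$. Slicing $X$ transversally through the image of $\bar x$ produces a slice meeting this fibre, so the chosen point of $\calA$ lies in the image of $U$; this is precisely the statement that a smooth surjective morphism admits \'etale-local sections through a prescribed point \cite[(17.16.3)]{EGAIV4}. Once surjectivity is secured, $U\to\calA$ is a representable, surjective and \'etale morphism from a scheme, that is, an \'etale atlas, and therefore $\calA$ is an algebraic space.
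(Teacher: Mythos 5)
Your strategy is the right one, and it is in fact more informative than what the paper offers: the paper's entire proof of this proposition is the citation \cite[Chapter 8]{LMB:Champs}, so the slicing construction you describe --- choosing local functions $g_1,\dotsc,g_d$ whose relative differentials trivialize $\Omega_{X/\calA}$ at a point, cutting down to $V(g_1,\dotsc,g_d)$ to pass from a smooth to an \'etale morphism, and invoking \cite[(17.16.3)]{EGAIV4} to ensure that enough slices exist for the resulting cover to stay surjective --- is precisely the argument carried out in the cited reference. Two remarks on the details. First, representability of each slice $U'\to\calA$ does not actually require the representability of the diagonal of $\calA$: the map $U'\to X$ is an immersion and $X\to\calA$ is representable by hypothesis, so $U'\times_\calA T=U'\times_X(X\times_\calA T)$ is a scheme for every scheme $T\to\calA$. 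Second, to slice through a \emph{prescribed} point one needs the relevant residue field extension to be separable, or one must use that such points are dense in the smooth fibers; your appeal to \cite[(17.16.3)]{EGAIV4} in the surjectivity step is the standard way to absorb this, but it deserves to be said explicitly, since a slice literally ``through $x$'' need not exist for every $x$.

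The one step whose justification does not work as written is the preliminary claim that the diagonal $\calA\to\calA\times\calA$ is representable ``by faithfully flat descent''. What your Cartesian square gives is that, for a scheme $T\to\calA\times\calA$, the sheaf $\calA\times_{\calA\times\calA}T$ becomes a scheme after the faithfully flat base change $(X\times X)\times_{\calA\times\calA}T\to T$; but being representable by a scheme is not an fppf-local property of a sheaf over a base, because the resulting descent datum need not be effective. To repair this one must use additional structure --- for instance that the diagonal is a monomorphism locally of finite type, hence separated and locally quasi-finite, and then reduce to effectivity of descent for quasi-affine morphisms --- or, as is the situation in this paper, simply feed in that the representability of the diagonal has already been established independently, here in Proposition~\ref{prop:representable}. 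With that input your construction goes through and produces the desired \'etale atlas.
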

\begin{proof}
  See \cite[Chapter 8]{LMB:Champs} for the construction of an \'etale
  cover of $\calA$.
\end{proof}
\begin{thm}
  The functor $\CM$ is an algebraic space of finite type over $\spec(\bbZ)$.
\end{thm}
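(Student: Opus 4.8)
The plan is to deduce the theorem by assembling the results of the preceding sections and verifying the hypotheses of Proposition~\ref{prop:alg_space_smooth}, which asserts that an \'etale sheaf admitting a representable, surjective and smooth cover by a scheme is an algebraic space. By Theorem~\ref{thm:sheaf}, the functor $\CM$ is already known to be a sheaf in the \'etale topology, so it remains only to check that the map $\func{\pi}{W}{\CM}$ of Theorem~\ref{thm:pi_surj} is representable, surjective and smooth.

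First I would record that $W$ is of finite type over $\spec(\bbZ)$: the scheme $W_0$ is of finite type over $\spec(\bbZ)$ by Theorem~\ref{thm:cover_W0}, the scheme $W_1$ is of finite type over $W_0$ by Proposition~\ref{prop:cover_isom}, and $W$ is a closed subscheme of $W_1$ by Proposition~\ref{prop:cover_W}. Consequently $\func{\pi}{W}{\CM}$ is representable and locally of finite type by Corollary~\ref{cor:loc_fin_type}, and it is surjective---as a map of Zariski sheaves by Theorem~\ref{thm:pi_surj}, and hence as a representable map by the corollary immediately following it. The one point that requires a small additional argument is smoothness: Theorem~\ref{thm:form_smooth} yields only that $\pi$ is formally smooth. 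To upgrade this to smoothness I would use that, by definition, smoothness of a representable morphism is tested after base change along an arbitrary map $T\to\CM$ from a scheme, where it becomes a morphism of schemes $W\times_{\CM}T\to T$; such a morphism is smooth precisely when it is formally smooth and locally of finite presentation. Formal smoothness is inherited from $\pi$, while locally of finite presentation follows from $\pi$ being locally of finite type together with the standing convention that all schemes are locally Noetherian, so that locally of finite type and locally of finite presentation coincide. Reconciling these two separately-established properties of $\pi$ is the step I expect to demand the most care.

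With these three properties in hand, Proposition~\ref{prop:alg_space_smooth} immediately gives that $\CM$ is an algebraic space. Finally, to obtain the finite-type assertion I would argue that $\CM$ is quasi-compact, because $W$ is quasi-compact (being of finite type over $\spec(\bbZ)$) and $\pi$ is surjective, and that the structure morphism $\CM\to\spec(\bbZ)$ is locally of finite type, because this property descends along the smooth surjective cover $\pi$ from the corresponding property of $W\to\spec(\bbZ)$. Combining quasi-compactness with being locally of finite type yields that $\CM$ is of finite type over $\spec(\bbZ)$, completing the proof.
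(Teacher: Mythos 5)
Your proposal is correct and follows essentially the same route as the paper: both verify the hypotheses of Proposition~\ref{prop:alg_space_smooth} by combining Theorem~\ref{thm:sheaf}, Proposition~\ref{prop:representable}, Theorem~\ref{thm:pi_surj}, Corollary~\ref{cor:loc_fin_type} and Theorem~\ref{thm:form_smooth}. You merely make explicit two points the paper leaves implicit, namely that formally smooth plus locally of finite presentation (which coincides with locally of finite type under the standing locally Noetherian convention) gives smoothness, and the descent argument for the finite-type assertion over $\spec(\bbZ)$.
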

\begin{proof}
  We have seen in Theorem~\ref{thm:sheaf} that $\CM$ is a sheaf in the
  \'etale topology. Moreover, we have a scheme $W$ of finite type
  over $\spec(\bbZ)$ and a map $\func\pi{W}\CM$ that is
  representable by Proposition~\ref{prop:representable}, surjective by
  Theorem~\ref{thm:pi_surj}, locally of finite presentation by
  Corollary~\ref{cor:loc_fin_type} and formally smooth by
  Theorem~\ref{thm:form_smooth}. Now it follows from
  Proposition~\ref{prop:alg_space_smooth} that $\CM$ is an algebraic
  space.
\end{proof}

\section{\texorpdfstring{Properness of $\CM$}{Properness of CM}}
We conclude by showing that $\CM$ satisfies the valuative criterion
for properness.
\subsection{Flat families over a discrete valuation ring} Flatness over
a discrete valuation ring can be described in terms of associated
points.
\begin{lm}[{\cite[Proposition 6 in Lecture 6]{Mumford:Lectures}}]\label{lm:flat_over_DVR}
  Let $R$ be a discrete valuation ring, and let $X$ be a scheme over
  $\spec(R)$ with structure morphism $\func f X \spec(R)$. A coherent
  sheaf $\calF$ on $X$ is flat over $\spec(R)$ if and only if every
  associated point of $\calF$ is contained in the generic fiber
  $X_\eta$, where $\eta$ denotes the generic point of
  $\spec(R)$. 
\end{lm}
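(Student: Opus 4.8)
The plan is to reduce the statement to a purely ring-theoretic fact about modules over a discrete valuation ring, exploiting that such a ring is a principal ideal domain. Since both flatness over $\spec(R)$ and the formation of associated points are local on $X$, I would first pass to the affine situation $X=\spec(A)$ with $A$ a Noetherian $R$-algebra and $\calF=\widetilde{M}$ for a finitely generated $A$-module $M$. Fix a uniformizer $t$ of $R$ and write again $t$ for its image under the structure map $R\to A$. The generic fiber $X_\eta$ is the open locus of $X$ where $t$ is invertible; explicitly, a prime $\p\in\spec(A)$ lies in $X_\eta$ if and only if its contraction to $R$ is the zero ideal, and since $(0)$ and $(t)$ are the only primes of $R$, this happens precisely when $t\notin\p$.

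Next I would invoke the two classical facts that carry the argument. First, over the discrete valuation ring $R$, which is in particular a principal ideal domain, a module is flat if and only if it is torsion-free; and $M$ is torsion-free over $R$ if and only if multiplication by $t$ is injective on $M$, that is, if and only if $t$ is a non-zero-divisor on $M$. Second, for the Noetherian ring $A$ and the finitely generated module $M$, the set of zero-divisors on $M$ coincides with the union $\bigcup_{\p\in\Ass_A(M)}\p$ of the finitely many associated primes---the same fact used in the proof of Lemma~\ref{lm:CM_ring}. Combining the two, $\calF$ is flat over $\spec(R)$ if and only if $t$ lies in no associated prime of $M$, i.e.\ if and only if $t\notin\p$ for every $\p\in\Ass_A(M)$. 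By the description of $X_\eta$ above, the condition $t\notin\p$ says exactly that the associated point $\p$ lies in the generic fiber, and gluing over an affine cover of $X$ yields the statement for $\calF$ on $X$.

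I do not anticipate a genuine obstacle: the proof is essentially the juxtaposition of the equivalence ``flat $=$ torsion-free over a PID'' with the equality ``zero-divisors $=$ union of associated primes''. The only points demanding care are the local-to-global reduction and, above all, the correct identification of the generic fiber $X_\eta$ with the non-vanishing locus of $t$, since this is what translates the torsion-freeness criterion into the geometric condition on associated points. Everything else is bookkeeping, and the finiteness of $\Ass_A(M)$ (guaranteed by the standing Noetherian hypotheses) ensures that the union of associated primes behaves well under the prime avoidance reasoning.
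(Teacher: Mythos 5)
Your argument is correct. Note, however, that the paper does not prove this lemma at all: it is quoted as \cite[Proposition 6 in Lecture 6]{Mumford:Lectures}, so there is no internal proof to compare against. What you have written is essentially the standard argument (and the one underlying Mumford's proposition): localize to $X=\spec(A)$ with $M$ finitely generated over the Noetherian $A$; flatness over the DVR $R$ is equivalent to $R$-torsion-freeness, i.e.\ to the uniformizer $t$ acting injectively on $M$; the zero-divisors on $M$ are exactly $\bigcup_{\p\in\Ass_A(M)}\p$; and $t\notin\p$ is precisely the condition that $\p$ contract to $(0)\subset R$, i.e.\ lie in $X_\eta$. All the identifications you single out as delicate are handled correctly, in particular the description of the generic fiber as the non-vanishing locus of $t$.

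One point worth making explicit, since it is the only step that is not pure bookkeeping: $M$ is finitely generated over $A$ but in general not over $R$, so you cannot invoke the structure theorem or the local criterion for flatness for finitely generated modules over $R$. You need the statement that an \emph{arbitrary} module over a DVR (or PID, or Pr\"ufer domain) is flat if and only if it is torsion-free; this is true (a torsion-free module is the filtered colimit of its finitely generated, hence free, submodules), but it is the version of the classical fact you should cite. With that caveat your proof is complete and self-contained, which is arguably an improvement over the bare citation in the paper.
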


\begin{prop}\label{prop:dim_fibers}
  Let $R$ be a discrete valuation ring, and let $\eta$ be the generic
  point of $\spec(R)$. Let $\func f X \spec(R)$ be a flat and proper
  morphism, and suppose that the generic fiber $X_\eta$ has pure
  dimension~$n$. Then $X$ has pure dimension $n+1$. Moreover, the
  closed fiber has pure dimension $n$.
\end{prop}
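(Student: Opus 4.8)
The plan is to exploit flatness to locate the irreducible components of $X$ inside the generic fiber, and then to compute dimensions by the dimension formula over the regular, hence universally catenary, ring $R$. First I would record that, by Lemma~\ref{lm:flat_over_DVR}, flatness of $\calO_X$ over $R$ forces every associated point of $\calO_X$ --- in particular every generic point of an irreducible component of $X$ --- to lie in the generic fiber $X_\eta$. Since $\{\eta\}$ is open in $\spec(R)$, the fiber $X_\eta=f^{-1}(\eta)$ is open in $X$, so the assignment $Z\mapsto Z\cap X_\eta$ is a bijection between the irreducible components of $X$ and those of $X_\eta$. As $X_\eta$ has pure dimension $n$, each $Z\cap X_\eta$ has dimension $n$.

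Next, fix an irreducible component $Z$ of $X$ with its reduced structure, so that $Z$ is integral and of finite type over $\spec(R)$. Because $f$ is proper, $f(Z)$ is closed in $\spec(R)$; it contains $\eta$ and is therefore all of $\spec(R)$, so $Z\to\spec(R)$ is dominant. Its generic fiber is exactly $Z\cap X_\eta$, which has dimension $n$ by the previous step, so $\trdeg_{\kappa(\eta)}\kappa(\zeta)=n$ where $\zeta$ is the generic point of $Z$. Since $R$ is a discrete valuation ring it is regular and hence universally catenary, and the dimension formula for an integral scheme of finite type dominating a universally catenary base yields $\dim Z=\dim(Z\cap X_\eta)+\dim\spec(R)=n+1$. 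As this holds for every component, $X$ has pure dimension $n+1$.

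Finally I would treat the closed fiber. Let $\pi$ be a uniformizer of $R$. Flatness means $\calO_X$ is torsion-free over $R$, so $\pi$ is a nonzerodivisor on $\calO_X$ that is not a unit, and the closed fiber is its zero scheme $X_0=V(\pi)$. Let $C$ be an irreducible component of $X_0$ with generic point $c$, and let $Z\ni c$ be a component of $X$; then $c$ is minimal over $\pi$ in the domain $\calO_{Z,c}$, so Krull's principal ideal theorem gives $\he(c)=\dim\calO_{Z,c}=1$. Since $Z$ is integral of finite type over the universally catenary ring $R$, it is catenary and every maximal chain of primes has length $\dim Z=n+1$, whence $\dim C=\dim\overline{\{c\}}=\dim Z-\he(c)=n$; properness guarantees that $X_0$ is nonempty whenever $X$ is. Thus the closed fiber has pure dimension $n$. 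The crux of the argument, and the step most likely to conceal a gap, is the passage from the fiber dimension to $\dim Z$: this rests on the universal catenarity of the regular ring $R$, without which dimensions need not add along $Z\to\spec(R)$.
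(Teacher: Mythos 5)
Your computation of $\dim Z$ follows essentially the paper's route: the paper likewise observes that flatness forces the generic points of the components of $X$ into $X_\eta$ and that properness sends closed points to the closed point of $\spec(R)$, and then invokes the dimension formula in the form of \cite[Proposition~14.107]{GortzWed:AlgGeo}. One caveat on your phrasing: $\dim Z=\dim Z_\eta+\dim\spec(R)$ is \emph{not} a consequence of $Z$ merely dominating a universally catenary base --- the integral scheme $\spec(R[x,y]/(\pi y-1))$ is of finite type and dominant over $\spec(R)$, has generic fiber of dimension $1$, but has total dimension $1$, not $2$. What makes the formula apply is the surjectivity and closedness you get from properness: every closed point $z$ of $Z$ lies over the closed point of $\spec(R)$ with $\kappa(z)$ finite over the residue field, so the pointwise dimension formula gives $\dim\calO_{Z,z}=1+\trdeg_{\kappa(\eta)}K(Z)=n+1$. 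You have all the ingredients; only the stated hypothesis is off.

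The genuine gap is in the closed-fiber step. You claim that because $Z$ is integral and of finite type over the universally catenary ring $R$, every maximal chain of primes in $Z$ has length $\dim Z$, and you then use $\dim\overline{\{c\}}=\dim Z-\he(c)$. That biequidimensionality statement is false under the hypotheses you cite: in $\spec(R[x])$ the chain $(0)\subset(\pi x-1)$ is a maximal chain of length $1$ (the quotient $R[x]/(\pi x-1)\cong K$ is a field), whereas $\dim R[x]=2$; correspondingly $\dim\overline{\{c\}}=\dim Z-\he(c)$ can fail for finite-type schemes over a discrete valuation ring. The claim does hold for your $Z$, but only because of properness: every maximal chain terminates at a closed point $z$, which necessarily lies over the closed point of $\spec(R)$ and satisfies $\dim\calO_{Z,z}=n+1$, and catenarity of the local domain $\calO_{Z,z}$ then forces every saturated chain from $(0)$ to $\m_z$ to have length $n+1$. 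With that repair, your Hauptidealsatz computation $\he(c)=1$ does yield $\dim C=n$, and this elementary treatment of the closed fiber is a genuinely different (and more self-contained) route than the paper's, which simply quotes \cite[Corollaire~(14.2.4)]{EGAIV3} to the effect that for a flat morphism the dimensions of the components of the closed fiber occur among those of the components of the generic fiber.
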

\begin{proof}
  Let $X'$ be an irreducible component of $X$. Since $f$ is flat and
  proper, the generic point is mapped to $\eta$ and the closed points
  in $X'$ are mapped to the closed point in $\spec(R)$. Hence the
  restriction of $f$ to $X'$ is surjective, and it follows from
  \cite[Proposition~14.107]{GortzWed:AlgGeo} that
  $\dim(X')=\dim(\spec(R))+\dim((X')_\eta)=1+n$.

  By \cite[Corollaire (14.2.4)]{EGAIV3}, the set of dimensions of the
  irreducible components of the closed fiber is contained in the set
  of dimensions of the irreducible components of the generic fiber.
  This implies the statement on the closed fiber.
\end{proof}
\begin{cor}\label{cor:codim_point}
  Let $R$ be a discrete valuation ring, and let $X$ be a scheme that
  is flat and proper over $\spec(R)$. Suppose that the generic fiber
  $X_\eta$ has pure dimension $1$. Let $x$ be a closed point of
  $X$. Then $x$ has codimension $2$ in every irreducible component of
  $X$ containing it. In particular, we have $\dim(\calO_{X,x})=2$.
\end{cor}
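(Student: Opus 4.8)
The plan is to reduce the assertion to a dimension count on each irreducible component and then to apply the dimension formula over the universally catenary base $\spec(R)$. First I would collect what is already available. Applying Proposition~\ref{prop:dim_fibers} with $n=1$ shows that $X$ has pure dimension $2$ and that the closed fiber $X_0$ has pure dimension $1$. Since $f$ is proper and hence closed, the set $f(\{x\})=\{f(x)\}$ is closed in $\spec(R)$, so $x$ lies over the closed point $s$ of $\spec(R)$; in particular $x\in X_0$, and $\kappa(x)$ is finite over $\kappa(s)$, so $\trdeg_{\kappa(s)}\kappa(x)=0$. Because the dimension of the local ring $\calO_{X,x}$ is the maximum of $\dim(\calO_{X',x})$ taken over the irreducible components $X'$ of $X$ passing through $x$, the equality $\dim(\calO_{X,x})=2$ follows from the codimension statement $\dim(\calO_{X',x})=\codim(x,X')=2$ for each such $X'$. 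Thus it suffices to prove the first assertion.

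Next I would fix an irreducible component $X'$ through $x$, regarded as an integral scheme with generic point $\xi'$. The decisive use of flatness is Lemma~\ref{lm:flat_over_DVR}: the point $\xi'$ is a minimal, hence associated, point of $\calO_X$ and therefore lies in the generic fiber $X_\eta$. Consequently $X'$ dominates $\spec(R)$, and its generic fiber $X'_\eta$ is one of the irreducible components of $X_\eta$; as $X_\eta$ has pure dimension $1$, we obtain $\dim(X'_\eta)=\trdeg_{K(R)}K(X')=1$, where $K(\,\cdot\,)$ denotes the function field. Now $R$, being a discrete valuation ring, is regular and universally catenary, and $\calO_{X',x}$ is a local domain essentially of finite type over $R$ whose maximal ideal lies over the maximal ideal of $R$. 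The dimension formula for universally catenary rings \cite[(14.C)]{Mats:algebra} then gives
\[
  \dim(\calO_{X',x})+\trdeg_{\kappa(s)}\kappa(x)=\dim(R)+\trdeg_{K(R)}K(X').
\]
Inserting $\dim(R)=1$, $\trdeg_{K(R)}K(X')=1$ and $\trdeg_{\kappa(s)}\kappa(x)=0$ yields $\dim(\calO_{X',x})=2$, which is exactly the asserted codimension of $x$ in $X'$.

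Taking the maximum over the finitely many components through $x$ then gives $\dim(\calO_{X,x})=2$. The main obstacle is the exact dimension count over the one-dimensional base, and in particular obtaining the lower bound $\dim(\calO_{X',x})\ge 2$ rather than a mere inequality, since an irreducible component of a flat $R$-scheme need not itself be flat over $R$. I circumvent this by combining domination of the base (from Lemma~\ref{lm:flat_over_DVR}) with the dimension formula, which controls the transcendence-degree bookkeeping. As an independent check of the value at $x$ one may argue directly on $\calO_{X,x}$: flatness over the discrete valuation ring $R$ makes a uniformizer $\pi$ a nonzerodivisor on $\calO_{X,x}$, so $\dim(\calO_{X,x}/\pi\calO_{X,x})=\dim(\calO_{X,x})-1$ by Krull's principal ideal theorem; since $\calO_{X,x}/\pi\calO_{X,x}=\calO_{X_0,x}$ and $X_0$ has pure dimension $1$, this again gives $\dim(\calO_{X,x})=2$.
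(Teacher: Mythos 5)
Your argument is correct, but it reaches the conclusion by a genuinely different route than the paper. The paper also fixes an irreducible component $X'$ through $x$, but then observes that $X'$ is itself flat and proper over $\spec(R)$ --- flatness because its generic point lies in the generic fiber, which is exactly the consequence of Lemma~\ref{lm:flat_over_DVR} that you make explicit --- and applies Proposition~\ref{prop:dim_fibers} to $X'$: its closed fiber $(X')_0$ has pure dimension $1$, so the chain $\{x\}\subsetneq(\text{a one-dimensional component of }(X')_0)\subsetneq X'$, together with Krull's Hauptidealsatz for the codimension of $(X')_0$ in $X'$, gives $\codim(\{x\},X')\geq 2$, the upper bound coming from $X'$ having dimension $2$. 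You instead feed the same two inputs (domination of $\spec(R)$ by $X'$ and $\dim(X'_\eta)=1$) into the dimension formula over the universally catenary ring $R$, which returns the equality $\dim(\calO_{X',x})=2$ in one stroke, with no separate upper-bound step and no need to re-establish flatness and properness of the component. The trade-off is that you invoke a heavier tool (the dimension formula for universally catenary rings) where the paper gets by with Krull's principal ideal theorem; on the other hand your bookkeeping of transcendence degrees is cleaner and makes the role of flatness (only through each component dominating the base) completely transparent. Your closing ``independent check'' --- $\pi$ a nonzerodivisor on $\calO_{X,x}$ plus $X_0$ of pure dimension $1$ --- is essentially the paper's argument run directly on $\calO_{X,x}$, and by itself already proves the ``in particular'' clause, though not the component-by-component statement.
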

\begin{proof}
  By Proposition~\ref{prop:dim_fibers}, the scheme $X$ has pure
  dimension $2$. Let $x\in X$ be a closed point, and let $X'$ be an
  irreducible component of $X$ containing it. Note that $X'$ is flat
  and proper over $\spec(R)$. In particular, the closed fiber $(X')_0$
  has pure dimension $1$ by Proposition~\ref{prop:dim_fibers}, it
  contains the closed point $x$, and we have
  $\codim(\{x\},(X')_0)=1$. Moreover, the closed fiber $(X')_0$ has
  codimension $1$ in $X'$ by {\em Krull's Haupt\-ideal\-satz}. This
  shows that $x$ has codimension at least $2$ in $X'$.
\end{proof}

\subsection{Family of Cohen--Macaulay curves over a discrete valuation ring}

We fix the following notation. Let $R$ be a discrete valuation ring
with maximal ideal $\m=(\pi)$ and field of fractions $K=\Quot(R)$.

For a family of Cohen--Macaulay curves $(C,i)\in \CM(\spec(R))$, we
denote the scheme-theoretic image $i(C)$ of $i$ by $Z$. Then the
morphism $i$ factors as \[\xymatrix{C \ar[rr]^i \ar[dr]_h && \bbP^n_R
  \\ &Z \ar@{^(->}[ur] & }.\] By construction, the associated map
$\func{h^\#}{\calO_Z}{h_*\calO_C}$ is injective, and we set
$Y:=\supp(\coker(h^\#))$ for the locus of points in $Z$ where $C$ and
$Z$ are not isomorphic.

Further we write $(C_K,i_K)$ in $\CM(\spec(K))$ for the restriction of
$(C,i)$ to the generic point in $\spec(R)$. Then the generic fiber
$Z_K$ of $Z$ is the scheme-theoretic image $i_K(C_K)$ of $C_K$ in
$\bbP^n_K$. In particular, with the induced map $\func {h_K}{C_K}Z_K$,
we have that $\func{h_K^\#}{\calO_{Z_K}}{(h_K)_*\calO_{C_K}}$ is
injective, and we set $Y_K:=\supp(\coker(h_K^\#))$. Note that $Y_K$ is
the restriction of $Y$ to the generic fiber.

\subsubsection{Scheme-theoretic image}
First we study the scheme-theoretic image $Z$. We show that it is the
scheme-theoretic closure of its generic fiber $Z_K$ in $\bbP^n_R$, and
that it has pure dimension $2$.
\begin{prop}\label{prop:image_description}
  The scheme-theoretic image $Z$ of $C$ in $\bbP^n_R$ is the
  scheme-theoretic closure of $Z_K$ in $\bbP^n_R$. In particular, $Z$
  is flat over $\spec(R)$, it has pure dimension $2$ and
  $\dim(\calO_{Z,z})=2$ for every closed point $z$ in $Z$.
\end{prop}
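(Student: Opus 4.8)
The plan is to deduce flatness of $Z$ over $\spec(R)$ from the injectivity of $h^\#$, then to identify a flat closed subscheme of $\bbP^n_R$ with the scheme-theoretic closure of its generic fiber, and finally to read off the dimension statements from Proposition~\ref{prop:dim_fibers} and Corollary~\ref{cor:codim_point}.

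First I would prove that $\calO_Z$ is torsion-free over $R$. The structure morphism of $C$ factors as $\func h C Z$ followed by $Z\to \spec(R)$, with $h$ finite and hence affine. Since $C$ is flat over $R$, over any affine open $\spec(B)\subseteq Z$ with $h^{-1}(\spec(B))=\spec(A)$ the finite $B$-algebra $A$ is flat, and thus torsion-free, as an $R$-module; in other words $h_*\calO_C$ is torsion-free over $R$. As the map $\func{h^\#}{\calO_Z}{h_*\calO_C}$ is injective by construction of the scheme-theoretic image, the sheaf $\calO_Z$ is a subsheaf of an $R$-torsion-free module and is therefore itself torsion-free. Over the discrete valuation ring $R$ this means that $Z$ is flat over $\spec(R)$.

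Next, because the formation of the scheme-theoretic image commutes with the flat base change $\spec(K)\to \spec(R)$ and $i$ is affine, the generic fiber $Z_\eta$ equals $i_K(C_K)=Z_K$, as already recorded in the setup. Let $\bar Z$ be the scheme-theoretic closure of $Z_K$ in $\bbP^n_R$. Since $Z$ is a closed subscheme of $\bbP^n_R$ containing $Z_K=Z_\eta$, we have $\bar Z\subseteq Z$, giving an inclusion of ideal sheaves $\calI_Z\subseteq \calI_{\bar Z}$ and a surjection $\calO_Z\surj \calO_{\bar Z}$ whose kernel $\calK=\calI_{\bar Z}/\calI_Z$ is a subsheaf of $\calO_Z$. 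As $Z$ and $\bar Z$ have the same generic fiber $Z_K$, the restriction $\calK_\eta$ vanishes, so $\calK$ is supported on the closed fiber and hence is $\pi$-power torsion. Torsion-freeness of $\calO_Z$ forces $\calK=0$, that is, $Z=\bar Z$, which is the first assertion.

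Finally, $Z$ is closed in $\bbP^n_R$ and therefore proper over $\spec(R)$, and it is flat by the above. Its generic fiber $Z_K$ is the scheme-theoretic image of $(C_K,i_K)\in \CM(\spec(K))$, so $Z_K$ is Cohen--Macaulay and of pure dimension $1$ by Corollary~\ref{cor:scheme-theor-image_CM}. Proposition~\ref{prop:dim_fibers} then yields that $Z$ has pure dimension $2$, and Corollary~\ref{cor:codim_point} gives $\dim(\calO_{Z,z})=2$ for every closed point $z\in Z$. The one step that requires genuine care is the flatness argument: the whole proof hinges on the observation that the injectivity of $h^\#$, combined with the flatness of $C$, already forces $\calO_Z$ to be $R$-torsion-free; once this is in place, the identification with the closure and the dimension count are formal consequences of the results of the preceding subsection.
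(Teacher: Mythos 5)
Your proof is correct, but it organizes the argument in the opposite order from the paper and rests on a different mechanism. The paper first identifies $Z$ with the scheme-theoretic closure of $Z_K$: since $C$ is flat over $R$, its generic fiber $C_K$ is scheme-theoretically dense in $C$ (Lemma~\ref{lm:flat_over_DVR}), so by transitivity of scheme-theoretic images $Z$ is the image of $C_K\to \bbP^n_R$, which coincides with the closure of $Z_K$; flatness of $Z$ is then read off from the fact that the associated points of a scheme-theoretic closure lie in $Z_K$, again via Lemma~\ref{lm:flat_over_DVR}. You instead prove flatness \emph{first}, directly from the injectivity of $h^\#$ into the $R$-torsion-free sheaf $h_*\calO_C$ (using that flat $=$ torsion-free over a DVR), and only then deduce $Z=\bar Z$ by observing that the kernel of $\calO_Z\surj \calO_{\bar Z}$ is $\pi$-power torsion inside a torsion-free sheaf. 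Both routes use the same two inputs --- flatness of $C$ and the injectivity of $h^\#$ built into the definition of the scheme-theoretic image --- but your version trades the transitivity-of-images argument for a concrete torsion computation; it is arguably more self-contained, at the modest cost of invoking the identification of the generic fiber of $Z$ with $Z_K$ from the setup rather than re-deriving it. The dimension statements at the end are handled exactly as in the paper, via Proposition~\ref{prop:dim_fibers} and Corollary~\ref{cor:codim_point}, with Corollary~\ref{cor:scheme-theor-image_CM} in place of the paper's citation of Proposition~\ref{prop:image} for the pure one-dimensionality of $Z_K$.
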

\begin{proof}
  The scheme $C$ is flat over $\spec(R)$, and hence its generic fiber
  $C_K$ is scheme-theoretically dense in $C$. Thus the scheme $Z$ is
  the scheme-theoretic image of the composition $C_K\hookrightarrow C
  \ra \bbP^n_R$. Moreover, by definition of $Z_K$, the
  scheme-theoretic closure of $Z_K$ in $Z$ equals the scheme-theoretic
  image of the composition $C_K\to Z_K \hookrightarrow \bbP^n_K \to
  \bbP^n_R$. Now the first statement follows as the
  diagram $$\xymatrix{C_K \ar[r]^{i_K} \ar[d] & \bbP^n_K \ar[d] \\ C
    \ar[r]^i & \bbP^n_R }$$ commutes. Moreover, the associated points
  of $Z$ lie in the generic fiber $Z_K$, and hence $Z$ is flat over
  $\spec(R)$ by Lemma~\ref{lm:flat_over_DVR}.

  Note that $Z_K$ has pure dimension $1$ by
  Proposition~\ref{prop:image}. Then the remaining statements follow
  directly from Propostion~\ref{prop:dim_fibers} and
  Corollary~\ref{cor:codim_point}.
\end{proof}

\begin{lm}\label{lm:point_closed}
  Let $\func f X Y$ be a finite morphism of locally Noetherian
  schemes. A point $x\in X$ is closed if and only if its image $f(x)$
  is a closed point of $Y$.
\end{lm}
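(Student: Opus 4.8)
The statement is local on $Y$, so the plan is to reduce immediately to the affine case $Y=\spec(B)$. Since $f$ is finite it is in particular affine, so $X=\spec(A)$ is affine as well, and $A$ is module-finite, hence integral, over $B$ through the structure homomorphism $\func\varphi B A$. Under this identification a point $x\in X$ is a prime $\p\subset A$, its image $f(x)$ is the contraction $\q:=\varphi^{-1}(\p)$, and being a closed point means being a maximal ideal. Thus the whole lemma reduces to the purely algebraic claim that $\p$ is maximal in $A$ if and only if $\q$ is maximal in $B$.

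For this claim I would pass to the induced injective integral extension $\injfunc{\bar\varphi}{B/\q}{A/\p}$ of integral domains. The key input is exactly the one already invoked in the proof of Lemma~\ref{lm:surj_loc}: for an integral extension of domains, the larger ring is a field precisely when the smaller one is. Applying this to $B/\q\hookrightarrow A/\p$ shows that $A/\p$ is a field (that is, $\p$ is maximal) if and only if $B/\q$ is a field (that is, $\q$ is maximal), which is the desired equivalence.

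Alternatively, and without choosing coordinates, I would argue topologically. For the implication that $x$ closed forces $f(x)$ closed, one uses that a finite morphism is a closed map: since $\{x\}$ is closed, its image $f(\{x\})=\{f(x)\}$ is closed, so $f(x)$ is a closed point. For the converse, set $y:=f(x)$ closed and consider the fiber $X_y=f^{-1}(y)$; being finite over $\spec(\kappa(y))$ it is a zero-dimensional (Artinian) scheme, so each of its points is closed in $X_y$. Since $\{y\}$ is closed in $Y$, the fiber $X_y$ is closed in $X$, and a point closed in a closed subspace is closed in the ambient space; hence $x$ is closed in $X$.

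\textbf{Main obstacle.} Both routes are short, so I do not expect a genuine difficulty; the one point that requires care is that the structure map $\varphi$ need not be injective (a finite morphism need not be a closed immersion), so one must contract to $\q$ and quotient correctly before invoking the field-versus-field statement. Beyond this bookkeeping I anticipate no serious obstruction.
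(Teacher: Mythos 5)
Your second, topological argument is correct and is essentially the paper's own proof: the forward direction uses that a finite morphism is a closed map, and the converse uses that the fiber over a closed point is finite over a field, hence a finite closed subset of $X$ all of whose points are closed. Had you given only that argument, there would be nothing to add.

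Your first, algebraic route has a genuine gap at its very first step. The claim that ``the statement is local on $Y$'' is not justified, because being a closed point is not a Zariski-local notion: a point can be closed in an affine open neighbourhood without being closed in the ambient scheme. For instance, if $R$ is a discrete valuation ring with uniformizer $\pi$ and fraction field $K$, the generic point of $\spec(R)$ is closed in the affine open $D(\pi)=\spec(K)$ but is not closed in $\spec(R)$. Consequently, after restricting to $V=\spec(B)$ and $U=f^{-1}(V)=\spec(A)$, the equivalence ``$\p$ maximal in $A$ if and only if $\q=\varphi^{-1}(\p)$ maximal in $B$'' (which is correct, via the integral extension $B/\q\hookrightarrow A/\p$ of domains) only proves the lemma when $X$ and $Y$ are themselves affine; it does not transfer to closedness in $X$ and $Y$. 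The correct local criterion is that a point is closed if and only if it is closed in \emph{every} affine open containing it, and checking that quantifier is exactly what the reduction to a single $f^{-1}(V)$ fails to do; note that in the paper the lemma is applied to non-affine schemes such as $Z\subseteq\bbP^n_R$, so this is not a vacuous concern. Since your alternative topological argument supplies a complete proof, the proposal as a whole stands, but the first route should either be confined explicitly to the affine case or be repaired by the fiber argument --- at which point it collapses into the second route.
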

\begin{proof}
  Since the finite morphism $f$ is closed, the image of a closed point
  is closed. Suppose conversely that the image $f(x)$ is a closed
  point. By \cite[Corollaire (6.1.7)]{EGAII}, the morphism $f$ is
  quasi-finite. In particular, the fiber $f^{-1}(f(x))$ is finite and
  closed, that is, it consists of finitely many closed points.
\end{proof}

\begin{prop}\label{prop:codim_point_image}
  Let $(C,i)\in\CM(\spec(R))$ for a discrete valuation ring $R$, and
  let $\func h C Z$ be the induced map to the image. Then
  $\dim(\calO_{C,x})=\dim(\calO_{Z,h(x)})$ for every point $x\in C$.
\end{prop}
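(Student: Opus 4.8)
The plan is to exploit that $h$ is a \emph{finite surjective} morphism between two schemes that are both biequidimensional of dimension $2$, and to reduce the desired equality of local dimensions to the behaviour of $\dim(\overline{\{x\}})$ under $h$. First I would record the global structure. By construction $h$ is finite, and it is surjective: being finite it is closed, and its image is dense since $Z=i(C)$ is the scheme-theoretic image, so $h(C)=Z$. The scheme $C$ is flat and proper over $\spec(R)$ with generic fibre $C_K$ of pure dimension $1$, so Proposition~\ref{prop:dim_fibers} gives that $C$ has pure dimension $2$, and Corollary~\ref{cor:codim_point} gives $\dim(\calO_{C,x})=2$ for every closed point $x\in C$. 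The analogous statements for $Z$ are exactly Proposition~\ref{prop:image_description}.

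Since both $C$ and $Z$ are of finite type over the (excellent) discrete valuation ring $R$, they are catenary; being in addition equidimensional of dimension $2$ with all closed points of codimension $2$, they are equicodimensional, hence biequidimensional. Consequently the dimension formula
$$\dim(\calO_{X,x}) = \dim(X) - \dim(\overline{\{x\}}) = 2 - \dim(\overline{\{x\}})$$
holds for every point, both for $X=C$ and for $X=Z$. Next, for $x\in C$ with image $z=h(x)$, I would compare $\dim(\overline{\{x\}})$ and $\dim(\overline{\{z\}})$: as $h$ is finite it is closed, so $h(\overline{\{x\}})=\overline{\{z\}}$, and the induced morphism $\overline{\{x\}}\to\overline{\{z\}}$ is finite and surjective, whence $\dim(\overline{\{x\}})=\dim(\overline{\{z\}})$. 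Substituting this into the two formulas above yields $\dim(\calO_{C,x})=\dim(\calO_{Z,z})$, as desired. For closed $x$ this also follows at once, since then $z$ is closed by Lemma~\ref{lm:point_closed} and both local rings have dimension $2$.

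The main obstacle is the dimension formula itself, i.e.\ verifying that $C$ and $Z$ are genuinely biequidimensional so that codimension and dimension add up to $\dim(X)$ over the mixed base $\spec(R)$. Note that the inequality $\dim(\calO_{C,x})\le \dim(\calO_{Z,z})$ is automatic from the finite-morphism bound $\dim(\calO_{C,x})\le \dim(\calO_{Z,z})+\trdeg_{\kappa(z)}\kappa(x)$ together with $\trdeg_{\kappa(z)}\kappa(x)=0$; the real content lies in the reverse inequality, which equidimensionality supplies. Should one prefer to avoid quoting the general formula, the same conclusion can be reached by a short case analysis over the three types of points of $C$ — generic points of irreducible components (where $\dim(\calO_{C,x})=0$), codimension-one points (where $\dim(\calO_{C,x})=1$, namely the generic points of the closed fibre and the points of the generic fibre $C_K$), and closed points (where $\dim(\calO_{C,x})=2$) — using that the finite surjective map $h$ sends each class into the corresponding class of $Z$ and preserves the dimension of closures.
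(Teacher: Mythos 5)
Your argument is correct, but it takes a genuinely different route from the paper's. The paper obtains the inequality $\dim(\calO_{C,x})\le\dim(\calO_{Z,h(x)})$ from the Cohen--Seidenberg theorem and then rules out the only possible strict case ($0$ versus $1$) directly: a generic point $x$ of $C$ lies in the (open) generic fibre $C_K$ by flatness, its image lies in $Z_K$, which has pure dimension $1$, and that image is non-closed there by Lemma~\ref{lm:point_closed}, so $\dim(\calO_{Z,h(x)})=0$; this is essentially the ``case analysis over the three types of points'' you sketch at the end. Your main argument instead deduces the equality from the dimension formula $\dim(\calO_{X,x})+\dim(\overline{\{x\}})=2$ on both $C$ and $Z$, combined with the invariance of $\dim(\overline{\{x\}})$ under the finite surjective map $h$; both ingredients are sound, and this is the more conceptual route. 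Two points of care, though. First, a discrete valuation ring need not be excellent; what you actually need is that it is universally catenary, which holds simply because it is regular. Second, the implication ``equidimensional $+$ equicodimensional $+$ catenary $\Rightarrow$ all maximal chains have the same length'' from EGA~IV (0.14.3.3) is known to be false in general, so you should not invoke biequidimensionality in that packaged form. Your argument nevertheless goes through because Corollary~\ref{cor:codim_point} (and, via it, Proposition~\ref{prop:image_description}) gives that every closed point has codimension $2$ in \emph{every} irreducible component containing it; together with catenarity this forces every maximal chain of irreducible closed subsets to have length $2$, which is exactly what the dimension formula requires. With that reading your proof is complete; the paper's version is more elementary and sidesteps the biequidimensionality discussion altogether.
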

\begin{proof}
  After passing to suitable affine neighborhoods of $x$ and $h(x)$,
  the {\em Cohen--Seidenberg Theorem} \cite[(5.E) Theorem
  5]{Mats:algebra} gives that $\dim(\calO_{Z,h(x)})\geq
  \dim(\calO_{C,x})$ for every $x\in C$. We have to show that the
  inequality cannot be strict.

  Since both $C$ and $Z$ have pure dimension $2$,
  Lemma~\ref{lm:point_closed} implies that $\dim(\calO_{Z,h(x)})=2$ if
  and only if $\dim(\calO_{C,x})=2$. It remains to eliminate the
  possibility that $\dim(\calO_{Z,h(x)})=1$ and
  $\dim(\calO_{C,x})=0$. If $x$ is a generic point of $C$, then it
  lies in the generic fiber $C_K$. Consequently, the image $h(x)$ lies
  in the generic fiber $Z_K$ that, by
  Corollary~\ref{cor:scheme-theor-image_CM}, has pure
  dimension~$1$. It follows that \mbox{$\dim(\calO_{Z,h(x)})=
    \dim(\calO_{Z_K,h(x)})\leq 1$}. As the point $h(x)$ is not closed
  in $Z_K$ by Lemma~\ref{lm:point_closed}, we conclude that
  $\dim(\calO_{Z,h(x)})=0$.
\end{proof}

\begin{cor}\label{cor:dim_local_ring_image}
  Let $(C,i)\in \CM(\spec(R))$ be a family of Cohen--Macaulay curves
  over a discrete valuation ring $R$. Let $x_1,x_2\in C$ be two points
  in $C$ such that $h(x_1)=h(x_2)$, where $\func h C Z$ is the induced
  map onto the image. Then $\dim(\calO_{C,x_1})=\dim(\calO_{C,x_2})$.
\end{cor}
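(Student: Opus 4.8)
The plan is to apply Proposition~\ref{prop:codim_point_image} twice. That proposition tells us that the induced map $h$ preserves the dimension of local rings, that is, $\dim(\calO_{C,x})=\dim(\calO_{Z,h(x)})$ for every point $x\in C$. This is exactly the tool we need, since the hypothesis relates the two points $x_1$ and $x_2$ through their common image under $h$.

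First I would note that by hypothesis $h(x_1)=h(x_2)$; call this common image point $z\in Z$. Applying Proposition~\ref{prop:codim_point_image} to the point $x_1$ gives $\dim(\calO_{C,x_1})=\dim(\calO_{Z,h(x_1)})=\dim(\calO_{Z,z})$. Applying the same proposition to $x_2$ gives $\dim(\calO_{C,x_2})=\dim(\calO_{Z,h(x_2)})=\dim(\calO_{Z,z})$. Since both local dimensions on $C$ equal the single quantity $\dim(\calO_{Z,z})$, they are equal to each other, and the corollary follows immediately.

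There is essentially no obstacle here: the corollary is a direct formal consequence of the preceding proposition, and the entire content of the argument is the transitivity of equality through the common image point. Accordingly, the proof should be just two or three lines.

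\begin{proof}
  Set $z:=h(x_1)=h(x_2)$. By
  Proposition~\ref{prop:codim_point_image}, applied first to $x_1$ and
  then to $x_2$, we have
  \[
    \dim(\calO_{C,x_1})=\dim(\calO_{Z,z})=\dim(\calO_{C,x_2}),
  \]
  which is the claimed equality.
\end{proof}
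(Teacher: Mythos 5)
Your proof is correct and is exactly the paper's argument: the paper's own proof of this corollary is the single line ``This follows directly from Proposition~\ref{prop:codim_point_image},'' which is precisely the two applications of that proposition through the common image point that you spell out.
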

\begin{proof}
  This follows directly from Proposition~\ref{prop:codim_point_image}.
\end{proof}

\subsubsection{Cohen--Macaulayness of the direct image }
Next, we show that the direct image sheaf $h_*\calO_{C}$ is
Cohen--Macaulay as a $\calO_Z$-module.

\begin{lm}[{\cite[Corollaire (5.7.11)]{EGAIV2}}]\label{lm:CM_base_change}
  Let $\func \varphi A B$ be a homomorphism of Noetherian rings, and
  let $M$ be a module over $B$ that is finitely generated as an
  $A$-module.
  \begin{enumerate}
  \item\label{lm:CM_base_change1} Suppose that $M$ is Cohen--Macaulay
    as an $A$-module. Then $M$ is Cohen--Macaulay as a $B$-module.
  \item\label{lm:CM_base_change2} Suppose that
    $\dim_{B_{\q_1}}(M_{\q_1})=\dim_{B_{\q_2}}(M_{\q_2})$ for all
    prime ideals $\q_1,\q_2\in \spec(B)$ such that
    $\varphi^{-1}(\q_1)=\varphi^{-1}(\q_2)$. If $M$ is Cohen--Macaulay
    as a $B$-module, then $M$ is Cohen--Macaulay as a module over $A$.
  \end{enumerate}
\end{lm}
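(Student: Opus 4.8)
The plan is to reduce the statement to the case of a \emph{finite} ring homomorphism and then to compare depth and dimension one prime at a time, using $\depth\le\dim$ to force the Cohen--Macaulay equalities.

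First I would dispose of the annihilators. The Cohen--Macaulay property of $M$ as an $A$-module (resp.\ as a $B$-module) is tested on the localizations $M_\p$ (resp.\ $M_\q$), and the depth and dimension of these are unchanged if $A$ is replaced by $A/\Ann_A(M)$ and $B$ by $B/\Ann_B(M)$. After this replacement $M$ is faithful over $B$; the relation $\varphi^{-1}(\Ann_B M)=\Ann_A M$ shows that $A$ still injects into $B$, and the faithful action gives an embedding $B\hookrightarrow \opn{End}_A(M)$. Since $M$ is finite over the Noetherian ring $A$, the module $\opn{End}_A(M)$ is finite over $A$, hence so is its submodule $B$. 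Thus I may assume that $\func\varphi A B$ is a finite homomorphism.

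The heart of the argument is then two local comparison formulas. Fix $\p\in\supp_A(M)$ and pass to the semilocal ring $B_\p:=B\otimes_A A_\p$, whose maximal ideals lying in $\supp(M)$ are exactly the primes $\q$ of $B$ with $\varphi^{-1}(\q)=\p$ and $\q\in\supp_B(M)$. On the dimension side, the Cohen--Seidenberg theorem (used already in Lemma~\ref{lm:image_affine}) gives $\dim_{A_\p}(M_\p)=\dim_{B_\p}(M_\p)=\max_{\q}\dim_{B_\q}(M_\q)$. On the depth side, I would invoke the invariance of depth under a finite change of rings: because $M_\p$ is finite over $A_\p$, the ideals $\p B_\p$ and the Jacobson radical $\sqrt{\p B_\p}$ of $B_\p$ define the same topology on $M_\p$, so $\depth_{A_\p}(M_\p)$ equals the depth of $M_\p$ over $B_\p$ with respect to that radical, which is $\min_{\q}\depth_{B_\q}(M_\q)$. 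This invariance-of-depth step — cleanly seen through local cohomology with supports, using that a finite morphism is affine and closed so that the supported cohomology is preserved — is the one genuinely technical input and the main obstacle; everything else is formal.

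Granting the two formulas, both assertions follow. For (i), let $\q\in\supp_B(M)$ and $\p=\varphi^{-1}(\q)\in\supp_A(M)$. Combining $\depth\le\dim$ with the formulas yields $\dim_{B_\q}(M_\q)\le\dim_{A_\p}(M_\p)=\depth_{A_\p}(M_\p)\le\depth_{B_\q}(M_\q)\le\dim_{B_\q}(M_\q)$, so equality holds throughout and $M$ is Cohen--Macaulay over $B$. For (ii), fix $\p\in\supp_A(M)$; since $M$ is Cohen--Macaulay over $B$ we have $\depth_{B_\q}(M_\q)=\dim_{B_\q}(M_\q)$ for every $\q$ over $\p$, and the equidimensionality hypothesis asserts that this common value is independent of $\q$. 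Hence the minimum and maximum appearing in the two formulas agree, giving $\depth_{A_\p}(M_\p)=\dim_{A_\p}(M_\p)$, and $M$ is Cohen--Macaulay over $A$. Note that the hypothesis in (ii) is exactly what blocks the standard counterexample of a curve and an isolated point mapping to one point of the base, where the fibers over the bad point have different dimensions.
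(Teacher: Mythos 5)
Your proof is correct. Note that the paper does not actually prove this lemma: it is imported verbatim from EGA~IV$_2$, Corollaire~(5.7.11), so there is no in-paper argument to compare against; your write-up is essentially a self-contained reconstruction of the standard (EGA-style) proof. The two reductions you make are both sound: passing to $A/\Ann_A(M)$ and $B/\Ann_B(M)$ changes neither the local depths nor the local dimensions and turns $\varphi$ into a finite map via $B/\Ann_B(M)\hookrightarrow \opn{End}_A(M)$; and the two local formulas $\dim_{A_\p}(M_\p)=\max_{\q\mid\p}\dim_{B_\q}(M_\q)$ (Cohen--Seidenberg, as in Lemma~\ref{lm:image_affine}) and $\depth_{A_\p}(M_\p)=\min_{\q\mid\p}\depth_{B_\q}(M_\q)$ (invariance of depth, equivalently of local cohomology, under a finite change of rings, using that $\sqrt{\p B_\p}$ is the Jacobson radical of the semilocal ring $B\otimes_A A_\p$) do immediately yield both assertions by sandwiching with $\depth\leq\dim$. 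You correctly identify the depth-invariance step as the only genuinely technical input; it is a standard fact (e.g.\ Bruns--Herzog, Exercise~1.2.26, or via $H^i_{\p A_\p}(M_\p)=H^i_{\p B_\p}(M_\p)$), and your observation that the equidimensionality hypothesis in (ii) is exactly what forces the min over depths and the max over dimensions to coincide is the right way to see why that hypothesis cannot be dropped.
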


\begin{prop}\label{prop:DVR_dir_image_CM}
  The direct image sheaf $h_*\calO_C$ is Cohen--Macaulay as an
  $\calO_Z$-module.
\end{prop}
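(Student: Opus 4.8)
The plan is to reduce the statement to a claim about a finite homomorphism of rings and then to apply Lemma~\ref{lm:CM_base_change}\ref{lm:CM_base_change2}. Since being Cohen--Macaulay as an $\calO_Z$-module is local on $Z$, I would first choose an affine open $\spec(B)\subseteq Z$. As $h$ is finite and in particular affine, the preimage $h^{-1}(\spec(B))=\spec(A)$ is affine and $A$ is a finite $B$-algebra via the structure homomorphism $\func\varphi B A$ induced by $h$. Under this reduction the sheaf $h_*\calO_C$ corresponds to $A$ regarded as a $B$-module, and the assertion becomes that $A$ is a Cohen--Macaulay $B$-module.

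Next I would record that the scheme $C$ is Cohen--Macaulay. Indeed, $C$ is flat over $\spec(R)$, each fiber is Cohen--Macaulay by the definition of $\CM$, and $R$ is a discrete valuation ring and hence Cohen--Macaulay; thus Proposition~\ref{prop:fiber_CM} applies and $C$ is Cohen--Macaulay. In ring-theoretic terms, $A$ is a Cohen--Macaulay ring, that is, $A$ is Cohen--Macaulay as a module over itself.

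I would then invoke Lemma~\ref{lm:CM_base_change}\ref{lm:CM_base_change2} for the finite homomorphism $\func\varphi B A$ and the module $M=A$. Part~\ref{lm:CM_base_change2} requires that $\dim_{A_{\q_1}}(A_{\q_1})=\dim_{A_{\q_2}}(A_{\q_2})$ whenever $\q_1,\q_2\in\spec(A)$ satisfy $\varphi^{-1}(\q_1)=\varphi^{-1}(\q_2)$. Translating back to the scheme $C$, the primes $\q_1,\q_2$ correspond to points $x_1,x_2\in C$, the condition $\varphi^{-1}(\q_1)=\varphi^{-1}(\q_2)$ says exactly that $h(x_1)=h(x_2)$, and the required equality reads $\dim(\calO_{C,x_1})=\dim(\calO_{C,x_2})$. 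This is precisely the content of Corollary~\ref{cor:dim_local_ring_image}. Hence the hypotheses of Lemma~\ref{lm:CM_base_change}\ref{lm:CM_base_change2} are satisfied, and since $A$ is Cohen--Macaulay over itself, the lemma yields that $A$ is a Cohen--Macaulay $B$-module, as required.

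The point demanding the most care is the bookkeeping in Lemma~\ref{lm:CM_base_change}: one must use part~\ref{lm:CM_base_change2} rather than part~\ref{lm:CM_base_change1}, with the lemma's source ring playing the role of $\calO_Z$ and its target ring the role of $\calO_C$, so that the conclusion produces Cohen--Macaulayness over $\calO_Z$. The two geometric inputs---that $C$ is Cohen--Macaulay and that the local dimension $\dim(\calO_{C,x})$ is constant along the fibers of $h$---have already been established, so the remaining work is essentially to verify that the module-theoretic fiber-dimension hypothesis of the lemma is supplied verbatim by Corollary~\ref{cor:dim_local_ring_image}.
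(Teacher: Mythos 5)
Your proof is correct and follows essentially the same route as the paper: establish that $C$ is Cohen--Macaulay via Proposition~\ref{prop:fiber_CM}, then apply Lemma~\ref{lm:CM_base_change}\ref{lm:CM_base_change2} to the finite map $h$, with the fiberwise dimension hypothesis supplied by Corollary~\ref{cor:dim_local_ring_image}. The only difference is that you spell out the affine reduction and the role-assignment of the rings in the lemma explicitly, which the paper leaves implicit.
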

\begin{proof}
  Note that since both $\spec(R)$ and all fibers of $C$ over
  $\spec(R)$ are Cohen--Macaulay, it follows from
  Proposition~\ref{prop:fiber_CM} that $C$ is Cohen--Macaulay. In
  particular, the sheaf $\calO_C$ is Cohen--Macaulay as an
  $\calO_C$-module. 

  By Lemma~\ref{lm:CM_base_change}\ref{lm:CM_base_change2}, it
  suffices to show that $\dim(\calO_{C,x_1})=\dim(\calO_{C,x_2})$ for
  all points $x_1,x_2\in C$ such that $h(x_1)=h(x_2)$. Then
  Corollary~\ref{cor:dim_local_ring_image} concludes the proof.
\end{proof}

\subsubsection{The non-isomorphism locus}\label{subsec:non-isom-locus}

In the next step, we show that the locus where $C$ and $Z$ are not
isomorphic is the union of the non-Cohen--Macaulay locus in $Z$ and
the closure of the locus where the generic fibers $C_K$ and $Z_K$ are
not isomorphic.

Recall that we defined $Y=\supp(\coker(h^\#))$, for the natural
inclusion $\injfunc{h^\#}{\calO_{Z}}{h_*\calO_C}$. Then
$Y_K=\supp(\coker((h_K)^\#))$ is the intersection of $Y$ with the
generic fiber $Z_K$. Let $Y_1$ be the closure of the set $Y_K$ in
$Z$.

\begin{lm}\label{lm:non-CM_closed}
  The set $Y_2:=\{z\in Z\mid \calO_{Z_,z} \text{ is not
    Cohen--Macaulay}\}$ is closed in $Z$.
\end{lm}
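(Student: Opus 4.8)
The plan is to prove the equivalent statement that the locus $\{z\in Z:\calO_{Z,z}\text{ is Cohen--Macaulay}\}$ is open, and to do so by exploiting the flatness of $Z$ over the regular base $\spec(R)$. Recall from Proposition~\ref{prop:image_description} that $Z$ is flat over $\spec(R)$; since $R$ is a discrete valuation ring it is regular, hence Cohen--Macaulay at every point. The idea is therefore to compare Cohen--Macaulayness of $\calO_{Z,z}$ with Cohen--Macaulayness along the fibre of $\func f Z{\spec(R)}$ through $z$, treating the generic and the closed fibre separately.

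First I would dispose of the generic fibre. A point $z\in Z$ lying over the generic point $\eta$ of $\spec(R)$ belongs to $Z_K$, and since the image of $\pi$ in $\kappa(z)$ is nonzero, the element $\pi$ is a unit in $\calO_{Z,z}$; consequently $\calO_{Z,z}=\calO_{Z,z}\otimes_R K=\calO_{Z_K,z}$. By Corollary~\ref{cor:scheme-theor-image_CM} the generic fibre $Z_K$ is Cohen--Macaulay, so each such local ring is Cohen--Macaulay. Hence $Y_2$ is contained in the closed fibre $Z_0$, which is a closed subset of $Z$.

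It then remains to identify $Y_2$ with the non-Cohen--Macaulay locus of $Z_0$. For $z\in Z_0$ the local ring $\calO_{Z,z}$ is flat over the discrete valuation ring $R$, so $\pi$ is a nonzerodivisor on it and its reduction modulo $\pi$ is exactly $\calO_{Z_0,z}$. By the standard behaviour of the Cohen--Macaulay property under passage to the quotient by a regular element \cite{Bruns-Herzog}, the ring $\calO_{Z,z}$ is Cohen--Macaulay if and only if $\calO_{Z_0,z}$ is. Thus $Y_2$ equals the set of points at which the scheme $Z_0$ fails to be Cohen--Macaulay. Since $Z_0$ is of finite type over the residue field $k=R/\m$, its Cohen--Macaulay locus is open by \cite[(6.11.2)]{EGAIV2}, so $Y_2$ is closed in $Z_0$ and therefore closed in $Z$.

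The one genuinely nontrivial input, and hence the main thing to pin down, is the openness of the Cohen--Macaulay locus of $Z_0$; every other step is either a direct appeal to the flatness of Proposition~\ref{prop:image_description} or the elementary regular-element criterion. A more uniform alternative that avoids the fibrewise splitting is to note that $Z$, being of finite type over the excellent ring $R$, is itself excellent, so its Cohen--Macaulay locus is open outright. I would nevertheless keep the fibrewise argument as the primary route, since it relies only on results already established in the paper and makes transparent why the non-Cohen--Macaulay points must lie in the closed fibre.
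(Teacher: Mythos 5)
Your proof is correct, but it takes a genuinely different route from the paper. The paper's proof is a single line: since $Z$ is a closed subscheme of the regular scheme $\bbP^n_R$ ($R$ being a discrete valuation ring), the Cohen--Macaulay locus of $Z$ is open by \cite[Corollaire (6.11.3)]{EGAIV2}, and $Y_2$ is its closed complement. You instead split by fibres: the generic fibre $Z_K$ is Cohen--Macaulay by Corollary~\ref{cor:scheme-theor-image_CM}, so $Y_2\subseteq Z_0$; on the closed fibre, flatness of $Z$ over $R$ (Proposition~\ref{prop:image_description}) makes $\pi$ a nonzerodivisor on $\calO_{Z,z}$, so $\calO_{Z,z}$ is Cohen--Macaulay if and only if $\calO_{Z_0,z}=\calO_{Z,z}/\pi\calO_{Z,z}$ is; and then you invoke openness of the Cohen--Macaulay locus for the finite-type $k$-scheme $Z_0$. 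All steps check out, but note that your argument still rests on the same EGA openness result (now applied to $Z_0\subseteq\bbP^n_k$ rather than $Z\subseteq\bbP^n_R$), so the fibrewise reduction does not actually make the key input more elementary; it also uses the flatness of $Z$, which the paper's proof does not need. What your route buys is the explicit additional fact that $Y_2$ is contained in the closed fibre $Z_0$ --- a fact the paper uses later (in the proof of Proposition~\ref{prop:codim_2}) without spelling out the justification --- so your argument is a useful complement even though the direct citation is shorter. Your closing remark about excellence of $Z$ is essentially the paper's argument in slightly different clothing.
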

\begin{proof}
  As $Z$ is a closed subscheme of the regular scheme $\bbP^n_R$, the
  statement follows directly from \cite[Corollaire (6.11.3)]{EGAIV2}.
\end{proof}

In Proposition~\ref{prop:non-isom-locus}, we show that these two sets
$Y_1$ and $Y_2$ constitute the entire non-isomorphism locus. In particular, this
implies that $Y$ is uniquely determined by the generic fiber
$(C_K,i_K)$.

The following results will be useful, see also
\cite[Lemma~36]{Kollar:HH}. We use the notation $\depth_Y(\calF) :=
\inf_{y\in Y} \depth_{\calO_{X,y}}(\calF_y)$ for a closed subset $Y$
of a locally Noetherian scheme $X$ and a coherent
$\calO_X$-module~$\calF$.
\begin{prop}\label{prop:isom_on_open_depth}
  Let $X$ be a locally Noetherian scheme, and let $\func \alpha \calF
  \calG$ be a morphism of $\calO_X$-modules where $\calF$ is coherent
  and $\calG$ is quasi-coherent.
  \begin{enumerate}
  \item\label{item:isom_on_open_depth1} Suppose that there exists an
    open subscheme $U$ of $X$ such that the restriction of $\alpha$ to
    $U$ is injective. If $\depth_{X\setminus U}(\calF)\geq 1$, then
    $\alpha$ is injective.
  \item\label{item:isom_on_open_depth2} Suppose that there exists an
    open subscheme $U$ of $X$ such that the restriction of $\alpha$ to
    $U$ is an isomorphism. If $\Ass(\calG)\subseteq U$ and
    $\depth_{X\setminus U}(\calF)\geq 2$, then $\alpha$ is an
    isomorphism.
  \end{enumerate}
\end{prop}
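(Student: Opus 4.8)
The plan is to prove both parts by the same mechanism: identify the obstruction (the kernel in (i), the cokernel in (ii)) as a quasi-coherent sheaf supported on $X\setminus U$, and then force it to vanish by showing it can have no associated point. The two ingredients I would rely on are that a nonzero quasi-coherent sheaf on a locally Noetherian scheme has at least one associated point, which necessarily lies in its support, and the translation $\depth_{\calO_{X,y}}(\calM_y)=0 \Leftrightarrow \m_y\in\Ass(\calM_y)$, i.e.\ $H^0_{\m_y}(\calM_y)\neq 0$. Since the depth hypotheses $\depth_{X\setminus U}(\calF)\geq 1$ (resp.\ $\geq 2$) are pointwise, everything reduces to an inspection of stalks at points of $X\setminus U$.

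For part~\ref{item:isom_on_open_depth1}, I would set $\calK:=\kernel(\alpha)$. This is quasi-coherent, and $\calK|_U=0$ because $\alpha|_U$ is injective, so $\supp(\calK)\subseteq X\setminus U$. Suppose $\calK\neq 0$ and choose an associated point $y\in\supp(\calK)\subseteq X\setminus U$, so that $\m_y\in\Ass(\calK_y)$. As $\calK_y\subseteq\calF_y$ is a submodule we have $\Ass(\calK_y)\subseteq\Ass(\calF_y)$, whence $\m_y\in\Ass(\calF_y)$ and $\depth_{\calO_{X,y}}(\calF_y)=0$. This contradicts $\depth_{X\setminus U}(\calF)\geq 1$, so $\calK=0$ and $\alpha$ is injective.

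For part~\ref{item:isom_on_open_depth2}, note first that $\depth\geq 2$ gives $\depth\geq 1$, so by part~\ref{item:isom_on_open_depth1} the map $\alpha$ is injective; set $\calC:=\coker(\alpha)$, giving a short exact sequence $0\to\calF\to\calG\to\calC\to 0$. Again $\calC|_U=0$, so $\supp(\calC)\subseteq X\setminus U$. Suppose $\calC\neq 0$ and pick an associated point $y\in X\setminus U$, so $H^0_{\m_y}(\calC_y)\neq 0$. At $y$ I would run the long exact sequence of local cohomology $\cdots\to H^0_{\m_y}(\calG_y)\to H^0_{\m_y}(\calC_y)\to H^1_{\m_y}(\calF_y)\to\cdots$. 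The hypothesis $\Ass(\calG)\subseteq U$ forces $\m_y\notin\Ass(\calG_y)$, i.e.\ $H^0_{\m_y}(\calG_y)=0$, while $\depth_{\calO_{X,y}}(\calF_y)\geq 2$ gives $H^0_{\m_y}(\calF_y)=H^1_{\m_y}(\calF_y)=0$. Exactness then yields $H^0_{\m_y}(\calC_y)=0$, a contradiction. Hence $\calC=0$ and $\alpha$ is an isomorphism.

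The main obstacle is that $\calG$ is only assumed quasi-coherent, so its stalks $\calG_y$ (and those of $\calC$) need not be finitely generated, and the classical depth lemma for a short exact sequence is usually stated only for finite modules. I would circumvent this by taking the local cohomology characterization $\depth_{\calO_{X,y}}(\calM_y)=\inf\{i: H^i_{\m_y}(\calM_y)\neq 0\}$ as the working definition of depth; the long exact sequence of local cohomology attached to $0\to\calF\to\calG\to\calC\to 0$ is valid for arbitrary modules and delivers exactly the vanishing used above. The identity $H^0_{\m_y}(\calM_y)=\Gamma_{\m_y}(\calM_y)\neq 0 \Leftrightarrow \m_y\in\Ass(\calM_y)$ likewise holds without any finiteness assumption, which is all that the associated-point arguments require.
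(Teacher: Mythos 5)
Your proof is correct, but it takes a genuinely different route from the paper's. The paper reduces to the affine case $X=\spec(A)$ with $X\setminus U=V(I)$, identifies $\depth_{X\setminus U}(\calF)$ with $\operatorname{grade}(I,M)$, and then argues by explicit element manipulation: for~(i) an element $a\in I$ that is a nonzerodivisor on $M$ kills the kernel because $D(a)\subseteq U$; for~(ii) an $M$-regular sequence $(a,b)$ in $I$ enters an element chase producing $n\in N\setminus\alpha(M)$ with $an=\alpha(m_1)$, $bn=\alpha(m_2)$, from which $bm_1=am_2$ and regularity of $(a,b)$ yield a contradiction. You instead work pointwise with the definition $\depth_{X\setminus U}(\calF)=\inf_{y\in X\setminus U}\depth_{\calO_{X,y}}(\calF_y)$, locate an associated point of the kernel (resp.\ cokernel) inside $X\setminus U$, and kill it via $\Ass(\calK_y)\subseteq\Ass(\calF_y)$ in~(i) and the local cohomology sequence $H^0_{\m_y}(\calG_y)\to H^0_{\m_y}(\calC_y)\to H^1_{\m_y}(\calF_y)$ in~(ii). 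Your approach meshes directly with the paper's pointwise definition of $\depth_{X\setminus U}$, handles the mere quasi-coherence of $\calG$ cleanly (as you note, the equivalence $H^0_{\m}(N)\neq 0\Leftrightarrow\m\in\Ass(N)$ and the long exact sequence require no finiteness), and sidesteps the somewhat terse step in the paper where a single $n\notin\alpha(M)$ with both $an$ and $bn$ in $\alpha(M)$ must be produced; the paper's argument, in exchange, is entirely elementary and avoids local cohomology. Both proofs are complete.
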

\begin{proof}
  Since all properties are local on $X$, we can without loss of
  generality assume that $X=\spec(A)$ for a Noetherian ring $A$. Then
  we have $X\setminus U=V(I)$ for an ideal $I$ of $A$, and
  $\calF=\tilde M$ and $\calG=\tilde N$ for $A$-modules $M$ and
  $N$. Note that
  $$\depth_{X\setminus U}(\calF) = \inf_{\p\supseteq
    I}\depth(M_\p)=\operatorname{grade}(I,M)$$ by \cite[Proposition
  1.2.10]{Bruns-Herzog}.
  
  Suppose first that $\alpha\vert_U$ is injective and
  $\depth_{X\setminus U}(\calF)\geq 1$. Then there exists an element
  $a\in I$ that is not a zero divisor of $M$. Let $K:=\ker(\alpha)
  \subset M$, and take $m \in K$. Since the open subset $D(a)$ is
  contained in $U$, the restriction of $\alpha$ to $D(a)$ is
  injective, and hence $K_a=0$. Therefore there exists some $n\in\bbN$
  such that $a^n m=0$. But $a$ is not a zero divisor of $M$, so $m=0$
  and $\alpha$ is injective.

  To show that the conditions in
  assertion~\ref{item:isom_on_open_depth2} moreover imply that
  $\alpha$ is surjective, suppose that $\alpha(M)\subsetneq N$. Let
  $(a,b)$ be a $M$-regular sequence in $I$. In particular, the element
  $a$ is not a zero divisor of $M$. Moreover, since
  $\Ass(N)=\Ass(M)\cap U$, it follows that $a$ is not a zero divisor
  of $N$. The restriction of $\alpha$ to both open subsets $D(a),
  D(b)\subseteq U$ is surjective, so there exists an element $n\in
  N\setminus \alpha(M)$ such that $an,bn\in \alpha(M)$, say
  $an=\alpha(m_1)$ and $bn=\alpha(m_2)$ for $m_1,m_2\in M$. Since
  $\alpha$ is injective, it follows that $bm_1=am_2$. In particular,
  the element $b m_1$ lies in $aM$, and thus, as the sequence $(a,b)$
  is $M$-regular, there exists an element $m_1'\in M$ such that
  $m_1=am_1'$. We get that $an = \alpha(m_1) = a\alpha(m_1')$. Since
  $a$ is not a zero divisor in $N$, this implies that
  $n=\alpha(m_1')\in \alpha(M)$, a contradiction. \qedhere
\end{proof}
\begin{cor}\label{cor:isom-open_depth}
  Let $X$ be a locally Noetherian scheme, and let $\calF$ be a
  coherent sheaf on $X$. Let $\injfunc j U X$ be an open subscheme of
  $X$ such that $\depth_{X\setminus U}(\calF)\geq 2$. Then $\calF\cong
  j_*(\calF\vert_U)$.
\end{cor}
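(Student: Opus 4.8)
The plan is to apply Proposition~\ref{prop:isom_on_open_depth}\ref{item:isom_on_open_depth2} to the canonical adjunction morphism $\func\alpha\calF{j_*(\calF\vert_U)}$, the unit of the adjunction $(j^*,j_*)$. Constructing $\alpha$ is a global matter, but checking that it is an isomorphism is local on $X$, so I would first reduce to the case $X=\spec(A)$ with $A$ Noetherian. Then $U$ is quasi-compact and $j$ is quasi-compact and quasi-separated, so $\calG:=j_*(\calF\vert_U)$ is quasi-coherent, as required by the hypotheses of the proposition.

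To invoke the proposition I must verify its three conditions for $\alpha$. First, the restriction $\alpha\vert_U$ is an isomorphism: this is the standard fact that for an open immersion $j$ the counit $j^*j_*\to\id$ is an isomorphism, so that $j^*\alpha$ is the identity on $\calF\vert_U$. Second, $\depth_{X\setminus U}(\calF)\geq 2$ is exactly our hypothesis. Third, and this is the only point requiring genuine work, I must check that $\Ass(\calG)\subseteq U$.

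For the associated-points condition I would argue that $\calG$ has no nonzero section annihilated by the ideal cutting out the complement. Writing $X\setminus U=V(I)$ for an ideal $I\subseteq A$, a global section of $\calG$ is by definition a section $n$ of $\calF$ over $U$. Suppose $\p\in\Ass(\calG)$ with $I\subseteq\p$; then there is such an $n$ with $\Ann(n)=\p$, so $In=0$. But at every point $y\in U$ some element $f\in I$ is a unit in $\calO_{X,y}$, and $fn=0$ then forces the germ $n_y=0$; as this holds for all $y\in U$ and $n$ is a section over $U$, we get $n=0$, a contradiction. Hence no prime containing $I$ is associated to $\calG$, that is, $\Ass(\calG)\subseteq U$. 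With all three hypotheses established, Proposition~\ref{prop:isom_on_open_depth}\ref{item:isom_on_open_depth2} shows that $\alpha$ is an isomorphism, which is precisely the assertion $\calF\cong j_*(\calF\vert_U)$.

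The main obstacle is the verification that $\Ass(\calG)\subseteq U$; the other two conditions are purely formal properties of the adjunction unit and a restatement of the depth hypothesis. I expect no further difficulty, since the torsion-freeness argument above becomes elementary once one passes to the affine situation.
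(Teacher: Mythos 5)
Your proposal is correct and follows the paper's proof exactly: both apply Proposition~\ref{prop:isom_on_open_depth}\ref{item:isom_on_open_depth2} to the adjunction unit $\calF\to j_*(\calF\vert_U)$, with the only point needing verification being $\Ass(j_*(\calF\vert_U))\subseteq U$. The sole difference is that the paper disposes of that point by citing \cite[Proposition (3.1.13)]{EGAIV2} (which gives $\Ass(j_*(\calF\vert_U))=\Ass(\calF\vert_U)\subseteq U$), whereas you verify it by a short direct argument in the affine case; your argument is sound.
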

\begin{proof}
  Consider the natural map $\func \alpha \calF{j_*(\calF\vert_U)}$
  that is the identity when restricted to $U$. Since
  $\Ass(j_*(\calF\vert_U))=\Ass(\calF\vert_U)\subseteq U$ by
  \cite[Proposition (3.1.13)]{EGAIV2}, it follows from
  Proposition~\ref{prop:isom_on_open_depth}\ref{item:isom_on_open_depth2}
  that $\alpha$ is an isomorphism.
\end{proof}

\begin{prop}\label{prop:non-isom-locus}
  In the situation of Subsection~\ref{subsec:non-isom-locus}, the
  non-i\-so\-mor\-phism locus is the union of the closure $Y_1$ of the
  non-isomorphism locus of the generic fiber and the
  non-Cohen--Macaulay locus $Y_2$, that is, $Y=Y_1\cup Y_2$.
\end{prop}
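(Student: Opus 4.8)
The plan is to prove the two inclusions separately; the inclusion $Y_1\cup Y_2\subseteq Y$ is short, while the reverse inclusion requires a careful analysis along the closed fibre. Throughout write $\calK:=\coker(h^\#)$, a coherent sheaf on $Z$ with $Y=\supp(\calK)$, so that $z\in Y$ if and only if $(h^\#)_z$ fails to be surjective, i.e.\ $\calK_z\neq 0$. First, since $Y$ is closed and $Y_K=\supp(\calK)\cap Z_K$ lies in $Y$, its closure $Y_1$ is contained in $Y$. For $Y_2\subseteq Y$ I would argue by contraposition: if $z\notin Y$ then $(h^\#)_z$ is an isomorphism, so $\calO_{Z,z}\cong(h_*\calO_C)_z$, and the latter is Cohen--Macaulay by Proposition~\ref{prop:DVR_dir_image_CM}; hence $\calO_{Z,z}$ is Cohen--Macaulay and $z\notin Y_2$.

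For the reverse inclusion, take $z\in Y$. If $z\in Z_K$, then $z\in\supp(\calK)\cap Z_K=\supp(\coker(h_K^\#))=Y_K\subseteq Y_1$, so I may assume $z\in Z_0$. The decisive computation is to reduce the defining sequence modulo $\pi$. Tensoring $0\to\calO_Z\xrightarrow{h^\#}h_*\calO_C\to\calK\to 0$ with $\kappa=R/\m$ gives, by right exactness and the affine base-change identity $(h_*\calO_C)\otimes_R\kappa=h_{0*}\calO_{C_0}$ (with $h_0\colon C_0\to Z_0$ the restriction to closed fibres), an isomorphism $\coker(h_0^\#)\cong\calK/\pi\calK$. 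Since $i$ factors through $Z$, the base change $i_0$ factors through $Z_0$; writing $W_0\subseteq Z_0$ for the scheme-theoretic image of $C_0$, the map $h_0^\#$ is the composite of the surjection $\calO_{Z_0}\twoheadrightarrow\calO_{W_0}$ with the injection $\calO_{W_0}\hookrightarrow h_{0*}\calO_{C_0}$, whence $\coker(h_0^\#)=\coker(\calO_{W_0}\to h_{0*}\calO_{C_0})$. By the second condition in Definition~\ref{def:CMfunctor} together with Lemma~\ref{lm:iso_alt}, this cokernel has zero-dimensional support. As $\pi\in\m_z$ and $\calK_z$ is finitely generated, Nakayama gives $\calK_z/\pi\calK_z=0\iff\calK_z=0$ for every $z\in Z_0$; therefore $Y\cap Z_0=\supp(\calK)\cap Z_0=\supp(\calK/\pi\calK)$ is a finite set of closed points of $Z$. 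In particular no generic point of $Z_0$ lies in $Y$.

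It remains to treat a closed point $z\in Y\cap Z_0$ with $z\notin Y_1$, for which I claim $z\in Y_2$. Suppose not, so that $\calO_{Z,z}$ is Cohen--Macaulay; by Proposition~\ref{prop:image_description} it has dimension $2$, hence depth $2$. On the punctured spectrum $U:=\spec\calO_{Z,z}\setminus\{\m_z\}$ the map $h^\#$ is an isomorphism: its codimension-$0$ points are generic points of $Z$, where $h$ is birational, and its codimension-$1$ points lie either in $Z_K\setminus Y_K$ (using $z\notin Y_1$) or are generic points of $Z_0$, which are not in $Y$ by the previous paragraph. Moreover $\Ass(h_*\calO_C)\subseteq Z_K\subseteq U$ by Lemma~\ref{lm:flat_over_DVR}, and $\depth_{\{\m_z\}}(\calO_{Z,z})=2$. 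Thus Proposition~\ref{prop:isom_on_open_depth}(\ref{item:isom_on_open_depth2}) forces $h^\#$ to be an isomorphism at $z$, i.e.\ $\calK_z=0$, contradicting $z\in Y$. Hence $\calO_{Z,z}$ is not Cohen--Macaulay and $z\in Y_2$, completing the inclusion $Y\subseteq Y_1\cup Y_2$.

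The main obstacle is the behaviour of $Y$ along the closed fibre, and specifically ruling out the generic points of $Z_0$. These are codimension-$1$ points of $Z$ at which $\calO_{Z,z}$ is automatically Cohen--Macaulay (since $\Ass(\calO_Z)\subseteq Z_K$ by flatness forces $\depth\calO_{Z,z}=1=\dim\calO_{Z,z}$), so they are invisible to $Y_2$, and they do not lie in $Y_1$ either; they must therefore be excluded from $Y$ by a direct argument. This is exactly the purpose of the reduction modulo $\pi$ in the second paragraph: property~(ii) of the definition controls the \emph{cokernel} of $h_0^\#$ even though $h_0^\#$ may fail to be injective, and Nakayama converts this into the vanishing of $\calK_z$ at the one-dimensional points. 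Once the closed-fibre support of $Y$ is known to be finite, the codimension-$2$ (closed) points are handled cleanly by the depth-$2$ criterion of Proposition~\ref{prop:isom_on_open_depth}.
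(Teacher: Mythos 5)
Your proof is correct and follows essentially the same route as the paper: both reduce to showing that $Y\setminus(Y_1\cup Y_2)$ could only consist of closed points of codimension $2$ in $Z$ (via the zero-dimensionality of the closed fibre of $Y$, which the paper gets directly from Lemma~\ref{lm:iso_alt} where you rederive it by reducing modulo $\pi$ and applying Nakayama), and then rule out such points with the depth-$2$ extension criterion of Proposition~\ref{prop:isom_on_open_depth}\ref{item:isom_on_open_depth2}. The only organizational difference is that you apply that criterion on the punctured spectrum of $\calO_{Z,z}$, taking the depth hypothesis from the assumed Cohen--Macaulayness of $\calO_{Z,z}$ (thereby matching the stated hypotheses of the criterion literally), whereas the paper argues globally on $Z\setminus(Y_1\cup Y_2)$ and draws the depth condition from the Cohen--Macaulayness of $h_*\calO_C$ (Proposition~\ref{prop:DVR_dir_image_CM}).
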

\begin{proof}
  The non-isomorphism locus $Y_K$ of the generic fiber is contained in
  the closed set $Y$ and hence even its closure $Y_1$ in
  $Z$. Moreover, since the sheaf $h_*\calO_C$ is Cohen--Macaulay as an
  $\calO_Z$-module by Proposition~\ref{prop:DVR_dir_image_CM}, the
  non-Cohen--Macaulay locus $Y_2$ of $Z$ is contained in $Y$. This
  shows the inclusion $Y\supseteq Y_1\cup Y_2$.

  Suppose that the inclusion is strict, that is, $Y':=Y_1\cup
  Y_2\subsetneq Y$. Since $Y\cap Z_K=Y_K=Y'\cap Z_K$, the complement
  $Y\setminus Y'$ lies over the closed point of $\spec(R)$. As the
  closed fiber of $Y$ has dimension 0 by Lemma~\ref{lm:iso_alt}, it
  follows that $Y\setminus Y'$ consists of closed points that have
  codimension $2$ in $Z$ by
  Proposition~\ref{prop:image_description}. Consider the scheme
  $X:=Z\setminus Y'$ and its open subscheme $U:=Z\setminus Y$. Let
  further $\injfunc \alpha {\calO_X}{(h_*\calO_C)\vert_X}$ be the
  restriction of $h^\#$ to $X$. By definition of $U$, the restriction
  of $\alpha$ to $U$ is an isomorphism. The coherent sheaf
  $(h_*\calO_C)\vert_X$ is Cohen--Macaulay by
  Proposition~\ref{prop:DVR_dir_image_CM}, and hence
  $\depth_{X\setminus U}((h_*\calO_C)\vert_X)=2$. Finally we observe
  that all associated points of $\calO_Z$ lie in the generic fiber,
  and in particular in $U$. Then it follows from
  Proposition~\ref{prop:isom_on_open_depth}\ref{item:isom_on_open_depth2}
  that $\alpha$ is an isomorphism, contradicting the assumption. We
  conclude that $Y=Y_1\cup Y_2$ as claimed.
\end{proof}

\begin{prop} \label{prop:C_determined} Consider the open subscheme
  $U:=Z_K\cup (Z\setminus Y)$ of $Z$, and let $\injfunc j U Z$ be the
  open immersion. The $\calO_Z$-algebras $h_*\calO_C$ and
  $j_*((h_*\calO_C)\vert_U)$ are isomorphic.
\end{prop}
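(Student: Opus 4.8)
The plan is to apply Corollary~\ref{cor:isom-open_depth} to the scheme $X:=Z$, the coherent sheaf $\calF:=h_*\calO_C$ and the open immersion $\injfunc j U Z$. Since $h$ is finite, the sheaf $h_*\calO_C$ is coherent, and $Z$ is locally Noetherian, so the hypotheses of the corollary are in place except for the depth condition. The whole statement therefore reduces to verifying that $\depth_{Z\setminus U}(h_*\calO_C)\geq 2$; once this is established, the canonical map $\func{\alpha}{h_*\calO_C}{j_*((h_*\calO_C)\vert_U)}$ is an isomorphism of $\calO_Z$-modules. As $\alpha$ is the adjunction unit for the sheaf of $\calO_Z$-algebras $h_*\calO_C$, it is an $\calO_Z$-algebra homomorphism, and being bijective it is an isomorphism of $\calO_Z$-algebras, which is the asserted conclusion.

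First I would identify the closed set $Z\setminus U$. By the definition $U=Z_K\cup(Z\setminus Y)$ we have $Z\setminus U=(Z\setminus Z_K)\cap Y$, that is, the part of the non-isomorphism locus $Y$ lying in the closed fiber of $Z$. The closed fiber of $Y$ is zero-dimensional by Lemma~\ref{lm:iso_alt}, so $Z\setminus U$ is a finite set of points. By Proposition~\ref{prop:image_description}, every closed point $z$ of $Z$ satisfies $\dim(\calO_{Z,z})=2$, so it remains to pass from depth to this dimension.

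Next I would compute the depth. By Proposition~\ref{prop:DVR_dir_image_CM} the sheaf $h_*\calO_C$ is Cohen--Macaulay as an $\calO_Z$-module, so for each $z\in Z\setminus U$ we have $\depth_{\calO_{Z,z}}((h_*\calO_C)_z)=\dim_{\calO_{Z,z}}((h_*\calO_C)_z)$. Since the natural map $h^\#$ embeds $\calO_Z$ into $h_*\calO_C$, the annihilator of $(h_*\calO_C)_z$ over $\calO_{Z,z}$ kills the unit and is therefore zero; hence $\dim_{\calO_{Z,z}}((h_*\calO_C)_z)=\dim(\calO_{Z,z})=2$. Taking the infimum over $z\in Z\setminus U$ yields $\depth_{Z\setminus U}(h_*\calO_C)\geq 2$, the bound holding vacuously if $Z\setminus U=\emptyset$, which is exactly the input required by Corollary~\ref{cor:isom-open_depth}.

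The only delicate point is the bookkeeping in the identification step: one must check that the finitely many points of $Z\setminus U$ are genuinely closed points of the total space $Z$, and not merely closed in the fiber, so that the local dimension $2$ from Proposition~\ref{prop:image_description} is available. This is straightforward, since such a point maps to the closed point of $\spec(R)$, whence its closure in $Z$ is contained in the closed fiber, where it is already closed; thus it is closed in $Z$. Everything else is a direct application of the depth criterion.
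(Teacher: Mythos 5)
Your proposal is correct and follows essentially the same route as the paper: identify $Z\setminus U=Y\setminus Y_K$ as a finite set of closed points in the zero-dimensional closed fiber of $Y$, use Proposition~\ref{prop:image_description} to get $\dim(\calO_{Z,z})=2$ there, combine with the Cohen--Macaulayness of $h_*\calO_C$ from Proposition~\ref{prop:DVR_dir_image_CM} to obtain $\depth_{Z\setminus U}(h_*\calO_C)\geq 2$, and conclude by Corollary~\ref{cor:isom-open_depth}. Your extra care in passing from codimension to module depth (via the injectivity of $h^\#$ forcing the annihilator of $(h_*\calO_C)_z$ to vanish) fills in a step the paper leaves implicit, and is a welcome addition rather than a deviation.
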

\begin{proof}
  The complement $Z\setminus U=Y\setminus Y_K$ is contained in the
  zero-dimensional closed fiber of $Y$. In particular, every point in
  $Z\setminus U$ has codimension $2$ in $Z$ by
  Proposition~\ref{prop:image_description}. As $h_*\calO_C$ is
  Cohen--Macaulay by Proposition~\ref{prop:DVR_dir_image_CM}, we
  consequently have that $\depth_{Z\setminus U}(h_*\calO_C)=2$.  Then
  $h_*\calO_C\cong j_*((h_*\calO_C)\vert_U)$ by
  Corollary~\ref{cor:isom-open_depth}. The isomorphism there is an
  isomorphism of $\calO_Z$-algebras.
\end{proof}

\subsection{Valuative criterion for separatedness}
The results of the previous section particularly imply that every
$(C,i) \in \CM(\spec(R))$ is uniquely determined by its generic fiber
$(C_K,i_K)$, that is, the functor $\CM$ satisfies the valuative
criterion for separatedness.
\begin{thm}[Valuative criterion for separatedness]\label{thm:valu-crit-separ}
  Let $R$ be a discrete valuation ring with field of fractions
  $K$. For every commutative diagram \[\xymatrix{\spec(K)\ar[r] \ar[d]
    & \CM \ar[d] \\ \spec(R) \ar[r] \ar@{-->}[ur] & \spec(\bbZ)}\]
  there exists at most one map $\spec(R)\dashrightarrow \CM$ making
  the diagram commute. In other words, every element $(C,i)\in
  \CM(\spec(R))$ is uniquely determined by its generic fiber
  $(C_K,i_K)\in \CM(\spec(K))$.
\end{thm}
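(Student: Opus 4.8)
The plan is to show that the entire family $(C,i)$ can be reconstructed, up to canonical isomorphism, from its generic fiber $(C_K,i_K)$, so that two elements of $\CM(\spec(R))$ with the same generic fiber are isomorphic over $\bbP^n_R$. Essentially all of the necessary input has already been assembled in the previous subsections, and the theorem amounts to putting these pieces together. Concretely, I would take two families $(C_1,i_1),(C_2,i_2)\in\CM(\spec(R))$ together with an isomorphism $\iso\phi{(C_1)_K}{(C_2)_K}$ over $\bbP^n_K$, and run the reconstruction for both, ending with an isomorphism $\iso\alpha{C_1}{C_2}$ over $\bbP^n_R$.

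First I would recover the scheme-theoretic image. Since $\phi$ lies over $\bbP^n_K$, the generic fibers $(Z_1)_K$ and $(Z_2)_K$ coincide as closed subschemes of $\bbP^n_K$, and by Proposition~\ref{prop:image_description} each $Z_\ell=i_\ell(C_\ell)$ is the scheme-theoretic closure of this common generic fiber in $\bbP^n_R$; hence $Z_1=Z_2=:Z$. Next I would recover the non-isomorphism locus $Y_\ell=\supp(\coker(h_\ell^\#))$. By Proposition~\ref{prop:non-isom-locus} it decomposes as the union of the closure of the non-isomorphism locus $Y_K$ of the generic fiber with the non-Cohen--Macaulay locus of $Z$; the former is read off from the generic fiber via $\phi$ and the latter is intrinsic to $Z$, so $Y_1=Y_2=:Y$. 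Consequently the open subscheme $U:=Z_K\cup(Z\setminus Y)$ of Proposition~\ref{prop:C_determined} is the same for both families.

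The heart of the argument is to compare the $\calO_Z$-algebras $(h_\ell)_*\calO_{C_\ell}$, since once these are identified the relative spectrum $\mathbf{Spec}_Z$ and the composite $C_\ell\to Z\hookrightarrow\bbP^n_R$ yield the desired $\alpha$. On the open set $U$ the algebra $(h_\ell)_*\calO_{C_\ell}\vert_U$ is visibly determined by the generic fiber: over $Z_K$ it is $(h_{\ell,K})_*\calO_{(C_\ell)_K}$, and over $Z\setminus Y$ it is canonically $\calO_{Z\setminus Y}$ because $h_\ell$ is an isomorphism there by definition of $Y$. The isomorphism $\phi$ therefore induces an isomorphism of $\calO_U$-algebras between $(h_1)_*\calO_{C_1}\vert_U$ and $(h_2)_*\calO_{C_2}\vert_U$. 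Applying the push-forward $j_*$ along $\injfunc j U Z$ and using the identifications $(h_\ell)_*\calO_{C_\ell}\cong j_*((h_\ell)_*\calO_{C_\ell}\vert_U)$ of Proposition~\ref{prop:C_determined} promotes this to an isomorphism of $\calO_Z$-algebras, whence $\alpha$.

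The main obstacle, and the only place where genuine care is needed, is the compatibility of the two local descriptions of the algebra on the overlap $Z_K\cap(Z\setminus Y)=Z_K\setminus Y_K$: one must check that the isomorphism induced by $\phi$ over $Z_K$ agrees, on this overlap, with the canonical identification with $\calO_{Z\setminus Y}$, so that the two pieces glue to a single $\calO_U$-algebra isomorphism. This I expect to handle by observing that $\phi$ lies over $\bbP^n_K$, and hence over $Z_K$ since $Z_K\hookrightarrow\bbP^n_K$ is a monomorphism; over $Z_K\setminus Y_K$ the maps $h_{\ell,K}$ are isomorphisms, which forces $\phi$ to be the unique isomorphism compatible with them and thus to induce the identity on $\calO_{Z_K\setminus Y_K}$. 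With this compatibility in place the gluing is automatic and the reconstruction is canonical, giving uniqueness.
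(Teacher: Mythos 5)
Your proposal is correct and follows essentially the same route as the paper: recover $Z$ as the scheme-theoretic closure of the common generic image (Proposition~\ref{prop:image_description}), recover $Y$ via Proposition~\ref{prop:non-isom-locus}, glue the algebra isomorphisms over $U=Z_K\cup(Z\setminus Y)$, and push forward along $j$ using Proposition~\ref{prop:C_determined}. Your explicit verification of the compatibility on the overlap $Z_K\setminus Y_K$ is in fact slightly more careful than the paper, which only asserts that the two pieces coincide there.
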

\begin{proof}
  Let $(C,i)$ and $(C',i')$ be two elements in $\CM(\spec(R))$ having
  the same generic fiber $(C_{K},i_{K})=(C_{K}',i_{K}')$ in
  $\CM(\spec(K))$. Then there exists an isomorphism $\iso
  {\alpha_K}{C_{K}}{C_{K}'}$ with $i_{K}'\circ {\alpha_K}=i_{K}$. In
  particular, $C_K$ and $C_{K}'$ have the same scheme-theoretic image
  $Z_K$ in $\bbP^n_K$ and the same non-isomorphism locus $Y_K\subset
  Z_K$. By Proposition~\ref{prop:image_description}, it follows that
  $C$ and $C'$ have the same scheme-theoretic image $Z$ in $\bbP^n_R$,
  and we write $\func h C Z$ and $\func {h'}{C'}Z$ for the
  restrictions of $i$ and $i'$ to the image. Then the isomorphism
  $\alpha_K$ induces an isomorphism $\iso{\beta_{Z_K}}
  {(h_*\calO_{C})\vert_{Z_K} }{(h'_*\calO_{C'})\vert_{Z_K}}$ of
  $\calO_{Z_K}$-algebras. By Proposition~\ref{prop:non-isom-locus},
  both non-isomorphism loci $Y=\supp(\coker(h^\#))$ and
  $Y'=\supp(\coker((h')^\#))$ are given as the union of closure of the
  non-isomorphism locus $Y_K$ of the generic fiber and
  non-Cohen--Macaulay locus of $Z$, and we have $Y=Y'$. Moreover, we
  get an induced $\calO_{Z\setminus Y}$-algebra isomorphism
  $\iso{\beta_{Z\setminus Y}}{(h_*\calO_{C})\vert_{Z\setminus
      Y}}{(h'_*\calO_{C'})\vert_{Z\setminus Y}}$.
 
  Note that the homomorphisms $\beta_{Z_K}$ and $\beta_{Z\setminus Y}$
  coincide on the intersection $ Z_K\cap (Z\setminus Y) = Z_K\setminus
  Y_K$, and hence they glue to an isomorphism
  $\iso{\beta}{(h_*\calO_{C})\vert_{U}} {(h'_*\calO_{C'})\vert_{U}}$
  on the union $U:= Z_K \cup (Z\setminus Y)$.

  Let $\injfunc j U Z$ be the open immersion. Then we get an
  isomorphism $\iso{j_*\beta} {j_*((h_*\calO_{C})\vert_U)}
  {j_*((h'_*\calO_{C'})\vert_U)}$ of $\calO_Z$-algebras. By
  Proposition~\ref{prop:C_determined}, this gives an isomorphism
  $h_*\calO_{C}\stackrel{\sim}{\longrightarrow}h'_*\calO_{C'}$ of
  $\calO_Z$-algebras, that is, an isomorphism $\iso\alpha{C}{C'}$ over
  $Z$, and hence over $\bbP^n_R$. This shows that $(C,i)=(C',i')$ in
  $\CM(\spec(R))$.
\end{proof}

\subsection{Valuative criterion for properness}
 
The aim of this subsection is to show that any element in
$\CM(\spec(K))$ can be lifted to $\CM(\spec(R))$.

So take $(C_K,i_K)\in \CM(\spec(K))$. Let $Z_K\subseteq \bbP^n_K$ be
the scheme-theoretic image of $i_K$, and $\func {h_K}{C_K}{Z_K}$ be
the induced map to it. Recall that $Z_K$ has pure dimension $1$ by
Corollary~\ref{cor:scheme-theor-image_CM}.

Let $Z$ be the scheme-theoretic closure of $Z_K$ in $\bbP^n_R$. Then
$Z$ is flat by Lemma~\ref{lm:flat_over_DVR}, it has pure dimension $2$
by Proposition~\ref{prop:dim_fibers} and all closed points have
codimension $2$ by Corollary~\ref{cor:codim_point}. Let $Y_1$ be the
closure of the non-isomorphism locus $Y_K:=\supp(\coker(\calO_{Z_K}\to
(h_K)_*\calO_{C_K}))$ in $Z$ and $Y_2:= \{z\in Z\mid \calO_{Z,z}
\text{ is not Cohen--Macaulay}\}$ be the non-Cohen--Macaulay locus of
$Z$, and set $Y:=Y_1\cup Y_2$. Note that $Y_2$, and hence also $Y$, is
closed in $Z$ by Lemma~\ref{lm:non-CM_closed}.

\begin{prop}\label{prop:codim_2}
  Consider the open subscheme $U:=Z_K\cup (Z\setminus Y)$ of $Z$. Then
  $\codim(Z\setminus U,Z)=2$.
\end{prop}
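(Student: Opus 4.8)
The plan is to first rewrite the set $Z\setminus U$ in a usable form. Since $\{\eta\}$ is open in $\spec(R)$, the generic fibre $Z_K$ is open in $Z$ with complement the closed fibre $Z_0$; and $Y$ is closed, so $U=Z_K\cup(Z\setminus Y)$ is open and $Z\setminus U=Y\cap Z_0$. Thus $Z\setminus U$ is a closed subset contained in $Z_0$, which has pure dimension $1$ by Proposition~\ref{prop:dim_fibers}. The key reduction is then: it suffices to show that $Z\setminus U$ contains no generic point of a component of $Z_0$. Indeed, a closed subset of the pure one-dimensional $Z_0$ missing all of its (finitely many) generic points has dimension $0$, hence is a finite set of closed points of $Z$, and every closed point of $Z$ has codimension $2$ by Corollary~\ref{cor:codim_point}; this yields $\codim(Z\setminus U,Z)=2$. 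So I fix a generic point $\zeta$ of a component of $Z_0$ (so $\dim\calO_{Z,\zeta}=1$) and aim to prove $\zeta\notin Y=Y_1\cup Y_2$.

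The main obstacle is ruling out $\zeta\in Y_2$, i.e.\ showing that $Z$ is Cohen--Macaulay at the codimension-one points $\zeta$ of its closed fibre. Here I would use flatness over the discrete valuation ring: localizing, $\calO_{Z,\zeta}$ is flat over $R$, so the uniformizer $\pi$ is a nonzerodivisor lying in $\m_\zeta$. The closed fibre of $\spec(\calO_{Z,\zeta})\to\spec(R)$ is $\calO_{Z,\zeta}/\pi=\calO_{Z_0,\zeta}$, which is Artinian and hence Cohen--Macaulay; and the generic fibre $\calO_{Z,\zeta}[\pi^{-1}]$ is Artinian as well, because in the one-dimensional ring $\calO_{Z,\zeta}$ every prime avoiding the nonzerodivisor $\pi\in\m_\zeta$ is minimal. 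Since $R$ itself is Cohen--Macaulay, Proposition~\ref{prop:fiber_CM}\ref{prop:fiber_CM1} then gives that $\calO_{Z,\zeta}$ is Cohen--Macaulay, so $\zeta\notin Y_2$. (Alternatively one invokes directly that a one-dimensional local ring with a regular parameter $\pi$ whose quotient is Cohen--Macaulay is itself Cohen--Macaulay.)

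It remains to exclude $\zeta\in Y_1$, which I expect to be the more routine step. Recall that $Y_1$ is the closure in $Z$ of the non-isomorphism locus $Y_K$ of the generic fibre, and that $Y_K$ is a finite set of closed points of $Z_K$ because $(C_K,i_K)\in\CM(\spec(K))$ (Lemma~\ref{lm:iso_alt}, together with Corollary~\ref{cor:scheme-theor-image_CM}). For each $y\in Y_K$ the reduced closure $\overline{\{y\}}$ is integral with unique associated point $y$ lying in the generic fibre, hence flat over $\spec(R)$ by Lemma~\ref{lm:flat_over_DVR}; its generic fibre is the single point $\{y\}$ of dimension $0$, so Proposition~\ref{prop:dim_fibers} shows $\overline{\{y\}}$ has pure dimension $1$ with zero-dimensional closed fibre. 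Therefore $Y_1\cap Z_0=\bigcup_{y\in Y_K}(\overline{\{y\}}\cap Z_0)$ is zero-dimensional, whereas $\overline{\{\zeta\}}\subseteq Z_0$ is one-dimensional; hence $\zeta\notin Y_1$. Combining the two steps gives $\zeta\notin Y$, which completes the reduction and the proof.
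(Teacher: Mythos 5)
Your proof is correct and follows essentially the same route as the paper's: the same decomposition $Z\setminus U=(Y_1\setminus Y_K)\cup Y_2$, the same use of flatness over $R$ to make $\pi$ a nonzerodivisor in the $Y_2$ case, and the same specialization-from-$Y_K$ argument in the $Y_1$ case. The only difference is packaging: the paper computes $\dim(\calO_{Z,z})=2$ directly at each $z\in Z\setminus U$ (for $Y_2$ via $\dim>\depth\geq 1$, which is the contrapositive of your Cohen--Macaulayness claim at the codimension-one points of $Z_0$), whereas you first show that $Z\setminus U$ consists of closed points and then invoke Corollary~\ref{cor:codim_point}.
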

\begin{proof}
  Let $z\in Z\setminus U$. We have to show that
  $\dim(\calO_{Z,z})=2$. Note that $Z\setminus U=(Y_1\setminus
  Y_K)\cup Y_2$. Suppose first that $z\in Y_1\setminus Y_K$. Then
  $z\in\overline{\{y\}}$ for a point $y\in Y_K$, and hence
  $\dim(\calO_{Z,z})>\dim(\calO_{Z,y})$. Moreover, we have
  $\dim(\calO_{Z,y})=\dim(\calO_{Z_K,y})>0$ since the point $y$ is
  closed in the scheme $Z_K$ of pure dimension $1$. Next suppose that
  $z\in Y_2$. Then $z$ lies in the closed fiber over $\spec(R)$ and,
  since $Z$ is flat over $\spec(R)$, the local ring $\calO_{Z,z}$ is a
  flat $R$-algebra. In particular, the generator $\pi$ of the maximal
  ideal of $R$ is not a zero divisor of $\calO_{Z,z}$ and hence
  $\depth_{\calO_{Z,z}}(\calO_{Z,z})\geq 1$. But we have
  $\dim(\calO_{Z,z})>\depth(\calO_{Z,z})$ since $\calO_Z$ is not
  Cohen--Macaulay at the point $z$.
\end{proof}

On $U_1=Z_K$ we have the coherent $\calO_{U_1}$-algebra
$\calA_1:=(h_K)_*\calO_{C_K}$, and on $U_2=Z\setminus Y$ we consider
the coherent $\calO_{U_2}$-algebra $\calA_2:=\calO_Z\vert_{U_2}$. Note
that the intersection $U_1\cap U_2=Z_K\setminus Y_K$ is the locus
where $\calA_1$ and $\calA_2$ coincide, and hence they glue to a
coherent $\calO_U$-algebra $\calA_U$ on the union $U=U_1\cup U_2$.
\begin{lm}\label{lm:A_U_is_CM}
  The $\calO_U$-module $\calA_U$ is Cohen--Macaulay.
\end{lm}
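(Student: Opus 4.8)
The plan is to verify the Cohen--Macaulay property pointwise, exploiting the fact that $\calA_U$ is glued from two pieces of very different nature: the structure sheaf $\calA_2=\calO_Z\vert_{U_2}$ on $U_2=Z\setminus Y$, and the direct image $\calA_1=(h_K)_*\calO_{C_K}$ on $U_1=Z_K$. Since being Cohen--Macaulay is local, it suffices to show that the stalk $(\calA_U)_u$ is a Cohen--Macaulay $\calO_{U,u}$-module for every $u\in U$. I would split into the cases $u\in U_2$ and $u\in U_1$; on the overlap $U_1\cap U_2=Z_K\setminus Y_K$ the two descriptions agree, so treating each region on its own is harmless.

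For a point $u\in U_2=Z\setminus Y$ the stalk is $(\calA_U)_u=\calO_{Z,u}$. Because $Y_2\subseteq Y$, such a $u$ avoids the non-Cohen--Macaulay locus $Y_2$, so $\calO_{Z,u}$ is a Cohen--Macaulay ring and hence Cohen--Macaulay as a module over itself. This case is immediate.

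The substantive case is $u\in U_1=Z_K$, where the base ring is $\calO_{Z_K,u}$ and $(\calA_U)_u=((h_K)_*\calO_{C_K})_u$. Here I would prove the stronger statement that $(h_K)_*\calO_{C_K}$ is Cohen--Macaulay as an $\calO_{Z_K}$-module, which is the field-coefficient analogue of Proposition~\ref{prop:DVR_dir_image_CM}. Working on an affine open $\spec(B)\subseteq Z_K$ with preimage $h_K^{-1}(\spec(B))=\spec(A)$ in $C_K$ (affine, since $h_K$ is finite), the induced map $\func\varphi B A$ is finite and $A$ is a Cohen--Macaulay ring of pure dimension $1$ by the definition of $\CM$ together with Lemma~\ref{lm:equidim}. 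By Lemma~\ref{lm:CM_base_change}\ref{lm:CM_base_change2} it then remains to verify that $\dim(A_{\q_1})=\dim(A_{\q_2})$ whenever $\varphi^{-1}(\q_1)=\varphi^{-1}(\q_2)$, that is, that $\dim(\calO_{C_K,x_1})=\dim(\calO_{C_K,x_2})$ for all points $x_1,x_2\in C_K$ with $h_K(x_1)=h_K(x_2)$.

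This equidimensionality condition is where the real, if modest, work lies, and it is decidedly easier over a field than the corresponding statement over $R$ (Corollary~\ref{cor:dim_local_ring_image}): since $C_K$ has pure dimension $1$, its only points are the generic points of its irreducible components, with local dimension $0$, and its closed points, with local dimension $1$. By Lemma~\ref{lm:point_closed}, applied to the finite morphism $h_K$, a point $x_i$ is closed exactly when $h_K(x_i)$ is closed; since $h_K(x_1)=h_K(x_2)$, the points $x_1$ and $x_2$ are simultaneously closed or simultaneously generic, so their local dimensions agree. This verifies the hypothesis of Lemma~\ref{lm:CM_base_change}\ref{lm:CM_base_change2}, so $A$ is Cohen--Macaulay as a $B$-module and hence $(h_K)_*\calO_{C_K}$ is Cohen--Macaulay over $\calO_{Z_K}$, completing the pointwise check and thereby the proof.
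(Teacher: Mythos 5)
Your proof is correct and follows essentially the same route as the paper: the $U_2$ part is immediate since $U_2=Z\setminus Y$ avoids the non-Cohen--Macaulay locus $Y_2$, and the $U_1$ part reduces via Lemma~\ref{lm:CM_base_change}\ref{lm:CM_base_change2} to the equidimensionality of local rings of $C_K$ over a common image point, which follows from pure dimension $1$ and Lemma~\ref{lm:point_closed}. If anything, your phrasing of the condition to check, namely $\dim(\calO_{C_K,x_1})=\dim(\calO_{C_K,x_2})$ for $x_1,x_2\in C_K$ with $h_K(x_1)=h_K(x_2)$, is the accurate form of what the paper's proof states slightly imprecisely (it writes $\calO_{Z_K,x_i}$ where the hypothesis of the cited lemma concerns the local rings upstairs).
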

\begin{proof}
  To show that the restriction $\calA_1=(h_K)_*\calO_{C_K}$ of
  $\calA_U$ to $U_1$ is Cohen--Macaulay we can make use of
  Lemma~\ref{lm:CM_base_change}. Then it suffices to show that
  $\dim(\calO_{Z_K,x_1})=\dim(\calO_{Z_K,x_2})$ for all $x_1,x_2\in
  Z_K$ with $h_K(x_1)=h_K(x_2)$. As $Z_K$ has pure dimension $1$, this
  follows from Lemma~\ref{lm:point_closed}.

  The restriction $\calA_2$ to $U_2$ is Cohen--Macaulay since $U_2$ is
  obtained by removing the non-Cohen--Macaulay locus $Y_2$ of $Z$.
\end{proof}
Let $\injfunc j U Z$ be the open immersion. We will show that the
direct image $j_*\calA_U$ gives rise to a lift $C$ of the curve
$C_K$. To see this, we need the following two results.

\begin{prop}[{\cite[Corollaire (5.11.4)]{EGAIV2}}]\label{prop:coherent_EGAIV}
  Let $X$ be a locally Noetherian scheme that can locally be embedded
  into a regular scheme. Let $U$ be an open subset of $X$, and denote
  by $\injfunc i U X$ the inclusion. Let $\calF$ be a coherent
  $\calO_U$-module. The direct image $i_*\calF$ is coherent if and only if
  $\codim((X\setminus U) \cap \overline{\{x\}},\overline{\{x\}})\geq 2$
  for every $x\in\Ass(\calF)$.
\end{prop}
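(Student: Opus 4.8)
The plan is to translate the coherence of $i_*\calF$ into a depth condition via local cohomology, and then to convert that depth condition into the stated codimension bound. Coherence is local on $X$ and the hypothesis is tested pointwise on $\Ass(\calF)$, so I would first reduce to the affine case $X=\spec(A)$ with $A$ a quotient of a regular ring, which is exactly what the assumption ``$X$ locally embeds into a regular scheme'' provides. Set $Z:=X\setminus U$ and choose any coherent extension $\calG$ of $\calF$ to $X$ (this exists since $A$ is Noetherian). The exact sequence of local cohomology
\[0 \to \mathcal{H}^0_Z(\calG)\to \calG \to i_*\calF \to \mathcal{H}^1_Z(\calG)\to 0\]
then accomplishes the first reduction: as $\mathcal{H}^0_Z(\calG)=\Gamma_Z(\calG)$ is a subsheaf of the coherent sheaf $\calG$, it is coherent, and hence $i_*\calF=i_*(\calG|_U)$ is coherent if and only if $\mathcal{H}^1_Z(\calG)$ is coherent.

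I would next reformulate this as the existence of a good extension: $i_*\calF$ is coherent if and only if there is a coherent $\calG$ on $X$ with $\calG|_U=\calF$ and $\depth_Z(\calG)\ge 2$. For the forward implication one takes $\calG:=i_*\calF$; then $\Ass(\calG)=\Ass(\calF)\subseteq U$ by \cite[Proposition (3.1.13)]{EGAIV2}, so $\mathcal{H}^0_Z(\calG)=0$, and since in this case the unit $\calG\to i_*i^*\calG=i_*\calF$ is the identity we also get $\mathcal{H}^1_Z(\calG)=0$, i.e.\ $\depth_Z(\calG)\ge 2$. Conversely, a coherent $\calG$ with $\depth_Z(\calG)\ge 2$ satisfies $\calG\cong i_*(\calG|_U)=i_*\calF$ by Corollary~\ref{cor:isom-open_depth}, so $i_*\calF$ is coherent. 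This step merely repackages Proposition~\ref{prop:isom_on_open_depth} and Corollary~\ref{cor:isom-open_depth}, and is where the depth machinery already developed in the paper is used.

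It remains to match $\depth_Z(\calG)\ge 2$ with the codimension bound on $\Ass(\calF)$. The easy direction deduces the bound from such a $\calG$: here $\depth_Z(\calG)\ge 2$ forces $\mathcal{H}^0_Z(\calG)=0$, so $\Ass(\calG)=\Ass(\calF)$; if some $x\in\Ass(\calF)$ had $\codim(\overline{\{x\}}\cap Z,\overline{\{x\}})\le 1$, I could pick a generic point $z$ of $\overline{\{x\}}\cap Z$ with $\dim\calO_{\overline{\{x\}},z}\le 1$, and then the prime $\mathfrak{p}_x\subseteq\calO_{X,z}$ corresponding to $x$ lies in $\Ass(\calG_z)$, whence $\depth_{\calO_{X,z}}(\calG_z)\le\dim(\calO_{X,z}/\mathfrak{p}_x)=\dim\calO_{\overline{\{x\}},z}\le 1$ (using the standard bound $\depth\le\dim(R/\mathfrak{p})$ for associated primes, \cite{Bruns-Herzog}), contradicting $\depth_Z(\calG)\ge 2$. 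The \emph{hard direction}, and the main obstacle, is the converse: producing a coherent depth-$\ge 2$ extension (equivalently, proving $\mathcal{H}^1_Z(\calG)$ coherent) out of the codimension hypothesis. This is precisely where the regular embedding is indispensable, since it equips $X$ with a dualizing complex; Grothendieck's finiteness theorem for local cohomology, combined with local duality, then expresses coherence of $\mathcal{H}^1_Z(\calG)$ through the codimension of the supports of the dual $\mathcal{E}xt$-sheaves, and unwinding the indices shows this is equivalent to $\codim(\overline{\{x\}}\cap Z,\overline{\{x\}})\ge 2$ for all $x\in\Ass(\calF)$. I expect this last index bookkeeping --- relating depth, the grade of the ideal of $Z$, and codimension in the regular ambient --- to be the delicate point; the rest is formal.
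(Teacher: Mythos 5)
The paper does not actually prove this proposition: it is quoted verbatim from EGA, with the citation [EGA~IV, Corollaire~(5.11.4)] built into the statement, and is used as a black box. So there is no in-paper argument to compare against, and your proposal has to be judged as a standalone proof. On that score the framing and roughly half of the content are correct: the reduction to the affine case, the four-term sequence $0 \to \calH^0_Z(\calG) \to \calG \to i_*\calF \to \calH^1_Z(\calG) \to 0$, the equivalence of coherence of $i_*\calF$ with the existence of a coherent extension $\calG$ with $\depth_Z(\calG)\geq 2$ (which indeed just repackages Proposition~\ref{prop:isom_on_open_depth} and Corollary~\ref{cor:isom-open_depth}), and the ``only if'' direction via $\depth_{\calO_{X,z}}(\calG_z)\leq \dim(\calO_{X,z}/\p_x)$ for $\p_x\in\Ass(\calG_z)$ are all sound.

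The genuine gap is the ``if'' direction, which is the substantive half of the theorem. You defer it to ``Grothendieck's finiteness theorem for local cohomology,'' but that theorem (SGA~2, Exp.~VIII, Th.~2.1 and 2.3, equivalently EGA~IV~(5.11.1)) \emph{is} the statement being proved: its case $n=1$, read through the exact sequence above, is exactly Corollaire~(5.11.4). So either you are citing the result to prove itself, or you are proposing to reprove SGA~2~VIII~2.1, in which case the phrase ``unwinding the indices'' conceals the whole argument --- the passage to a complete regular local ambient ring, local duality identifying $H^i_{\m}$ with Matlis duals of $\ext$-modules over the regular ring, and the induction converting finiteness for supports in the maximal ideal into finiteness for supports in an arbitrary closed set $Z$. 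That is precisely where the hypothesis that $X$ embeds locally into a regular scheme is consumed (and without which the statement fails), and none of it appears in your sketch: nothing you wrote shows that the codimension bound on $\Ass(\calF)$ forces $\calH^1_Z(\calG)$ to be coherent. If the intent is to use EGA as a black box --- which is what the paper does --- the honest proof is the one-line citation; if the intent is a self-contained proof, the hard implication still has to be supplied.
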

\begin{prop}[{\cite[Proposition
    (5.10.10)(i)]{EGAIV2}}]\label{prop:S2_EGAIV}
  Let $X$ be a locally Noetherian scheme, let $U$ be an open subscheme
  of $X$, and denote by $\injfunc i U X$ the inclusion. Let $\calF$ be
  a coherent $\calO_U$-module, and suppose that the direct image
  $i_*\calF$ is coherent. Then $\depth_{X\setminus U}(i_*\calF) \geq 2$.
\end{prop}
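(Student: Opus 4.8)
Since $\depth_{X\setminus U}(i_*\calF)$ is by definition the infimum of the local depths $\depth_{\calO_{X,x}}((i_*\calF)_x)$ over $x\in X\setminus U$, the statement is local on $X$, and the plan is to reduce at once to the affine case $X=\spec(A)$ with $A$ Noetherian, $X\setminus U=V(I)$ for an ideal $I\subseteq A$, and $\calF$ coherent on $U$. Write $\calG:=i_*\calF$. As $i$ is an open immersion into a Noetherian scheme, $\calG$ is quasi-coherent, so $\calG=\widetilde{M}$ with $M=\Gamma(X,\calG)=\Gamma(U,\calF)$; the hypothesis that $i_*\calF$ is coherent says precisely that this $A$-module $M$ is finitely generated. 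As recalled in the proof of Proposition~\ref{prop:isom_on_open_depth}, one has $\depth_{X\setminus U}(\calG)=\operatorname{grade}(I,M)$ by \cite[Proposition 1.2.10]{Bruns-Herzog}. Since the grade of $I$ on $M$ equals the least degree in which the local cohomology $H^i_I(M)$ (cohomology with support in $V(I)$) is nonzero, it therefore suffices to prove that $H^0_I(M)=H^1_I(M)=0$.

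The key step is the local-cohomology exact sequence attached to the open complement $U=X\setminus V(I)$. Because $X$ is affine and $\calG$ is quasi-coherent, one has $H^0(X,\calG)=M$ and $H^1(X,\calG)=0$, so the long exact sequence comparing cohomology with supports in $V(I)$ to the cohomology of $X$ and of $U$ collapses to
\begin{equation*}
  0\longrightarrow H^0_I(M)\longrightarrow M\stackrel{\rho}{\longrightarrow}\Gamma(U,\calG\vert_U)\longrightarrow H^1_I(M)\longrightarrow 0,
\end{equation*}
where $\rho$ is the restriction of global sections to $U$. Now $\calG\vert_U=(i_*\calF)\vert_U=\calF$ because $i$ is an open immersion, whence $\Gamma(U,\calG\vert_U)=\Gamma(U,\calF)=M$; under this identification $\rho$ is the identity map, since a global section of $\calG=i_*\calF$ is by construction nothing but a section of $\calF$ over $U$. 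Thus $\rho$ is an isomorphism, and the exact sequence forces $H^0_I(M)=\ker(\rho)=0$ and $H^1_I(M)=\coker(\rho)=0$.

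Putting the two steps together gives $\operatorname{grade}(I,M)\geq 2$, that is, $\depth_{X\setminus U}(i_*\calF)\geq 2$, as claimed. The only point that requires genuine care is the identification of the middle map $\rho$ as an isomorphism: this is exactly where the hypothesis $\calG=i_*\calF$ enters, for it guarantees simultaneously that $M=\Gamma(U,\calF)$ consists of sections already defined on $U$ and that $\calG\vert_U=\calF$. For a general coherent sheaf on $X$ the comparison map would only be the canonical unit $\calG\to i_*(\calG\vert_U)$, whose kernel and cokernel are precisely $H^0_I(M)$ and $H^1_I(M)$ and need not vanish. I expect this adjunction bookkeeping, together with the justification of the local-cohomology exact sequence and of the grade characterization, to be the main (albeit routine) obstacle; alternatively, the statement is verbatim \cite[Proposition (5.10.10)(i)]{EGAIV2} and may simply be invoked.
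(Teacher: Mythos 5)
Your argument is correct, but note that the paper does not prove this statement at all: it is quoted verbatim from \cite[Proposition (5.10.10)(i)]{EGAIV2} and simply invoked, as you anticipate in your final sentence. What you have written is essentially the standard proof of that EGA result, and it goes through: after reducing to $X=\spec(A)$ (using that $(i_*\calF)\vert_V=(i_V)_*(\calF\vert_{U\cap V})$ for an affine open $V\subseteq X$, which holds because $i$ is an open immersion), coherence of $i_*\calF$ is exactly the statement that $M=\Gamma(U,\calF)$ is a finitely generated $A$-module, and this finiteness is what licenses both the identity $\depth_{X\setminus U}(\widetilde M)=\operatorname{grade}(I,M)$ from \cite[Proposition 1.2.10]{Bruns-Herzog} and the characterization of grade as the least degree of nonvanishing local cohomology. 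The four-term sequence $0\to H^0_I(M)\to M\to \Gamma(U,\calF)\to H^1_I(M)\to 0$ is the local-cohomology sequence for the pair $(X,U)$ with $H^1(X,\widetilde M)=0$ by affineness, and your identification of the middle map as the identity on $\Gamma(U,\calF)$ is the crux and is correct; it is precisely the adjunction feature of $i_*$ that distinguishes this sheaf from an arbitrary coherent extension of $\calF$. (If $V(I)\cap\supp(M)=\emptyset$ the grade is $+\infty$, consistent with the convention $\depth(0)=+\infty$ implicit in the paper's definition of $\depth_Y$.) So your proposal supplies a complete, self-contained justification where the paper only cites; the trade-off is that the citation keeps the paper short, while your argument makes visible why the conclusion is exactly ``depth at least $2$'' and no more: $H^0_I$ and $H^1_I$ are the kernel and cokernel of the unit map $\calG\to i_*(\calG\vert_U)$, and nothing constrains $H^2_I$.
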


Applying these results to the coherent $\calO_U$-algebra $\calA_U$
that we defined above gives the following.
\begin{prop}\label{prop:dir_image_coherent}
  Let the notation be as above. The sheaf $\calA:=j_*\calA_U$ of
  $\calO_Z$-algebras is coherent and Cohen--Macaulay as an 
  $\calO_Z$-module.
\end{prop}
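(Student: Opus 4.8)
The plan is to derive both assertions from the two cited results of EGA IV, using Proposition~\ref{prop:coherent_EGAIV} for coherence and Proposition~\ref{prop:S2_EGAIV} for the depth bound, and then to read off the Cohen--Macaulay property from a dimension count. Throughout I take the ambient scheme to be $Z$, which is a closed subscheme of the regular scheme $\bbP^n_R$ and hence can locally be embedded into a regular scheme, so both EGA statements apply with the open immersion $j$ and the coherent sheaf $\calA_U$ on $U$.

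First I would pin down the associated points of $\calA_U$. Since $\calA_U$ is Cohen--Macaulay as an $\calO_U$-module by Lemma~\ref{lm:A_U_is_CM}, it has no embedded associated points, so $\Ass(\calA_U)$ consists exactly of the generic points of $\supp(\calA_U)=U$. Each generic point $\eta_i$ of a component of $Z$ is a minimal, hence associated, point of $\calO_Z$, and therefore lies in the generic fiber $Z_K$ by Lemma~\ref{lm:flat_over_DVR}; as $U\supseteq Z_K$, these $\eta_i$ are precisely the generic points of $U$, all contained in $Z_K\subseteq U$. To apply Proposition~\ref{prop:coherent_EGAIV} it then remains to check, for each component $Z_i:=\overline{\{\eta_i\}}$ of $Z$, that $(Z\setminus U)\cap Z_i$ has codimension at least $2$ in $Z_i$. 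By Proposition~\ref{prop:codim_2} every point of $Z\setminus U$ satisfies $\dim(\calO_{Z,z})=2$; since $Z$ has pure dimension $2$ this forces such a $z$ to be closed in $Z$ (it lies in the one-dimensional closed fiber and has maximal local dimension), hence closed in any component $Z_i$ through it. A closed point of the $2$-dimensional integral scheme $Z_i$ has codimension $2$, so the hypothesis of Proposition~\ref{prop:coherent_EGAIV} holds and $\calA=j_*\calA_U$ is coherent.

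For the Cohen--Macaulay property I would argue locally. At a point $z\in U$ the stalk $\calA_z=(\calA_U)_z$ is Cohen--Macaulay by Lemma~\ref{lm:A_U_is_CM}. At a point $z\in Z\setminus U$ I invoke Proposition~\ref{prop:S2_EGAIV}, which applies now that $\calA$ is coherent and yields $\depth_{Z\setminus U}(\calA)\geq 2$, in particular $\depth_{\calO_{Z,z}}(\calA_z)\geq 2$. The same depth bound shows $\calA$ has no associated point in $Z\setminus U$, so $\Ass(\calA)=\Ass(\calA_U)=\{\eta_i\}$ and hence $\supp(\calA)=Z$; consequently $\dim_{\calO_{Z,z}}(\calA_z)=\dim(\calO_{Z,z})=2$ for such $z$. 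Combining this with the universal inequality $\depth\leq\dim$ gives $\depth_{\calO_{Z,z}}(\calA_z)=\dim_{\calO_{Z,z}}(\calA_z)=2$, so $\calA_z$ is Cohen--Macaulay. Thus $\calA$ is Cohen--Macaulay at every point.

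The genuinely new work is not in either EGA input, which does the analytic heavy lifting, but in verifying their \emph{hypotheses}: pinning down $\Ass(\calA_U)$ from the Cohen--Macaulayness supplied by Lemma~\ref{lm:A_U_is_CM}, and translating the codimension statement of Proposition~\ref{prop:codim_2} into the component-wise codimension-$2$ condition required by Proposition~\ref{prop:coherent_EGAIV}. I expect the main obstacle to be exactly this bookkeeping: one must be certain that the points of $Z\setminus U$ are \emph{closed} (so that they have codimension $2$ in each individual component, not merely in $Z$) and that $\supp(\calA)=Z$ (so that the module dimension equals the local ring dimension at the bad points), both of which rest on the purity of dimension of $Z$ established earlier.
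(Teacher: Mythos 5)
Your proof follows the paper's argument essentially step for step: coherence via Proposition~\ref{prop:coherent_EGAIV} after identifying $\Ass(\calA_U)$ with the generic points of $U$ (using Lemma~\ref{lm:A_U_is_CM}) and checking the componentwise codimension condition on $Z\setminus U$, and Cohen--Macaulayness by combining the depth bound of Proposition~\ref{prop:S2_EGAIV} with Lemma~\ref{lm:A_U_is_CM}; your pointwise $\depth=\dim$ computation at the points of $Z\setminus U$ merely makes explicit what the paper leaves implicit. One small caveat: your justification that ``a closed point of the $2$-dimensional integral scheme $Z_i$ has codimension $2$'' is not a general fact --- $\spec(R[x])$ is integral of dimension $2$ but has closed points of codimension $1$, e.g.\ $V(\pi x-1)$ --- and here it holds because $Z_i$ is flat and proper over $\spec(R)$ with one-dimensional generic fiber, which is exactly Corollary~\ref{cor:codim_point}, the result the paper cites at this step.
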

\begin{proof}
  We have to show that $\calA_U$ satisfies the properties of
  Proposition~\ref{prop:coherent_EGAIV}. By Lemma~\ref{lm:A_U_is_CM},
  the sheaf $\calA_U$ is Cohen--Macaulay, and hence the associated
  points of $\calA_U$ are the generic points of
  $\supp(\calA_U)=U$. Thus we have to show that $\codim((Z\setminus
  U)\cap Z',Z')=2$ for every irreducible component $Z'$ of $Z$. By
  Proposition~\ref{prop:codim_2}, the closed set $Z\setminus U$ has
  codimension $2$ in $Z$, and hence it consists of closed points. We
  have seen in Corollary~\ref{cor:codim_point} that every closed point
  has codimension $2$ in every irreducible component containing
  it. This shows that $\calA$ is coherent.  Since $\depth_{Z\setminus
    U}\calA=2$ by Proposition~\ref{prop:S2_EGAIV}, it follows with
  Lemma~\ref{lm:A_U_is_CM} that $\calA$ is Cohen--Macaulay.
\end{proof}
Now we are ready to prove that the moduli functor $\CM$ is
proper.
\begin{thm}[Valuative criterion for properness]\label{thm:valu-crit-prop}
  Let $R$ be a discrete valuation ring with field of fractions
  $K$. For every commutative diagram \[\xymatrix{\spec(K)\ar[r] \ar[d]
    & \CM \ar[d] \\ \spec(R) \ar[r] \ar@{-->}[ur] & \spec(\bbZ)}\]
  there exists a unique map $\spec(R)\dasharrow \CM$ making the
  diagram commute.  In other words, every pair $(C_K,i_K) \in
  \CM(\spec(K))$ is the generic fiber of a unique element $(C,i)\in
  \CM(\spec(R))$.
\end{thm}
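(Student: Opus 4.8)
The uniqueness is exactly the content of the valuative criterion for separatedness, Theorem~\ref{thm:valu-crit-separ}, so the plan is to produce \emph{some} lift $(C,i)\in\CM(\spec(R))$ whose generic fiber is $(C_K,i_K)$. All the geometry needed has been assembled above: I would set $C:=\mathbf{Spec}_Z(\calA)$, the relative spectrum of the coherent $\calO_Z$-algebra $\calA=j_*\calA_U$ from Proposition~\ref{prop:dir_image_coherent}, with structure map $\func h C Z$ and $i$ the composition $C\to Z\hookrightarrow\bbP^n_R$. Since $\calA$ is coherent, $h$ is finite, hence so is $i$, and $C$ is flat and proper over $\spec(R)$ once flatness is established. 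It then remains to verify flatness, the three defining conditions of $\CM$, and that the generic fiber is $(C_K,i_K)$.

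For flatness I would use Lemma~\ref{lm:flat_over_DVR}: because $\calA_U$ is Cohen--Macaulay by Lemma~\ref{lm:A_U_is_CM}, its associated points are the generic points of $U$, all of which are generic points of $Z$ (as $Z\setminus U$ has codimension $2$) and thus lie in the generic fiber $Z_K$; by \cite[Proposition (3.1.13)]{EGAIV2} the same holds for $\Ass(\calA)=\Ass(j_*\calA_U)$, so $\calA$, and therefore $C$, is flat over $\spec(R)$. Restricting to the generic fiber, $Z_K\subseteq U$ gives $\calA\vert_{Z_K}=\calA_U\vert_{Z_K}=(h_K)_*\calO_{C_K}$, so $C\times_R K=C_K$ and $i$ restricts to $i_K$, recovering the prescribed generic fiber.

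The defining conditions of $\CM$ I would check as follows. Cohen--Macaulayness of $C$ follows from $\calA$ being Cohen--Macaulay over $\calO_Z$ together with Lemma~\ref{lm:CM_base_change}\ref{lm:CM_base_change1} applied stalkwise, so that the stalks of $\calA$ are Cohen--Macaulay rings; since $C$ is flat over the discrete valuation ring $R$, the uniformizer $\pi$ is a nonzerodivisor, and cutting the Cohen--Macaulay scheme $C$ by $\pi$ shows that the closed fiber $C_0$ is Cohen--Macaulay as well. As $C$ is flat and proper over $\spec(R)$ with generic fiber $C_K$ of pure dimension $1$, Proposition~\ref{prop:dim_fibers} yields that $C$ has pure dimension $2$ and $C_0$ pure dimension $1$, settling condition~(i). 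For condition~(iii), the sheaf $i_*\calO_C=\calA$ is flat over $R$, so its Hilbert polynomial is locally constant on the connected scheme $\spec(R)$ and hence equals the generic value $p(t)$ on both fibers.

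Condition~(ii) is where I expect the main obstacle. The non-isomorphism locus of $i$ is $W:=\supp(\coker(\calO_Z\to\calA))$, which is contained in $Y$ because $\calA\vert_{Z\setminus Y}=\calO_Z\vert_{Z\setminus Y}$; by Lemma~\ref{lm:iso_alt} it suffices to prove $\dim(Y_s)=0$ for both points $s$ of $\spec(R)$. On the generic fiber, $Y_\eta=Y_K$ is the finite non-isomorphism locus of $(C_K,i_K)$. The delicate point is the closed fiber, where $Y_0=(Y_1)_0\cup Y_2$: while $(Y_1)_0$ is visibly finite (the closed fibers of the finitely many one-dimensional flat closures $\overline{\{y\}}$, $y\in Y_K$), the non--Cohen--Macaulay locus $Y_2$ a priori threatens to be one-dimensional. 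The key observation I would isolate is that $Z$ is Cohen--Macaulay at every generic point of its closed fiber: such a point has a one-dimensional local ring flat over $R$, so $\pi$ is a nonzerodivisor and the depth equals the dimension. Hence the closed set $Y_2$ contains no generic point of the pure one-dimensional fiber $Z_0$ and is therefore zero-dimensional, making $Y_0$ finite. Thus $i_0$ is an isomorphism onto its image away from finitely many closed points, completing the verification that $(C,i)\in\CM(\spec(R))$; uniqueness is then supplied by Theorem~\ref{thm:valu-crit-separ}.
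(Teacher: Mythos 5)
Your proposal is correct and follows essentially the same route as the paper: the lift is $C=\mathbf{Spec}(j_*\calA_U)$ over the closure $Z$ of $Z_K$, flatness comes from $\Ass(j_*\calA_U)=\Ass(\calA_U)$ lying in the generic fiber, Cohen--Macaulayness and pure dimension of the closed fiber come from Lemma~\ref{lm:CM_base_change} and Proposition~\ref{prop:dim_fibers}, and uniqueness is Theorem~\ref{thm:valu-crit-separ}. Your inline argument that $Y_2$ misses the generic points of $Z_0$ (via $\pi$ being a nonzerodivisor, so $\operatorname{depth}\geq 1$ while non-CM forces $\dim>\operatorname{depth}$) is exactly the computation the paper packages as Proposition~\ref{prop:codim_2}.
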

\begin{proof}
  Let $(C_K,i_K)\in \CM(\spec(K))$. The morphism $i_K$ factors as
  $C_K\stackrel{h_K}{\to} Z_K\hookrightarrow \bbP^n_K$, where $Z_K$ is
  the scheme-theoretic image of $C_K$ in $\bbP^n_K$. Let $Z$ be the
  scheme-theoretic closure of $Z_K$ in $\bbP^n_R$. Let further $Y_1$
  be the closure of $Y_K:=\supp(\coker(h_K^\#))$ in $Z$ and $Y_2$ be
  the non-Cohen--Macaulay locus of $Z$. Let $Y:=Y_1\cup Y_2$, and let
  $U_1:=Z_K$, $U_2:=Z\setminus Y$ and $U:=U_1\cup U_2$. The algebras
  $(h_K)_*\calO_{C_K}$ on $U_1$ and $\calO_{U_2}$ on $U_2$ coincide on
  the intersection $U_1\cap U_2$, and glue to a coherent
  $\calO_U$-algebra $\calA_U$.  We set $\calA:=j_*(\calA_U)$, where
  $\injfunc j U Z$ denotes the inclusion. Let
  $C:=\mathbf{Spec}(\calA)$, and let $\func i C \bbP^n_R$ be the
  composition of the structure map $C\to Z$ and the embedding
  $Z\hookrightarrow \bbP^n_R$. Note that base change to $\spec (K)$
  gives the finite morphism $\func {i_K}{C_K}{\bbP^n_K}$.

  Finally, we claim that $(C,i)$ defines an element in
  $\CM(\spec(R))$. First we observe that, by
  Proposition~\ref{prop:dir_image_coherent}, the $\calO_Z$-module
  $\calA$ is coherent, and therefore the morphism $i$ is
  finite. Moreover, as we have $\Ass(\calA)=\Ass(\calA_U)$ by
  \cite[Proposition (3.1.13)]{EGAIV2}, all associated points of
  $\calA$ lie in the generic fiber over $\spec(R)$. By
  Lemma~\ref{lm:flat_over_DVR}, it follows that $C$ is flat over
  $\spec(R)$.  It remains to show that the pair $(C,i)$ satisfies the
  conditions on the fibers over $\spec(R)$. Note that restriction to
  the generic point gives the element $(C_K,i_K)$ that lies in
  $\CM(\spec(K))$ by assumption. Hence, we only have to check the
  closed fiber. It follows from Proposition~\ref{prop:dim_fibers} that
  also the closed fiber $C_0$ has pure dimension $1$. Moreover, the
  sheaf $\calA$ is Cohen--Macaulay as an $\calO_Z$-module by
  Proposition~\ref{prop:dir_image_coherent}, and hence also as an
  $\calA$-module, see
  Lemma~\ref{lm:CM_base_change}\ref{lm:CM_base_change1}. It follows
  that the scheme $C$ is Cohen--Macaulay. The closed fiber $C_0$ is
  obtained by dividing out with the non zero divisor $\pi$, and it is
  therefore Cohen--Macaulay by \cite[Theorem~
  2.1.3(a)]{Bruns-Herzog}. Finally, the closed fiber $Y_0$ of $Y$ has
  dimension $0$ by Proposition~\ref{prop:codim_2}. By
  Lemma~\ref{lm:iso_alt}, this implies that also the restriction
  $\func{i_0}{C_0}{\bbP^n_{k}}$ is an isomorphism onto its image away
  from finitely many closed points. This concludes the proof since the
  Hilbert polynomial is locally constant.
\end{proof}

\newcommand{\etalchar}[1]{$^{#1}$}
\providecommand{\bysame}{\leavevmode\hbox to3em{\hrulefill}\thinspace}
\providecommand{\MR}{\relax\ifhmode\unskip\space\fi MR }
\providecommand{\MRhref}[2]{%
  \href{http://www.ams.org/mathscinet-getitem?mr=#1}{#2}
}
\providecommand{\href}[2]{#2}

 \end{document}